\newcommand{\PreserveBackslash}[1]{\let\temp=\\#1\let\\=\temp}
\newcolumntype{C}[1]{>{\PreserveBackslash\centering}p{#1}}
\newcolumntype{R}[1]{>{\PreserveBackslash\raggedleft}p{#1}}
\newcolumntype{L}[1]{>{\PreserveBackslash\raggedright}p{#1}}
\definecolor{Gray}{gray}{0.9}
\newcolumntype{g}{>{\columncolor{Gray}}c}
\newcolumntype{M}{V{3cm}} %M is for Maximal column
\tikzset{snakeit/.style={decorate, decoration={snake, amplitude=.2mm,segment length=1mm}}}
\tikzset{ext/.style={circle, draw,inner sep=1pt}, int/.style={circle,draw,fill,inner sep=2pt},nil/.style={inner sep=1pt}}
\tikzset{cy/.style={circle,draw,fill,inner sep=2pt},scy/.style={circle,draw,inner sep=2pt},scyx/.style={draw,cross out,inner sep=2pt},scyt/.style={draw,regular polygon,regular polygon sides=3,inner sep=0.95pt}}
\tikzset{exte/.style={circle, draw,inner sep=3pt},inte/.style={circle,draw,fill,inner sep=3pt}}
\tikzset{diagram/.style={matrix of math nodes, row sep=3em, column sep=2.5em, text height=1.5ex, text depth=0.25ex}}
\tikzset{diagram2/.style={matrix of math nodes, row sep=0.5em, column sep=0.5em, text height=1.5ex, text depth=0.25ex}}
\tikzset{rowcolsep/.style={column sep=.2cm, row sep=.1cm}}
\tikzset{
  crossed/.style={
    decoration={markings,mark=at position .5 with {\arrow{|}}},
    postaction={decorate},
    shorten >=0.4pt}}
\tikzset{every picture/.style={baseline=-.65ex} }
\tikzset{every loop/.style={draw}}
  \tikzset{->-/.style={decoration={
    markings,
    mark=at position .5 with {\arrow{>}}},postaction={decorate}}}
\theoremstyle{plain}
  \newtheorem{thm}{Theorem}
  \newtheorem{defi}[thm]{Definition}
  \newtheorem{prop}[thm]{Proposition}
  \newtheorem{cor}[thm]{Corollary}
  \newtheorem{lemma}[thm]{Lemma}
\theoremstyle{definition}
\newtheorem{ex}[thm]{Example}
\newtheorem{rem}[thm]{Remark}
\numberwithin{thm}{section}
\newcommand{\Hom}{\mathrm{Hom}}
\newcommand{\R}{{\mathbb{R}}}
\newcommand{\Z}{{\mathbb{Z}}}
\newcommand{\mF}{\mathcal{F}}
\DeclareMathOperator{\id}{id}
\newcommand{\op}{\mathcal}
\newcommand{\Lie}{\mathsf{Lie}}
\DeclareMathOperator{\coker}{coker}
\newcommand{\fGC}{\mathsf{fGC}}
\newcommand{\Ass}{\mathsf{Assoc}}
\newcommand{\bpm}{\begin{pmatrix}}
\newcommand{\epm}{\end{pmatrix}}
\newcommand{\GC}{\mathsf{GC}}
\newcommand{\HGC}{\mathsf{HGC}}
\newcommand{\bbS}{\mathbb{S}}
\DeclareMathOperator{\Aut}{Aut}
\DeclareMathOperator{\sgn}{sgn}
\DeclareMathOperator{\gr}{gr}
\newcommand{\MM}{{\mathcal M}}
\newcommand{\GK}{\mathsf{GK}}
\newcommand{\Q}{\mathbb{Q}}
\newcommand{\bMM}{\overline{\MM}}
\newcommand{\bGK}{\overline{\GK}{}}
\newcommand{\tGK}{\widetilde{\GK}{}}
\newcommand{\POp}{{\op P}}
\newcommand{\Pois}{{\mathsf{Pois}}}
\newcommand{\Embbar}{\overline{\mathrm{Emb}}}
\DeclareMathOperator{\im}{\mathrm{im}}
\newcommand{\XPhi}{\Xi}
\newcommand{\Xast}{X^{\star}}
\newcommand{\Jast}{J^{\star}}
\newcommand{\tJast}{\tilde J^{\star}}
\author{Sam Payne}
\address{Department of Mathematics \\ 2515 Speedway, PMA 8.100 \\ Austin, TX 78722 USA}
\email{sampayne@utexas.edu}
\author{Thomas Willwacher}
\address{Department of Mathematics\\ ETH Zurich\\ R\"amistrasse 101 \\ 8092 Zurich, Switzerland}
\email{thomas.willwacher@math.ethz.ch}
\begin{document}
\title{Weight two compactly supported cohomology of moduli spaces of curves}

\begin{abstract}
  We study the weight 2 graded piece of the compactly supported rational cohomology of the moduli spaces of curves $\MM_{g,n}$ and show that this can be computed as the cohomology of a graph complex that is closely related to graph complexes arising in the study of embedding spaces. For $n = 0$, we express this cohomology in terms of $W_0H_c^\bullet(\MM_{g',n'})$ for $g' \leq g$ and $n'\leq 2$, and thereby produce several new infinite families of nonvanishing unstable cohomology groups on $\MM_g$, including the first such families in odd degrees. In particular, we show that $ \dim H^{4g-k}(\MM_g)$ grows at least exponentially with $g$, for $k \in \{8, 9, 11, 12, 14, 15, 16, 18, 19\}$. 
\end{abstract}

 \thanks{
SP has been supported by NSF grants DMS--2001502 and DMS--2053261. Portions of this work were carried out while he was in residence at ICERM for the special thematic semester program on Combinatorial Algebraic Geometry, with support from NSF grant DMS--1439786 and the Simons Foundation Institute Grant Award ID 507536. TW has been supported by the ERC starting grant 678156 GRAPHCPX, and the NCCR SwissMAP, funded by the Swiss National Science Foundation.}

\maketitle

\setcounter{tocdepth}{1}
   \tableofcontents

\section{Introduction}

The weight zero compactly supported cohomology $W_0H_c^\bullet(\MM_{g,n})$ is naturally isomorphic to the cohomology of the genus $g$ part of the standard commutative graph complex with $n$ marked external vertices \cite{CGP1, CGP2}. Similar graph complexes arise in the study of rational homology and homotopy groups of higher dimensional long links \cite{TT1}.  In this paper, we use commutative graph complexes with additional decorations on the external vertices, similar to those appearing in the embedding calculus \cite{FTW2} to study the next nontrivial weight-graded piece of the compactly supported cohomology of $\MM_{g,n}$.  Throughout, all cohomology groups are taken with rational coefficients. 
 
The weight $k$ graded piece of $H^\bullet_c (\MM_{g,n})$ is $$\gr_k H_c^\bullet (\MM_{g,n}) := W_k H^\bullet_c(\MM_{g,n}) / W_{k-1} H^\bullet_c(\MM_{g,n}).$$  Note that $\gr_1 H^\bullet_c (\MM_{g,n})$ vanishes for all $g$ and $n$; this follows from the vanishing of $H^1(\overline \MM_{g,n})$ for all $g$ and $n$, via Deligne's weight spectral sequence. The next nontrivial weight graded piece is $\gr_2 H_c^\bullet(\MM_{g,n})$, which we study by combining graph complex techniques with the presentations and pullback formulas for $H^2 (\overline \MM_{g,n})$ from \cite{ArbarelloCornalba98, Keel92}.

Our first main result expresses $\gr_2 H^\bullet_c (\MM_{g,n})$ as the cohomology of a graph complex $X_{g,n}$  generated by possibly disconnected simple graphs without loops or multiple edges, with features and decorations as follows.  Say the $1$-valent vertices are {\em external} and all other vertices are {\em internal}.  \begin{itemize}
\item All internal vertices are at least trivalent; 
\item Each external vertex has a label from $\{1,\dots,n,\epsilon,\omega \}$; 
\item Each label $1,\dots,n$ appears exactly once, and the label $\omega$ appears exactly twice; 
\item The graph obtained by joining all external vertices marked $\epsilon$ or $\omega$ is connected and has genus $g$.
\item There is no connected component consisting of a single edge connecting two external vertices with markings $\{\epsilon, \omega\}$ or $\{\omega, \omega\}$.
\end{itemize}
The edges that do not contain an external vertex with marking from $\{1, \dots, n\}$ are called \emph{structural}. Each generator is equipped with a total ordering of its structural edges, and reordering the structural edges induces multiplication by the sign of the permutation.  Note that some connected components may not have any internal vertices.  The \emph{degree} of a generator for $X_{g,n}$ is the number of structural edges plus one.
In figures, we replace each external vertex with its label and omit the edge ordering.  For example, the following is a generator of $X_{2,2}$ in degree $6$.
\begin{equation*}%\label{equ:intro graph example}
\begin{tikzpicture}
\node[int] (v1) at (0:.5) {};
\node[int] (v2) at (120:.5) {};
\node[int] (v3) at (-120:.5) {};
\node (w1) at (0:1) {$1$};
\node (w2) at (120:1) {$\epsilon$};
\node (w3) at (-120:1) {$\omega$};
\node (u1) at (-2.5,0) {$\omega$};
\node (u2) at (-1.5,0) {$2$};
\draw (v1) edge (v2) edge (v3) edge (w1)
     (v2) edge (v3) edge (w2) (v3) edge (w3);
\draw (u1) edge (u2);
\end{tikzpicture}
\end{equation*}

The differential $\delta$ on $X_{g,n}$ is a sum of two parts 
$
\delta = \delta_{split} + \delta_{join}.
$ 
Here $\delta_{split}$ is a sum over all vertices and over all ways of splitting the vertex into two vertices joined by an edge, so that the new vertices are at least trivalent.
\begin{align}\label{equ:deltasplit}
  \delta_{split} \Gamma &= \sum_{v \text{ vertex} }  
  \Gamma\text{ split $v$} 
  &
  \begin{tikzpicture}[baseline=-.65ex]
  \node[int] (v) at (0,0) {};
  \draw (v) edge +(-.3,-.3)  edge +(-.3,0) edge +(-.3,.3) edge +(.3,-.3)  edge +(.3,0) edge +(.3,.3);
  \end{tikzpicture}
  &\mapsto
  \sum
  \begin{tikzpicture}[baseline=-.65ex]
  \node[int] (v) at (0,0) {};
  \node[int] (w) at (0.5,0) {};
  \draw (v) edge (w) (v) edge +(-.3,-.3)  edge +(-.3,0) edge +(-.3,.3)
   (w) edge +(.3,-.3)  edge +(.3,0) edge +(.3,.3);
  \end{tikzpicture}
  \end{align}
The part $\delta_{join}$ glues together a subset $S$ of the $\epsilon$- and $\omega$-decorated external vertices, where
\begin{itemize}
\item $|S| \geq 2$, and $S$ contains either 0 or 1 of the $\omega$-decorated vertices
\end{itemize}
and then attaches an edge to a new external vertex. The new edge is decorated by $\omega$ if $S$ contains an $\omega$-decorated vertex, and by $\epsilon$ otherwise:  
\begin{align*}%\label{equ:deltajoin}
  \delta_{join} 
 \begin{tikzpicture}[baseline=-.8ex]
 \node[draw,circle] (v) at (0,.3) {$\Gamma$};
 \node (w1) at (-.7,-.5) {};
 \node (w2) at (-.25,-.5) {};
 \node (w3) at (.25,-.5) {};
 \node (w4) at (.7,-.5) {};
 \draw (v) edge (w1) edge (w2) edge (w3) edge (w4);
 \end{tikzpicture} 
 = 
 \sum_{S} 
 \begin{tikzpicture}[baseline=-.8ex]
 \node[draw,circle] (v) at (0,.3) {$\Gamma$};
 \node (w1) at (-.7,-.5) {};
 \node (w2) at (-.25,-.5) {};
 \node[int] (i) at (.4,-.5) {};
 \node (w4) at (.4,-1.3) {$\epsilon$ or $\omega$};
 \draw (v) edge (w1) edge (w2) edge[bend left] (i) edge (i) edge[bend right] (i) (w4) edge (i);
 \end{tikzpicture} \, .
 \end{align*}
Each graph produced by the differential has one new structural edge, which is taken first in the ordering; the relative ordering of the remaining edges is preserved.

\begin{thm}\label{thm:main WHGC}
For all $g$ and $n$ such that $2g + n \geq 3$ and $(g,n)\neq (1,1)$ there is an $\bbS_n$-equivariant isomorphism 
\[
 \gr_2 H^\bullet_c(\MM_{g,n}) \cong H^\bullet(X_{g,n},\delta).
\]
\end{thm}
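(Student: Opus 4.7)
The plan is to extend the CGP analysis of $W_0 H_c^\bullet(\MM_{g,n})$ \cite{CGP1, CGP2} to weight two by including contributions from codimension-one boundary strata and from $H^2$ of the compactification. The first step is to apply Deligne's weight spectral sequence to the pair $(\overline{\MM}_{g,n}, \partial\overline{\MM}_{g,n})$: for the weight $2$ graded piece, only the codimension $\leq 1$ strata contribute, so $\gr_2 H_c^\bullet(\MM_{g,n})$ is computed (after appropriate shifts) by the two-term complex
\[
 H^0(D^{(1)})(-1) \xrightarrow{\partial} H^2(\overline{\MM}_{g,n}),
\]
where $D^{(1)}$ is the disjoint union of boundary divisors of $\overline{\MM}_{g,n}$ and $\partial$ is Gysin pushforward.

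The second step is to unpack this model using the stratification of $\overline{\MM}_{g,n}$ by stable dual graphs. Each boundary divisor is (a finite quotient of) a product of smaller moduli spaces $\overline{\MM}_{g',n'}$, so applying the CGP weight-zero model to each factor yields a larger model of $\gr_2 H_c^\bullet(\MM_{g,n})$ as a total complex indexed by stable dual graphs, with each vertex decorated by a weight-$\leq 2$ class in the cohomology of its associated moduli space. The relevant generators are, by \cite{ArbarelloCornalba98} (and \cite{Keel92} in genus zero), $1$ in weight zero together with $\psi_i$, $\kappa_1$, and boundary divisor classes in weight two, modulo the Arbarello--Cornalba relations.

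The third step is to translate this combinatorial model into $X_{g,n}$. External vertices labeled $1,\dots,n$ correspond to marked points; the $\epsilon$-labeled external vertices encode $\psi$- or $\kappa_1$-decorations at neighboring vertices; and the pair of $\omega$-labeled external vertices encodes a single boundary class ``cut'' into two legs (one $\omega$ on each side), so the constraint that $\omega$ appears exactly twice is precisely that each generator carries one boundary decoration. Under this dictionary, the CGP-style splitting differential on the weight-zero layer becomes $\delta_{split}$, while the Gysin map in the spectral sequence between $H^0(D^{(1)})$ and $H^2(\overline{\MM}_{g,n})$ becomes $\delta_{join}$: each way of joining a subset of $\epsilon$/$\omega$ external vertices through a new internal vertex, with one new $\epsilon$ or $\omega$ leg attached, records the Gysin image of a boundary divisor class.

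The main obstacle lies in the third step: carefully verifying the combinatorial and sign matching between the spectral-sequence model and $X_{g,n}$. In particular, one must reconcile the relations in $H^2(\overline{\MM}_{g,n})$ (especially the pullback formulas for boundary classes to boundary strata) with the explicit structural features of $X_{g,n}$, namely the trivalency condition on internal vertices, the $\bbS_n$-equivariance, the exclusion of single-edge components between $\{\epsilon,\omega\}$- or $\{\omega,\omega\}$-vertices, and the hypotheses $2g+n\geq 3$ and $(g,n)\neq(1,1)$. These constraints encode both the stability conditions on the various $\overline{\MM}_{g',n'}$ appearing in the stratification and the vanishings produced by combining the Arbarello--Cornalba relations with $\delta_{join}$; handling the unstable low-genus boundary cases (where dimensions drop or $\psi = \delta/12$ forces identifications) is the most delicate piece of the correspondence.
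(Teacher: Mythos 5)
Your first step already fails, and it is not a repairable slip: the two-term complex $H^0(D^{(1)})(-1)\xrightarrow{\ \partial\ }H^2(\bMM_{g,n})$ computes the weight-$2$ graded pieces of the \emph{ordinary} cohomology $H^1(\MM_{g,n})$ and $H^2(\MM_{g,n})$, not of the compactly supported cohomology. For $\gr_2H^\bullet_c(\MM_{g,n})$ the relevant row of Deligne's spectral sequence is $E_1^{j,2}\cong\bigoplus_{|E(\Gamma)|=j}\big(H^2(\bMM_\Gamma)\otimes\det E(\Gamma)\big)^{\Aut(\Gamma)}$ with $j$ running over \emph{all} codimensions, i.e.\ the weight-$2$ Getzler--Kapranov complex $\GK^2_{g,n}$; the claim that only codimension $\le 1$ strata contribute is false, and indeed the generators of $X_{g,n}$ have unboundedly many edges precisely because every stratum contributes. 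Your second step then gestures at the correct object (stable graphs with vertex decorations of total weight $2$), but it cannot be obtained by "applying the CGP weight-zero model to each factor" of the two-term complex you started from; it has to be taken as the $E_1$-page itself, and the differential mixes edge-contractions of all types, not just a single Gysin map into $H^2(\bMM_{g,n})$.

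Even granting the correct starting complex, the passage to $X_{g,n}$ is not the sign-and-bookkeeping "dictionary" you describe, and your proposed dictionary does not match any map that exists: in $X_{g,n}$ the two $\omega$-legs record a $\psi$-class split across an edge or a generator $E_{ij}$ of a resolution of $H^2(\bMM_{0,k})$ (with boundary decorations $\delta_{0,A}$ being killed), whereas you assign the $\omega$-pair to a boundary class and the $\epsilon$-legs to $\psi$/$\kappa_1$-decorations; with that assignment $\delta_{join}$ does not reproduce the spectral-sequence differential. The substantive content that is missing is (i) the reduction from arbitrary vertex decorations to genus-zero vertices, i.e.\ the quasi-isomorphism $\GK^2_{g,n}\to\bGK^2_{g,n}$, which rests on computing the single-vertex complexes $V^2_{g,n}$ (acyclic for $g\ge 2$, with explicit low-genus exceptions coming from the Arbarello--Cornalba relations, and with the class $\delta_{irr}$ on a genus-one vertex producing the excluded case $(g,n)=(1,1)$ -- a phenomenon your setup cannot see); and (ii) the replacement of $H^2(\bMM_{0,k})$ by its presentation by $\psi$- and $\delta$-classes modulo the relations $E_{ij}$, which is what creates the $\epsilon$/$\omega$-leg model and requires its own quasi-isomorphism arguments (a resolution complex $\tGK^2_{g,n}$ and a zig-zag $\bGK^2_{g,n}\leftarrow\tGK^2_{g,n}\to X_{g,n}$, or equivalent). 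Without these two layers of argument the proposal does not constitute a proof.
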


\noindent This isomorphism at the level of cohomology is achieved through a zig-zag of quasi-isomorphisms of complexes, relating the row of Deligne's weight spectral sequence that computes $\gr_2 H^\bullet_c(\MM_{g,n})$ to the graph complex $X_{g,n}$.

The cohomology of such commutative graph complexes is far from fully understood.  Nevertheless, a partial understanding of this graph cohomology is enough to yield new results about the unstable cohomology of $\MM_{g,n}$ in weight 2, just as in weight 0. For $n = 0$, we express the weight 2 compactly supported cohomology of $\MM_g$ in terms of the weight 0 compactly supported cohomology of $\MM_{g',1}$ and $\MM_{g',2}$ for $g' \leq g$, as follows. Let $$\mathbb{V}_{\bullet, \bullet} := \textstyle{\bigwedge^2} \Big(\textstyle{\bigoplus_{g,k}} W_0 H^k_{c}(\MM_{g,1})\Big),$$ bigraded so that an element of $W_0 H^k_c(\MM_{g,1}) \wedge W_0 H^{k'}_c (\MM_{g',1})$ is in degree $(g+g', k + k')$. Let  $V_{as}$ denote the antisymmetric part of a $\bbS_2$-representation $V$. By convention, we set $H^\bullet_c(\MM_{0,2}) := 0$.

\begin{thm}\label{cor:n0}
For $g \geq 2$, there is an isomorphism of $\Q$-vector spaces
\[
 \gr_2 H_c^{k} (\MM_{g})
\ \cong \ 
 \mathbb{V}_{g,k-3} 
  \oplus W_0 H^{k-3}_c(\MM_{g-1,2})_{as}
   \oplus 
 \mathbb{V}_{g-1, k- 4}
 \oplus W_0 H^{k-4}_c(\MM_{g-2,2})_{as}.
\]
\end{thm}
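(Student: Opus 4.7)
By Theorem~\ref{thm:main WHGC}, it suffices to compute $H^\bullet(X_{g,0},\delta)$. I would first split $X_{g,0}=X^A\oplus X^B$ according to whether the two $\omega$-decorated vertices lie in the same connected component ($X^A$) or in two distinct components ($X^B$). This splitting is preserved by $\delta=\delta_{split}+\delta_{join}$ because the constraint $|S\cap\{\omega\}|\leq 1$ in the definition of $\delta_{join}$ forbids ever merging the two $\omega$'s into a single vertex, so each summand can be analyzed independently.

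Next I would filter each of $X^A$ and $X^B$ by the number of $\epsilon$-decorated external vertices. Since $\delta_{split}$ preserves this number while $\delta_{join}$ strictly decreases it (each join of $|S|\geq 2$ externals drops the $\epsilon$-count by at least one), the filtration is $\delta$-compatible, and the associated spectral sequence has $E_1$-page equal to the $\delta_{split}$-cohomology of the associated graded. On each graded piece the complex factors as a graded-symmetric tensor product over its connected components, and by the CGP graph-complex description of $W_0 H^\bullet_c$ from \cite{CGP1,CGP2}, the $\delta_{split}$-cohomology of a single connected component of genus $g'$ with $m$ external markings equals $W_0 H^\bullet_c(\MM_{g',m})$, equivariantly in permutations of like-labeled externals.

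The main technical step, and the principal obstacle, is to show that the subsequent differentials of the spectral sequence (or an equivalent explicit chain homotopy) contract all but four minimal families of surviving configurations. I expect contractions built from $\delta_{join}$ to kill (i) configurations carrying any $\epsilon$-decoration on an $\omega$-containing component, (ii) all $\epsilon$-only components of positive genus or with more than two $\epsilon$-externals, and (iii) configurations containing more than one isolated $\epsilon$-$\epsilon$ single-edge component. The lone $\epsilon$-$\epsilon$ single-edge component uniquely resists cancellation because self-joining its two $\epsilon$'s would produce a forbidden loop at the resulting vertex. Getting the signs exactly right in these contractions is where the real work lies.

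Once the surviving families are isolated, the joined-genus formula $g=\sum_i g_i + |E_{ext}|-K$ together with the degree convention (structural edges plus one) identifies each surviving family with exactly one summand of the theorem: a single two-$\omega$ component of genus $g-1$ yields $W_0 H^{k-3}_c(\MM_{g-1,2})_{as}$; the same with one extra isolated $\epsilon$-$\epsilon$ edge yields $W_0 H^{k-4}_c(\MM_{g-2,2})_{as}$; two single-$\omega$ components of total genus $g$, unordered and therefore contributing an exterior square, yield $\mathbb{V}_{g,k-3}$; and the same with one extra isolated $\epsilon$-$\epsilon$ edge yields $\mathbb{V}_{g-1,k-4}$. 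The antisymmetrizations $(-)_{as}$ and the exterior square arise because swapping the two $\omega$-vertices, or the two $\omega$-containing components, reorders the structural edges incident to them and induces a sign via the edge-ordering convention. Summing the four contributions yields the claimed isomorphism.
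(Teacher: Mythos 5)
There is a genuine gap, and it appears already at your first step. The decomposition $X_{g,0}=X^A\oplus X^B$ according to whether the two $\omega$-decorations lie in one component or in two is \emph{not} preserved by $\delta_{join}$. The constraint on $S$ only prevents the two $\omega$'s from being joined \emph{to each other}; it does not prevent the two $\omega$-containing components from being merged. For instance, if $\omega_1$ lies in a component $C_1$ and $\omega_2$ in a component $C_2$ carrying an $\epsilon$-leg, the term of $\delta_{join}$ with $S=\{\omega_1,\epsilon\}$ (or with $S$ a pair of $\epsilon$'s, one from each component) produces a graph in which both $\omega$'s sit in a single component. What is true is that the number of $\omega$-components can only decrease under $\delta$, so $X^A$ is a subcomplex with quotient $X^B$; but then the splitting of the resulting extension in cohomology is itself something to be proved, not assumed.

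The more serious issue is that the step you describe as ``where the real work lies'' is the entire content of the theorem, and the mechanism you sketch for it is not the one that works. On the $E_1$-page of your $\epsilon$-count filtration the higher differentials are induced by $\delta_{join}$, in particular by its $\omega$-part (fusing $\epsilon$-legs onto an $\omega$-leg), and controlling these directly is exactly the hard point. The paper does not attempt it. Instead it first restricts to the quasi-isomorphic subcomplex $\Xast_{g}$ of Definition~\ref{def:Xast} (Proposition~\ref{prop:WHGC prime WHGC inclusion qiso}), whose defining condition --- that $\epsilon$-components contain no internal vertices --- is forced precisely so that the reattachment involution $\XPhi$ is well defined; it then conjugates the differential by $\XPhi$ (Proposition~\ref{thm:WHGC simpl 2}), which for $n=0$ eliminates $\delta_{join}^\omega$ entirely and leaves $\delta_{split}+\delta_{join}^\epsilon$; and only then does it run contraction arguments of the type you envisage (Proposition~\ref{prop:Hg inclusion qiso}, a three-stage application of Lemma~\ref{lem:acyclicity}, plus the $\epsilon$-count splittings which do appear there in the guise of $U_1$ versus $U_{\geq 2}$) to cut down to the subcomplex $H_g$, on which the differential is pure $\delta_{split}$ and the four summands appear as in Lemma~\ref{lem:Hg isomorphism}. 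Your list of surviving configurations and the genus/degree bookkeeping at the end do agree with the paper's answer (and the exclusion of a second $(\epsilon,\epsilon)$-edge is automatic, since swapping two such edges is an odd symmetry and kills the generator), but without the involution $\XPhi$ or some substitute for it, the cancellation scheme you describe remains a conjecture rather than a proof.
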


\begin{rem}
Note that the Hodge structure on $\gr_2 H_c^{k} (\MM_{g})$ is pure Tate; this follows from Deligne's weight spectral sequence associated to $\MM_g \subset \bMM_g$, which abuts to $\gr_\bullet H_c^k(\MM_g)$, together with the fact that $H^2(\bMM_{g',n'})$ is pure Tate for all $g'$ and $n'$; see \S\ref{sec:wss}, below. The Hodge structure on $\gr_2 H_c^{k} (\MM_{g})$ is therefore obtained from the $\Q$-vector space description above by tensoring with $\Q(-1)$.  For any variety or Deligne-Mumford stack $X$ for which $\gr_1 H^*_c(X) = 0$, such as $\MM_g$, the subspace $W_2H_c^*(X)$ is an extension of $\gr_2H_c^*(X)$ by $W_0H_c^*(X)$, in the category of mixed Hodge structures.  For $X = \MM_g$, one might naturally expect that this extension is trivial. To prove this, it would be enough to show that $W_2H_c^*(\MM_g)$ is the mixed Hodge realization of a mixed Tate motive over $\Z$, by \cite[(1.6.11)]{DeligneGoncharov05}.
\end{rem}

\noindent 
To understand the geometric meaning of the first two summands in this theorem, let $D_{g}^\circ \subset \bMM_{g}$ be the locally closed substack parametrizing $1$-nodal curves.  We have a natural identification
\[
W_0H^k_c (D^\circ_{g}) \cong \mathbb{V}_{g,k} \oplus W_0 H^k_c (\MM_{g-1,2})_{as} .
\]
Cup product with the first Chern class of the normal bundle followed by the push-forward for the open inclusion $\iota$ of $D^\circ_{g}$ in the boundary $\partial \MM_{g} := \bMM_{g} \smallsetminus \MM_{g}$ and the coboundary map $\delta$ for the excision sequence give
\[
W_0H^\bullet_c (D^\circ_{g}) \xrightarrow{c_1 \smile} \gr_2 H^{\bullet + 2}_c(D^\circ_{g}) \xrightarrow{\iota_*} \gr_2 H^{\bullet + 2}_c \partial \MM_{g} \xrightarrow{\delta} \gr_2 H^{\bullet + 3}_c (\MM_{g}).
\]
We expect that the geometric content of Theorem~\ref{cor:n0} is that this composition is injective and the image accounts for the first two summands in Theorem~\ref{cor:n0}. As a first step toward seeing this, note that $c_1$ is represented by a smooth differential form that extends to the boundary. This pulls back to the normalization of $\partial \MM_g$ and hence acts on every page of the weight spectral sequence abutting to $\gr H^\bullet_c(\MM_g)$ (cf. the proof of Theorem~\ref{cor:Psi injection} in Section~\ref{sec:J1n}). This action can be interpreted graphically by following our zig-zag of quasi-isomorphisms. We do not have an analogous expected geometric interpretation for the other two summands.  For a graphical illustration of the simplest non-trivial cohomology classes produced by Theorem~\ref{cor:n0}, see  Section~\ref{sec:cor n0 example}.

For applications, we will use only subspaces of $\mathbb{V}_{\bullet, \bullet}$ that arise from the images of known classes under pullback to the universal curve $W_0 H^\bullet_c(\MM_g) \hookrightarrow W_0H^\bullet_c(\MM_{g,1})$. 
Recall that  $W_0H^{2g}_c(\MM_g)$ is nonvanishing for $g = 3$, $g = 5$, and $g \geq 7$, and its dimension grows at least exponentially with $g$ \cite[Theorem~1.1]{CGP1}; the proof relies on connections with Grothendieck-Teichm\"uller theory \cite{grt} and a deep result of Francis Brown \cite{Brown}. Here, we show furthermore that $W_0H^{2g+3}_c(\MM_g)$ is nonvanishing for $g =6$ and $g \geq 8$, and its dimension again grows at least exponentially with $g$; see Proposition~\ref{prop:degree3}.  It follows that the dimensions of $\mathbb{V}_{g,2g+k}$ and $\mathbb{V}_{g, 2g + k + 3}$ grow at least exponentially with $g$ whenever $W_0H^{2g' + k}_c(\MM_{g'}) \neq 0$ for some $g'$.  Combining Theorem~\ref{cor:n0} with what is currently known about $\bigoplus_g W_0H^\bullet_c(\MM_{g})$ and applying Poincar\'e duality, we have the following:

\begin{cor} \label{cor:odd}
The following unstable rational cohomology groups are nonzero: \vspace{-8pt}
\begin{multicols}{2}
\begin{enumerate}
\item $H^{4g-8}(\MM_g)$ for $g = 9$ and $g \geq 11;$
\item $H^{4g-9}(\MM_g)$ for $g = 6$ and $g \geq 8;$  
\item $H^{4g-11} (\MM_g)$ for $g = 10$ and  $g \geq 12$;%, and $g \geq 14;$
\item $H^{4g-12} (\MM_g)$ for $g = 9$ and $g \geq 11$;% and $g \geq 13;$
\item $H^{4g-14}(\MM_g)$ for $g = 15$ and $g \geq 17$;
\item $H^{4g-15} (\MM_g)$ for $g = 14$ and $g \geq 16$;
\item $H^{4g-16} (\MM_g)$ for $g = 13$, $g = 15$, and $g \geq 17;$
\item $H^{4g-18} (\MM_g)$ for $g =17$ and $g \geq 19;$
\item $H^{4g-19} (\MM_g)$ for $g =16$ and $g \geq 18$.
\end{enumerate}
\end{multicols}
\vspace{-8 pt}
\noindent In each case, the dimension of $\gr_2 H^{4g-k}(\MM_g)$ grows at least exponentially with $g$.
\end{cor}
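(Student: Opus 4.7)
The plan is to use Poincaré duality and then apply Theorem~\ref{cor:n0} to reduce the question to exhibiting exponentially many independent classes in one of the four summands appearing in its statement. Since $\MM_g$ is a smooth Deligne--Mumford stack of dimension $3g-3$, Poincaré duality gives $H^{4g-k}(\MM_g) \cong H_c^{2g-6+k}(\MM_g)^{\vee}$, so with $K := 2g-6+k$ it suffices to show that $\gr_2 H_c^{K}(\MM_g)$ is nonzero and grows at least exponentially in $g$. By Theorem~\ref{cor:n0} this group decomposes as a direct sum of the four pieces $\mathbb{V}_{g, K-3}$, $W_0 H^{K-3}_c(\MM_{g-1,2})_{as}$, $\mathbb{V}_{g-1, K-4}$, and $W_0 H^{K-4}_c(\MM_{g-2,2})_{as}$, and I will produce the desired classes in one of them, according to which values of $K-3, K-4$ are admissible.

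The key input is the exponential growth of $\mathbb{V}_{g,2g+m}$ for $m \in \{0,3,6\}$, which is precisely what is spelled out in the paragraph preceding the corollary: for any $a_1, a_2 \in \{0,3\}$ one fixes a small genus $h_0$ and wedges the pullback of a fixed known class in $W_0 H^{2h_0 + a_1}_c(\MM_{h_0})$ with the exponentially growing family in $W_0 H^{2(g-h_0)+a_2}_c(\MM_{g-h_0, 1})$, yielding exponentially many independent elements of $\mathbb{V}_{g, 2g + a_1 + a_2}$. This handles the cases $k \in \{9, 12, 15\}$ via the summand $\mathbb{V}_{g, 2g+(k-9)}$ (where $k-9 \in \{0,3,6\}$) and the cases $k \in \{8, 11, 14\}$ via the summand $\mathbb{V}_{g-1, 2(g-1)+(k-8)}$ (where $k-8 \in \{0,3,6\}$). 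The specific small $g$ listed in items~(1)--(6) are exactly those for which a valid decomposition $h_1 + h_2 = g$ or $g-1$ exists with both $h_i$ in the appropriate nonvanishing set supplied by \cite{CGP1} and Proposition~\ref{prop:degree3}, namely $\{3,5\}\cup\{h\geq 7\}$ for the $a_i=0$ family and $\{6\}\cup\{h\geq 8\}$ for the $a_i=3$ family.

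For $k \in \{16, 18, 19\}$ the shifts $k-9$ and $k-8$ lie outside $\{0,3,6\}$, so the $\mathbb{V}$ summands cannot be used directly. Instead the plan is to exploit the antisymmetric summands $W_0 H^\bullet_c(\MM_{g',2})_{as}$ with $g' \in \{g-1, g-2\}$, constructing exponentially many antisymmetric classes at the required degrees by means of a graph-complex presentation of $W_0 H^\bullet_c(\MM_{g',2})$ in the spirit of \cite{CGP1}, which makes it possible to pair the known degree-$0$ and degree-$3$ families across the two external vertices and antisymmetrize. The main obstacle is precisely this step: the antisymmetric part of $W_0 H^\bullet_c(\MM_{g',2})$ does not admit as transparent a wedge-product description as $\mathbb{V}_{\bullet,\bullet}$, so one must argue directly in the graph complex, verify that the constructed antisymmetric cocycles survive the differential, and check their independence from the classes coming from the $\mathbb{V}$ summands via the isomorphism of Theorem~\ref{cor:n0}.
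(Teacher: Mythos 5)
Your overall strategy for $k\in\{8,11,12,14,15\}$ --- Poincar\'e duality to $\gr_2 H_c^{2g-6+k}(\MM_g)$, Theorem~\ref{cor:n0}, and the summands $\mathbb{V}_{g,2g+m}$, $\mathbb{V}_{g-1,2(g-1)+m}$ with $m\in\{0,3,6\}$ fed by $\bigwedge^2\mathbb{W}_0$, $\mathbb{W}_0\wedge\mathbb{W}_3$, and $\bigwedge^2\mathbb{W}_3$ --- is exactly the paper's argument for items (1), (3), (4), (5), (6). But there are two genuine gaps. First, for $k=9$ (item (2)) the weight-2 route does not give the stated range: $\bigwedge^2\mathbb{W}_0$ only populates $\mathbb{V}_{g,2g}$ for $g=8$ and $g\geq 10$ (note $\mathbb{V}_{6,12}=\Lambda^2$ of a one-dimensional space $=0$, and indeed $\gr_2H_c^\bullet(\MM_6)$ vanishes entirely, while $\mathbb{V}_{9,18}=\mathbb{V}_{8,17}=0$ from the known supports of $W_0H_c^\bullet(\MM_{g,1})$ for $g\leq 6$), so you miss $g=6$ and $g=9$. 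The paper instead gets item (2) with no weight-2 input at all: $H^{4g-9}(\MM_g)\cong H_c^{2g+3}(\MM_g)^\vee$, and $W_0H_c^{2g+3}(\MM_g)\neq 0$ for $g=6$ and $g\geq 8$ by Proposition~\ref{prop:degree3}.

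Second, and more seriously, your plan for $k\in\{16,18,19\}$ is both unnecessary and unexecuted. The missing ingredient is the Bar-Natan--McKay class: $\mathbb{W}_7:=W_0H_c^{27}(\MM_{10})\neq 0$, a degree shift of $7$ since $27=2\cdot 10+7$. The paper wedges it with $\mathbb{W}_0$ and $\mathbb{W}_3$ to populate $\mathbb{V}_{g,2g+7}$ and $\mathbb{V}_{g,2g+10}$, which handle $k=16$ via the first summand and $k=18,19$ via the third and first summands, with exponential growth inherited from $\mathbb{W}_0$ and $\mathbb{W}_3$. Your proposed substitute --- producing classes in $W_0H_c^\bullet(\MM_{g',2})_{as}$ by ``pairing the degree-$0$ and degree-$3$ families across the two external vertices'' --- does not work as stated: $W_0H_c^\bullet(\MM_{g',2})$ is computed by the complex $G^{(g',2)}$ of \emph{connected} genus-$g'$ graphs with two legs, not by pairs of one-legged graphs, so there is no such pairing construction; and you acknowledge that this step is not carried out. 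As written, items (7)--(9) and the cases $g=6,9$ of item (2) remain unproven. (A minor point in the other direction: your claim that the listed small $g$ in items (5)--(6) are ``exactly'' those admitting a valid decomposition is not quite right --- e.g.\ $15=6+9$ would also give $H^{4\cdot 16-14}(\MM_{16})\neq0$ and $H^{45}(\MM_{15})\neq 0$ --- but this only proves more and is not a gap.)
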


\noindent In particular, this gives the first infinite collections of nonvanishing odd cohomology groups on $\MM_g$.  Only a few sporadic examples were known previously, such as $H^{5}(\MM_4)$ \cite{Tommasi05}, and $H^{15}(\MM_6)$, $H^{23}(\MM_8),$ and $H^{27}(\MM_{10})$ \cite{CGP1}.  For the proof of Corollary~\ref{cor:odd}, see Section~\ref{sec:support}. 

\begin{rem}
For context, let us recall that $H^\bullet(\MM_g)$ is fully understood in the stable range, consisting of degrees up to $2\lfloor g/3 \rfloor$, where it agrees with the polynomial ring on the $\kappa$-classes \cite{MadsenWeiss07, Wahl13}. The Euler characteristic computations of Harer and Zagier show that the dimension of the unstable cohomology of $\MM_g$ grows super-exponentially with $g$ \cite{HarerZagier86}. This is far larger than the tautological subring generated by the $\kappa$-classes, which is nonzero precisely in the even degrees up to $2g-4$ \cite{Faber99, Looijenga95}. The problem of understanding the rest of the cohomology of $\MM_g$, starting with the degrees in which it is zero and nonzero, is highlighted in \cite[Section~9]{Margalit19}.

Until recently, there were only two instances of cohomology groups that were known to be nonzero due to the presence of non-tautological classes: $H^6(\MM_3)$ \cite{Looijenga93} and $H^5(\MM_4)$, each of which is 1-dimensional. It was previously conjectured by Kontsevich and independently by Church, Farb, and Putman that $H^{4g-5-k}(\MM_g)$ vanishes for $g \gg 0$, when $k$ is fixed \cite{CFP2, K3}.  This is true for $k < 0$ for virtual cohomological dimension reasons and for $k = 0$ by \cite{CFP1, MSS}. The main result of \cite{CGP1} disproved the conjecture for $k =1$. Corollary~\ref{cor:odd} shows that the conjecture is false in nine more cases, namely for $k \in \{3,4, 6,7,9, 10, 11, 13, 14\}$. See Table~\ref{tab:nonvanishing}, at the end of the introduction, for a summary of what is now known about the vanishing and non-vanishing of $H^{4g-5-k}(\MM_g)$.

Corollary~\ref{cor:odd} uses only the subspace of $\mathbb{V}_{\bullet, \bullet}$ that comes from $W_0 H^\bullet_c(\MM_g) \hookrightarrow W_0 H^\bullet_c(\MM_{g,1})$. Explicit computations show that this inclusion is an isomorphism for $g < 7$. For $g = 8$, the Euler characteristic formulas in \cite{CFGP} show that it is not surjective, but we do not know in which degrees the cokernel is supported.  Similarly, $W_0H^\bullet_c(\MM_{g,2})_{as}$ vanishes for $g < 7$, and for $g = 8$, its Euler characteristic is nonzero but we do not know in which degrees it is supported.  By Theorem~\ref{thm:main WHGC}, any future improvement in our understanding of $W_0H^\bullet_c(\MM_g)$, $W_0H^\bullet_c(\MM_{g,1})$ and $W_0H^\bullet_c(\MM_{g,2})_{as}$ will yield a corresponding improvement in our understanding of $\gr_2 H^\bullet_c(\MM_g)$. For an inspiring sense of what one might reasonably hope or expect to be true about $W_0H^\bullet_c(\MM_g)$, see \cite[Conjecture~1 and Table~2]{Brown21}.  
\end{rem}

For $n\geq 1$ we do not have an analogue of Theorem~\ref{cor:n0} expressing $\gr_2 H^\bullet_c(\MM_{g,n})$ in terms of $W_0 H^\bullet_c(\MM_{g',n'})$. Nevertheless, we produce injections that prove nonvanishing and lower bounds on dimensions in many cases. We begin with $n = 1$. Recall that the pullback $\pi^* \colon \gr_k H^\bullet (\MM_g) \to \gr_k H^\bullet_c (\MM_{g,1})$ is injective, as is the composition
\[
  \gr_k H^\bullet_c(\MM_{g})
  \xrightarrow{\pi^*}
  \gr_k H^\bullet_c(\MM_{g,1})
  \xrightarrow{\psi\wedge} \gr_{k+2} H^{\bullet+2}_c(\MM_{g,1}),
\]
because further composition with the Gysin push-forward is multiplication by $2g-2$. For weight $k = 0$, we strengthen this by showing that $\psi \wedge$ is itself injective: 
\begin{thm}\label{cor:Psi injection}
Let $g\geq 2$. Multiplication by the $\psi$-class at the marking yields an injection 
\begin{equation}\label{equ:Psi injection}
  \psi \wedge \colon W_0 H^\bullet_c(\MM_{g,1}) \to \gr_2 H^{\bullet+2}_c(\MM_{g,1}).
\end{equation}
Furthermore, the image of the injection 
\begin{equation}\label{equ:gr2 injection n1}
  \pi^*\colon \gr_2 H^\bullet_c(\MM_{g})
  \to \gr_2 H^{\bullet}_c(\MM_{g,1}),
\end{equation}
intersects trivially with that of \eqref{equ:Psi injection}.
\end{thm}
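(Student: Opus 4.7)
The plan is to combine the graph-complex model of $\gr_2 H_c^\bullet(\MM_{g,1})$ provided by Theorem~\ref{thm:main WHGC} with the projection formula for the universal curve $\pi \colon \MM_{g,1} \to \MM_g$.

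For part~(2), the disjointness of the images, I would argue as follows. Suppose $\psi \wedge y = \pi^* z$ for some $y \in W_0 H^\bullet_c(\MM_{g,1})$ and $z \in \gr_2 H^\bullet_c(\MM_g)$. Applying the Gysin pushforward $\pi_*$, the right-hand side vanishes because $\pi_*(\pi^* z) = \pi_*(1) \cdot z = 0$ for degree reasons. The left-hand side $\pi_*(\psi y)$ is analyzed via $\pi_*\psi = 2g-2$ and the projection formula, applied component-wise to the Leray decomposition of $y$ with respect to $\pi$. Combined with the injectivity established in part~(1) and a comparison of the remaining Leray components of the equation $\psi \wedge y = \pi^* z$ in $\gr_2 H_c^{\bullet+2}(\MM_{g,1})$, this forces $y = 0$ and therefore $z = 0$.

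For part~(1), the injectivity of $\psi \wedge$, I would pass to the graph-complex model and realize multiplication by $\psi$ at the chain level by an explicit map
\[
\Psi \colon G_{g,1}[-2] \longrightarrow X_{g,1},
\]
where $G_{g,1}$ denotes the standard graph complex computing $W_0 H_c^\bullet(\MM_{g,1})$. The map $\Psi$ reflects the geometry of the boundary divisor $D_{0,\{1\}}$ on which the marking bubbles off together with a node: on a generator $\Gamma$ in which the unique external vertex labeled $1$ is attached by an edge to an internal vertex $u$, $\Psi$ deletes this edge and attaches two new external vertices labeled $\omega$, one to $1$ and the other to $u$, producing a disconnected $1\text{-}\omega$ dipole component. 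A direct computation verifies that $\Psi$ is a chain map, and the zig-zag of quasi-isomorphisms underlying Theorem~\ref{thm:main WHGC} shows that $\Psi$ induces $\psi \wedge$ on cohomology.

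The injectivity of $\Psi$ on cohomology is the main obstacle. The plan is to filter $X_{g,1}$ by the presence of the $1\text{-}\omega$ dipole component: graphs containing such a dipole form a subspace preserved by $\delta_{\mathrm{split}}$, and on the associated graded this becomes a genuine subcomplex canonically isomorphic to $G_{g,1}[-2]$; a graph-level retraction collapsing the dipole furnishes a quasi-inverse to the inclusion. The higher spectral-sequence differentials arising from $\delta_{\mathrm{join}}$ are then controlled by invoking the already-known injectivity of the composition $W_0 H_c^\bullet(\MM_g) \xrightarrow{\pi^*} W_0 H_c^\bullet(\MM_{g,1}) \xrightarrow{\psi \wedge} \gr_2 H_c^{\bullet+2}(\MM_{g,1})$ recalled in the paper just before the theorem statement. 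The main difficulty is precisely this spectral-sequence analysis, since the dipole subspace is not preserved by the full differential $\delta_{\mathrm{split}} + \delta_{\mathrm{join}}$, and care is needed to lift injectivity from the associated graded to the full complex $X_{g,1}$.
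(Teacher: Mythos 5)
Your chain-level candidate for $\psi\wedge$ is essentially the correct one: the graph obtained from $\Gamma\in G^{(g,1)}$ by relabelling its leg by $\omega$ and adding a disjoint $(\omega,1)$-edge is exactly the representative the paper traces through the zig-zag (via \eqref{equ:Phi pic1 leg}). However, the heart of part (1) -- injectivity on cohomology -- is precisely the step you leave open, and the device you propose for closing it does not suffice. The subspace of $X_{g,1}$ spanned by graphs containing the $(\omega,1)$-dipole is not a subcomplex for $\delta_{split}+\delta_{join}$ (terms of $\delta_{join}$ fuse the dipole's $\omega$-leg with $\epsilon$-legs of the other component), so you are forced into a spectral-sequence or quotient-complex argument, and the only external input you invoke, the injectivity of $\psi\wedge\circ\pi^*$ coming from the Gysin formula $\pi_*\psi=2g-2$, controls $\psi\wedge$ only on the image of $\pi^*\colon W_0H^\bullet_c(\MM_g)\to W_0H^\bullet_c(\MM_{g,1})$. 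That image is a proper subspace in general (the paper notes the inclusion fails to be surjective already for $g=8$), so no amount of comparison of associated-graded pages can bootstrap injectivity on all of $W_0H^\bullet_c(\MM_{g,1})$ from that fact. The missing idea is the paper's mechanism for killing the troublesome $\delta_{join}$-terms altogether: pass to the quasi-isomorphic subcomplex $\Xast_{g,1}$ (Proposition~\ref{prop:WHGC prime WHGC inclusion qiso}) and conjugate by the involution $\XPhi$ of Proposition~\ref{thm:WHGC simpl 2}. For the twisted differential $\tilde\delta$ the decomposition by how many $\omega$-legs lie in non-isolated components becomes an honest direct sum of complexes \eqref{eq:Xast direct sum}, $\Xast_{g,1}=H_{g,1}\oplus J_{g,1}\oplus K_{g,1}$; the summand $J_{g,1}$ contains $\Jast_{0,1}\otimes G^{(g,1)}[-1]\cong G^{(g,1)}[-2]$ by \eqref{equ:Jgn decomp} and Proposition~\ref{lem:J0np}, so its cohomology injects into $\gr_2H^{\bullet}_c(\MM_{g,1})$ with no convergence issues, and this is the whole proof of injectivity.

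The same decomposition also settles part (2) for free, since the image of $\pi^*$ is realized by the complementary summand $H_{g,1}\cong H_g$ (Lemma~\ref{lem:Hgn} and the identification of $\pi^*$ in Appendix~\ref{sec:pullback map}), and cohomologies of distinct direct summands meet trivially. By contrast, your Gysin/Leray argument for part (2) is both incomplete and partially circular: writing $y$ in Leray components, the identity $\pi_*(\psi y)=0$ gives $(2g-2)a+\pi_*(\psi b)=0$ where $b$ is the component in $H^{\bullet-1}_c(\MM_g,R^1\pi_!\Q)$, and you offer no control of the cross term $\pi_*(\psi b)$ nor of how $\psi\wedge$ and $\pi^*$ interact with the remaining Leray components in weight $2$; moreover the argument appeals to part (1), which is itself unproven in your outline. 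So the proposal is a genuinely different route from the paper's, but as it stands it has a real gap at the central injectivity statement, and filling it would require either computing the cohomology of the relevant quotient complex directly or rediscovering something equivalent to the involution $\XPhi$.
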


For $n\geq 2$, we show that a quasi-isomorphic subcomplex of $X_{g,n}$ can be split into several direct summands, as detailed in Section~\ref{sec:classes}. For some of these summands one may evaluate the cohomology explicitly in terms of known data or weight 0 cohomology. The summands whose cohomology can be computed correspond, in a sense to be made precise below, to graphs of the form 
\[
\begin{tikzpicture}
  \node[circle, draw] (v) at (1,0) {$\Gamma_2$};
  \node[circle, draw] (w) at (0,0) {$\Gamma_1$};
  \node (we) at (0,-.8) {$\omega$};
  \node (v1) at (1,-.8) {$\dots$};
  \draw (w) edge (we) (v) edge (v1.north west) edge (v1) edge (v1.north east);
\end{tikzpicture} \ ,
\]
where $\Gamma_1$ is a connected component with one $\omega$-decoration and no other external vertices. The genus $g$ of the overall graph is the sum of the genera $g_1$ and $g_2$ of $\Gamma_1$ and $\Gamma_2$, respectively. The cohomology of the summands with $g_2=0$ and $g_2=1$ are explicitly understood, and the complementary part $\Gamma_1$ contributes a tensor factor that is identified with $W_0 H^\bullet_c(\MM_{g_1,1})$. In this way, one arrives at the following:

\begin{thm}\label{cor:n2}
For $g\geq 2$ and $n\geq 2$ one has an injection 
\[
%   \Q^{(n-2)!} 
  H_c^{n-3}(\MM_{0,n})
  \otimes W_0 H_c^{\bullet-n}(\MM_{g,1})
\   \oplus \ 
  \mathbb{B}_n \otimes W_0 H_c^{\bullet-n-3}(\MM_{g-1,1})
  \to 
  \gr_2 H_c^\bullet(\MM_{g,n}),
\]
where
$\mathbb{B}_n$ is a vector space defined in \eqref{equ:An Bn def} below, which has dimension at least $(n-2)!$ for $n\geq 3$.
\end{thm}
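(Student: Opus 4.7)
The plan is to invoke Theorem~\ref{thm:main WHGC} and reduce the statement to exhibiting an injection of the displayed direct sum into $H^\bullet(X_{g,n},\delta)$. I would do this by identifying a quasi-isomorphic subcomplex $Y_{g,n}\subseteq X_{g,n}$ generated by graphs of the form $\Gamma_1 \sqcup \Gamma_2$ as pictured above, with $\Gamma_1$ a connected component containing exactly one $\omega$-vertex and no other external vertices, and $\Gamma_2$ a connected component containing the other $\omega$ and all markings $1,\dots,n$. To upgrade the inclusion $Y_{g,n}\hookrightarrow X_{g,n}$ to a quasi-isomorphism, I would filter $X_{g,n}$ by the number of $\epsilon$-vertices and by the number of additional connected components carrying external decorations, and construct an explicit contracting homotopy on each associated graded piece that removes these extra features, in the style of the standard arguments for commutative graph complexes.

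Once inside $Y_{g,n}$, no term of the differential mixes the two components $\Gamma_1$ and $\Gamma_2$: the operator $\delta_{split}$ acts within each component, while $\delta_{join}$ has no joinable pair of vertices available inside $\Gamma_1$. Thus $Y_{g,n}$ splits as a direct sum over decompositions $g = g_1 + g_2$ with summands of the form $Y^{(1)}_{g_1} \otimes Y^{(2)}_{g_2,n}$. The factor $Y^{(1)}_{g_1}$ is the standard Kontsevich commutative graph complex on connected graphs with a single marked vertex, so that $H^\bullet(Y^{(1)}_{g_1}) \cong W_0 H_c^{\bullet}(\MM_{g_1,1})$ up to the global degree convention for $X_{g,n}$.

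For $g_2 = 0$, the factor $Y^{(2)}_{0,n}$ is the tree complex with $n+1$ labeled leaves $\{1,\dots,n,\omega\}$ and at-least-trivalent internal vertices, equipped with the edge-contracting differential. Its cohomology is concentrated in a single degree and identifies with the Lie operad component $\Lie(n-1)$ of dimension $(n-2)!$, with the $\omega$-leaf playing the role of the operadic root; the precise degree shift $\bullet - n$ tracks the number of structural edges in a binary tree with $n+1$ leaves plus the $+1$ shift built into the definition of $X_{g,n}$. Combined with the $g_1 = g$ factor this yields the first summand. For $g_2 = 1$, by definition $\mathbb{B}_n$ is the cohomology of $Y^{(2)}_{1,n}$; the lower bound $\dim \mathbb{B}_n \geq (n-2)!$ for $n\geq 3$ should follow from a direct combinatorial analysis of the few possible genus-$1$ shapes, for instance by exhibiting an injection from the $g_2=0$ tree complex via insertion of a small cycle at the root. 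Combined with $g_1 = g-1$ this yields the second summand, shifted by an additional $3$ in degree to account for the extra structural edges required to build a genus-$1$ graph under the given valence constraints. Because the two summands live in distinct $(g_1,g_2)$-pieces of $Y_{g,n}$, their images in $H^\bullet(X_{g,n},\delta)$ intersect trivially, giving the desired direct-sum injection.

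The main obstacle is the construction of the contracting homotopies that reduce $X_{g,n}$ to the subcomplex $Y_{g,n}$, requiring careful sign bookkeeping under the edge ordering and careful handling of the interaction between $\delta_{split}$ and $\delta_{join}$ at the boundary of the filtration. The other two ingredients, namely the identification of $Y^{(1)}_{g_1}$ with the standard graph complex for $W_0 H_c^\bullet(\MM_{g_1,1})$ and the evaluation of the tree-complex cohomology as $\Lie(n-1)$, are more routine, relying respectively on the graph complex picture from \cite{CGP1} and on the well-known operadic Koszul duality between the commutative and Lie operads.
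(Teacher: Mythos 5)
There is a genuine gap at the very first step: the span $Y_{g,n}$ of graphs $\Gamma_1\sqcup\Gamma_2$, with $\Gamma_1$ carrying a single $\omega$-leg and nothing else, is \emph{not} a subcomplex of $(X_{g,n},\delta)$, and it is not quasi-isomorphic to $X_{g,n}$ either. The differential $\delta_{join}$ acts on the set of all $\epsilon$- and $\omega$-decorated external vertices of the whole (disconnected) graph, so it can fuse the lone $\omega$-leg of $\Gamma_1$ with $\epsilon$-legs of $\Gamma_2$, merging the two components and leaving $Y_{g,n}$. This is exactly the difficulty the paper circumvents by first passing to the quasi-isomorphic subcomplex $\Xast_{g,n}$ (Proposition~\ref{prop:WHGC prime WHGC inclusion qiso}) and then conjugating the differential by the involution $\XPhi$ (Proposition~\ref{thm:WHGC simpl 2}); only for the twisted differential $\tilde\delta$ does the count of $\omega$-legs lying in ``isolated'' components become preserved, yielding the direct sum decomposition $\Xast_{g,n}=H_{g,n}\oplus J_{g,n}\oplus K_{g,n}$ of \eqref{eq:Xast direct sum}. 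Your proposal has no substitute for this device. Moreover, the claim that $Y_{g,n}\hookrightarrow X_{g,n}$ is a quasi-isomorphism is too strong to be true: connected generators containing both $\omega$-legs together with all the markings (the summand $K_{g,n}$ in the paper's decomposition) cannot be contracted away, and if they could, the theorem would compute $\gr_2 H_c^\bullet(\MM_{g,n})$ completely in weight-zero terms, which the paper explicitly states is not available for $n\geq 1$. What is actually true, and all that is needed for an injection, is that (the image under $\XPhi$ of) your $Y_{g,n}$-type piece is a \emph{direct summand} of a quasi-isomorphic subcomplex; you should aim for that rather than for a contracting homotopy onto it.

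The treatment of the second summand also does not match the object in the statement. The genus-one factor $\Jast_{1,n}$ has cohomology $(\mathbb{A}_n[-n-1]\oplus\mathbb{B}_n[-n-2])\otimes\sgn_n$ (Proposition~\ref{lem:J1np}), so $\mathbb{B}_n$ is only part of it, sitting in a specific degree; redefining $\mathbb{B}_n$ as the whole cohomology of the genus-one complex changes the statement, and in any case $\mathbb{B}_n$ is pinned down in \eqref{equ:An Bn def} as a cokernel of the map $D_n^k$ between pieces of free Poisson algebras. The bound $\dim\mathbb{B}_n\geq(n-2)!$ is obtained in the paper by a dimension count through the associative/Lie/Poisson decomposition (Lemma~\ref{lem:Bn dim bound}); your suggested injection ``by inserting a small cycle at the root'' is not an argument and would need to be shown compatible with the differential. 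The remaining ingredients you cite --- the identification of the isolated component with $G^{(g_1,1)}$ and of the genus-zero tree complex with $\Lie(n-1)$ of dimension $(n-2)!$ --- do agree with the paper (Proposition~\ref{lem:J0np}), although even there the surjectivity of the map $\bigoplus_j\Lie(x_1,\dots,\hat x_j,\dots,x_n)\to\Lie(x_1,\dots,x_n)$ has to be proved, not just the dimension of the Lie operad component quoted.
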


% The tensor factors in Theorem~\ref{cor:n2} have natural algebraic and geometric interpretations.  For an interpretation of $\mathbb{Q}^{(n-2)!}$ in terms of the geometry of moduli spaces, recall that $W_0 H^\bullet_c(\MM_{0,n+1})$ has dimension $(n-1)!$ and is supported in degree $n-2$.  Say the marked points are labeled $\{1, \ldots, n, \omega \}$.  For $1 \leq j \leq n$, let $D_j \subset \bMM_{0,n+1}$ be the locally closed divisor parametrizing 2-component curves where the $j$ and $\omega$ markings are on one component and all other markings are on the other component.  Then $D_j \cong \MM_{0,n}$,  the union $\MM_{0,n+1} \cup D_j$ is open in $\bMM_{0,n+1}$, and $D_j$ is closed in this union.  The coboundary map in the resulting excision sequence gives, in weight zero, $\delta_j \colon W_0H^\bullet_c(D_j) \to W_0H^{\bullet + 1}_c(\MM_{0,n+1}).$ The tensor factor appearing in Theorem~\ref{cor:n2} is then identified with
% $
% \mathbb{Q}^{(n-2)!} \cong \ker \Big( \bigoplus_j \delta_j \Big).
% $
% For an equivalent algebraic interpretation of this tensor factor, let $\Lie(x_1,\dots,x_n)$ denote the part of the free Lie algebra with generators $x_1,\dots,x_n$ spanned by Lie words in which each generator appears exactly once. Then we have a surjective linear map
% $$
% \delta \colon \bigoplus_j \Lie(x_1, \ldots, \hat x_j, \ldots, x_n) \to \Lie(x_1, \ldots, x_n),
% $$
% given by $(F_1, \ldots, F_n) \mapsto [F_1,x_1] + \cdots + [F_n,x_n]$, and $\mathbb{Q}^{(n-2)!} \cong \ker \delta \otimes \sgn$. For further details, see Section~\ref{sec:classes n1}.

\begin{table} [h!] \label{tab:nonvanishing}
\scalebox{.65}{
    \centering
    \renewcommand{\arraystretch}{1.3}
    \definecolor{light-grey}{gray}{.67}
    \definecolor{dark-grey}{gray}{.33}
    \begin{tabular}{| c| C{8 pt} C{8 pt} C{8 pt} C{8 pt} C{8 pt} C{8 pt} C{8 pt} C{8 pt} C{8 pt} C{8 pt} C{8 pt} C{8 pt} C{8 pt} C{8 pt} C{8 pt} C{8 pt} C{8 pt} C{8 pt} C{8 pt} C{8 pt} C{8 pt} C{8 pt} C{8 pt} C{8 pt} C{8 pt} C{8 pt} C{8 pt} C{8 pt}| }
    \hline
$g,k$ & 0 & 1 & 2 & 3 & 4 & 5 & 6 & 7 & 8 & 9 & 10 & 11 & 12 & 13 & 14 & 15 & 16 & 17 & 18 &  19 & 20 & 21 & 22 & 23 & 24 & 25 & 26 & 27\\ [0.5ex]
     \hline
    2 & &  & &  \multicolumn{1}{l|}{\cellcolor{black}} & & & & & & & & & & & & & & & & & & & & & & & &  \\ \cline{6-9}
    3 & & \cellcolor{black} & & & & \cellcolor{black} & & \cellcolor{black} & & & & & & & & & & & & & & & & & & & & \\ \cline{9-13}
    4 & & & & & & & \cellcolor{black} & \cellcolor{black} & & \cellcolor{black} & & \cellcolor{black} & & & & & & & & & & & & & & & &  \\ \cline{13-17}
    5 & & \cellcolor{black}  & \cellcolor{light-grey} & \cellcolor{light-grey} & \cellcolor{light-grey}& \cellcolor{light-grey}& \cellcolor{light-grey} & \cellcolor{light-grey} & \cellcolor{light-grey} & \cellcolor{black} & \cellcolor{light-grey} & \cellcolor{black} & \cellcolor{light-grey} & \cellcolor{black} & & \cellcolor{black} & & & & & & & & & & & & \\ \cline{17-21}
    6 &  & \cellcolor{light-grey} & \cellcolor{light-grey} & \cellcolor{light-grey} & \cellcolor{black} & \cellcolor{light-grey} & \cellcolor{light-grey} & \cellcolor{light-grey} & \cellcolor{light-grey} & \cellcolor{light-grey} & \cellcolor{light-grey} & \cellcolor{black} &\cellcolor{light-grey}  & \cellcolor{black} & \cellcolor{light-grey} & \cellcolor{black} & & \cellcolor{black} & & \cellcolor{black} & & & & & & & & \\ \cline{21-25}
    7 & & \cellcolor{black} & \cellcolor{light-grey}& \cellcolor{light-grey}& \cellcolor{light-grey}& \cellcolor{light-grey}& \cellcolor{light-grey}& \cellcolor{light-grey} & \cellcolor{light-grey} & \cellcolor{light-grey} & \cellcolor{light-grey} & \cellcolor{light-grey} & \cellcolor{light-grey}& \cellcolor{black}& \cellcolor{light-grey} &\cellcolor{black} & \cellcolor{light-grey} &\cellcolor{black} & \cellcolor{light-grey}  & \cellcolor{black}& & \cellcolor{black} & & \cellcolor{black} & & & & \\ \cline{26-29}
    8 & & \cellcolor{black} & \cellcolor{light-grey} & \cellcolor{light-grey} & \cellcolor{black} & \cellcolor{light-grey} & \cellcolor{light-grey} & \cellcolor{light-grey} & \cellcolor{light-grey} & \cellcolor{light-grey} & \cellcolor{light-grey} & \cellcolor{light-grey} &  \cellcolor{light-grey}& \cellcolor{light-grey} & \cellcolor{light-grey} &  \cellcolor{black} &\cellcolor{light-grey}  & \cellcolor{black} & \cellcolor{light-grey}& \cellcolor{black} & \cellcolor{light-grey} & \cellcolor{black} & \cellcolor{light-grey} & \cellcolor{black} & & \cellcolor{black} & & \cellcolor{black}\\
    9 & & \cellcolor{black} & \cellcolor{light-grey} & \cellcolor{dark-grey} & \cellcolor{black} &  \cellcolor{light-grey}& \cellcolor{light-grey} & \cellcolor{dark-grey}&  \cellcolor{light-grey} & \cellcolor{light-grey}& \cellcolor{light-grey}& \cellcolor{light-grey}& \cellcolor{light-grey}& \cellcolor{light-grey}& \cellcolor{light-grey}& \cellcolor{light-grey}& \cellcolor{light-grey}& \cellcolor{black}& \cellcolor{light-grey}& \cellcolor{black}& \cellcolor{light-grey}& \cellcolor{black}& \cellcolor{light-grey}& \cellcolor{black} & \cellcolor{light-grey}& \cellcolor{black} & & \cellcolor{black} \\ 
    10 & & \cellcolor{black} & \cellcolor{light-grey} & \cellcolor{light-grey} & \cellcolor{black} &  \cellcolor{light-grey}& \cellcolor{dark-grey} & \cellcolor{light-grey} & \cellcolor{black}& \cellcolor{light-grey}& \cellcolor{light-grey}& \cellcolor{light-grey}& \cellcolor{light-grey}& \cellcolor{light-grey}& \cellcolor{light-grey}& \cellcolor{light-grey}& \cellcolor{light-grey}& \cellcolor{light-grey}& \cellcolor{light-grey}& \cellcolor{black}& \cellcolor{light-grey}& \cellcolor{black}& \cellcolor{light-grey}& \cellcolor{black} & \cellcolor{light-grey}& \cellcolor{black} & \cellcolor{light-grey} & \cellcolor{black} \\
    11 & & \cellcolor{black} & \cellcolor{light-grey} & \cellcolor{dark-grey} &  \cellcolor{dark-grey}& \cellcolor{light-grey} &  \cellcolor{light-grey} & \cellcolor{dark-grey} & \cellcolor{light-grey} & \cellcolor{light-grey}& \cellcolor{light-grey}& \cellcolor{light-grey}& \cellcolor{light-grey}& \cellcolor{light-grey}& \cellcolor{light-grey}& \cellcolor{light-grey}& \cellcolor{light-grey}& \cellcolor{light-grey}& \cellcolor{light-grey}& \cellcolor{light-grey}& \cellcolor{light-grey}& \cellcolor{black}& \cellcolor{light-grey}& \cellcolor{black} & \cellcolor{light-grey}& \cellcolor{black} & \cellcolor{light-grey} & \cellcolor{black} \\
    12 & & \cellcolor{black} & \cellcolor{light-grey} & \cellcolor{dark-grey} &  \cellcolor{dark-grey}& \cellcolor{light-grey}& \cellcolor{dark-grey} & \cellcolor{dark-grey} & \cellcolor{light-grey}& \cellcolor{light-grey}& \cellcolor{light-grey}& \cellcolor{light-grey}& \cellcolor{light-grey}& \cellcolor{light-grey}& \cellcolor{light-grey}& \cellcolor{light-grey}& \cellcolor{light-grey}& \cellcolor{light-grey}& \cellcolor{light-grey}& \cellcolor{light-grey}& \cellcolor{light-grey}& \cellcolor{light-grey}& \cellcolor{light-grey}& \cellcolor{black} & \cellcolor{light-grey}& \cellcolor{black} & \cellcolor{light-grey} & \cellcolor{black} \\
    13 & & \cellcolor{black} & \cellcolor{light-grey} & \cellcolor{dark-grey} &  \cellcolor{dark-grey}&  \cellcolor{light-grey} & \cellcolor{dark-grey} & \cellcolor{dark-grey}& \cellcolor{light-grey}& \cellcolor{light-grey}& \cellcolor{light-grey}& \cellcolor{dark-grey}& \cellcolor{light-grey}&\cellcolor{light-grey}& \cellcolor{light-grey}& \cellcolor{light-grey}& \cellcolor{light-grey}& \cellcolor{light-grey}& \cellcolor{light-grey}& \cellcolor{light-grey}& \cellcolor{light-grey}& \cellcolor{light-grey}& \cellcolor{light-grey}& \cellcolor{light-grey} & \cellcolor{light-grey}& \cellcolor{black} & \cellcolor{light-grey} & \cellcolor{black} \\
    14 & & \cellcolor{black} & \cellcolor{light-grey} & \cellcolor{dark-grey} &  \cellcolor{dark-grey}& \cellcolor{light-grey}&  \cellcolor{dark-grey}& \cellcolor{dark-grey}& \cellcolor{light-grey} & \cellcolor{light-grey} &\cellcolor{dark-grey}& \cellcolor{light-grey}& \cellcolor{light-grey}& \cellcolor{light-grey}& \cellcolor{light-grey}& \cellcolor{light-grey}& \cellcolor{light-grey}& \cellcolor{light-grey}& \cellcolor{light-grey}& \cellcolor{light-grey}& \cellcolor{light-grey}& \cellcolor{light-grey}& \cellcolor{light-grey}& \cellcolor{light-grey} & \cellcolor{light-grey}& \cellcolor{light-grey} & \cellcolor{light-grey} & \cellcolor{black} \\
    15 & & \cellcolor{black} & \cellcolor{light-grey} & \cellcolor{dark-grey} &  \cellcolor{dark-grey}& \cellcolor{light-grey}&  \cellcolor{dark-grey}& \cellcolor{dark-grey}& \cellcolor{light-grey} & \cellcolor{dark-grey}& \cellcolor{light-grey}& \cellcolor{dark-grey}& \cellcolor{light-grey}& \cellcolor{light-grey}& \cellcolor{light-grey}& \cellcolor{light-grey}& \cellcolor{light-grey}& \cellcolor{light-grey}& \cellcolor{light-grey}& \cellcolor{light-grey}& \cellcolor{light-grey}& \cellcolor{light-grey}& \cellcolor{light-grey}& \cellcolor{light-grey} & \cellcolor{light-grey}& \cellcolor{light-grey} & \cellcolor{light-grey} & \cellcolor{light-grey} \\
    16 & & \cellcolor{black} & \cellcolor{light-grey} & \cellcolor{dark-grey} &  \cellcolor{dark-grey}& \cellcolor{light-grey}&  \cellcolor{dark-grey}& \cellcolor{dark-grey}& \cellcolor{light-grey} & \cellcolor{light-grey}& \cellcolor{dark-grey}& \cellcolor{light-grey}& \cellcolor{light-grey}& \cellcolor{light-grey}& \cellcolor{dark-grey}& \cellcolor{light-grey}& \cellcolor{light-grey}& \cellcolor{light-grey}& \cellcolor{light-grey}& \cellcolor{light-grey}& \cellcolor{light-grey}& \cellcolor{light-grey}& \cellcolor{light-grey}& \cellcolor{light-grey} & \cellcolor{light-grey}& \cellcolor{light-grey} & \cellcolor{light-grey} & \cellcolor{light-grey} \\
    17 & & \cellcolor{black} & \cellcolor{light-grey} & \cellcolor{dark-grey} &  \cellcolor{dark-grey}& \cellcolor{light-grey}&  \cellcolor{dark-grey}& \cellcolor{dark-grey}& \cellcolor{light-grey} & \cellcolor{dark-grey}& \cellcolor{dark-grey}& \cellcolor{dark-grey}& \cellcolor{light-grey}& \cellcolor{dark-grey}& \cellcolor{light-grey}& \cellcolor{light-grey}& \cellcolor{light-grey}& \cellcolor{light-grey}& \cellcolor{light-grey}& \cellcolor{light-grey}& \cellcolor{light-grey}& \cellcolor{light-grey}& \cellcolor{light-grey}& \cellcolor{light-grey} & \cellcolor{light-grey}& \cellcolor{light-grey} & \cellcolor{light-grey} & \cellcolor{light-grey} \\
    18 & & \cellcolor{black} & \cellcolor{light-grey} & \cellcolor{dark-grey} &  \cellcolor{dark-grey}& \cellcolor{light-grey}&  \cellcolor{dark-grey}& \cellcolor{dark-grey}& \cellcolor{light-grey} & \cellcolor{dark-grey}& \cellcolor{dark-grey}& \cellcolor{dark-grey}& \cellcolor{light-grey}& \cellcolor{light-grey}& \cellcolor{dark-grey}& \cellcolor{light-grey}& \cellcolor{light-grey}& \cellcolor{light-grey}& \cellcolor{light-grey}& \cellcolor{light-grey}& \cellcolor{light-grey}& \cellcolor{light-grey}& \cellcolor{light-grey}& \cellcolor{light-grey} & \cellcolor{light-grey}& \cellcolor{light-grey} & \cellcolor{light-grey} & \cellcolor{light-grey} \\    
    19& & \cellcolor{black} & \cellcolor{light-grey} & \cellcolor{dark-grey} &  \cellcolor{dark-grey}& \cellcolor{light-grey}&  \cellcolor{dark-grey}& \cellcolor{dark-grey}& \cellcolor{light-grey} & \cellcolor{dark-grey}& \cellcolor{dark-grey}& \cellcolor{dark-grey}& \cellcolor{light-grey}& \cellcolor{dark-grey}& \cellcolor{dark-grey}& \cellcolor{light-grey}& \cellcolor{light-grey}& \cellcolor{light-grey}& \cellcolor{light-grey}& \cellcolor{light-grey}& \cellcolor{light-grey}& \cellcolor{light-grey}& \cellcolor{light-grey}& \cellcolor{light-grey} & \cellcolor{light-grey}& \cellcolor{light-grey} & \cellcolor{light-grey} & \cellcolor{light-grey} \\
    20 & & \cellcolor{black} & \cellcolor{light-grey} & \cellcolor{dark-grey} &  \cellcolor{dark-grey}& \cellcolor{light-grey}&  \cellcolor{dark-grey}& \cellcolor{dark-grey}& \cellcolor{light-grey} & \cellcolor{dark-grey}& \cellcolor{dark-grey}& \cellcolor{dark-grey}& \cellcolor{light-grey}& \cellcolor{dark-grey}& \cellcolor{dark-grey}& \cellcolor{light-grey}& \cellcolor{light-grey}& \cellcolor{light-grey}& \cellcolor{light-grey}& \cellcolor{light-grey}& \cellcolor{light-grey}& \cellcolor{light-grey}& \cellcolor{light-grey}& \cellcolor{light-grey} & \cellcolor{light-grey}& \cellcolor{light-grey} & \cellcolor{light-grey} & \cellcolor{light-grey} \\        
    21 & & \cellcolor{black} & \cellcolor{light-grey} & \cellcolor{dark-grey} &  \cellcolor{dark-grey}& \cellcolor{light-grey}&  \cellcolor{dark-grey}& \cellcolor{dark-grey}& \cellcolor{light-grey} & \cellcolor{dark-grey}& \cellcolor{dark-grey}& \cellcolor{dark-grey}& \cellcolor{light-grey}& \cellcolor{dark-grey}& \cellcolor{dark-grey}& \cellcolor{light-grey}& \cellcolor{light-grey}& \cellcolor{light-grey}& \cellcolor{light-grey}& \cellcolor{light-grey}& \cellcolor{light-grey}& \cellcolor{light-grey}& \cellcolor{light-grey}& \cellcolor{light-grey} & \cellcolor{light-grey}& \cellcolor{light-grey} & \cellcolor{light-grey} & \cellcolor{light-grey} \\ 
%    22 & & \cellcolor{black} & \cellcolor{light-grey} & \cellcolor{dark-grey} &  \cellcolor{dark-grey}& \cellcolor{light-grey}&  \cellcolor{dark-grey}& \cellcolor{dark-grey}& \cellcolor{light-grey} & \cellcolor{dark-grey}& \cellcolor{dark-grey}& \cellcolor{dark-grey}& \cellcolor{light-grey}& \cellcolor{dark-grey}& \cellcolor{dark-grey}& \cellcolor{light-grey}& \cellcolor{light-grey}& \cellcolor{light-grey}& \cellcolor{light-grey}& \cellcolor{light-grey}& \cellcolor{light-grey}& \cellcolor{light-grey}& \cellcolor{light-grey}& \cellcolor{light-grey} & \cellcolor{light-grey}& \cellcolor{light-grey} & \cellcolor{light-grey} & \cellcolor{light-grey} \\               
    \hline
    \end{tabular}
    }
    \caption{A summary of vanishing and nonvanishing results for the rational cohomology groups $H^{4g-5-k}(\MM_g)$.  Black and dark gray boxes denote previously known and new nonvanishing groups, respectively. The white boxes denote groups that are known to vanish, and the light gray boxes denote those that are as yet unknown.}
    \end{table}

\section{Preliminaries}

\subsection{Notation and conventions}
\label{sec:notation}
We work over the rational numbers $\Q$. All vector spaces are understood to be $\Q$-vector spaces, and likewise all homology and cohomology groups are taken with $\Q$-coefficients. If $V$ is a finite-dimensional vector space with the action of a finite group $G$, we identify the invariant subspace $V^G$ with the coinvariant space $V_G$, by averaging over the group action.

For a graded vector space $V$ and an integer $k$, let $V[k]$ be the graded vector space obtained by shifting all degrees by $k$, i.e., the degree $j$ part of $V$ is the degree $j-k$ part of $V[k]$.

When working with graded vector spaces we use the standard Koszul sign convention.
That is, for $V$ and $W$ graded vector spaces the isomorphism $V\otimes W\cong W\otimes V$ sends a tensor product $v\otimes w$ of homogeneous elements $v\in V$, $w\in W$ of degrees $|v|, |w|$ to the element $(-1)^{|v||w|} w\otimes v$. Furthermore, let $f \in \Hom(V,V')$ and $g \in \Hom(W,W')$ be two linear maps between graded vector spaces, of homogeneous degree $|f|$ and $|g|$, respectively.  Then we define the linear map $f \otimes g \in \Hom(V \otimes V', W \otimes W')$, such that, for homogeneous $v \in V$ and $w \in W$,
\begin{equation*} %\label{eq:Koszul}
(f \otimes g)(v \otimes w) = (-1)^{|g||v|} f(v) \otimes g(w).
\end{equation*}
The phrase ``differential graded" is abbreviated dg. We follow cohomological grading conventions, so all differentials have degree $+1$ unless stated otherwise. The Koszul sign rule will be particularly important when we consider the tensor product of dg vectors space $(V, d_V)$ and $(W, d_W)$. The differential on $V \otimes W$ is then 
\[
d_{V \otimes W} := d_V \otimes \id_W + \id_V \otimes d_W.
\]
When no confusion seems possible, we denote this by $d_V + d_W$, with the Koszul sign convention understood.

\medskip

We will consider many instances of spaces that depend on a genus $g$ and a number of marked points $n$, among them the moduli spaces $\bMM_{g,n}$ and the dg vector spaces $X_{g,n}$.  When $n = 0$, we follow the usual convention of omitting this from the notation, e.g., we write $\bMM_{g} := \bMM_{g,0}$ and $X_{g} := X_{g,0}$.

\subsection{Quasi-isomorphisms and acyclicity}

A morphism of dg vector spaces is a quasi-isomorphism if it induces an isomorphism on cohomology, and a dg vector space $(V,d)$ is acyclic if its cohomology vanishes. We will make repeated use of the following elementary sufficient criteria for morphisms of dg vector spaces to be quasi-isomorphisms, and for dg vector spaces to be acyclic. 

\begin{lemma}\label{lem:acyclicity}
Let $(V,d)$ be a dg vector space with $V_1\subset V$ a dg subspace. Suppose there is a decomposition of graded vector spaces $V=V_0\oplus V_1$. Let $\pi_1$ be the projection onto $V_1$, and let $D=\pi_1\circ d: V_0 \to V_1$.  Then $\ker D \subset V$ and 
  $V_0\oplus \im D\subset V$ are dg subvector spaces.  Moreover, 
\begin{enumerate}
  \item \label{it:surj} If $D$ is surjective, then $\ker D\to V$ is a quasi-isomorphism, and
  \item If $D$ is injective then  
  $V\to V/(V_0\oplus \im D)=\coker D$ is a quasi-isomorphism.
\end{enumerate}
In particular, if $D$ is an isomorphism, then $(V,d)$ is acyclic.
\end{lemma}
\begin{proof} The differential $d$ on $V$ is a sum of three pieces $d_0$, $D$, and $d_1$: 
\[
\begin{tikzcd}
V_0 \ar[bend left]{r}[above]{D} \ar[loop above]{}[above]{d_0} & V_1 \ar[loop above]{}[above]{d_1}
\end{tikzcd}\,.
\]
The fact that $d^2=0$ translates into the three equations 
\begin{align*}
d_0^2&=0 
&
d_1^2&=0
&
d_1D+Dd_0 &=0.
\end{align*}
It follows that $\ker D$ and $V_0 \oplus \im D$ are dg subspaces of $V$.

Next, we show that if $D$ is an isomorphism then $(V,d)$ is acyclic. Let $x \in V$ be a cocycle, and write $x = x_0 + x_1$, with $x_i \in V_i$. Since $x$ is a cocycle, we have $d_0x_0=0$ and $Dx_0+d_1 x_1=0$. Thus
\[
Dx_0 = -d_1 x_1 = -d_1 D D^{-1} x_1 = Dd_0 D^{-1} x_1.
\]
Applying $D^{-1}$ to both sides shows that $x_0 = d_0 D^{-1} x_1$, and hence $x = d D^{-1} x_1$ is a coboundary, as required.

Statements (1) and (2) follow by applying this criterion for acyclicity to $V/\ker D$
and $V_0\oplus \im D$, respectively.
\end{proof}

\subsection{Deligne's weight spectral sequence} \label{sec:wss} The natural starting point for understanding the weight-graded compactly supported cohomology of a smooth variety or Deligne-Mumford stack with a given normal crossings compactification is the Poincar\'e dual of Deligne's weight spectral sequence \cite[\S3.2]{Deligne71}, cf. \cite[Example~3.5]{Petersen17}.  We briefly recall this construction in the special case of the Deligne-Mumford compactification of $\MM_{g,n}$ by the space of stable $n$-marked curves $\overline \MM_{g,n}$. It is naturally stratified by the topological type, which is encoded combinatorially in the dual graph of the curve. Each stratum thus corresponds to a stable $n$-marked graph $\Gamma$ of genus $g$, i.e., a graph with $n$ legs labeled $1, \ldots, n$ in which each vertex $v$ is labeled by a non-negative integer $g_v$ such that $2g_v + n_v \geq 3$,
where $n_v$ denotes the valence of the vertex $v$, and such that $g = h^1(\Gamma) + \sum_v g_v$.

For each such stable $n$-marked graph $\Gamma$ of genus $g$, the locus of curves with dual graph $\Gamma$ is the image of the natural gluing map
\[
\xi_\Gamma \colon \MM_\Gamma \to \overline \MM_{g,n},
\]
where $\MM_\Gamma := \prod_{v \in V(\Gamma)} \MM_{g_v,n_v}.$  This gluing extends naturally to $\overline \MM_\Gamma := \prod_v \overline \MM_{g_v,n_v}$.  Let $D_\Gamma := \xi_\Gamma (\prod_v \overline \MM_{g_v,n_v})$ be the image of this extension; it is the closure of the locus of curves with dual graph $\Gamma$. The normalization of $D_\Gamma$ is the smooth and proper Deligne-Mumford stack
\[
\widetilde D_\Gamma = \overline \MM_\Gamma / \Aut(\Gamma).
\]
\noindent Note that the codimension of $D_\Gamma$ in $\overline \MM_{g,n}$ is the number of edges in $\Gamma$.  Let $$\widetilde D^j := \bigsqcup_{|E(\Gamma)| = j} \widetilde D_\Gamma.$$  The image of a small neighborhood of a point in $\widetilde D^j$ is contained in precisely $j$ analytic branches of the boundary divisor $\partial \MM_{g,n} := \bMM_{g,n} \smallsetminus \MM_{g,n}$. The monodromy action on these branches defines a local system of rank $j$ on $\widetilde D^j$ whose determinant is denoted $\epsilon^j$.
The $E_1$-page of the weight spectral sequence for $H^\bullet_c (\MM_{g,n})$ is expressed naturally in terms of cohomology with coefficients in this local system:
\[
E_1^{j,k} \cong H^{k}(\widetilde D^j, \epsilon^j).
\]
Note that the branches of $D$ that contain the image of a small neighborhood of a point in $\widetilde D_\Gamma$ are naturally identified with the edges of $\Gamma$, and $\epsilon^j$ is trivialized by pullback to $\overline \MM_\Gamma$.  In this way, the local system $\epsilon^j$ is identified with the determinant of the permutation action of $\Aut(\Gamma)$ on $E(\Gamma)$.  Thus, we have
\[
E_1^{j,k} \cong \bigoplus_{|E(\Gamma)| = j} \big( H^{k}(\overline \MM_\Gamma) \otimes \det E(\Gamma) \big)^{\Aut(\Gamma)}.
\]
The spectral sequence degenerates at $E_2$. Hence $\gr_k H^{j+k}_c(\MM_{g,n})$ is canonically identified with $H^j$ of the complex
\begin{equation} \label{eq:weightk}
\resizebox{.91\hsize}{!}{
$\displaystyle{\cdots \to \bigoplus_{|\Gamma| = j-1} \big( H^{k}(\overline \MM_\Gamma) \otimes \det E(\Gamma) \big)^{\Aut(\Gamma)} \xrightarrow{d_{j-1}} \bigoplus_{|\Gamma| = j}\big( H^{k}(\overline \MM_\Gamma) \otimes \det E(\Gamma) \big)^{\Aut(\Gamma)} \xrightarrow{d_j} \bigoplus_{|E(\Gamma)| = j + 1} \big( H^{k}(\overline \MM_\Gamma) \otimes \det E(\Gamma) \big)^{\Aut(\Gamma)} \to \cdots}$
}
\end{equation}
All of the data in this spectral sequence is neatly encoded in the language of modular operads \cite{GK}, cf. \cite[Example~3.9]{Petersen17}. More precisely, the cohomology groups $H^\bullet(\overline \MM_{g,n})$ naturally form a modular cooperad, whose Feynman transform evaluated at $(g,n)$ is the direct sum, over all weights $k$, of the complex \eqref{eq:weightk}.  Using the K\"unneth decomposition, one may encode generators as linear combinations of graphs $\Gamma$ whose vertices $v$ are labeled with elements of $H^{k_v}(\overline \MM_{g_v,n_v})$ such that $\sum_v k_v = k$.  The result is now known as the Getzler-Kapranov graph complex and denoted $\GK_{g,n}^k$; see, e.g.,  \cite[Section~6.1]{AWZ} and \cite{Kalugin}.

\subsection{Modular cooperads and Feynman transform}
We briefly recall the notion of modular cooperads and their Feynman transforms, and refer the reader to \cite{GK} for details.  A stable $\bbS$-module is a collection of dg vector spaces $\POp=\{\POp(g,n)\}$, for $g,n\geq 0$ and  $2g+n\geq 3$, with each $\POp(g,n)$ equipped with an action of $\bbS_n$.  

Let $\POp$ be a stable $\bbS$-module. 
For a stable graph $\Gamma$ we may define the tensor product 
\[
\otimes_\Gamma \POp := \otimes_{v\in V(\Gamma)} \POp(g_v,n_v).
\]
Note that $\Aut(\Gamma)$ acts naturally on $\otimes_\Gamma \POp$.  A modular cooperad is a stable $\bbS$-module together with a morphism
\[
 \POp(g,n) \to \otimes_\Gamma \POp 
\]
for every stable graph of genus $g$ with $n$ legs, satisfying suitable compatibility relations.

To a modular cooperad $\POp$, one naturally associates the Feynman transform $\mF\POp$, which is a modular cooperad whose underlying stable $\bbS$-module is
\[
  \mF\POp(g,n) \cong \bigoplus_{[\Gamma]} \left(\bigotimes_\Gamma \POp \otimes \Q[-1]^{\otimes |E(\Gamma)|} \right)_{\Aut(\Gamma)}.
\]
Here, the sum is over isomorphism classes of stable graphs of genus $g$ with $n$ legs, $|E(\Gamma)|$ is the set of edges of $\Gamma$, and the action of the automorphism group is diagonal on $\bigotimes_\Gamma$ and $\Q[-1]^{\otimes |E(\Gamma)|}$ by permutation of factors. Elements of $\mF\POp(g,n)$ can be understood as isomorphism classes of graphs whose vertices are decorated by suitable elements of $\POp$. 
In cases where $\POp$ has no differential, the differential on $\mF\POp(g,n)$ has two pieces, $$\delta=\delta_{split}+\delta_{loop}.$$ 
The piece $\delta_{split}$ splits vertices similar to \eqref{equ:deltasplit}, using the cooperadic composition of $\POp$ on decorations.
Similarly, $\delta_{loop}$ creates a loop at a vertex
\begin{align*}%\label{equ:deltaloop}  
  \delta_{loop}:
  \begin{tikzpicture}
  \node[ext] (v) at (0,0) {};
  \draw (v) edge +(-.5,-.5) edge +(0,-.5) edge +(.5,-.5);
  \end{tikzpicture}
  &\mapsto 
  \begin{tikzpicture}
    \node[ext] (v) at (0,0) {};
    \draw (v) edge +(-.5,-.5) edge +(0,-.5) edge +(.5,-.5) (v) edge[loop] (v);
    \end{tikzpicture}
  \end{align*}
using the modular cooperadic map
$
\POp(g, n) \to \POp(g -1,n+2)
$
on the decoration. 

\subsection{The Getzler-Kapranov graph complex}

The cohomology groups $H^\bullet (\bMM_{g,n})$ of the Deligne-Mumford compactified moduli spaces $\bMM_{g,n}$ assemble into a modular cooperad that we shall denote $H(\bMM)$. We consider the Feynman-transform $\mF H(\bMM)$ of this modular cooperad.

An element of $\mF H(\bMM)(g,n)$ is a linear combination of stable $n$-marked graphs of genus $g$ whose vertices $v$ are decorated by elements of $H(\bMM_{g_v,n_v})$. As recalled in the preceding section, the differential has two parts
\begin{equation*}%\label{equ:delta GK}
\delta = \delta_{split} + \delta_{loop}. 
\end{equation*}
The dg vector spaces $\mF H(\bMM)(g,n)$ inherit the weight grading from $H(\bMM)$, where the weight of a graph is the sum of the degrees of its vertex decorations. 

\begin{defi}
The weight $k$ Getzler-Kapranov graph complex is
\[
\GK^k_{g,n} := \gr_k \mF H(\bMM)(g,n).
\]
\end{defi}

\noindent Note that $\GK^k_{g,n}$ is also naturally identified with \eqref{eq:weightk}, the weight $k$ row in on the $E_1$-page of the weight spectral sequence for $\MM_{g,n} \subset \bMM_{g,n}$, and hence there is a canonical isomorphism
\[
 H^\bullet \GK^k_{g,n} \cong \gr_k H^\bullet_c(\MM_{g,n}).
\]
%See \cite[Theorem~1]{Kalugin}.

\newcommand{\GG}{\fGC_{2,\mathrm{conn}}}
\subsection{A nonvanishing result in weight 0} 

Here we state and prove Proposition~\ref{prop:degree3}, a nonvanishing and exponential growth result for $W_0 H^{2g+3}_c (\MM_g)$. This proposition, combined with Theorem~\ref{cor:n0} and the results on $W_0H^\bullet_c(\MM_g)$ in \cite{CGP1}, gives the nonvanishing statements in Corollary~\ref{cor:odd}.
%give a brief proof of the nonvanishing statement for weight zero compactly supported cohomology that was mentioned in the introduction, using graph complex techniques. 

Let $\GG$ be the full commutative graph complex studied in \cite{grt}. The elements of $\GG$ are (possibly infinite) formal $\Q$-linear combinations of isomorphism classes of connected graphs with vertices of any valence. It is a dg Lie algebra with a combinatorially defined bracket, and the differential $\delta$ is the bracket with a single edge. The subspace $\GC_2$ spanned by stable graphs is a direct summand, and the restriction of $\delta$ is exactly $\delta_{split}$.  The cohomology of $\GG$ is
\begin{equation} \label{eq:loopcohom}
H(\GG) = H(\GC_2) \oplus \bigoplus_{k \geq 1} \Q [L_{4k+1}], 
\end{equation}
where $L_n$ denotes a loop of $n$ edges, in which every vertex has valence 2 \cite[Proposition~3.4]{grt}.

Recall that the cohomological degree of a graph in $\GG$ of genus $g$ with $e$ edges is $e -2g$. There is an extra differential $\nabla$ on $\GG$, given by bracket with a loop edge, studied in \cite{KWZ}.   

\begin{lemma} \label{lem:nablaclosed}
Every class in $H^0(\GC_2)$ is represented by a $\delta$-cocycle $F \in \GG$ such that $\nabla F = 0$. 
\end{lemma}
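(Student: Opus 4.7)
My plan is to take any $\delta$-cocycle $F$ representing the class in $H^0(\GC_2)$ and modify it by a $\delta$-coboundary so that $\nabla F = 0$. Since $\delta\nabla + \nabla\delta = 0$, the element $\nabla F$ is automatically $\delta$-closed of cohomological degree $-1$, and $\nabla$ descends to an operator $\bar\nabla\colon H^\bullet(\GG) \to H^{\bullet-1}(\GG)$. By \eqref{eq:loopcohom}, the loop classes $L_{4k+1}$ lie in degrees $4k - 1 \geq 3$, so $H^{-1}(\GG) = H^{-1}(\GC_2)$; in particular $\bar\nabla$ restricts to a map $H^0(\GC_2) \to H^{-1}(\GC_2)$.

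The core step is to show that $\bar\nabla|_{H^0(\GC_2)} = 0$, so that $\nabla F = \delta H$ for some $H \in \GG$ of degree $-2$. To establish this vanishing, I would first reduce to the tadpole-free subcomplex of $\GC_2$, which is well known to be quasi-isomorphic to $\GC_2$, so that $F$ may be chosen without self-loops. Then $\nabla F$ is a sum over vertices $v$ of $F$ of graphs with exactly one tadpole, attached at $v$, and a combinatorial homotopy that ``contracts'' the added tadpole back, with careful sign bookkeeping, should provide an explicit $\delta$-primitive $H$ for $\nabla F$. This vanishing is essentially the content of \cite{KWZ}.

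The final step is to upgrade the cohomological vanishing to the chain-level equality $\nabla F' = 0$. If we can write $H = \nabla G + C$ with $\delta C = 0$, then $F' := F + \delta G$ represents the same class and
\[
  \nabla F' = \nabla F - \delta \nabla G = \delta H - \delta(H - C) = \delta C = 0.
\]
The explicit homotopy in the previous step should naturally produce a primitive $H$ of the form $\nabla G$, giving the decomposition for free; if not, a descending induction on the genus filtration of $\GG$, using the finite-dimensionality of each genus sector, supplies the needed $G$ from $H$.

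The main obstacle is the vanishing $\bar\nabla|_{H^0(\GC_2)} = 0$: constructing the explicit chain-level homotopy witnessing it is the key technical content of the proof, while the final lifting then follows cleanly from the form of that homotopy.
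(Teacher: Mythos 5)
Your overall strategy---correcting $F$ by a coboundary $\delta G$ whose $\nabla$-image cancels a $\delta$-primitive of $\nabla F$---matches the paper's, and your final algebra $\nabla(F+\delta G)=\delta(H-\nabla G)$ is right. But you have mislocated the difficulty. The step you call ``the main obstacle,'' namely $\bar\nabla|_{H^0(\GC_2)}=0$, is in fact immediate: $\nabla F$ is a $\delta$-cocycle of cohomological degree $-1$, and $H^\bullet(\GG)$ vanishes in all negative degrees by \cite[Theorem~1.1]{grt}, which is all the paper uses. Your proposed substitute---an explicit contracting homotopy on the tadpole-free complex---is not constructed, is not obviously available (the bracket $[L_{1},\cdot]$ does not only attach tadpoles at vertices; it also inserts edges between pairs of vertices), and is not what \cite{KWZ} provides; that reference enters the paper's proof for a different purpose.

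The genuine gap is the step you dismiss as following ``cleanly'': producing a $\delta$-primitive $H$ of $\nabla F$ of the special form $H=\nabla G+C$ with $\delta C=0$. Neither of your two fallbacks delivers this. Finite-dimensionality of the genus sectors gives no control over the image of $\nabla$, and a bare ``descending induction on the genus filtration'' has no mechanism for solving equations of the form $\nabla G=(\text{given element})$. The paper's proof shows what is actually needed: an alternating zig-zag in the $(\delta,\nabla)$-bicomplex. One repeatedly uses $\delta\nabla F_i=0$ together with $H^{<0}(\GG)=0$ to find $F_{i+1}$ with $\delta F_{i+1}=\nabla F_i$; this terminates because the edge number is fixed while the genus grows, yielding $F_n$ with $\nabla F_n=0$; and then the $\nabla$-acyclicity of $\GG$ in genus $>1$ (\cite[Corollary~3]{KWZ}) is invoked to solve $\nabla G_n=F_n$ and collapse the tower back down to $n=1$, at which point $G$ with $\nabla G=F_1$ gives $F'=F+\delta G$. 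The $\nabla$-acyclicity input is indispensable and is absent from your argument, so as written the proof does not close.
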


\begin{proof}
Let $F \in \GG$ be a $\delta$-cocycle representing a nontrivial class in $H(\GC_2)$. We must show that there is some $F' \in \GG$ such that $F' - F$ is $\delta$-exact and $\nabla F' = 0$.  First, note that
$
\delta \nabla F = - \nabla \delta F = 0.
$
Since $H(\GG)$ vanishes in negative degrees \cite[Theorem~1.1]{grt}, we see that $\nabla F$ is $\delta$-exact, i.e., there is $F_1 \in \GG$ such that $\delta F_1 = \nabla F$. Moreover, $\delta \nabla F_1 = - \nabla \delta F_1 = -\nabla^2 F = 0$.  Continuing in this way, the number of edges remains fixed and the genus increases by 1 at each step, so eventually we arrive at a sequence $(F = F_0, \ldots, F_n)$
\begin{equation}\label{eq:waterfall}
\delta F_n = \nabla F_{n-1} \quad \quad \mbox{ and } \nabla(F_n) = 0.
\end{equation}
Then, since $\GG$ is $\nabla$-acyclic in genus greater than 1 \cite[Corollary 3]{KWZ}, there is some $G_n$ such that $\nabla G_n = F_n$.  If $n > 1$, then we can replace $F_n$ with $F_{n-1} + \delta G_n$ to get a shorter sequence that satisfies \eqref{eq:waterfall}. Repeating the process, we arrive at such a sequence with $n = 1$.  Choose $G$ such that $\nabla G = F_1$, and set $F' = F + \delta G$.  
\end{proof}

Recall from \cite{CGP1} that $W_0 H^\bullet_c (\MM_g)$ is identified with the genus $g$ part of the cohomology of the complex of commutative stable graphs $\GC_2$ and the dimension of $W_0H^{2g}_c(\MM_g)$ grows at least exponentially with $g$.

\begin{prop} \label{prop:degree3}
The weight zero cohomology group $W_0 H^{2g+3}_c (\MM_g)$ is nonzero for $g = 6$ and $g \geq 8$.  Moreover, its dimension grows at least exponentially with $g$.
\end{prop}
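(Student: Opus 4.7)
My plan is to translate the statement via the Chan--Galatius--Payne isomorphism $W_0 H^{2g+k}_c(\MM_g) \cong H^k(\GC_2)^{(g)}$ from \cite{CGP1} into a nonvanishing and exponential-growth claim for $H^3(\GC_2)^{(g)}$, and then to construct the required classes by Lie bracket with the loop class $L_5$. By the decomposition \eqref{eq:loopcohom}, $H^3(\GC_2)^{(g)} \cong H^3(\GG)^{(g)}$ for $g \geq 2$, since the extra loop classes contribute only in genus $1$. Thus it suffices to work in the full graph complex $\GG$, where Lie brackets are available.

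Given a class in $H^0(\GC_2)^{(g')} = \grt_1^{(g')}$, Lemma~\ref{lem:nablaclosed} provides a representative $F$ satisfying $\delta F = 0$ and $\nabla F = 0$. The Lie bracket $[F, L_5]$ is then a $\delta$-cocycle in $\GG^{(g'+1)}$ of degree $3$, since $\delta L_5 = 0$. Its cohomology class yields a linear map $\Phi \colon \grt_1^{(g')} \to H^3(\GG)^{(g'+1)} \cong H^3(\GC_2)^{(g'+1)}$. The condition $\nabla F = 0$ is what makes this construction useful: since $\nabla$ acts as a derivation on the bracket, $\nabla[F,L_5] = \pm[F, \nabla L_5]$, and combined with the $\nabla$-acyclicity of $\GG^{(g)}$ for $g \geq 2$ from \cite{KWZ}, this controls $[F,L_5]$ within the bicomplex $(\GG, \delta, \nabla)$ in a way that permits detection of non-triviality.

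The next step is to verify injectivity of $\Phi$ on a sufficiently large subspace of $\grt_1$. By Brown's theorem \cite{Brown}, $\grt_1$ contains a free Lie subalgebra generated by $\sigma_{2k+1}$ for $k \geq 1$, whose graded pieces grow exponentially with the genus grading. Establishing injectivity of $\Phi$ on the part generated by $\sigma_{2k+1}$ for $k \geq 2$ produces the desired exponential growth of $H^3(\GC_2)^{(g)}$ for $g = g'+1 \geq 8$, while the image of $\sigma_5$ gives the $g=6$ case. Translating back through \cite{CGP1} then yields the claimed exponentially growing families of classes in $W_0 H^{2g+3}_c(\MM_g)$.

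The principal obstacle is precisely this injectivity assertion: proving that $[F, L_5]$ is not $\delta$-exact in $\GG$ for nonzero $F$ in the relevant Brown subspace. The Lie algebra formalism gives a clean construction, but non-triviality requires analyzing the combinatorial structure of $\GG$. The most promising route is to construct a pairing with a dual graph complex that recovers $F$ from $\Phi(F)$ up to a nonzero rescaling, or to argue through the spectral sequence of the bicomplex $(\GG,\delta,\nabla)$, in which the $\nabla$-closed representative from Lemma~\ref{lem:nablaclosed} survives to a page where bracketing with the loop class becomes manifestly injective on the Brown subspace.
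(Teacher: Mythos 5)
Your setup coincides with the paper's: pass to the graph complex via \cite{CGP1}, take a representative $F$ with $\delta F=0$ and $\nabla F=0$ (Lemma~\ref{lem:nablaclosed}), and consider $[L_5,F]$ as a degree~$3$ class in genus $g'+1$. But the proposal stops exactly where the real work begins: you never prove that $[L_5,F]$ is non-exact for the relevant $F$, you yourself flag this as ``the principal obstacle,'' and the two routes you gesture at (a pairing with a dual complex, or a spectral sequence of the bicomplex $(\GG,\delta,\nabla)$ on which $[L_5,\cdot]$ becomes manifestly injective) are neither carried out nor plausibly within reach as stated. In fact the paper never proves injectivity of $[L_5,\cdot]$ on Brown's subalgebra at all; it proves the weaker containment $\ker[L_5,\cdot]\subseteq\ker[\sigma_3,\cdot]$ on $H^0(\GC_2)$, which suffices, and the mechanism is the piece you are missing: by \cite{KWZ} there is an explicit genus-$2$ graph $\gamma$ with $(\delta+\nabla)(L_5+\gamma)=\sigma_3$, so if $[L_5,F]=\delta\lambda$ then $\zeta:=[\gamma,F]-\nabla\lambda$ satisfies $\delta\zeta=0$ and $\nabla\zeta=[\sigma_3,F]$ (using $\nabla F=0$), and \v{Z}ivkovi\'c's vertex-deletion operator $D$ with $\nabla=\delta D-D\delta$ \cite{Zivkovic} then gives $[\sigma_3,F]=\delta(D\zeta)$. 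Nonvanishing is thereby reduced to the statement that $[\sigma_3,\cdot]$ does not annihilate the relevant classes, which is a purely algebraic fact about the free Lie algebra on $\sigma_3,\sigma_5,\dots$ \cite{Brown,grt}. Without this reduction (or a genuinely new non-exactness argument), what you have is a construction of candidate classes, not a proof.

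There is also a secondary gap in the bookkeeping even granting your unproved injectivity: the free Lie subalgebra generated by $\sigma_5,\sigma_7,\dots$ has no elements in genus $8$ or $10$ (these are not sums of odd integers $\geq 5$, and $[\sigma_5,\sigma_5]=0$), so restricting attention to that subalgebra would fail to produce nonvanishing of $W_0H^{2g+3}_c(\MM_g)$ for $g=9$ and $g=11$, which the proposition asserts. The paper handles all $g\geq 8$ by using that $H^0(\GC_2^{(g')})$ is not annihilated by $\sigma_3$ for $g'=5$ and all $g'\geq 7$ (e.g.\ $[\sigma_3,\sigma_5]$ in genus $8$ and $[\sigma_3,\sigma_7]$ in genus $10$), with the exponential growth supplied by applying $[\sigma_3,\cdot]$ to the subalgebra generated by $\sigma_5,\sigma_7,\dots$; you would need to incorporate these $\sigma_3$-brackets into your argument as well.
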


\begin{proof}
Let us denote the genus $g$ part of $\GC_2$ by $\GC_2^{(g)}$. The main result of \cite{CGP1} shows that 
$W_0 H^{2g+k}_c(\MM_g) \cong H^k (\GC_2^{(g)})$.  We recall that $H^0(\GC_2)$ is naturally identified with the Grothendieck-Teichm\"uller Lie algebra \cite{grt}. Using this identification, it follows from the main results of \cite{Brown} that $H^0(\GC_2)$ contains a free Lie subalgebra generated by classes $\sigma_g \in H^0(\GC_2^{(g)})$ for odd $g \geq 3$; see \cite[(1.3)]{Brown21}.   

We consider two linear maps induced by the Lie bracket:
\[
[\sigma_3, \cdot ] \colon H^0 (\GC_2) \to H^0(\GC_2) \quad \quad \mbox{ and } \quad \quad [L_5, \cdot ] \colon H^0 (\GC_2) \to H^3(\GC_2).
\]
The natural target of $[L_5, \cdot ]$ is $H^3 (\GG)$, but the image of any nontrivial class has genus greater than 1; by \eqref{eq:loopcohom}, it must be in the summand $H^3 (\GC_2)$.
We claim that the kernel of $[L_5, \cdot]$ is contained in the kernel of $[\sigma_3, \cdot]$.  The proposition follows from the claim, since the free Lie algebra generated by $\sigma_5$, $\sigma_7$, \dots has trivial intersection with the kernel of $[\sigma_3, \cdot ]$ and hence also with the kernel of $[L_5, \cdot ]$. It remains to prove the claim.

Let $F \in \GG$ be a cocyle of genus $g$ representing a class in $H^0(\GC_2^{(g)})$. Suppose $[L_5, F]$ is exact, and write it as $\delta(\lambda)$, with $\lambda\in \GG$ of genus $g+1$. We must show that $[\sigma_3, F]$ is exact.  By Lemma~\ref{lem:nablaclosed}, we may assume $\nabla F = 0$.

Recall that there is a graph $\gamma$ of genus $2$ such that $(\delta + \nabla)(L_5 + \gamma) = \sigma_3$ (as in \cite[Figure 2]{KWZ}).  Let $\zeta := [\gamma,F] - \nabla \lambda$. Then
$
\zeta = [L_5 + \gamma, F] - (\delta + \nabla) \lambda,
$
and, using the assumption that $\nabla F = 0$, it follows that 
\begin{equation}
  \label{equ:delta nabla zeta}
  (\delta + \nabla) \zeta = [\sigma_3, F].
\end{equation}
Note that $\zeta$ is of genus $g+2$ and $[\sigma_3,F]$ is of genus $g+3$, and $\delta$ leaves the genus invariant, while $\nabla$ increases it by one.
Hence, comparing terms of like genus in \eqref{equ:delta nabla zeta}, we find that separately $\delta \zeta = 0$ and $\nabla \zeta = [\sigma_3, F]$.  Let $D$ denote the vertex deletion operation introduced by \v{Z}ivkovi\'c in \cite[Section~3]{Zivkovic}. Then $\nabla = \delta D - D \delta$ (\cite[Proposition~3.3]{Zivkovic}) and hence 
\begin{equation}\label{equ:sigma3F}
[\sigma_3, F]= \nabla \zeta = (\delta D - D \delta)\zeta = \delta(D \zeta).
\end{equation}
This shows that $[\sigma_3, F]$ is exact, as required, and proves the proposition.
We note that \cite[Proposition~3.3]{Zivkovic} is stated in a version of the graph complex that also contains disconnected graphs. However, since the connected graphs span a direct summand with respect to $\delta$, given that \eqref{equ:sigma3F} holds in the larger complex, it must also hold for the connected part.
\end{proof}

\subsection{The support of $H^\bullet (X_{g,n})$} \label{sec:support}

Here we deduce bounds on the support of $H^\bullet (X_{g,n})$ from Theorem~\ref{thm:main WHGC}, using mixed Hodge theory and known vanishing statements for $H^\bullet_c (\MM_{g,n})$.  We also explain how Corollary~\ref{cor:odd} follows from Theorem~\ref{cor:n0}.

\begin{cor} %\label{cor:support2}
The graph cohomology $H^\bullet(X_{g,n})$ is supported in degrees between $\max\{2g, 2g-2+n\}$ and $3g-2+n$.
\end{cor}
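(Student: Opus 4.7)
My plan is to translate the statement, via Theorem~\ref{thm:main WHGC}, into a claim about the support of $\gr_2 H^\bullet_c(\MM_{g,n})$, then bound the support above using mixed Hodge theory on smooth Deligne--Mumford stacks, and below using Poincar\'e duality combined with known top-degree cohomological vanishing theorems for $\MM_{g,n}$.

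\medskip

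\noindent\textbf{Upper bound.} Since $\MM_{g,n}$ is a smooth Deligne--Mumford stack of complex dimension $d=3g-3+n$, the mixed Hodge structure on $H^k_c(\MM_{g,n})$ is concentrated in weights $[\max\{0,2k-2d\},\,k]$. Hence $\gr_2 H^k_c(\MM_{g,n})=0$ unless $2k-2d\le 2$, i.e.\ $k\le d+1=3g-2+n$. By Theorem~\ref{thm:main WHGC}, this gives the upper bound on the support of $H^\bullet(X_{g,n})$.

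\medskip

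\noindent\textbf{Lower bound.} Since $\gr_2 H^k_c(\MM_{g,n})$ is a subquotient of $H^k_c(\MM_{g,n})$, it suffices to show that $H^k_c(\MM_{g,n})$ vanishes for $k<\max\{2g,\,2g-2+n\}$. Poincar\'e duality on the smooth DM stack $\MM_{g,n}$ turns this into the dual statement that $H^j(\MM_{g,n})=0$ for $j>2d-\max\{2g,\,2g-2+n\}$. A direct calculation shows that this threshold equals $4g-4+n$ when $n\ge 2$, and equals $4g-5$ and $4g-3$ respectively for $n=0$ and $n=1$. For $n\ge 2$ the required vanishing is exactly Harer's bound on the virtual cohomological dimension of $\MM_{g,n}$. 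For $n=0$ and $n=1$, Harer alone is off by one, so I would invoke the deeper top-degree vanishing results recalled in the introduction, namely $H^{4g-5}(\MM_g)=0$ (Morita--Sakasai--Suzuki and Church--Farb--Putman) and $H^{4g-3}(\MM_{g,1})=0$ (Church--Farb--Putman). In every case this yields the claimed vanishing, hence the lower bound.

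\medskip

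\noindent\textbf{Main obstacle.} There is essentially no obstacle: once Theorem~\ref{thm:main WHGC} is available, the only nonformal input is the strengthening of Harer's virtual cohomological dimension bound by one unit in the cases $n\le 1$, and this strengthening is precisely the content of the cited results, already used elsewhere in the paper.
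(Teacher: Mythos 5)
Your proof is correct and follows essentially the same route as the paper: both arguments use Poincar\'e duality together with Deligne's weight bounds for the upper bound, and Harer's virtual cohomological dimension supplemented by the vanishing of $H^{4g-5}(\MM_g)$ and $H^{4g-3}(\MM_{g,1})$ for the lower bound in the cases $n\leq 1$. One harmless slip: for $n=0$ and $n=1$ the quantity $2d-\max\{2g,2g-2+n\}$ equals $4g-6$ and $4g-4$ (not $4g-5$ and $4g-3$), but since you then invoke precisely the vanishing results in the critical degrees $4g-5$ and $4g-3$, the argument is unaffected.
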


\begin{proof}
By Theorem~\ref{thm:main WHGC}, we must show that $\gr_2 H^\bullet_c (\MM_{g,n})$ is supported in the indicated range of degrees.  By Poincar\'e duality, 
we have 
\begin{equation} \label{eq:PD}
\gr_2 H^\bullet_c (\MM_{g,n}) \cong \gr_{6g-8+2n} H^{6g-6+2n-\bullet}(\MM_{g,n})^\vee.
\end{equation}
To see the upper bound, recall that $\gr_\ast H^k$ is supported in weights $\ast \leq 2k$ \cite{Deligne71}.  Hence the right hand side of \eqref{eq:PD} vanishes when $\bullet > 3g-2+n$. For $n \geq 2$, the lower bound also follows from \eqref{eq:PD}, since the virtual cohomological dimension (vcd) of $\MM_{g,n}$ is $4g-4+n$.  The lower bounds for $n = 0$ and $1$ are similar; one uses that the vcds of $\MM_{g,1}$ and $\MM_{g}$ are $4g-3$ and $4g-5$,  respectively, and that $H^{4g-3}(\MM_{g,1})$ and $H^{4g-5}(\MM_{g})$ both vanish \cite{CFP1, MSS}.
\end{proof}

Finally, we explain how Corollary~\ref{cor:odd} follows from Theorem~\ref{cor:n0} and Proposition~\ref{prop:degree3}.

\begin{proof}[Proof of Corollary~\ref{cor:odd}]
We start by recalling that
\begin{itemize}
\item  $W_0H^{2g}_c(\MM_g)$ is nonvanishing for $g =3$, $g = 5$, and $g \geq 7$, by \cite{CGP1}, and 
\item $W_0 H^{2g+3}_c(\MM_g)$ is nonvanishing for $g = 6$ and $g \geq 8$, by Proposition~\ref{prop:degree3}. 
\end{itemize}
In both cases the dimension grows exponentially with $g$. Furthermore, we know that $W_0 H^{27}_c(\MM_{10}) \neq 0;$ the corresponding graph cohomology computation is due to Bar-Natan and McKay \cite{BNM}. For this proof only, let us set
\[
\mathbb{W}_0 := \bigoplus_g W_0H^{2g}_c(\MM_g); \quad \quad \mathbb{W}_3 := \bigoplus_g W_0 H^{2g+3}_c(\MM_g); \quad \quad \mathbb{W}_{7} := W_0H^{27}_c \MM_{10}.
\]
\vskip -6 pt
Then $\bigwedge^2 \mathbb{W}_0$ contributes to $\mathbb{V}_{g,2g}$ for $g = 8$ and $g \geq 10$, and the dimension of this subspace grows exponentially with $g$.  Considering the third summand in Theorem~\ref{cor:n0}, this shows that $\gr_2 H^{2g+2}_c(\MM_g)$ is nonzero for $g = 9$ and $g \geq 11$, and grows exponentially with $g$. The corresponding statement for $H^{4g-8}(\MM_g)$ follows by Poincar\'e duality, since $H^{4g-8}(\MM_g) \cong H^{2g+2}_c(\MM_g)^\vee$. This proves Corollary~\ref{cor:odd} for $H^{4g-8}(\MM_g)$.

Corollary~\ref{cor:odd} for $H^{4g-9}(\MM_g)$ is an immediate consequence of Proposition~\ref{prop:degree3}, since $H^{4g-9}(\MM_g) \cong H^{2g+3}_c(\MM_g)^\vee$.

The remaining cases of Corollary~\ref{cor:odd} are similar to the case of $H^{4g-8}(\MM_g)$.  The subspace $\mathbb{W_0} \wedge \mathbb{W}_3$ contributes to $\mathbb{V}_{g,2g+3}$ for $g = 9$ and $g \geq 11$, and the dimension of this contribution grows exponentially with $g$.  Considering the third summand in Theorem~\ref{cor:n0} and applying Poincar\'e duality shows that $H^{4g-11}(\MM_g)$ is nonzero for $g = 10$ and $g \geq 12$, and its dimension grows exponentially with $g$.  The corresponding statements for $H^{4g-12}(\MM_g)$ are proved similarly, using the first summand in Theorem~\ref{cor:n0}

The next three cases, for $H^{4g-14}(\MM_g)$, $H^{4g-15}(\MM_g)$ and $H^{4g-16}(\MM_g)$ use the contributions of $\bigwedge^2 \mathbb{W}_3$ and $\mathbb{W}_0 \wedge \mathbb{W}_7$ to $\mathbb{V}_{g, 2g+6}$ and $\mathbb{V}_{g, 2g+7}$, respectively.  The final two cases, for $H^{4g-18}(\MM_g)$ and $H^{4g-19}(\MM_g)$ are deduced similarly from the contribution of $\mathbb{W}_3 \wedge \mathbb{W}_7$ to $\mathbb{V}_{g, 2g + 10}$.
\end{proof}

\subsection{A vanishing result in weight 0.}
Our main motivation is to use what is known in weight 0 to prove new results about the weight 2 cohomology $\gr_2 H^\bullet_c(\MM_{g,n})$.  However, it is worth noting that the information flows meaningfully in both directions, via Theorem~\ref{cor:n0}.  For instance, applying basic results from mixed Hodge theory to the weight 2 cohomology, we deduce the following vanishing result in weight 0.

\begin{prop} %\label{prop:weight0vanishing}
The cohomology $W_0H^{3g-1}_c(\MM_{g,2})_{as}$ vanishes for all $g \geq 1$. Also,
\[
\dim \bigoplus_g W_0 H^{3g-2}_c(\MM_{g,1}) \leq 1.
\]
\end{prop}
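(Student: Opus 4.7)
The plan is to combine the upper bound on the support of $\gr_2 H^\bullet_c(\MM_{g'})$ established in the preceding corollary with Theorem~\ref{cor:n0}. That corollary implies $\gr_2 H^k_c(\MM_{g'})=0$ whenever $k > 3g'-2$. I would apply this with $g' = g+1$ and $k = 3g+2$: since $3g+2 > 3(g+1)-2 = 3g+1$, this gives $\gr_2 H^{3g+2}_c(\MM_{g+1}) = 0$ for every $g\geq 1$.

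Substituting $g \to g+1$ and $k \to 3g+2$ into Theorem~\ref{cor:n0} then shows that
\[
\mathbb{V}_{g+1,\,3g-1} \;\oplus\; W_0 H^{3g-1}_c(\MM_{g,2})_{as} \;\oplus\; \mathbb{V}_{g,\,3g-2} \;\oplus\; W_0 H^{3g-2}_c(\MM_{g-1,2})_{as}
\]
vanishes, and the first assertion of the proposition is the vanishing of the second summand. In the edge case $g=1$, the fourth summand reads $W_0 H^{1}_c(\MM_{0,2})_{as}$, which is zero by the convention $H^\bullet_c(\MM_{0,2}):=0$, so no inconsistency arises.

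For the second assertion, set $V := \bigoplus_{g'\geq 1} W_0 H^{3g'-2}_c(\MM_{g',1})$. A wedge of two classes coming from genera $g_1,g_2\geq 1$ lands in bidegree $(g_1+g_2,\,3(g_1+g_2)-4)$; writing $h = g_1+g_2\geq 2$, this is precisely the bidegree $(h,\,3h-4)$ treated above (taking $g = h-1$). Hence $\bigwedge^2 V$ is contained in $\bigoplus_{h\geq 2}\mathbb{V}_{h,\,3h-4}$, which is zero by the first step, so $\bigwedge^2 V = 0$. A vector space whose antisymmetric square vanishes has dimension at most $1$, yielding $\dim V \leq 1$.

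I do not anticipate a serious obstacle: both statements are formal consequences of Theorem~\ref{cor:n0} once the top-degree vanishing $\gr_2 H^{3g+2}_c(\MM_{g+1})=0$ is in hand, and the latter is immediate from the mixed-Hodge-theoretic support bound already established. The only care needed is to track the bidegree conventions of $\mathbb{V}_{\bullet,\bullet}$ and to handle the convention on $\MM_{0,2}$ in small genus.
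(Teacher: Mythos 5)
Your argument is correct and is essentially the paper's own proof: both reduce the statement to the vanishing of $\gr_2 H^{k}_c(\MM_{g'})$ for $k>3g'-2$ (Deligne's weight bound via Poincar\'e duality, which is exactly what the support corollary encodes) combined with the decomposition of Theorem~\ref{cor:n0}, applied in genus $g+1$ at degree $3g+2$ for the first assertion and to the bidegree $(h,3h-4)$ pieces of $\mathbb{V}_{\bullet,\bullet}$ for the second. The only cosmetic difference is that you invoke the already-proved support bound rather than redoing the Poincar\'e duality computation for the specific groups, and you use the isomorphism rather than just the injectivity of the relevant summands; these are equivalent.
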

\noindent In other words, the graph complex $G^{(g,2)}$ computing $W_0 H_c(\MM_{g,2})$ (see section \ref{sec:svert weight0}) has no antisymmetric cohomology in the top degree, corresponding to trivalent graphs with 2 marked points. Furthermore the top degree cohomology of $\bigoplus_g G^{(g,1)}$, corresponding to trivalent graphs, is at most one-dimensional.  

\begin{proof}
The antisymmetric part $W_0H^{3g-1}_c(\MM_{g},2)_{as}$ injects into $\gr_2 H^{3g+2}_c (\MM_{g+1})$, via the second summand in Theorem~\ref{cor:n0}, and the Poincar\'e dual of $\gr_2 H^{3g+2}_c (\MM_{g+1})$ is $\gr_{6g-2}H^{3g-2}(\MM_{g+1})$, which vanishes because $\gr_* H^k$ is supported in weights $* \leq 2k$ \cite{Deligne71}.  Similarly, the first summand in Theorem~\ref{cor:n0} gives an injection from $\bigwedge^2 \bigoplus_g W_0 H^{3g-2}_c(\MM_{g,1})$ into $\bigoplus_{g'} \gr_2 H^{3g'-1}_c (\MM_{g'})$, which vanishes for the same weight reasons.  
\end{proof}

\noindent It is not known whether $W_0 H^{3g-2}(\MM_{g,1})$ vanishes, or equivalently, whether every trivalent graph with one marked point is a coboundary in $G^{(g,1)}$, for all $g$.

\section{The weight 2 Getzler-Kapranov graph complex}\label{sec:explicit wt 2}

\subsection{Generators and relations} 
The decorated graphs that generate $\GK^2_{g,n}$ have a particularly simple description. Each generator has an underlying stable graph of genus $g$, and comes with a total ordering of the structural edges, i.e., the edges that are not incident to the external vertices labeled $1, \dots, n$, subject to the relation that reordering the structural edges is multiplication by the sign of the permutation. In addition, each vertex $v$ is decorated by an element of $H^{k_v}(\bMM_{g_v,n_v})$, where $g_v$ and $n_v$ are the genus and valence of $v$, and $\sum_v k_v = 2$.

Since $H^0(\bMM)=\Q$ and $H^1(\bMM)=0$ for each moduli space $\bMM$ attached to a vertex, any generator for $\GK^2_{g,n}$ has a unique vertex $v$ with decoration in $H^2(\bMM_{g_v,n_v})$, which we call the \emph{special vertex}.  After rescaling, we may assume all other vertices are decorated with the unit $1 \in H^0(\bMM)$; for simplicity, we omit these trivial decorations. 
We can thus give a finite generating set for $\GK^2_{g,n}$ by specifying a finite generating set for $H^2(\bMM_{g',n'})$ for all $g'$ and $n'$.

Recall that $H^2(\bMM_{g,n})$ is generated by the {\em tautological classes}: $\kappa$, $\psi_1, \ldots, \psi_n$, $\delta_{irr}$, and $\delta_{a,A}=\delta_{g-a,A^c}$, for subsets $A \subset \{1,\dots,n\}$ and $0\leq a \leq g$ such that $2a+|S|\geq 2$ and $2(g-a)+|A^c|\geq 2$. Thus $\GK_{g,n}^2$ is generated by graphs in which  the special vertex $v$ is decorated by one of these tautological classes in $H^2(\bMM_{g_v,n_v})$. Here, the set $\{1, \ldots, n_v\}$ is implicitly identified with the set 
set of half-edges incident to $v$. 

\medskip 

For $g \geq 3$, the tautological classes form a basis $H^2(\bMM_{g,n})$; there are no further relations. However, for $g\leq 2$, the tautological classes satisfy relations as follows \cite[Theorem~2.2]{ArbarelloCornalba98}:
 
For $g=2$ there is one relation:
\[
5\kappa = 5\sum_{i=1}^n \psi_i + \delta_{irr} - 5 \sum_{A} \delta_{0,A} +7 \sum_{A} \delta_{1,A}.  
\]
For $g=1$, there are $n+1$ relations:
\[
 \kappa =  \sum_{i=1}^n \psi_i -\sum_{|A|\geq 2} \delta_{0,A}, \mbox{ \ \ \ \ \ \ and \ \ \ \ \ \ }
12\psi_i = \delta_{irr} + 12 \sum_{i \in A} \delta_{0,A}.
\]
For $g = 0$, the relations are generated by:
\[
  \kappa = 
   \sum_{A\notni x,y} (|A|-1) \, \delta_{0,A}, \mbox{ \ \ \ \ \ \ \ \ }
 \psi_i = \sum_{i \in A \notni x,y} \delta_{0,A}, \mbox{ \ \ \ \ \ \ and \ \ \ \ \ \ }
 \delta_{irr} =0\, .
\]
Here $i$ and $x\neq y$ run over elements of $\{1,\dots,n\}$. See also Proposition~\ref{prop:H2 resolution} for another presentation of $H^2(\bMM_{0,n})$.

\begin{ex}
The following is a generator in degree $9$ for $\GK^2_{8,1}$. The inscribed numbers in vertices $v$ representing the genera $g_v$.
\[
  \begin{tikzpicture}[scale=1]
    \node[ext,accepting, label=90:{$\scriptstyle \kappa$}] (v1) at (0,0){$\scriptstyle 3$};
    \node[ext] (v2) at (180:1){$\scriptstyle 1$};
    \node[ext] (v3) at (60:1){$\scriptstyle 0$};
    \node[ext] (v4) at (-60:1){$\scriptstyle 0$};
    \draw (v1) 
    edge(v2)  edge (v3) edge (v4) 
    (v2) edge[bend left] (v3) edge[bend right] (v4)  -- +(180:1.3) 
    (v3) edge (v4) edge[loop] (v3);
    \node (w) at (180:2.5) {$1$};
    \end{tikzpicture}
\]
The special vertex is drawn with a double cirle, and decorated by $\kappa\in H^2(\bMM_{3,3})$.
\end{ex}

\subsection{Graphical depiction of boundary and $\psi$-class decorations}\label{sec:graphical bdry psi}
We use suggestive graphical depictions of the decorations on the special vertex, indicating a decoration $\psi_i$ with an arrow on the corresponding half-edge.
\begin{align*}
  \begin{tikzpicture}[scale=1]
    \node[ext,accepting, label=90:{$\scriptstyle \psi_i$}] (v) at (0,0){$\scriptstyle h$};
    \node at (.45,.15) {$\scriptstyle i$};
    \draw (v) edge +(0:.5) edge +(60:.5) edge +(-60:.5) edge +(120:.5) edge +(180:.5) edge +(-120:.5);
  \end{tikzpicture}
  &=:
  \begin{tikzpicture}[scale=1]
    \node[ext,accepting] (v) at (0,0){$\scriptstyle h$};
    \draw (v) edge[->-] +(0:.5) edge +(60:.5) edge +(-60:.5) edge +(120:.5) edge +(180:.5) edge +(-120:.5);
  \end{tikzpicture}
\end{align*}
To indicate a decoration $\delta_{a,A}$, we replace the special vertex with two vertices connected by a marked edge.
\begin{align}\label{equ:delta two vert}
  \begin{tikzpicture}[scale=1]
    \node[ext,accepting, label=90:{$\scriptstyle \delta_{a,A}$}] (v) at (0,0){$\scriptstyle h$};
    \draw (v) edge +(0:.5) edge +(60:.5) edge +(-60:.5) edge +(120:.5) edge +(180:.5) edge +(-120:.5);
  \end{tikzpicture}
  &=:
  \begin{tikzpicture}[scale=1]
    \node[ext] (v) at (0,0){$\scriptstyle b$};
    \node[ext] (w) at (-.7,0){$\scriptstyle a$};
    \draw (v) edge[crossed] (w) edge +(0:.5) edge +(60:.5) edge +(-60:.5) 
    (w) edge +(120:.5) edge +(180:.5) edge +(-120:.5);
  \end{tikzpicture}
\end{align}
Here $b = h-a$ and $A$ is the subset of the half-edges at the special vertex that are connected to the vertex labeled $a$.  

\subsection{Graphical depiction of the differential}
The differential $\delta = \delta_{split} + \delta_{loop}$ on $\GK^k_{g,n}$ acts on each generator by taking a sum over the vertices. Roughly speaking, it amounts to splitting vertices and attaching loops at vertices in all allowable ways, and then pulling back cohomological decorations along the corresponding clutching and gluing maps between moduli spaces. When the decoration on a given vertex is trivial, the differential acts just like the differential on the standard commutative graph complex. 

Thus, we may depict the contribution ``$\Gamma \mathrm{ \ split \ } v$" from a non-special vertex $v$ with $g_v = h$ as  
\begin{equation}
\label{equ:deltasplitGK}
  \begin{tikzpicture}[baseline=-.65ex]
  \node[ext] (v) at (0,0) {$\scriptstyle h$};
  \draw (v) edge +(-.3,-.3)  edge +(-.3,0) edge +(-.3,.3) edge +(.3,-.3)  edge +(.3,0) edge +(.3,.3);
  \end{tikzpicture}
  \mapsto
  \sum_{a+b=h}\sum
  \begin{tikzpicture}[baseline=-.65ex]
  \node[ext] (v) at (0,0) {$\scriptstyle a$};
  \node[ext] (w) at (0.5,0) {$\scriptstyle b$};
  \draw (v) edge (w) (v) edge +(-.3,-.3)  edge +(-.3,0) edge +(-.3,.3)
   (w) edge +(.3,-.3)  edge +(.3,0) edge +(.3,.3);
  \end{tikzpicture}\, .
  \end{equation}
On the right-hand side, the second sum is taken over all ways of distributing the half-edges incident to the special vertex over the two new vertices of genus $a$ and $b$ such that the resulting graph is stable. 
Similarly, we may depict the contribution to $\delta_{loop}$ from a non-special vertex $v$ as
\begin{equation}  \label{equ:delta loop GK}
  \begin{tikzpicture}
  \node[ext] (v) at (0,0) {$\scriptstyle h$};
  \draw (v) edge +(-.5,-.5) edge +(0,-.5) edge +(.5,-.5);
  \end{tikzpicture}
  \mapsto 
  \begin{tikzpicture}
    \node[ext] (v) at (0,0) {$\scriptscriptstyle h-1$};
    \path[overlay] (v) edge[loop] coordinate[midway](X) (v) ;
    \path (X);
    \draw (v) edge +(-.5,-.5) edge +(0,-.5) edge +(.5,-.5) (v) ;
    \end{tikzpicture},
\end{equation}
with the right-hand side being understood as zero if $h=0$. In both cases, the edge orderings on the right-hand side are fixed so that the newly created edge is first and the relative order of the other edges is unchanged.

Next we turn to the special vertex. In what follows, it will be convenient to consider curves with marked points labeled by a finite set $S$ that is not necessarily identified with $\{1, \dots, n\}$.  We follow the usual notational convention (e.g., from \cite{ArbarelloCornalba98}), writing $\MM_{g,S}$ for the moduli space of smooth curves of genus $g$ with $|S|$ marked points labeled by a bijection to $S$, and $\bMM_{g,S}$ for its compactification by stable curves.

To describe the differential one needs to understand the pullback of the classes in $H^2(\bMM_{h,k})$ under the maps
\[
\xi \colon \bMM_{h-1,S\cup \{t,t'\} } \to \bMM_{h,S} \mbox{ \ \ \ \ \ \ and \ \ \ \ \ \ }
\theta \colon \bMM_{a,A\cup \{q\} } \to \bMM_{h,S},
\]
where the latter is obtained by attaching a fixed curve of genus $h-a$ with marked points labeled by $A^c \cup \{r\}$, where $A^c := S \smallsetminus A$.  (The map $\theta$ depends on the choice of this curve, but only up to homotopy, so the pullback map on cohomology is well-defined.)   
The relevant formulas are well-known; see \cite[Lemmas~3.2 and 3.3]{ArbarelloCornalba98}.

First, for the pullback $\theta^*$, which is related to $\delta_{split}$ at the special vertex, we have:
\[
\theta^*(\kappa) = \kappa, \ \ \ \ \ \ \ 
\theta^*(\delta_{irr}) = \delta_{irr}, \ \ \ \  \ \ \mbox{ and } \ \ \ \ \ \ 
\theta^*(\psi_i) = 
\begin{cases}
  \psi_i & \text{if $i\in S$} \\
  0 & \text{otherwise}
\end{cases}\, .
\]
  \noindent The resulting formulas for $\delta_{split}$ when applied to graphs in which the special vertex is decorated with $\kappa$, $\delta_{irr}$, or $\psi_i$, can then be depicted graphically in a way that naturally generalizes \eqref{equ:deltasplitGK}. For example, when the special vertex is decorated with $\psi_i$, we have
\begin{align} \label{eq:splitpsi}
\delta_{split}\colon
\, 
\begin{tikzpicture}[baseline=-.65ex]
  \node[ext] (v) at (0,0) {$\scriptstyle h$};
  \draw (v) edge[->-] +(-.3,-.3)  edge +(-.3,0) edge +(-.3,.3) edge +(.3,-.3)  edge +(.3,0) edge +(.3,.3);
  \end{tikzpicture}
  &\mapsto
  \sum_{a + b =h}\sum
  \begin{tikzpicture}[baseline=-.65ex]
  \node[ext] (v) at (0,0) {$\scriptstyle a$};
  \node[ext] (w) at (0.5,0) {$\scriptstyle b$};
  \draw (v) edge (w) (v) edge[->-] +(-.3,-.3)  edge +(-.3,0) edge +(-.3,.3)
   (w) edge +(.3,-.3)  edge +(.3,0) edge +(.3,.3);
  \end{tikzpicture}
\end{align}
Here, the second sum is again taken over all ways of distributing the half-edges incident to the special vertex over the two new vertices of genus $a$ and $b$ such that the resulting graph is stable.  

Next, we discuss the formulas for $\theta^*(\delta_{b,B})$. In the special case where $A = S$, we have:
\begin{align*}
  \theta^*(\delta_{b,B}) &= 
  \begin{cases}
    \delta_{2b-h,S\cup \{q\}} - \psi_q & \text{if $(b,B) = (a,S)$ or $(b,B) = (h-a, \emptyset)$;} \\
    \delta_{b,B}+\delta_{b+a-h,B\cup \{q\} } & \text{otherwise.}
  \end{cases}
  \end{align*}
  If $A\neq S$ then 
  \begin{align*}
    \theta^*(\delta_{b,B}) &= 
    \begin{cases}
      -\psi_q & \text{if $(b,B)=(a,A)$ or $(b,B)=(h-a,A^c)$}; \\
      \delta_{b,B} & \text{if $B\subset A$ and $(b,B)\neq (a,A)$}; \\
      \delta_{b+a-h, (B \smallsetminus A^c) \cup \{q\}} & \text{if $B\supset A^c$ and $(b,B)\neq (h-a,A^c)$}; \\
      0 & \text{otherwise}.
    \end{cases}
  \end{align*}

Again, the resulting action of $\delta_{split}$ on a special vertex with decoration $\delta_{b,B}$ has a  convenient graphic depiction: 
\begin{equation}\label{equ:delta split delta}
\begin{aligned}
\delta_{split}\colon
\, 
\begin{tikzpicture}[scale=.9]
  \node[ext] (v) at (0,0){$\scriptstyle b$};
  \node[ext] (w) at (-.7,0){$\scriptstyle a$};
  \draw (v) edge[crossed] (w) edge +(0:.5) edge +(60:.5) edge +(-60:.5) 
  (w) edge +(120:.5) edge +(180:.5) edge +(-120:.5);
\end{tikzpicture}
&\mapsto 
\sum_{b'+b''=b}\sum
\begin{tikzpicture}[scale=.9]
  \node[ext] (v0) at (0.7,0){$\scriptstyle b''$};
  \node[ext] (v) at (0,0){$\scriptstyle b'$};
  \node[ext] (w) at (-.7,0){$\scriptstyle a$};
  \draw (v) edge[crossed] (w) edge (v0) edge +(-60:.5) 
  (v0) edge +(0:.5) edge +(60:.5) 
  (w) edge +(120:.5) edge +(180:.5) edge +(-120:.5);
\end{tikzpicture}
+\sum_{a'+a''=a}\sum
\begin{tikzpicture}[scale=.9]
  \node[ext] (w0) at (-1.4,0){$\scriptstyle a'$};
  \node[ext] (v) at (0,0){$\scriptstyle b$};
  \node[ext] (w) at (-.7,0){$\scriptstyle a''$};
  \draw (v) edge[crossed] (w)  edge +(-60:.5) edge +(0:.5) edge +(60:.5) 
  (w) edge (w0) edge +(120:.5) 
  (w0) edge +(180:.5) edge +(-120:.5);
\end{tikzpicture}
\\&\quad -
\begin{tikzpicture}[scale=.9]
  \node[ext] (v) at (0,0){$\scriptstyle b$};
  \node[ext, accepting] (w) at (-.7,0){$\scriptstyle a$};
  \draw (v) edge +(0:.5) edge +(60:.5) edge +(-60:.5) 
  (w) edge[->-] (v) edge +(120:.5) edge +(180:.5) edge +(-120:.5);
\end{tikzpicture}
-
\begin{tikzpicture}[scale=1]
  \node[ext, accepting] (v) at (0,0){$\scriptstyle b$};
  \node[ext] (w) at (-.7,0){$\scriptstyle a$};
  \draw (v) edge[->-] (w) edge +(0:.5) edge +(60:.5) edge +(-60:.5) 
  (w) edge +(120:.5) edge +(180:.5) edge +(-120:.5);
\end{tikzpicture}
\end{aligned}
\end{equation}

Next, consider pullback $\xi^*$, which is related to $\delta_{loop}$ at the special vertex.  We have 
\[
\resizebox{.95\hsize}{!}{$
\xi^* \kappa = \kappa, \ \ \ \ \ \ \ \ \ 
   \xi^* \psi_j = \psi_j, \ \ \ \ \ \ \ \ \ \mbox { and } \ \ \ \ \ \ \ \ \ \ \
 \xi^* \delta_{a,A} = 
 \begin{cases}
  \delta_{a,A} & \text{if $h=2a$, $A=S=\emptyset$} \\
  \delta_{a,A}+\delta_{a-1,A\cup\{t,t'\} }
  &\text{otherwise}
 \end{cases}\, . 
 $}
\]
These algebraic rules correspond to the following pictorial definition of $\delta_{loop}$:
\begin{gather*}
  \begin{tikzpicture}[scale=1]
    \node[ext,accepting, label=-90:{$\kappa$}] (v) at (0,0){$\scriptstyle h$};
    \draw (v)  edge +(0:.5) edge +(60:.5) edge +(-60:.5) edge +(120:.5) edge +(180:.5) edge +(-120:.5);
  \end{tikzpicture}
  \mapsto 
  \begin{tikzpicture}[scale=1]
    \node[ext,accepting, label=-90:{$\kappa$}] (v) at (0,0){$\scriptscriptstyle h-1$};
    \draw (v) edge +(0:.5) edge +(60:.5) edge +(-60:.5) edge +(120:.5) edge +(180:.5) edge +(-120:.5) edge[loop above] (v);
  \end{tikzpicture}
  \quad \quad
  \begin{tikzpicture}[baseline=-.65ex, scale=1.0]
    \node[ext] (v) at (0,0) {$ \scriptstyle h$};
    \draw (v) edge[->-] +(-.5,-.5)  edge +(-.5,0) edge +(-.5,.5) edge +(.5,-.5)  edge +(.5,0) edge +(.5,.5);
    \end{tikzpicture}
    \mapsto
    \begin{tikzpicture}[baseline=-.65ex, scale=1.0]
      \node[ext] (v) at (0,0) {$\scriptscriptstyle h-1$};
      \draw (v) edge[->-] +(-.5,-.5)  edge +(-.5,0) edge +(-.5,.5) edge +(.5,-.5)  edge +(.5,0) edge +(.5,.5)edge[loop above] (v);
    \end{tikzpicture}
    \\
  \begin{tikzpicture}[scale=1.2]
    \node[ext] (v) at (0,0){$\scriptstyle b$};
    \node[ext] (w) at (-.7,0){$\scriptstyle a$};
    \draw (v) edge[crossed] (w) edge +(0:.5) edge +(60:.5) edge +(-60:.5) 
    (w) edge +(120:.5) edge +(180:.5) edge +(-120:.5);
  \end{tikzpicture}
  \mapsto 
  \begin{tikzpicture}[scale=1.2]
    \node[ext] (v) at (0,0){$\scriptstyle b$};
    \node[ext] (w) at (-.7,0){$\scriptscriptstyle a-1$};
    \draw (v) edge[crossed] (w) edge +(0:.5) edge +(60:.5) edge +(-60:.5) 
    (w) edge +(120:.5) edge +(180:.5) edge +(-120:.5)
    edge[loop above] (w);
  \end{tikzpicture}
  \, + \, 
  \begin{tikzpicture}[scale=1.2]
    \node[ext] (v) at (0,0){$\scriptscriptstyle b-1$};
    \node[ext] (w) at (-.7,0){$\scriptstyle a$};
    \draw (v) edge[crossed] (w) edge +(0:.5) edge +(60:.5) edge +(-60:.5) edge[loop above] (v)
    (w) edge +(120:.5) edge +(180:.5) edge +(-120:.5);
  \end{tikzpicture} \quad .
\end{gather*}
Note that these pictures are formally similar to \eqref{equ:delta loop GK}.

For the boundary class $\delta_{irr}$, we have
\[
 \xi^*\delta_{irr} =
 \delta_{irr}-\psi_t -\psi_{t'} + \sum_{a} 
 \sum_{ {t\in A,t'\notin A}} \delta_{a,A}.
\]
This means that the action of the differential $\delta_{loop}$ on a $\delta_{irr}$-decorated vertex may be depicted graphically as
\begin{equation*}%\label{equ:delta join delta irr}
  \begin{aligned}
  \delta_{loop}\colon
  \, 
  \begin{tikzpicture}[scale=1]
    \node[ext,accepting, label=90:{$\scriptstyle \delta_{irr}$}] (v) at (0,0){$\scriptstyle h$};
    \draw (v) edge +(0:.5)  
    edge +(-60:.5) 
    edge +(180:.5) edge +(-120:.5);
  \end{tikzpicture}
  &\mapsto 
  \begin{tikzpicture}[scale=1, label distance=-1mm]
    \node[ext,accepting, label=90:{$\scriptstyle \delta_{irr}$}] (v) at (0,0){$\scriptscriptstyle h-1$};
    \draw (v) edge +(0:.5)  
    edge +(-60:.5) 
     edge +(180:.5) edge +(-120:.5)
    edge[loop] (v);
  \end{tikzpicture}
- 2
\begin{tikzpicture}[scale=1]
  \node[ext,accepting] (v) at (0,0){$\scriptscriptstyle h-1$};
  \draw (v) edge +(0:.5)  
  edge +(-60:.5) 
    edge +(180:.5) edge +(-120:.5)
  edge[loop, ->-] (v);
\end{tikzpicture}
  + \sum_{a + b=h-1}
  \begin{tikzpicture}[scale=1]
    \node[ext] (v) at (0,0){$\scriptstyle b$};
    \node[ext] (w) at (-.7,0){$\scriptstyle a$};
    \draw (v) edge[crossed] (w)  
     edge +(-60:.5) edge +(0:.5)  
    (w) edge (w0)  
    (w) edge +(180:.5) edge +(-120:.5)
    (v) edge[out=120, in=60] (w);
  \end{tikzpicture}
  \end{aligned}
  \end{equation*}
  
  \subsection{Example of a nontrivial cocycle in $\GK_{g,n}^2$}\label{sec:cor n0 example} 
Here we depict a representative in $\GK^2$ of the first nontrivial cohomology class produced by Theorem~\ref{cor:n0}. It lives in $\gr_2 H^{19}_c(\MM_8)\cong H^{19}(\GK_{8}^2)$, and arises from the subspace $W_0H_c^{6}(\MM_{3,1}) \wedge W_0 H_c^{10}(\MM_{5,1})$ of  $\mathbb V_{8,16}$.
A representative is given by
  \begin{align*}
\frac{1}{20} \sum
\left(
    \begin{tikzpicture}
  \node[ext] (v1) at (0:.5) {};
\node[ext] (v2) at (90:.5) {};
\node[ext] (v3) at (-90:.5) {};
\node[ext] (v4) at (180:.5) {};
\draw (v1) edge (v2) edge (v4) edge (v3) %edge[dashed] +(1,0)
(v2) edge (v3) edge (v4)
(v3) edge (v4);
\end{tikzpicture}
\right)
\begin{tikzpicture}
  %\node[ext] (v0) at (0,0) {};
    %\node[ext] (v1) at (1,0) {};
\draw (0,0) edge[dashed] + (1,0);
\end{tikzpicture}
\left(
  \begin{tikzpicture}
    \node[ext] (v0) at (0,0) {};
    \node[ext] (v1) at (0:.5) {};
  \node[ext] (v2) at (72:.5) {};
  \node[ext] (v3) at (144:.5) {};
  \node[ext] (v4) at (-144:.5) {};
  \node[ext] (v5) at (-72:.5) {};
  \draw (v0) edge (v1) edge (v2) edge (v4) edge (v3) edge (v5)
  (v1) edge (v2) edge (v5)
  (v3) edge (v2) edge (v4)
  (v4) edge (v5);
  \end{tikzpicture}
+
\frac52
\begin{tikzpicture}
  \node[ext] (v1) at (-.9,.5) {};
  \node[ext] (v2) at (-.9,-.5) {};
\node[ext] (v3) at (-.4,0) {};
\node[ext] (v4) at (.9,.5) {};
\node[ext] (v5) at (.9,-.5) {};
\node[ext] (v6) at (.4,0) {};
\draw (v1) edge (v2) edge (v4) edge (v3) 
(v2) edge (v3) edge (v5)
(v3) edge (v5) edge (v6)
(v4) edge (v5) edge (v6)
(v5) edge (v6);
\end{tikzpicture}
\right),
\end{align*}
where the dashed edge should be attached to one vertex on the left and one on the right, and one decorates it with an arrow pointing left, minus one pointing right.
The result is a linear combination of 10 graphs with $\psi$-decorations: 
\begin{gather*}
      \begin{tikzpicture}[scale=0.7]
    \node[ext, accepting] (v1) at (0:.5) {};
  \node[ext] (v2) at (90:.5) {};
  \node[ext] (v3) at (-90:.5) {};
  \node[ext] (v4) at (180:.5) {};
  \draw (v1) edge (v2) edge (v4) edge (v3)
  (v2) edge (v3) edge (v4)
  (v3) edge (v4);
    \begin{scope}[xshift=2cm]
      \node[ext] (w0) at (0,0) {};
      \node[ext] (w1) at (0:.5) {};
    \node[ext] (w2) at (72:.5) {};
    \node[ext] (w3) at (144:.5) {};
    \node[ext] (w4) at (-144:.5) {};
    \node[ext] (w5) at (-72:.5) {};
    \draw (w0) edge (w1) edge (w2) edge (w4) edge (w3) edge (w5)
    (w1) edge (w2) edge (w5)
    (w3) edge (w2) edge (w4)
    (w4) edge (w5);
    \end{scope}
    \draw (v1) edge[->-] (w3);
  \end{tikzpicture}
  \ - \
  \begin{tikzpicture}[scale=0.7]
    \node[ext] (v1) at (0:.5) {};
  \node[ext] (v2) at (90:.5) {};
  \node[ext] (v3) at (-90:.5) {};
  \node[ext] (v4) at (180:.5) {};
  \draw (v1) edge (v2) edge (v4) edge (v3)
  (v2) edge (v3) edge (v4)
  (v3) edge (v4);
    \begin{scope}[xshift=2cm]
      \node[ext] (w0) at (0,0) {};
      \node[ext] (w1) at (0:.5) {};
    \node[ext] (w2) at (72:.5) {};
    \node[ext, accepting] (w3) at (144:.5) {};
    \node[ext] (w4) at (-144:.5) {};
    \node[ext] (w5) at (-72:.5) {};
    \draw (w0) edge (w1) edge (w2) edge (w4) edge (w3) edge (w5)
    (w1) edge (w2) edge (w5)
    (w3) edge (w2) edge (w4)
    (w4) edge (w5);
    \end{scope}
    \draw (w3) edge[->-] (v1);
  \end{tikzpicture}
  \ + \   \frac15 
  \begin{tikzpicture} [scale=0.7]
    \node[ext, accepting] (v1) at (0:.5) {};
  \node[ext] (v2) at (90:.5) {};
  \node[ext] (v3) at (-90:.5) {};
  \node[ext] (v4) at (180:.5) {};
  \draw (v1) edge (v2) edge (v4) edge (v3)
  (v2) edge (v3) edge (v4)
  (v3) edge (v4);
    \begin{scope}[xshift=2cm]
      \node[ext] (w0) at (0,0) {};
      \node[ext] (w1) at (0:.5) {};
    \node[ext] (w2) at (72:.5) {};
    \node[ext] (w3) at (144:.5) {};
    \node[ext] (w4) at (-144:.5) {};
    \node[ext] (w5) at (-72:.5) {};
    \draw (w0) edge (w1) edge (w2) edge (w4) edge (w3) edge (w5)
    (w1) edge (w2) edge (w5)
    (w3) edge (w2) edge (w4)
    (w4) edge (w5);
    \end{scope}
    \draw (v1) edge[->-] (w0);
  \end{tikzpicture}
  \ - \ \frac15 
  \begin{tikzpicture}[scale=0.7]
    \node[ext] (v1) at (0:.5) {};
  \node[ext] (v2) at (90:.5) {};
  \node[ext] (v3) at (-90:.5) {};
  \node[ext] (v4) at (180:.5) {};
  \draw (v1) edge (v2) edge (v4) edge (v3)
  (v2) edge (v3) edge (v4)
  (v3) edge (v4);
    \begin{scope}[xshift=2cm]
      \node[ext, accepting] (w0) at (0,0) {};
      \node[ext] (w1) at (0:.5) {};
    \node[ext] (w2) at (72:.5) {};
    \node[ext] (w3) at (144:.5) {};
    \node[ext] (w4) at (-144:.5) {};
    \node[ext] (w5) at (-72:.5) {};
    \draw (w0) edge (w1) edge (w2) edge (w4) edge (w3) edge (w5)
    (w1) edge (w2) edge (w5)
    (w3) edge (w2) edge (w4)
    (w4) edge (w5);
    \end{scope}
    \draw (w0) edge[->-] (v1);
  \end{tikzpicture}
 \ +
  \\ 
  \begin{tikzpicture}[scale=0.7]
    \node[ext, accepting] (v1) at (0:.5) {};
  \node[ext] (v2) at (90:.5) {};
  \node[ext] (v3) at (-90:.5) {};
  \node[ext] (v4) at (180:.5) {};
  \draw (v1) edge (v2) edge (v4) edge (v3)
  (v2) edge (v3) edge (v4)
  (v3) edge (v4);
  \begin{scope}[xshift=2cm]
    \node[ext] (w1) at (-.9,.5) {};
    \node[ext] (w2) at (-.9,-.5) {};
  \node[ext] (w3) at (-.4,0) {};
  \node[ext] (w4) at (.9,.5) {};
  \node[ext] (w5) at (.9,-.5) {};
  \node[ext] (w6) at (.4,0) {};
  \draw (w1) edge (w2) edge (w4) edge (w3) 
  (w2) edge (w3) edge (w5)
  (w3) edge (w5) edge (w6)
  (w4) edge (w5) edge (w6)
  (w5) edge (w6);
  \end{scope}
  \draw (v1) edge[->-] (w1);
  \end{tikzpicture}
 \ - \
  \begin{tikzpicture}[scale=0.7]
    \node[ext] (v1) at (0:.5) {};
  \node[ext] (v2) at (90:.5) {};
  \node[ext] (v3) at (-90:.5) {};
  \node[ext] (v4) at (180:.5) {};
  \draw (v1) edge (v2) edge (v4) edge (v3)
  (v2) edge (v3) edge (v4)
  (v3) edge (v4);
  \begin{scope}[xshift=2cm]
    \node[ext, accepting] (w1) at (-.9,.5) {};
    \node[ext] (w2) at (-.9,-.5) {};
  \node[ext] (w3) at (-.4,0) {};
  \node[ext] (w4) at (.9,.5) {};
  \node[ext] (w5) at (.9,-.5) {};
  \node[ext] (w6) at (.4,0) {};
  \draw (w1) edge (w2) edge (w4) edge (w3) 
  (w2) edge (w3) edge (w5)
  (w3) edge (w5) edge (w6)
  (w4) edge (w5) edge (w6)
  (w5) edge (w6);
  \end{scope}
  \draw (w1) edge[->-] (v1);
  \end{tikzpicture}
\ + \
  \begin{tikzpicture}[scale=0.7]
    \node[ext, accepting] (v1) at (0:.5) {};
  \node[ext] (v2) at (90:.5) {};
  \node[ext] (v3) at (-90:.5) {};
  \node[ext] (v4) at (180:.5) {};
  \draw (v1) edge (v2) edge (v4) edge (v3)
  (v2) edge (v3) edge (v4)
  (v3) edge (v4);
  \begin{scope}[xshift=2cm]
    \node[ext] (w1) at (-.9,.5) {};
    \node[ext] (w2) at (-.9,-.5) {};
  \node[ext] (w3) at (-.4,0) {};
  \node[ext] (w4) at (.9,.5) {};
  \node[ext] (w5) at (.9,-.5) {};
  \node[ext] (w6) at (.4,0) {};
  \draw (w1) edge (w2) edge (w4) edge (w3) 
  (w2) edge (w3) edge (w5)
  (w3) edge (w5) edge (w6)
  (w4) edge (w5) edge (w6)
  (w5) edge (w6);
  \end{scope}
  \draw (v1) edge[->-] (w2);
  \end{tikzpicture}
  \ - \
  \begin{tikzpicture}[scale=0.7]
    \node[ext] (v1) at (0:.5) {};
  \node[ext] (v2) at (90:.5) {};
  \node[ext] (v3) at (-90:.5) {};
  \node[ext] (v4) at (180:.5) {};
  \draw (v1) edge (v2) edge (v4) edge (v3)
  (v2) edge (v3) edge (v4)
  (v3) edge (v4);
  \begin{scope}[xshift=2cm]
    \node[ext] (w1) at (-.9,.5) {};
    \node[ext, accepting] (w2) at (-.9,-.5) {};
  \node[ext] (w3) at (-.4,0) {};
  \node[ext] (w4) at (.9,.5) {};
  \node[ext] (w5) at (.9,-.5) {};
  \node[ext] (w6) at (.4,0) {};
  \draw (w1) edge (w2) edge (w4) edge (w3) 
  (w2) edge (w3) edge (w5)
  (w3) edge (w5) edge (w6)
  (w4) edge (w5) edge (w6)
  (w5) edge (w6);
  \end{scope}
  \draw (w2) edge[->-] (v1);
  \end{tikzpicture}
  \\ + \ 
  \begin{tikzpicture}[scale=0.7]
    \node[ext, accepting] (v1) at (0:.5) {};
  \node[ext] (v2) at (90:.5) {};
  \node[ext] (v3) at (-90:.5) {};
  \node[ext] (v4) at (180:.5) {};
  \draw (v1) edge (v2) edge (v4) edge (v3)
  (v2) edge (v3) edge (v4)
  (v3) edge (v4);
  \begin{scope}[xshift=2.2cm]
    \node[ext] (w1) at (-.9,.5) {};
    \node[ext] (w2) at (-.9,-.5) {};
  \node[ext] (w3) at (-.4,0) {};
  \node[ext] (w4) at (.9,.5) {};
  \node[ext] (w5) at (.9,-.5) {};
  \node[ext] (w6) at (.4,0) {};
  \draw (w1) edge (w2) edge (w4) edge (w3) 
  (w2) edge (w3) edge (w5)
  (w3) edge (w5) edge (w6)
  (w4) edge (w5) edge (w6)
  (w5) edge (w6);
  \end{scope}
  \draw (v1) edge[->-] (w3);
  \end{tikzpicture}
  \ - \
  \begin{tikzpicture}[scale=0.7]
    \node[ext] (v1) at (0:.5) {};
  \node[ext] (v2) at (90:.5) {};
  \node[ext] (v3) at (-90:.5) {};
  \node[ext] (v4) at (180:.5) {};
  \draw (v1) edge (v2) edge (v4) edge (v3)
  (v2) edge (v3) edge (v4)
  (v3) edge (v4);
  \begin{scope}[xshift=2.2cm]
    \node[ext] (w1) at (-.9,.5) {};
    \node[ext] (w2) at (-.9,-.5) {};
  \node[ext, accepting] (w3) at (-.4,0) {};
  \node[ext] (w4) at (.9,.5) {};
  \node[ext] (w5) at (.9,-.5) {};
  \node[ext] (w6) at (.4,0) {};
  \draw (w1) edge (w2) edge (w4) edge (w3) 
  (w2) edge (w3) edge (w5)
  (w3) edge (w5) edge (w6)
  (w4) edge (w5) edge (w6)
  (w5) edge (w6);
  \end{scope}
  \draw (w3) edge[->-] (v1);
  \end{tikzpicture}\, .
  \end{gather*}

  \begin{rem}
    Note that Theorem \ref{cor:n0} also provides another embedding of $\mathbb V_{g,k}$ into $H_c(\MM_{g+1})$, and the element of $\mathbb V_{8,16}$ underlying the degree 19 cocycle of genus 8 depicted above gives rise to another non-trivial element of $H_c^{20}(\MM_{9})$.
  The corresponding cocycle in $\GK_9^2$ has approximately three times as many terms, and we do not draw it here. In the proof of Theorem \ref{cor:n0} and Appendix \ref{sec:direct map def} we do provide an explicit algorithm for obtaining the image of the corresponding cocycle in the quasi-isomorphic quotient $\bGK_9^2$ of $\GK_9^2$.
  \end{rem}

\subsection{The quotient $\bGK$}
Let $I_{g,n}\subset \GK_{g,n}$ be the graded subspace spanned by graphs that have at least one vertex of positive genus. 
Then define the quotient 
\[
\bGK_{g,n} = \GK_{g,n}/(I_{g,n}+\delta I_{g,n}).
\]
In other words, $\bGK_{g,n}$ is generated by graphs all of whose vertices are of genus zero. Among those graphs we impose relations of the form $\delta_{loop}\Gamma$, for $\Gamma$ running over graphs with exactly one vertex of genus 1. The quotient $\bGK_{g,n}$ inherits the weight grading from $\GK_{g,n}$, where the weight is the sum of the cohomological degrees of the vertex decorations. We denote the summand of weight $k$ by
\[
  \bGK_{g,n}^k \subset \bGK_{g,n}.
\]
In particular, the weight 0 piece $\bGK_{g,n}^{0}$ is generated by graphs without loop edges, all of whose vertices have genus zero. After rescaling, we may assume that all vertex decorations are trivial, i.e., each decoration is the identity $1 \in H^0(\bMM_{0,k})$. One main result of \cite{CGP1,CGP2} is that the quotient $\GK_{g,n}^0\to \bGK^0_{g,n}$ is a quasi-isomorphism for all $g,n$.

\medskip

In this paper, we prove an analogous result in weight 2.  The weight 2 piece $\bGK_{g,n}^{2}$ is generated by graphs with all vertices of genus zero, and one special vertex decorated by a tautological generator for $H^2(\bMM_{0,k})$. We quotient by graphs with a loop edge at a non-special vertex. Furthermore, if there is a loop edge at the special vertex, then we impose the additional relations $\delta_{0,A}=0$ for sets of half-edges $A$ that contain both half-edges of the loop.  (Since $\delta_{0,A} = \delta_{0,A^c}$ in $H^2(\bMM_{0,k})$, this also kills graphs labeled with $\delta_{0,A}$, where $A$ contains neither half-edge of the loop.)

\begin{thm}\label{thm:main quotient}
  The quotient map $\GK_{g,n}^2\to \bGK_{g,n}^2$ is a quasi-isomorphism for all $g, n$ except $g=n=1$. 
\end{thm}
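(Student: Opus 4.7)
The plan is to show that the kernel of the quotient map, the subcomplex $J := I_{g,n} + \delta I_{g,n} \subset \GK^2_{g,n}$, is acyclic for $(g,n) \neq (1,1)$; the theorem then follows from the long exact sequence attached to $0 \to J \to \GK^2_{g,n} \to \bGK^2_{g,n} \to 0$.

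First I would set up the algebraic structure. Decompose $\GK^2_{g,n} = \widetilde{\GK} \oplus I_{g,n}$ as graded vector spaces, where $\widetilde{\GK}$ denotes the span of graphs with all vertices of genus zero. Since $\delta_{split}$ never creates positive-genus vertices and $\delta_{loop}$ vanishes on genus-zero vertices, $\widetilde{\GK}$ is in fact a subcomplex, and the differential takes block-triangular form with an off-diagonal map $f : I_{g,n} \to \widetilde{\GK}$ recording the $\delta_{loop}$ contributions that collapse a unique genus-one vertex into a loop-carrying genus-zero vertex. Then $\bGK^2_{g,n} = \widetilde{\GK}/\mathrm{im}(f)$, and unwinding the definitions shows that $J$ is acyclic precisely when $(\ker f, \delta_I|_{\ker f})$ is acyclic, where $\delta_I$ is the $I$-preserving part of $\delta$.

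Next I would introduce a filtration on $I_{g,n}$ by the total vertex-genus $p(\Gamma) = \sum_v g_v$ and analyze the associated spectral sequence. Since $\delta_{split}$ preserves $p$ and $\delta_{loop}$ decreases $p$ by $1$, this is a preserved increasing filtration whose associated graded has differential just $\delta_{split}$. The key acyclicity of $(\ker f, \delta_I|_{\ker f})$ would then follow by constructing an explicit contracting homotopy that either deletes a chosen positive-genus non-special vertex or trades its genus for a loop edge, in the spirit of the CGP and \v{Z}ivkovi\'c arguments of \cite{CGP1, CGP2, Zivkovic}. The special vertex plays a passive role in this contraction: the pullback formulas $\theta^*$ and $\xi^*$ from Section~\ref{sec:explicit wt 2} are used to verify that the homotopy transports the cohomological decorations $\kappa$, $\psi_i$, $\delta_{irr}$, $\delta_{a,A}$ compatibly.

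The hardest step will be the construction of the contracting homotopy in the presence of the cohomologically decorated special vertex, because the pullback formulas are case-dependent and each case must yield the right cancellation; in particular, care is needed when the positive-genus vertex being contracted is adjacent to, or equal to, the special vertex. The excluded case $(g,n)=(1,1)$ reflects that the only available stable graph configurations are essentially a single vertex decorated by an element of $H^2(\bMM_{1,1})$, so the contracting homotopy has no room to act, and one checks directly that $H^\bullet(\bGK^2_{1,1}) \neq H^\bullet(\GK^2_{1,1})$. As a final sanity check, one matches the defining relations of $\bGK^2_{g,n}$ against $\mathrm{im}(f)$: the standard presentation of $H^2(\bMM_{0,k})$ recalled earlier, together with the extra relation $\delta_{0,A}=0$ in the presence of a loop at the special vertex with $A \supseteq \{t,t'\}$, which is exactly what $\xi^*\delta_{0,A}$ applied at a $g_*=1$ special vertex produces.
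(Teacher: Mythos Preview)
Your reduction to showing that $J=I_{g,n}+\delta I_{g,n}$ (equivalently $\ker f$) is acyclic is sound, but the execution from that point on has a real gap, and the route is quite different from the paper's.

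The paper does \emph{not} attack the kernel directly. It filters \emph{both} $\GK^2_{g,n}$ and $\bGK^2_{g,n}$ by the \emph{number of vertices}. On the associated graded the differential becomes $\delta_{loop}$, and the complex factors as a tensor product over vertices of the one-vertex complexes $V^2_{h,k}$ of \eqref{equ:Vgndef}. The technical heart is then Proposition~\ref{prop:onevertmain}: computing $H(V^2_{h,k})$ case by case. The naive null-homotopy (contract a loop edge, raise the genus) works only for $h\ge 4$, where the tautological generators of $H^2(\bMM_{h,k})$ are linearly independent; for $h=1,2,3$ the Arbarello--Cornalba relations force separate dimension-count and basis arguments. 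Once Proposition~\ref{prop:onevertmain} is in hand, the induced map on $E^1$ is surjective with a small kernel $K_{g,n}$ spanned by graphs whose special vertex has genus $1$ decorated by $\delta_{irr}$, and $K_{g,n}$ is killed by an elementary ``split off the special vertex'' trick via Lemma~\ref{lem:acyclicity}; this is exactly where $(g,n)=(1,1)$ is excluded.

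Your plan filters by total vertex genus $p=\sum_v g_v$, so the associated graded carries only $\delta_{split}$. Two problems. First, $(I_{g,n},\delta_{split})$ does not decompose into a tractable tensor product the way $(\GK^2_{g,n},\delta_{loop})$ does, so it is unclear what the $E^1$-page even looks like. Second, and more seriously, the contracting homotopy you describe --- ``trade genus for a loop edge'' --- is an inverse to $\delta_{loop}$, not to $\delta_{split}$; it does not act on the page where you have landed. Even if you rearrange the argument to use such a homotopy against $\delta_{loop}$ instead, you will hit exactly the obstruction the paper isolates in Proposition~\ref{prop:onevertmain}: when the special vertex has small genus, the relations among $\kappa,\psi_i,\delta_{irr},\delta_{a,A}$ mean the loop-contraction homotopy is not well defined on the decoration, and the residual class $\delta_{irr}$ at a genus-$1$ special vertex survives. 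You treat this as a routine compatibility check with the pullback formulas, but it is precisely the place where the argument has content and requires the several-page case analysis of Section~4.
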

\noindent For $g=n=1$ the right-hand complex is acyclic and the left-hand complex has one-dimensional cohomology, spanned by the class of a single-vertex graph with decoration $\delta_{irr}$.
\begin{equation*}%\label{equ:dirr graph}
  \begin{tikzpicture}[scale=1]
    \node[ext,accepting, label=90:{$\scriptstyle \delta_{irr}$}] (v1) at (0,0){$\scriptstyle 1$};
    \node (w) at (1,0) {$1$};
    \draw (v1) edge (w);
  \end{tikzpicture}
\end{equation*}

\subsection{Resolutions of $H^2(\bMM_{0,n})$}

In the proof of Theorem~\ref{thm:main quotient}, we use the following presentation for $H^2(\bMM_{0,n})$.  We believe this is well-known to experts. However, lacking a suitable reference, we include a short proof.

\begin{lemma} \label{lem:H2 presentation}
The cohomology group $H^2(\bMM_{0,n})$ is generated by boundary classes $\delta_{0,A}$ for subsets $A \subset \{1, ..., n \}$ with $2 \leq |A| \leq n-2$, and the $\psi$-classes $\psi_1, \ldots, \psi_n$ modulo the relations
\begin{enumerate}
\item $\delta_{0,A} = \delta_{0,A^c}$;
\item \label{eq:psi relation} $\psi_i + \psi_j = \sum_{i \in A, j \in A^c} \delta_{0,A}$.
\end{enumerate}
\end{lemma}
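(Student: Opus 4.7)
The plan is to derive the stated presentation directly from the Arbarello--Cornalba presentation of $H^2(\bMM_{0,n})$ recalled in the paper. For $g=0$, the relation $\delta_{irr}=0$ together with the stability constraint on boundary classes forces the AC generator set to collapse to $\kappa$, $\psi_1,\ldots,\psi_n$, and $\delta_{0,A}$ with $2 \leq |A| \leq n-2$, modulo $\delta_{0,A}=\delta_{0,A^c}$, the $\kappa$-formula $\kappa = \sum_{A \not\ni x,y}(|A|-1)\delta_{0,A}$, and the $\psi$-formula $\psi_i = \sum_{i \in A \not\ni x,y}\delta_{0,A}$ for arbitrary auxiliary indices $x,y$. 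The goal is then to repackage these relations into (1) and (2).

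The first step is to dispense with $\kappa$ by treating its defining formula as an abbreviation; one then checks that comparing the $\kappa$-formulas for two different auxiliary pairs $(x,y)$ and $(x',y')$ yields identities that already follow from (1) and (2), a short combinatorial verification I would carry out directly.

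The next step is to show that the AC $\psi$-formula implies relation~(2). Fixing any auxiliary index $z \notin \{i,j\}$, I would add the AC expression for $\psi_i$ (with auxiliary pair $(j,z)$) to the one for $\psi_j$ (with auxiliary pair $(i,z)$), using (1) to re-index the second sum by $A = B^c$. Splitting each sum according to whether $z \in A$ shows that the $z$-dependence cancels, leaving precisely
\[
\psi_i + \psi_j = \sum_{i \in A,\, j \in A^c} \delta_{0,A}.
\]

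The main obstacle is the converse direction: deducing the AC $\psi$-formula from (1) and (2). The key identity is
\[
\psi_i = \tfrac{1}{2}\bigl((\psi_i + \psi_x) + (\psi_i + \psi_y) - (\psi_x + \psi_y)\bigr),
\]
into which I would substitute relation~(2) term by term. A case analysis indexed by the unordered partition $\{A, A^c\}$ and the distribution of $i, x, y$ among its two parts, combined with (1) to pair contributions from $A$ and $A^c$, shows that each partition contributes with coefficient $1$ precisely when $i$ lies in one part and $x,y$ both lie in the other, and $0$ in all other cases. This recovers the AC formula $\psi_i = \sum_{i \in A \not\ni x,y} \delta_{0,A}$, establishing equivalence of the two presentations and completing the proof.
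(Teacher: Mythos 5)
Your proposal is correct, but it takes a genuinely different route from the paper. The paper does not argue by comparing presentations: it quotes Keel's dimension formula $\dim H^2(\bMM_{0,n}) = 2^{n-1}-\binom{n}{2}-1$, observes that modulo $\delta_{0,A}=\delta_{0,A^c}$ the generators span a space of dimension $2^{n-1}-1$ and that the $\binom{n}{2}$ relations (2) are visibly independent, and then verifies that (2) actually holds in $H^2(\bMM_{0,n})$ by induction on $n$, using the pullback formulas $\pi^*\psi_i=\psi_i-\delta_{0,\{i,n+1\}}$ and $\pi^*\delta_{0,A}=\delta_{0,A}+\delta_{0,A\cup\{n+1\}}$ along the universal curve. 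You instead prove the two presentations are formally equivalent over $\Q$, and your key computations are correct: adding the Arbarello--Cornalba expressions for $\psi_i$ and $\psi_j$ with auxiliary pairs $(j,z)$ and $(i,z)$ and re-indexing by complements does make the $z$-dependence cancel and yields (2); and the identity $\psi_i=\tfrac12\bigl((\psi_i+\psi_x)+(\psi_i+\psi_y)-(\psi_x+\psi_y)\bigr)$, expanded via (2) and sorted by the position of $i,x,y$ in each unordered partition, recovers $\psi_i=\sum_{i\in A\not\ni x,y}\delta_{0,A}$. The $\kappa$-consistency you leave as an assertion does go through, and can be made painless: using the $\psi$-formula you have just derived from (1),(2), one checks that $\sum_{A\not\ni x,y}(|A|-1)\delta_{0,A}=\sum_i\psi_i-\sum_{\{A,A^c\}}\delta_{0,A}$ modulo (1),(2), and the right-hand side is manifestly independent of $(x,y)$. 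Two remarks on what each approach buys. Your argument is purely combinatorial, needs no induction and no geometric input beyond the cited presentation, but it rests on the completeness of the genus-zero relations in \cite[Theorem~2.2]{ArbarelloCornalba98} as recalled in Section~\ref{sec:explicit wt 2}; the paper states before the lemma that it includes a proof precisely because it lacked a reference in the desired form, and its Keel-based dimension count sidesteps that dependence. Also note that both of your steps implicitly require $n\geq 3$ (choice of $z\notin\{i,j\}$, resp.\ of $x\neq y$ distinct from $i$) and the factor $\tfrac12$ uses $\Q$-coefficients; both are harmless here but worth saying.
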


\begin{proof}
These classes generate $H^2(\bMM_{0,n})$ and $\dim H^2 (\bMM_{0,n}) = 2^{n-1} - \binom{n}{2} - 1$ \cite{Keel92}. The relations $\delta_{0,A} = \delta_{0, A^c}$ are standard, and the quotient by these relations has dimension $2^{n-1} - 1$.  The remaining $\binom{n}{2}$ relations are independent, so it suffices to show that they hold in $H^2(\bMM_{0,n})$.  For $n =3$, this is trivial. We proceed by induction on $n$.  Assume that \eqref{eq:psi relation} holds in $H^2 (\bMM_{0,n})$.  Let $\pi \colon \bMM_{0,n+1} \to \bMM_{0,n}$ be the universal curve.  Then apply the pullback formulas $\pi^* \psi_i = \psi_i - \delta_{0, \{i,n+1\}}$ and $\pi^* \delta_{0,A} = \delta_{0,A} + \delta_{0,A \cup \{n+1\}}$ \cite[Lemma~3.1]{ArbarelloCornalba98} to deduce that \eqref{eq:psi relation} holds in $H^2(\bMM_{0,n+1})$.
\end{proof}

The following proposition is an immediate consequence of Lemma~\ref{lem:H2 presentation}.
\begin{prop}  \label{prop:H2 resolution}
There is an $\bbS_n$-equivariant short exact sequence
\begin{equation*}%\label{equ:H2 exact seq 2}
  0\to \bigoplus_{1\leq i<j\leq n} \Q E_{ij}
  \to 
  \Big( \bigoplus_{1\leq i\leq n} \Q\psi_i \Big) \oplus  \bigoplus_{1\in A} \Q \delta_{0,A}   \to  H^2(\bMM_{0,n}) \to 0
\end{equation*}
in which the first arrow is given by $E_{ij} \mapsto \psi_i + \psi_j - \sum_{i \in A, j \in A^c} \delta_{0,A}$.  Note that here we identify $\delta_{0,A} = \delta_{0, A^c}$.
\end{prop}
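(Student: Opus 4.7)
The plan is to read Lemma~\ref{lem:H2 presentation} as a presentation of $H^2(\bMM_{0,n})$. The right arrow is surjective because the $\psi_i$'s together with the $\delta_{0,A}$'s with $1\in A$ already generate, once one uses $\delta_{0,A}=\delta_{0,A^c}$ to pick a distinguished representative for each boundary class. The defining formula for the left arrow is well-defined in the chosen middle-term basis: whenever a summand $\delta_{0,A}$ appears with $1\notin A$, one substitutes $\delta_{0,A^c}$. With this convention, Lemma~\ref{lem:H2 presentation}(2) says exactly that each $E_{ij}$ maps into the kernel of the right arrow, and $\bbS_n$-equivariance is automatic from the permutation action on indices.

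Exactness in the middle then reduces, by dimension count, to injectivity of the left arrow. The middle term has dimension $n + (2^{n-1}-n-1)=2^{n-1}-1$, while $\dim H^2(\bMM_{0,n})=2^{n-1}-\binom{n}{2}-1$ by \cite{Keel92}, so the kernel has dimension exactly $\binom{n}{2}$, matching the rank of the left term.

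The main (though routine) obstacle is therefore the linear independence of the $E_{ij}$'s. The first step is to compute, after the substitution above, the coefficient of a basis element $\delta_{0,B}$ (with $1\in B$ and $2\leq |B|\leq n-2$) in $E_{ij}$: unwinding the sum over $\{A:i\in A,\ j\in A^c\}$ and grouping each $A$ with $A^c$, one finds this coefficient equals $-1$ if the cut $(B,B^c)$ separates $i$ and $j$, and $0$ otherwise. Assuming a relation $\sum_{i<j} c_{ij}E_{ij}=0$, matching $\psi_k$-coefficients gives the vertex conditions $\sum_{j\neq k} c_{kj}=0$ for every $k$ (with symmetric indexing).

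To conclude, fix any pair $\{i,j\}$. For $n\geq 4$, exactly one of $\{i,j\}$ and its complement contains $1$, and both sets have size in $[2,n-2]$, so the cut-sum condition for that subset applies; expanding it and simplifying via the vertex conditions reduces to $-2c_{ij}=0$, forcing $c_{ij}=0$. For $n=3$ the middle term has no $\delta$-generators, and the three vertex conditions form a rank-$3$ system whose only solution is trivial. This establishes injectivity and completes the short exact sequence.
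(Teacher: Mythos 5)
Your proposal is correct. The paper itself dispenses with this proposition in one line, declaring it an immediate consequence of Lemma~\ref{lem:H2 presentation}; the only genuinely nontrivial linear-algebra input, namely that the $\binom{n}{2}$ relation vectors are linearly independent (equivalently, that the first arrow is injective), is simply asserted in the proof of that lemma, where it combines with Keel's dimension count $\dim H^2(\bMM_{0,n})=2^{n-1}-\binom{n}{2}-1$ to give the presentation. You follow the same overall route --- surjectivity and ``image contained in kernel'' from the lemma, then a dimension count to reduce exactness in the middle to injectivity on the left --- but you actually prove the injectivity rather than assert it: the computation that the coefficient of a basis element $\delta_{0,B}$ (with $1\in B$) in the image of $E_{ij}$ is $-1$ exactly when the cut $(B,B^c)$ separates $i$ and $j$, combined with the vertex conditions $\sum_{m\neq k}c_{km}=0$ coming from the $\psi_k$-coefficients, correctly yields $-2c_{ij}=0$ for each pair when $n\geq 4$, and your separate treatment of $n=3$ (where there are no $\delta$-generators and the triangle incidence system has rank $3$) is also right. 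So your argument buys a self-contained verification of the step the paper leaves implicit, at the cost of a page of explicit coefficient bookkeeping; the paper's version is shorter because it is content to lean on the asserted independence inside Lemma~\ref{lem:H2 presentation}.
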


\noindent The $\bbS_n$-action is given by permuting all labels and setting $E_{ij} = E_{ji}$ for $i > j$ and $\delta_{0,A} = \delta_{0,A^c}$ when $1 \not \in A$.

\section{Filtration by the number of vertices} 

The main idea of the proof of Theorem \ref{thm:main quotient} is to use the filtration on $\GK_{g,n}$ by the number of vertices, and look at the associated spectral sequence. The associated graded of this filtration $\gr \GK_{g,n}$ can be identified with $(\GK_{g,n}, \delta_{loop})$, i.e., we kill the part of the differential splitting vertices.
As discussed in Section~\ref{sec:thm quotient proof actual}, this complex splits into subcomplexes that are direct sums of tensor products of complexes associated to single vertices. We begin by studying these single vertex complexes
\begin{equation}\label{equ:Vgndef}
V_{g,n}
=
\bigoplus_{h} \big(H^\bullet(\bMM_{g-h,n+2h}) \otimes \Q[-1]^{\otimes h}\big)_{\bbS_2 \wr \bbS_h}
\end{equation}
where the wreath product $\bbS_2 \wr \bbS_h$ acts by permuting the last $2h$ marked points and the factors $\Q[-1]^{\otimes h}$, introducing signs via the Koszul sign rule, see section \ref{sec:notation}. The weight grading on $V_{g,n}$ is given by the cohomological degree in $H^\bullet(\bMM_{g-h,n+2h})$, and the complex decomposes into a direct sum of weight graded pieces. We denote the piece of weight $k$ by $V_{g,n}^k$; in this paper, we focus on the cases $k = 0$ and $k = 2$.

\subsection{The single vertex complex in weight zero} \label{sec:svert weight0}
In weight zero, we have 
\begin{equation} \label{eq:weight0}
  H(V_{g,n}^0)=
  \begin{cases}
    \Q & \text{for $g=0$, $n\geq 3$} \\
    0 & \text{otherwise}
  \end{cases}.
\end{equation}
To see this, note that 
\[
\big(H^0(\bMM_{g-h,n+2h}) \otimes \Q[-1]^{\otimes h}\big)_{\bbS_2 \wr \bbS_h} = \begin{cases} \Q & \text{for $h \in \{0,1\}$} \\
0 & \text{otherwise} \end{cases},
\]
and the differential on $V_{g,n}^0$ is nontrivial when $g$ is positive.

The complex $\overline{\mathsf{GK}}{}^{0}_{g,n}$ is generated by connected simple graphs of genus $g$ with $n$ marked legs.
Following \cite{CGP2}, we denote this
\[
G^{(g,n)} :=  \overline{\mathsf{GK}}{}^{0}_{g,n}. 
\]
One can deduce from \eqref{eq:weight0} that $\GK_{g,n}^0\to \bGK^0_{g,n}$ is a quasi-isomorphism. However, we do not give the argument now. Instead, we proceed directly to the weight two case and merely remark that the weight zero case can be handled by a simplified version of the argument in Section~\ref{sec:thm quotient proof actual}, filtering by the number of vertices and using \eqref{eq:weight0} in place of Proposition~\ref{prop:onevertmain}. Thus, the methods presented here give an alternative approach to the following result.

\begin{thm}[{\cite{CGP1, CGP2}}]\label{thm:weight0}
The graph cohomology $H(G^{(g,n)})=H(\bGK_{g,n}^0)$ is identified with the weight zero part $W_0H_c^\bullet(\MM_{g,n})$ of the compactly supported cohomology of the open moduli space of curves.
\end{thm}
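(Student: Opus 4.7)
The plan is to mimic the proof of Theorem~\ref{thm:main quotient}, substituting the weight 0 single-vertex computation \eqref{eq:weight0} for Proposition~\ref{prop:onevertmain}. We start from the canonical identification $\gr_0 H^\bullet_c(\MM_{g,n}) \cong H^\bullet(\GK_{g,n}^0)$ coming from Deligne's weight spectral sequence, already discussed in the excerpt. It therefore suffices to show that the projection
\[
\GK_{g,n}^0 \longrightarrow \bGK_{g,n}^0 = G^{(g,n)}
\]
is a quasi-isomorphism.

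I would filter both sides by the number of vertices, so that on the associated graded the differential reduces to $\delta_{\mathit{loop}}$ on the left and to $0$ on the right. As in Section~\ref{sec:thm quotient proof actual}, the complex $(\GK_{g,n}^0,\delta_{\mathit{loop}})$ splits as a direct sum over isomorphism classes of loopless cores $\Gamma$, and on each summand one has
\[
(\GK_\Gamma, \delta_{\mathit{loop}}) \cong \Bigl( \bigotimes_{v \in V(\Gamma)} V^0_{g_v, n_v} \otimes \Q[-1]^{\otimes E(\Gamma)} \Bigr)_{\Aut(\Gamma)}.
\]
Taking cohomology commutes with the finite group coinvariants, and \eqref{eq:weight0} says that $H(V^0_{g_v,n_v})$ is $\Q$ when $g_v = 0$ and $n_v \geq 3$, and vanishes otherwise. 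Hence the $E^1$-page is concentrated on loopless cores all of whose vertices have genus $0$ and valence at least $3$, with trivial decorations, matching the generators of $\bGK_{g,n}^0 = G^{(g,n)}$ on the nose.

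Next I would check that the induced differential on $E^1$ agrees with $\delta_{\mathit{split}}$ on $G^{(g,n)}$. This is immediate because the only nontrivial cohomology class on a genus-zero vertex is the unit $1 \in H^0(\bMM_{0,n_v})$, so the cooperadic pullbacks entering $\delta_{\mathit{split}}$ act trivially on decorations and the formula reduces to the usual vertex-splitting of commutative graphs. The spectral sequence comparison theorem \cite[Theorem 5.2.12]{Weibel} then promotes the $E^2$-isomorphism to a quasi-isomorphism of the original complexes.

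The only subtlety, and the main thing worth checking carefully, is that nothing analogous to the residual complex $K_{g,n}$ of Section~\ref{sec:thm quotient proof actual} appears here: in weight 2 that complex arose from the class $\delta_{\mathit{irr}}$ surviving in $H(V^2_{1,n})$, but in weight 0 the single-vertex cohomology \eqref{eq:weight0} already sees only genus-zero vertices and produces exactly one generator per valence, so the projection \eqref{equ:GKbGK E1 proj} is an honest isomorphism at the $E^1$-level in this case. Once that verification is done, the theorem follows with no further work.
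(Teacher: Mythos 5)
Your argument is correct and is essentially the paper's own proof: the paper proves Theorem~\ref{thm:weight0} by combining the identification $H^\bullet(\GK^0_{g,n})\cong \gr_0 H^\bullet_c(\MM_{g,n})$ with a simplified version of the vertex-number filtration argument of Section~\ref{sec:thm quotient proof actual}, using \eqref{eq:weight0} in place of Proposition~\ref{prop:onevertmain}, exactly as you do (including the observation that no analogue of the residual complex $K_{g,n}$ appears in weight zero).
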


%From this, one easily deduces that $\GK_{g,n}^0\to \bGK^0_{g,n}$ is a quasi-isomorphism and recovers the result from \cite{CGP1, CGP2} that cohomology of the commutative $n$-marked graph complex computes $W_0H^\bullet_c \MM_{g,n}$.

\subsection{The single vertex complex in weight 2}
Our proof of Theorem~\ref{thm:main quotient} will combine \eqref{eq:weight0} with the following analogous statement in weight 2.
\begin{prop}\label{prop:onevertmain}
  For $g\geq 2$ and all $n$ one has 
  \[
    H(V_{g,n}^2) = 0.
  \]
  For $g=1$ one has
  \[
    H(V_{1,n}^2) \cong \Q \delta_{irr} \oplus \Big( H^2(\bMM_{0,n+2})_{\bbS_2}/ \xi^*H^2(\bMM_{1,n})\Big). 
  \]
  For $g = 0$, the differential on $V_{0,n}^2$ vanishes.
\end{prop}

We write $V^2_{g,n}$ in the form:
\begin{equation} \label{eq:V2gn}
 \resizebox{0.91\hsize}{!}{$
 0 \to H^2(\bMM_{g,n}) \to  H^2(\bMM_{g-1,n+2})_{\bbS_2} \to H^2(\bMM_{g-2,n+4})_{\bbS_2 \wr \bbS_2}  \to H^2 (\bMM_{g-3,n})_{\bbS_2 \wr \bbS_3} \to  \cdots .
 $}
\end{equation}
Here, the action of $\bbS_2 \wr \bbS_h$ is understood to include a sign that accounts for the permutation of the tensor factors $\Q[-1]^{\otimes h}$ in \eqref{equ:Vgndef}. To make this concrete, recall that $H^2(\bMM_{g,n})$ is generated by the tautological classes $\kappa$, $\psi_i$, $\delta_{irr}$ and $\delta_{a,A}=\delta_{g-a,A^c}$, with relations for $g\leq 2$ as discussed in Section \ref{sec:explicit wt 2}. Then the action of $\bbS_2 \wr \bbS_h$ is induced by a signed permutation action on this set of tautological generators, as follows.  One may think of each generator as a graph with a single internal vertex of genus $g - h$ incident to $h$ loop edges and $n$ marked legs, equipped with a decoration by a tautological generator for $H^2(\bMM_{g-h,n+2h})$. Then each copy of $\bbS_2$ acts by exchanging the half-edges in a loop (without sign). The quotient map to $\bbS_h$ is given by the permutation of the loop edges, and the induced permutation on generators is twisted by the sign of this $\bbS_h$-action.  

\begin{lemma}
As a vector space $V^2_{g,n} = \bigoplus_h H^2(\bMM_{g-h, n+2h})_{\bbS_2 \wr \bbS_h}$ is generated by the following classes
\begin{enumerate}
\item $\kappa$ or $\delta_{irr}$, for $h\in\{0,1\}$,
\item $\psi_i$, with $i \in \{1, \ldots, n\}$, for  $h \in \{0,1\}$,
\item $\psi_j$ with $j \not \in \{1, \ldots, n\}$, for $h\in\{1,2\}$
\item $\delta_{a,A}$, for $h \in \{0,1,2,3\}$.
\end{enumerate}
In particular, the complex \eqref{eq:V2gn} vanishes beyond $h = 3$.
\end{lemma}

\begin{proof}
Any other tautological generator for $H^2(\bMM_{g-h, n+2h})$ is preserved by an element of $\bbS_2 \wr \bbS_h$ that acts by a transposition on the set of loop edges.
\end{proof}

\begin{proof}[Proof of Proposition \ref{prop:onevertmain} for $g\geq 4$]
For $g\geq 4$, there are no nontrivial relations among the above generators for $V^2_{g,n}$, i.e., all relations are induced by isomorphisms of marked and decorated graphs. In this case, we can see that the complex is acyclic by constructing a null-homotopy as follows. Depict boundary classes and $\psi$-classes by marked edges and half-edges, respectively, as in Section~\ref{sec:explicit wt 2}.  Thus the generators of $V^2_{g,n}$ are depicted by graphs that have either one or two vertices.  Say that a vertex $v$ is \emph{active} if either $g_v > 0$ or if there is a loop attached to $v$.  Note that each vertex has at most one loop attached, since otherwise there is an odd automorphism interchanging two loops.

For each loop edge, there is a natural contraction, in which the loop edge is deleted and the genus of the attached vertex is increased by 1. The orientation on the resulting graph is determined as follows: after reordering, we may assume that the contracted loop was first in the loop order, and then the remaining edges keep their induced ordering. The null homotopy takes the sum of all such contractions, divided by the number of active vertices. 
\end{proof}

When $g = 0$ there is nothing to prove. When $g = 1$, $2$, or $3$, additional arguments are required. For simplicity, we give complete proofs for $n \geq 1$; the cases where $n = 0$ are similar. Throughout the remainder of the proof, we use $S$ to denote an arbitrary subset of $\{1, \ldots, n\}$, subject to  restrictions as specifically indicated. For instance, ``$\delta_{0,S}, \ 1 \in S$" refers to the $2^{n-1}$ elements of the form $\delta_{0,S}$ where $S$ is a subset of $\{1, \ldots, n\}$ that contains $1$.

\begin{proof}[Proof of Proposition \ref{prop:onevertmain} for $g= 3$]
The complex $V^2_{3,n}$ has the form
\begin{equation}\label{equ:V3 explicit}
  H^2(\bMM_{3,n}) \to H^2(\bMM_{2,n+2})_{\bbS_2} \to H^2(\bMM_{1,n+4})_{\bbS_2\wr \bbS_2} \to H^2(\bMM_{0,n+6})_{\bbS_2 \wr \bbS_3} \, .
\end{equation}
By \cite[Theorem~2.10]{ArbarelloCornalba98} we know that $ H^2(\bMM_{3,n}) \to H^2(\bMM_{2,n+2})_{\bbS_2}$ is injective.  We now check exactness at the remaining places, working from right to left. Let $s, s'$, $t,t'$, and $u,u'$ be the pairs of markings added to $\{1, ..., n\}$ at each step. We claim that $H^0(\bMM_{0,n+6})_{\bbS_2 \wr \bbS_3}$ has a basis consisting of the classes $$\delta_{0,S \cup \{s,s',t\}}, \ 1 \in S.$$ This can be seen by taking  
coinvariants of the resolution in Proposition~\ref{prop:H2 resolution}; each $E_{ij}$ is stabilized by an element of $\bbS_2 \wr \bbS_3$ that acts by a simple transposition on the set of 3 loops, and hence the coinvariants of $\bigoplus_{i,j} \Q E_{ij}$ vanish. 

Next, recall that $H^2(\bMM_{1,n+4})$ is freely generated by boundary classes, which are permuted by $\bbS_2 \wr \bbS_2$. Taking coinvariants, we see that $H^2(\bMM_{1,n+4})_{\bbS_2\wr \bbS_2}$ has a basis consisting of:
  \[
 \delta_{0,S\cup \{s\}}, \ |S|\geq 1; \quad \quad \quad
 \delta_{0,S\cup \{s,s'\}}; \quad \quad \quad
 \delta_{0,S\cup \{s,s',t\}}. 
 \]
 
Working with respect to these bases, the same chain homotopy argument used for $g \geq 4$ shows that \eqref{equ:V3 explicit} is exact at $H^2(\bMM_{0,n+6})_{\bbS_2 \wr \bbS_3}$ and at $H^2(\bMM_{1,n+4})_{\bbS_2 \wr \bbS_2}$.   It remains to check that it is exact at $H^2(\bMM_{2,n+2})_{\bbS_2}$.  
We prove this when $n > 0$ by counting dimensions and computing the Euler characteristic; the argument for $n = 0$ is similar. 
      
Assume $n > 0$. Then $H^2(\bMM_{2,n+2})_{\bbS_2}$ has a basis given by $\psi_1, \ldots, \psi_n, \psi_s$, and $\delta_{irr}$ together with: 
\[
 \resizebox{0.91\hsize}{!}{$
\delta_{0,S}, \ |S|\geq 2; \quad \quad \quad
\delta_{1,S}; \quad \quad \quad
\delta_{2,S}; \quad \quad \quad
\delta_{0,S\cup\{s\}}, \ |S|\geq 1; \quad \quad \quad
\delta_{1,S\cup\{s\}}, \ 1\in S.
$}
\]
The dimension of $H^2(\bMM_{3,n})$ is $2 \cdot 2^{n} + 1$. Counting the bases above shows that 
\[
\chi(V^2_{3,n}) = (2 \cdot 2^n + 1) -(4 \cdot 2^n + 2^{n-1}) + (3 \cdot 2^n - 1) -2^{n-1},
\]
which is $0$, as required.
\end{proof}  
  
 \begin{proof}[Proof of Proposition \ref{prop:onevertmain} for $g = 2$]
The complex $V_{2,n}^2$ has the form 
\begin{equation*}%\label{equ:V2 explicit}
  H^2(\bMM_{2,n}) \to  H^2(\bMM_{1,n+2})_{\bbS_2} \to H^2(\bMM_{0,n+4})_{\bbS_2 \wr \bbS_2} .
\end{equation*}
By \cite[Theorem~2.10]{ArbarelloCornalba98}, we know that the first arrow is injective. We proceed to prove exactness at the remaining two places, working from right to left.

Say $s, s'$ and $t, t'$ are the two pairs of markings added at each step. First, we claim that $H^2(\bMM_{0,n+4})_{\bbS_2 \wr \bbS_2}$ has a basis consisting of:
\begin{equation} \label{eq:basisS2S2}
\delta_{0,S \cup \{s, s'\}}, \ 1 \in S; \quad \quad \quad \delta_{0, S \cup \{s\}}, \ |S| \geq 2.
\end{equation}
To see this, consider the $\bbS_2 \wr \bbS_2$-coinvariants of the resolution of  $H^2(\bMM_{0,n+4})$ from Proposition~\ref{prop:H2 resolution}. The coinvariant space of $\bigoplus_{i,j} \Q E_{ij}$ has a basis consisting of $E_{ss'}$ together with $E_{1s}, \ldots, E_{ns}$.  Similarly, the coinvariant space of  $\Big( \bigoplus_i \Q\psi_i \Big) \oplus  \Big(\bigoplus_S \Q \delta_{0,S} \Big)$ has a basis consisting of:
\[
\psi_{s}; \quad \quad \quad \delta_{0,S \cup \{s, s'\}}, \ 1 \in S; \quad \quad \quad \delta_{0, S \cup \{s\}}, \ |S| \geq 1.
\]
By examining the differential, one checks that $E_{s,s'}$ can be used to eliminate $\psi_s$, while $E_{is}$ can be used to eliminate $\delta_{0, \{i, s\}}$ for $i \in \{1, \ldots, n \}$.  Hence the images of the remaining basis elements listed in \eqref{eq:basisS2S2} form a basis for  $H^2(\bMM_{0,n+4})_{\bbS_2\wr \bbS_2}$, as claimed.

Next, recall that $H^2(\bMM_{1,n+2})$ is freely generated by boundary classes, which are permuted by the $\bbS_2$-action. One then sees that the coinvariant space $H^2(\bMM_{1,n+2})_{\bbS_2}$ has a basis consisting of: 
\begin{equation} \label{eq:basisH1n+2}
\delta_{irr}; \quad \quad \quad \delta_{0,S}, \ |S| \geq 2; \quad \quad \quad \delta_{0, S \cup \{s \}}, \ |S| \geq 1; \quad \quad \quad \delta_{1,S}.
\end{equation}
Working with these bases, the same chain homotopy argument that works for $g \geq 4$ gives a splitting of the map to $H^2(\bMM_{0,n+4})_{\bbS_2 \wr \bbS_2}$. It remains to check that $V^2_{2,n}$ is exact at $H^2(\bMM_{1,n+2})_{\bbS_2}$.  Just as for $g = 3$, we do this by counting dimensions and computing the Euler characteristic. For simplicity, we assume $n \geq 1$; the $n = 0$ case is similar.

The dimension of $H^2(\bMM_{2,n})$ is $2^n + 2^{n-1}$. Counting the bases \eqref{eq:basisS2S2} and \eqref{eq:basisH1n+2} then shows
\[
\chi(V^2_{2,n}) = (2^n + 2^{n-1}) - (3 \cdot 2^n - n - 1) + (2^n + 2^{n-1} - n - 1),
\]
which is 0, as required.
\end{proof}

We use the following lemma in the proof of Proposition~\ref{prop:onevertmain} for $g = 1$, and also in the proof of Proposition~\ref{prop:tGKqiso}.

\begin{lemma} \label{lem:S2independent}
The classes $\delta_{0,S}$ in $H^2(\bMM_{1,n})$ for $S \subset \{1, \ldots, n \}$, $|S| \geq 2$ have linearly independent image in $H^2(\bMM_{0,n+2})_{\bbS_2}$, where $\bbS_2$ acts by transposing the last two marked points.
\end{lemma}

\begin{proof}
 These classes are $\bbS_2$-invariant, so it suffices to show that they are linearly independent in $H^2(\bMM_{0,n+2})$.  We claim that there is a curve $C$ in $\bMM_{0,n+2}$ that intersects $\delta_{0,S}$ at finitely many points and is disjoint from $\delta_{0,S'}$ for all other subsets $S' \subset \{1, \ldots, n\}$. Fix $n+2$ general points in $\mathbb{P}^1$. Let $\mathbb{G}_m$ act so that the coordinates of the points in $S$ are multiplied by $z$ and the rest are multiplied by $z^{-1}$, and let $C$ be the closure of this $\mathbb{G}_m$-orbit.  The image of $\mathbb{G}_m$ is contained in the open moduli space $\MM_{0,n+2}$, and the remaining two points of $C$ are general in $\delta_{0,S}$. This proves the claim, and the lemma follows.
 \end{proof}

\begin{proof}[Proof of Proposition \ref{prop:onevertmain} for $g= 1$]
We now consider the two-term complex $V^2_{1,n}$,
\[
 H^2(\bMM_{1,n}) \to H^2(\bMM_{0,n+2})_{\bbS_2}.
\]
Recall that $H^2(\bMM_{1,n})$ is freely generated by the boundary classes $\delta_{irr}$ and $\delta_{0,S},  \ |S| \geq 2$.  Also, $H^2(\bMM_{0,n+2})_{\bbS_2}$ is generated by the classes
\[
\delta_{0,S}, \ |S| \geq 2; \quad \quad \quad \delta_{0, S \cup \{s\}}, \ |S| \geq 1, \ |S^c| \geq 1.
\]
The differential is then given by $\delta_{irr} \mapsto 0$ and $\delta_{0,S} \mapsto \delta_{0,S}$. The statement of Proposition~\ref{prop:onevertmain} for $g = 1$ reduces to saying that the kernel of the differential is generated by $\delta_{irr}$. This follows from  Lemma~\ref{lem:S2independent}, since the classes $\delta_{0,S}, \ |S| \geq 2$ have linearly independent images in $H^2(\bMM_{0,n+2})_{\bbS_2}$. This completes the proof of Proposition~\ref{prop:onevertmain}. 
\end{proof}

\newcommand{\WW}{W}
\subsection{The filtered map of filtered complexes} \label{sec:thm quotient proof actual}
We now prove Theorem \ref{thm:main quotient}, showing that the projection
\begin{equation}\label{equ:GKbGK proj}
\GK^2_{g,n} \to\bGK^2_{g,n}   
\end{equation}
is a quasi-isomorphism for any $(g,n)\neq (1,1)$. We do this by filtering both sides by the number of vertices and showing that the induced map between spectral sequences is an isomorphism at $E_2$. From this it follows that \eqref{equ:GKbGK proj} is a quasi-isomorphism by the spectral sequence comparison theorem  \cite[Theorem 5.2.12]{Weibel}.

\begin{proof}[Proof of Theorem~\ref{thm:main quotient}]

Let $\mF^p\GK^2_{g,n}\subset \GK^2_{g,n}$ and $\mF^p\bGK^2_{g,n}\subset \bGK^2_{g,n}$ be the dg sub-vector spaces of $\GK^2_{g,n}$ and $\bGK^2_{g,n}$, respectively, spanned by graphs with at least $p$ vertices. So $\mF^\bullet \GK^2_{g,n}$ and $\mF^\bullet \bGK^2_{g,n}$ are decreasing filtrations of dg vector spaces, and \eqref{equ:GKbGK proj} respects the filtrations. We now consider the spectral sequences associated to these filtrations and the induced map between them.

On $E_0$, we have the associated graded dg vector spaces
\[
  \gr \GK^2_{g,n} \cong (\GK^2_{g,n}, \delta_{loop}) 
\quad \quad \quad \mbox{ and } \quad \quad \quad 
  \gr \bGK^2_{g,n} \cong (\bGK^2_{g,n}, 0).
\]

Say that the \emph{loopless core} of a stable graph is the stable graph obtained by contracting all loop edges and increasing the genera of the adjacent vertices accordingly. Note that $\delta_{loop}$ does not change the loopless core, and hence $(\GK^2_{g,n}, \delta_{loop})$ decomposes as a direct sum
\[
  (\GK^2_{g,n}, \delta_{loop})
  \cong 
  \bigoplus_{\Gamma} (\GK_\Gamma,\delta_{loop}),
\]
where $\Gamma$ ranges over isomorphism classes of $n$-marked stable graphs of genus $g$ without loop edges and $\GK_\Gamma$ is the dg subspace spanned by graphs with loopless core $\Gamma$.

Since $\delta_{loop}$ acts independently on all vertices we furthermore have the isomorphism of dg vector spaces 
\[
  (\GK_\Gamma,\delta_{loop})
  \cong 
  \left( \bigotimes_{v\in \mathrm{Vert}(\Gamma)}  V^{k_v}_{g_v,n_v} \otimes \Q[-1]^{\otimes E(\Gamma)} \right)_{\Aut(\Gamma)},
\]
where $E(\Gamma)$ is the set of structural edges of $\Gamma$.
Since taking coinvariants with respect to finite group actions commutes with taking cohomology we find that 
\[
  H(\GK_\Gamma,\delta_{loop})
  \cong 
  \left( \bigotimes_{v\in \mathrm{Vert}(\Gamma)}  H(V^{k_v}_{g_v,n_v}) \otimes \Q[-1]^{\otimes E(\Gamma)} \right)_{\Aut(\Gamma)}.
\]
Recall that $k_v=0$ for all vertices except one special vertex $w$ which has $k_w = 2$.  By \eqref{eq:weight0}, the right hand side vanishes if $\Gamma$ has a non-special vertex of positive genus. Otherwise, the right hand side is 
\[
\left( H(V^2_{g_w, n_w}) \otimes \Q[-1]^{\otimes E(\Gamma)} \right)_{\Aut(\Gamma)}.
\]
In other words, $H(\GK^2_{g,n}, \delta_{loop})$ can be identified with a graded vector space of stable graphs without loop edges where all vertices have genus 0 except possibly one special vertex $w$ that is decorated by $H(V^{2}_{g_w,n_w})$, oriented by a total ordering of the structural edges.

The differential on the $E_1$-page of our spectral sequence is then induced by the part $\delta_{split}$ of the original differential. Our goal is to show that the projection
\begin{equation}\label{equ:GKbGK E1 proj}
  (H(\GK^2_{g,n}, \delta_{loop}), \delta_{split})
  \to 
  (\bGK^2_{g,n}, \delta_{split}),
\end{equation}
is a quasi-isomorphism. The right-hand side is also a graded vector space of stable graphs. Now all vertices have genus 0, but loop edges are allowed, and one special vertex $w$ is decorated by $H^2(\bMM_{0,n_w})$, modulo the image of $\delta_{loop}$. Applying Proposition~\ref{prop:onevertmain}, we see that \eqref{equ:GKbGK E1 proj} is surjective, with kernel $K_{g,n}$ generated by stable graphs in which all vertices have genus 0 except one special vertex of genus 1 that is decorated with $\delta_{irr}$. Equivalently, $K_{g,n}$ is a commutative graph complex generated by graphs with one special vertex in which all vertices have valence at least 3 except the special vertex, which may also have valence 1 or 2.  We prove the acyclicity of $K_{g,n}$ as follows.  

  Let $K_1\subset K_{g,n}$ be the dg subspace spanned by graphs in which the special vertex has valence  $1$, and $K_{\geq 2}\subset K_{g,n}$ the complementary subspace spanned by graphs having a special vertex of valence $\geq 2$. The differential $\delta_{split}$ decomposes accordingly into the following pieces 
  \begin{equation*}%\label{equ:Kgn depict}
    \begin{tikzcd}
      K_{\geq 2} \ar[loop above]{}{}  \ar{r}{s} &  K_1 \ar[loop above]{} 
    \end{tikzcd}.
  \end{equation*}
The part $s$ splits off the special vertex as shown,
\[
  s\colon
  \begin{tikzpicture}
    \node[ext, accepting] (v) at (0,0) {};
    \draw (v) edge +(-.5,.5) edge +(-.5,0) edge +(-.5,-.5);
  \end{tikzpicture}
\mapsto 
\begin{tikzpicture}
  \node[ext, accepting] (v) at (0.5,0) {};
  \node[ext] (w) at (0,0) {};
  \draw (w) edge +(-.5,.5) edge +(-.5,0) edge +(-.5,-.5) edge (v);
\end{tikzpicture},
\]
with the special vertex marked by double circles. It is clear that this map $s\colon K_{\geq 2}\to K_1$ is injective. It is also surjective if $(g,n)\neq (1,1)$.
Applying Lemma \ref{lem:acyclicity} it follows that $K_{g,n}$ is acyclic, as required.
\end{proof}

%\subsection{The weight zero case}\label{sec:weight0}
%The complex $\bGK_{g,n}^0$ is generated by connected simple graphs, without loop edges or positive genus vertices, of genus $g$ with $n$ marked legs.
%Following \cite{CGP2}, we denote this
%\[
%G^{(g,n)} :=  \bGK_{g,n}^0. 
%\]
%For example, the following graph is a generator of $G^{(2,3)}$ of degree 5:
%\[
%\begin{tikzpicture}
%  \node[ext] (v1) at (0:.5) {};
%  \node[ext] (v2) at (90:.5) {};
%  \node[ext] (v3) at (180:.5) {};
%  \node[ext] (v4) at (-90:.5) {};
%  \node (w1) at (0:1.2) {$1$};
%  \node (w2) at (-90:1.2) {$2$};
%  \node (w3) at (180:1.2) {$3$};

%  \draw (v2) edge (v1) edge (v3)
%    (v4) edge (v1) edge (v3) edge (v2)
%    (v1) edge (w1) (v4) edge (w2) (v3) edge (w3);
%\end{tikzpicture}\, .
%\] 
%A simplified version of the argument in Section~\ref{sec:thm quotient proof actual}, filtering by the number of vertices and using \eqref{eq:weight0} in place of Proposition \ref{prop:onevertmain}, shows that the projection $\GK_{g,n}^0\to \bGK_{g,n}^0$ is a quasi-isomorphism.
%In this way, one recovers the following result in weight zero:

%\begin{thm}[{\cite{CGP1, CGP2}}]\label{thm:weight0}
%The graph cohomology $H(G^{(g,n)})=H(\bGK_{g,n}^0)$ is identified with the weight zero part $W_0H_c^\bullet(\MM_{g,n})$ of the compactly supported cohomology of the open moduli space of curves.
%\end{thm}

\section{A zig-zag of quasi-isomorphisms of graph complexes}

In this section, we discuss the graph complex $X_{g,n}$ in more detail. We then describe a zig-zag of quasi-isomorphisms relating $\bGK^2_{g,n}$ to $X_{g,n}$ and thereby prove Theorem~\ref{thm:main WHGC}.

\smallskip

Recall that we consider $X_{g,n}$ only for non-negative integers $g$ and $n$ such that $2g+n \geq 3$.

\subsection{Generators for $X_{g,n}$} %\label{sec:Xgn} 
The generators for $X_{g,n}$ are simple graphs without loops or multiple edges, in which no vertices have valence 2.  The vertices of valence at least 3 are \emph{internal} and those of valence 1 are \emph{external}.  Each external vertex is decorated with an element from the set $\{\epsilon, \omega, 1, \ldots, n \}$, such that:
\begin{itemize}
\item Each label $1, \ldots, n$ appears exactly once and the label $\omega$ appears exactly twice;
\item The graph obtained by joining all external vertices labeled $\epsilon$ or $\omega$ is connected and has genus $g$;
\end{itemize}
Say that an edge with two external vertices labeled $a$ and $b$ is an \emph{$(a,b)$-edge}. We further require that
\begin{itemize}
\item No connected component is an $(\epsilon,\omega)$ or $(\omega,\omega)$ edge; 
\end{itemize}

\noindent An edge is \emph{structural} if it does not contain an external vertex with label from $\{1, \ldots, n\}$. Note that there are exactly $n$ non-structural edges. The degree of a graph is the number of structural edges plus one.  Each generator comes with a total ordering of the structural edges, and we impose the relation that permuting the structural edges is multiplication by the sign of the permutation.

The differential $\delta$ on $X_{g,n}$ is a sum of two parts 
$
\delta = \delta_{split} + \delta_{join}.
$ 
Here $\delta_{split}$ is a sum over all vertices and over all ways of splitting the vertex into two vertices joined by an edge:
\begin{align*}%\label{equ:deltasplit}
  \delta_{split} \Gamma &= \sum_{v \text{ vertex} } 
  \Gamma\text{ split $v$} 
  &
  \begin{tikzpicture}[baseline=-.65ex]
  \node[int] (v) at (0,0) {};
  \draw (v) edge +(-.3,-.3)  edge +(-.3,0) edge +(-.3,.3) edge +(.3,-.3)  edge +(.3,0) edge +(.3,.3);
  \end{tikzpicture}
  &\mapsto
  \sum
  \begin{tikzpicture}[baseline=-.65ex]
  \node[int] (v) at (0,0) {};
  \node[int] (w) at (0.5,0) {};
  \draw (v) edge (w) (v) edge +(-.3,-.3)  edge +(-.3,0) edge +(-.3,.3)
   (w) edge +(.3,-.3)  edge +(.3,0) edge +(.3,.3);
  \end{tikzpicture}
  \end{align*}
  
The part $\delta_{join}$ glues together a subset $S$ of the $\epsilon$- and $\omega$-decorated external vertices, such that $|S| \geq 2$, and $S$ contains either 0 or 1 of the $\omega$-decorated vertices. It then attaches an edge to a new external vertex decorated by $\epsilon$ or $\omega$, respectively:
\begin{align*}%\label{equ:deltajoin}
  \delta_{join} 
 \begin{tikzpicture}[baseline=-.8ex]
 \node[draw,circle] (v) at (0,.3) {$\Gamma$};
 \node (w1) at (-.7,-.5) {};
 \node (w2) at (-.25,-.5) {};
 \node (w3) at (.25,-.5) {};
 \node (w4) at (.7,-.5) {};
 \draw (v) edge (w1) edge (w2) edge (w3) edge (w4);
 \end{tikzpicture} 
 = 
 \ \sum_{S}  
 \begin{tikzpicture}[baseline=-.8ex]
 \node[draw,circle] (v) at (0,.3) {$\Gamma$};
 \node (w1) at (-.7,-.5) {};
 \node (w2) at (-.25,-.5) {};
 \node[int] (i) at (.4,-.5) {};
 \node (w4) at (.4,-1.3) {$\epsilon$ or $\omega$};
 \draw (v) edge (w1) edge (w2) edge[bend left] (i) edge (i) edge[bend right] (i) (w4) edge (i);
 \end{tikzpicture} \, .
 \end{align*}
In both parts, there is precisely one new structural edge, and the ordering is chosen so that the new edge comes first and the relative ordering of the old edges is preserved.  It is straightforward to verify that applying $\delta_{split}$ and $\delta_{join}$ to generators for $X_{g,n}$ produces a sum of generators for $X_{g,n}$, and that $\delta$ squares to zero (as do $\delta_{split}$ and $\delta_{join}$).

In pictures, we draw graphs in $X_{g,n}$ with filled black vertices, to distinguish them from graphs in $\GK_{g,n}$.

\begin{rem} %\label{rem:FTWv1} 
The graph complex $X_{g,n}$ is closely related to the graph complexes studied by Fresse, Turchin, and the second author in the context of the embedding calculus \cite{FTW2}.
For suitable submanifolds $M\subset \R^m$ they showed that the rational homotopy groups of the (long) embedding spaces modulo immersions $\Embbar_\partial(M,\R^N)$ can be expressed as the cohomology of a graph complex $\HGC_{\bar A,N}$, in which external vertices are labeled by elements of the augmentation ideal $\bar A$ of a dg commutative algebra model of the one-point-compactification $M\cup\{\infty\}$. 
Furthermore, the cohomology groups of $\Embbar_\partial(M,\R^N)$ are computed by the Chevalley complex of $\HGC_{\bar A,N}$, and this latter complex can be identified with a complex of (possibly) disconnected graphs.

In particular, if $M$ is a union of $n$ copies of $\R^k$ and $S^{2\ell}$, then the corresponding model $\bar A$ has a basis consisting of $\epsilon$ in degree zero, $\omega$ in degree $2\ell$, and  $\{\omega_1, \ldots, \omega_n\}$ in degree $k$, with product given by $\epsilon^2 = \epsilon$, $\epsilon\omega =\omega,$ and 
  $\epsilon \omega_j = \omega^2= \omega_i \omega = \omega_i\omega_j = 0$.

Then, up to unimportant degree shifts, $X_{g,n}$ may be identified with a subquotient of the Chevalley complex of $\HGC_{\bar A,N}$, for $N$ even, corresponding to genus $g$ graphs without loop edges that contain each decoration $\omega_j$ exactly once, and that are connected after fusing the $\epsilon$- and $\omega$-decorated legs.
We shall not need this connection to the embedding calculus elsewhere in the paper, and leave the more precise comparison of the cohomology of $X_{g,n}$ and the cohomology of emedding spaces to future work.
\end{rem}

\subsection{A resolution of $\bGK^2_{g,n}$}
Our goal is to relate $\bGK_{g,n}^2$ to $X_{g,n}$ by a zigzag of quasi-isomorphisms. To this end, we now construct a quasi-isomorphism $\tGK_{g,n}^2 \xrightarrow{\sim} \bGK_{g,n}^2$ such that $\tGK_{g,n}^2$ also maps naturally onto $X_{g,n}$.  Roughly speaking $\tGK_{g,n}^2$ is constructed from $\bGK^2_{g,n}$ by resolving the space of decorations $H^2(\bMM_{0,k})$, as in Proposition~\ref{prop:H2 resolution}.
%\begin{rem}
  One can also construct a quasi-isomorphism $X_{g,n} \to \bGK_{g,n}^2$; see Appendix~\ref{sec:direct map def}. The construction of the map is explicit, but the proof that it is a quasi-isomorphism uses the resolution $\tGK_{g,n}^2 \xrightarrow{\sim}\bGK_{g,n}^2$, and so we have chosen to focus on the zig-zag.
%\end{rem}

Concretely, $\tGK^2_{g,n}$ is a graph complex analogous to $\GK_{g,n}^2$ generated by graphs with the following features. All vertices have genus 0 and there is exactly one special vertex with extra decoration. There are no loop edges at non-special vertices. The special vertex is decorated with one of the following: 
\begin{itemize}
  \item A symbol $\delta_{0,A}$, where $A$ is a subset of the $k$ half-edges at the special vertex such that $2\leq |S|\leq k-2$.
    \item A symbol $\psi_i$, where $i$ is a half-edge at the special vertex. 
  \item A symbol $E_{ij}$ where $i,j$ are distinct half-edges at the special vertex.
\end{itemize}
We impose the relations $\delta_{0,A} = \delta_{0,A^c}$ and $E_{ij} = E_{ji}$.  Furthermore, if there is a loop at the special vertex with half-edges $t$ and $t'$, we impose the  \emph{loop relations}:
\begin{equation}\label{eq:looprels}
 \resizebox{0.91\hsize}{!}{$
\psi_i = 0;  \quad \quad \psi_t = \psi_{t'} = \frac12 \sum_{{t\in A, t'\notin A}} \delta_{0,A}; \quad \quad E_{it} = E_{it'} = 0; \quad \quad \delta_{0,A} = 0 \mbox { if } t,t' \in S \mbox{ (or $t,t' \in S^c$)}.
$}
\end{equation}
As usual, each generator comes equipped with an ordering of the structural edges, and permuting this ordering induces multiplication by the sign of the permutation.  Formally, these decorations should be interpreted as elements of a dg vector space resolving   $H^2(\bMM_{0,k})$, as in Proposition~\ref{prop:H2 resolution}. In this resolution, $\delta_{0,A}$ and $\psi_i$ have degree 2, while $E_{ij}$ has degree $1$. Thus, a generator with $n$ structural edges has degree $n+2$ unless its decoration is $E_{ij}$, in which case the degree is $n+1$.

We continue to depict the decoration at the special vertex by adding combinatorial features to the graph. The depictions of $\psi_i$ and $\delta_{0,S}$ are exactly as in section \ref{sec:graphical bdry psi}. 
We depict the decoration $E_{ij}$, with $i$ and $j$ distinct half-edges incident to the special vertex, by marking the two half-edges with arrows as follows.
\begin{align*}
\begin{tikzpicture}
  \node[ext, accepting] (v) at (0,0){};
 \node (vi) at (130:.7){};
\node (vj) at (50:.7){};
\node (vk) at (90:.5){$E_{ij}$};
  \draw (v) edge (vi) edge (vj)
  (v) edge +(-30:.5) edge +(-90:.5) edge +(-150:.5);
\end{tikzpicture}
\ \ 
& =: \ \ 
\begin{tikzpicture}
  \node[ext, accepting] (v) at (0,0){};
  \node (vi) at (130:.7){$\scriptstyle i$};
  \node (vj) at (50:.7){$\scriptstyle j$};
  \draw (v) edge[->-] (vi) edge[->-] (vj)
  (v) edge +(-30:.5) edge +(-90:.5) edge +(-150:.5)
   ;
\end{tikzpicture}
\end{align*}

The loop relations \eqref{eq:looprels} then take on the following graphical form:
\begin{align}\label{equ:loop relations}
  \begin{tikzpicture}[scale=.9]
    \node[ext,accepting] (v) at (0,0){};
    \draw (v) edge[->-] +(0:.5) edge[loop] (v) edge +(-60:.5) edge +(180:.5) edge +(-120:.5);
  \end{tikzpicture}
  &
  =0
  &
  \begin{tikzpicture}[scale=.9]
    \node[ext,accepting] (v) at (0,0){};
    \draw (v) edge +(0:.5) edge[->-,loop] (v) edge +(-60:.5) edge +(180:.5) edge +(-120:.5);
  \end{tikzpicture}
  &=
\frac{1}{2}
  \sum  
   \begin{tikzpicture}[scale=.9]
    \node[ext] (v) at (0,0){};
    \node[ext] (w) at (-.7,0){};
    \draw (v) edge[crossed] (w) edge +(0:.5) edge[out=120, in=60] (w) edge +(-60:.5) 
    (w)  edge +(180:.5) edge +(-120:.5);
  \end{tikzpicture}
  &
  \begin{tikzpicture}[scale=.9]
    \node[ext, accepting] (v) at (0,0){};
    \draw (v) edge[->-,loop] (v)
    (v) edge[->-] +(-30:.5) edge +(-90:.5) edge +(-150:.5)
     ;
  \end{tikzpicture}
  &=
  \begin{tikzpicture}[scale=.9]
    \node[ext, accepting] (v) at (0,0){};
    \draw (v) edge[loop] (v)
    (v) edge[->] +(50:.25)
    (v) edge[->] +(130:.25)
    (v) edge +(-30:.5) edge +(-90:.5) edge +(-150:.5)
     ;
  \end{tikzpicture}
  =0
&
  \begin{tikzpicture}[scale=.9]
    \node[ext] (v) at (0,0){};
    \node[ext] (w) at (-.7,0){};
    \draw (v) edge[crossed] (w) edge +(0:.5) edge +(60:.5) edge +(-60:.5) 
    (w) edge[loop] (w) edge +(180:.5) edge +(-120:.5);
  \end{tikzpicture}
  &=0
  \end{align}

\noindent Note that any graph with two or more loops at the special vertex is set to zero by these relations. 

The differential on $\tGK^2_{g,n}$ is the sum of two parts
\[
\delta = \delta_{split} + \delta_{res}    
\]
where $\delta_{res}$ encodes the resolution of $H^2(\bMM_{0,n})$ from Proposition~\ref{lem:H2 presentation}. 
Recall that the resolution is given by 
\begin{equation*}%\label{equ:Eij replacement}
E_{ij} \mapsto \psi_i+\psi_j - \sum_{i\in S, j\in S^c} \delta_{0,S}.    
\end{equation*}
The differential $\delta_{res}$ applies this formula to 
the $E_{ij}$-decoration at the special vertex, with a conventional sign $(-1)^{e}$, where $e$ is the number of structural edges. Graphically, this may be depicted as 
\begin{equation}\label{equ:Eij delta int}
  \delta_{res} \colon
  \begin{tikzpicture}
    \node[ext, accepting] (v) at (0,0){};
    \node (vi) at (130:.7){};
    \node (vj) at (50:.7){};
    \draw (v) edge[->-] (vi) edge[->-] (vj)
    (v) edge +(-30:.5) edge +(-70:.5) edge +(-110:.5) edge +(-150:.5) ;
  \end{tikzpicture}
  \mapsto 
  (-1)^{e}
  \left(
  \begin{tikzpicture}
    \node[ext, accepting] (v) at (0,0){};
    \node (vi) at (130:.7){};
    \node (vj) at (50:.7){};
    \draw (v) edge (vi) edge[->-] (vj)
    (v) edge +(-30:.5) edge +(-70:.5) edge +(-110:.5) edge +(-150:.5) ;
  \end{tikzpicture}
  +
  \begin{tikzpicture}
    \node[ext, accepting] (v) at (0,0){};
    \node (vi) at (130:.7){};
    \node (vj) at (50:.7){};
    \draw (v) edge[->-] (vi) edge (vj)
    (v) edge +(-30:.5) edge +(-70:.5) edge +(-110:.5) edge +(-150:.5) ;
  \end{tikzpicture}
  -
  \sum\, 
  \begin{tikzpicture}
    \node[ext] (v) at (0,0){};
    \node[ext] (w) at (0.7,0){};
    \draw (v) edge[crossed] (w) edge +(130:.7) 
    (w) edge +(50:.7) 
    (v) edge +(-110:.5) edge +(-150:.5) 
    (w) edge +(-30:.5) edge +(-70:.5) ;
  \end{tikzpicture}
  \right)
\end{equation}
If the special vertex is decorated by $\delta_{0,A}$ or $\psi_i$, then $\delta_{res}$ vanishes. 
Note that $\delta_{res}$ is homogeneous of degree 1.

The part $\delta_{split}$ splits undecorated vertices as usual.
When the special vertex is split this is done as follows:
\begin{itemize}
  \item If decorated by $\psi_i$ the special vertex is again split as in \eqref{eq:splitpsi} on p.~\pageref{eq:splitpsi}.
  \item If decorated by $\delta_{0,S}$ the special vertex is split as in \eqref{equ:delta split delta} on p.~\pageref{equ:delta split delta}. 
  \item If decorated by $E_{ij}$ some subset $S$ of half-edges is split off that does not contain both $i$ and $j$ (but might contain neither of them), as shown: 
  \begin{equation}\label{equ:Eij split}
  \delta_{split}:
  \begin{tikzpicture}
    \node[ext, accepting] (v) at (0,0){};
    \node (vi) at (130:.7){};
    \node (vj) at (50:.7){};
    \draw (v) edge[->-] (vi) edge[->-] (vj)
    (v) edge +(-30:.5) edge +(-70:.5) edge +(-110:.5) edge +(-150:.5) ;
  \end{tikzpicture}
  \mapsto
  \sum\,
  \begin{tikzpicture}
    \node[ext, accepting] (v) at (0,0){};
    \node[ext] (w) at (130:.7) {};
    \node (vi) at (130:1.4){};
    \node (vj) at (50:.7){};
    \draw (v) edge[->-] (w) edge[->-] (vj)
    (w) edge (vi) edge +(-110:.5) edge +(-150:.5)
    (v) edge +(-30:.5) edge +(-70:.5)   ;
  \end{tikzpicture}
  +
  \sum\,
  \begin{tikzpicture}
    \node[ext, accepting] (v) at (0,0){};
    \node[ext] (w) at (50:.7) {};
    \node (vi) at (130:.7){};
    \node (vj) at (50:1.4){};
    \draw (v) edge[->-] (w) edge[->-] (vi)
    (w) edge (vj) edge +(-30:.5) edge +(-70:.5)
    (v)  edge +(-110:.5) edge +(-150:.5)  ;
  \end{tikzpicture}
  +
  \sum\,
  \begin{tikzpicture}
    \node[ext, accepting] (v) at (0,0){};
    \node[ext] (w) at (0,-.7) {};
    \node (vi) at (130:.7){};
    \node (vj) at (50:.7){};
    \draw (v) edge (w) edge[->-] (vi) edge[->-] (vj)
    (w) edge +(-110:.5) edge +(-70:.5)
    (v) edge +(-150:.5) edge +(-30:.5) ;
  \end{tikzpicture}
  \end{equation}
  \end{itemize}

\begin{prop} %\label{prop:squarezero}
The differential on $\tGK_{g,n}^2$ is well-defined and squares to zero.
\end{prop}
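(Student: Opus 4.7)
The proposition has two parts: that the stated formula for $\delta$ descends to a well-defined endomorphism of the quotient $\tGK^2_{g,n}$, and that $\delta^2=0$.

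The symmetries $\delta_{0,A}=\delta_{0,A^c}$ and $E_{ij}=E_{ji}$ are preserved by $\delta$ because the defining formulas for $\delta_{split}$ and $\delta_{res}$ are manifestly symmetric in the relevant indices, and the convention that newly created edges come first is compatible with the sign rule for edge orderings. The substantive check is that $\delta$ respects the loop relations \eqref{equ:loop relations}. For each of these one applies $\delta_{split}+\delta_{res}$ to both sides and reduces the output using the explicit formulas \eqref{eq:splitpsi}, \eqref{equ:delta split delta}, \eqref{equ:Eij delta int}, \eqref{equ:Eij split} together with the loop relations themselves. Conceptually, these relations encode the identification of decorations with an $\bbS_k$-equivariant resolution of $H^2(\bMM_{0,k})$ modulo the image of $\xi^*$ from the loop-attaching map, and their preservation reflects the compatibility of the resolution of Proposition~\ref{prop:H2 resolution} with the various clutching maps into the boundary of $\bMM_{g,n}$.

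For $\delta^2=0$ we expand
\[
\delta^2 \;=\; \delta_{split}^2 \;+\; \bigl(\delta_{split}\delta_{res}+\delta_{res}\delta_{split}\bigr) \;+\; \delta_{res}^2.
\]
The term $\delta_{res}^2$ vanishes because $\delta_{res}$ sends $E_{ij}$-decorated graphs into sums of $\psi_i$- and $\delta_{0,A}$-decorated graphs, on which $\delta_{res}$ is zero. The vanishing of $\delta_{split}^2$ is the standard pairing argument for commutative graph complexes: contributions from splitting at two distinct vertices appear in pairs with opposite signs after swapping the order of the splits, while those from splitting a single vertex in two steps cancel in pairs; the presence of a decoration at the special vertex does not interfere, as may be verified on each decoration type using the explicit splitting formulas.

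The main obstacle is the cross term $\delta_{split}\delta_{res}+\delta_{res}\delta_{split}$, which is nontrivial only on $E_{ij}$-decorated generators. When $\delta_{split}$ acts at a non-special vertex, it modifies a part of the graph disjoint from the decoration, so the two operations commute as set maps; the sign $(-1)^e$ in \eqref{equ:Eij delta int} then produces the required anticommutation, because $\delta_{split}$ increases the edge count by one. When $\delta_{split}$ acts at the special vertex, one must match the three families of splits in \eqref{equ:Eij split} against the splits of the three summands of $\delta_{res}(E_{ij})$ in \eqref{equ:Eij delta int}; this matching is a term-by-term computation, and amounts to the $\bbS_k$-equivariant naturality of the resolution in Proposition~\ref{prop:H2 resolution} under the restriction of decorations along the clutching maps $\theta$. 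The signs again work out thanks to the $(-1)^e$ factor, completing the proof.
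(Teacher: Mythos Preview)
Your proposal is correct and follows essentially the same approach as the paper: verify compatibility with the loop relations \eqref{equ:loop relations}, then expand $\delta^2$ and handle the cross term $\delta_{split}\delta_{res}+\delta_{res}\delta_{split}$ on $E_{ij}$-decorated generators by term matching. Your separation of the cross term into contributions from non-special vertices (handled cleanly by the $(-1)^e$ sign) and the special vertex is slightly more structured than the paper's presentation, which treats $\delta_{split}$ as a whole; conversely, the paper carries out one of the loop-relation checks explicitly (the second one in \eqref{equ:loop relations}, which is the least obvious), whereas you defer all four to a blanket assertion.
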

\begin{proof}
First, we need to show that this differential is well-defined, i.e., that it respects the loop relations \eqref{equ:loop relations}.
That it respects the first and third relations is relatively straightforward.  To see that it respects the second loop relation, 
note that the differential applied to the left-hand side is
\begin{equation}\label{equ:welldef 1}
\sum 
\begin{tikzpicture}[scale=1]
  \node[ext, accepting] (v) at (0,0){};
  \node[ext] (w) at (-.7,0){};
  \draw (v) edge (w) edge +(0:.5) edge[out=120, in=60,->-] (w) edge +(-60:.5) 
  (w)  edge +(180:.5) edge +(-120:.5);
\end{tikzpicture}
+
\sum 
\begin{tikzpicture}
  \node[ext, accepting] (v) at (0,0){};
  \node[ext] (w) at (0,-.7) {};
  \draw (v) edge (w) edge[->-, loop] (v)
  (w) edge +(-110:.5) edge +(-70:.5)
  (v) edge +(-150:.5) edge +(-30:.5) ;
\end{tikzpicture}\,. 
\end{equation}
The differential applied to the right-hand side produces 
\begin{equation}\label{equ:welldef 2}
  -
  \sum  
   \begin{tikzpicture}[scale=1]
    \node[ext, accepting] (v) at (0,0){};
    \node[ext] (w) at (-.7,0){};
    \draw (v) edge[->-] (w) edge +(0:.5) edge[out=120, in=60] (w) edge +(-60:.5) 
    (w)  edge +(180:.5) edge +(-120:.5);
  \end{tikzpicture}  
+
\sum  
\begin{tikzpicture}[scale=1]
 \node[ext] (v) at (0,0){};
 \node[ext] (w) at (-.7,0){};
 \node[ext] (ww) at (.7,0){};
 \draw (v) edge[crossed] (w) 
 edge (ww) edge[out=120, in=60] (w) 
 (w)  edge +(180:.5) edge +(-120:.5)
 (ww)  edge +(0:.5) edge +(-60:.5) ;
\end{tikzpicture} 
+
\sum  
\begin{tikzpicture}[scale=1]
 \node[ext] (v) at (0,0){};
 \node[ext] (w) at (-.7,0){};
 \node[ext] (ww) at (-.35,-.5){};
 \draw (v) edge[crossed] (w) edge +(0:.5)
 edge (ww) 
 (w)   edge (ww) edge +(180:.5) edge +(-120:.5)
 (ww)  edge +(-60:.5) ;
\end{tikzpicture}  \, .
\end{equation}
The first term in \eqref{equ:welldef 1} agrees with that in \eqref{equ:welldef 2}, with the sign due to interchanging the two edges. The second term agrees with the second term in \eqref{equ:welldef 2} due to the relations. Each summand in the third term appears twice, with opposite sign, by applying $\delta_{split}$ to two different terms in the sum appearing in \eqref{equ:loop relations}. 
The verification that the differential respects the fourth loop relation is similar.

Now we must check that the differential squares to zero. Suppose $\Gamma$ is a generator for $\tGK^2_{g,n}$. We must show:
\begin{equation*}%\label{equ:delta square tGK}
 (\delta_{split}+\delta_{res})^2\Gamma
  =
  \delta_{split}^2\Gamma + (\delta_{split}\delta_{res}+\delta_{res}\delta_{split})\Gamma = 0.
\end{equation*} 
The argument for cancellation of terms that arise from splitting non-special vertices is identical to the proof that differential squares to zero on the ordinary commutative graph complex. The same argument adapts easily to cases where the special vertex is decorated with $\psi_i$ or $\delta_{0,A}$, because in these cases $\delta_{res} = 0$. 

Suppose the special vertex is decorated by $E_{ij}$. 
The standard argument shows that $\delta_{split}^2 \Gamma =0$. It remains to show $(\delta_{split}\delta_{res}+\delta_{res}\delta_{split})\Gamma=0$.
Applying $\delta_{res}$ to the right-hand side of \eqref{equ:Eij split} gives $(-1)^{e+1}$ times 
\vspace{-2 pt}
\begin{equation} \label{equ:d square many terms}
\begin{gathered}    
\sum\,
  \begin{tikzpicture}[scale=0.7]
    \node[ext, accepting] (v) at (0,0){};
    \node[ext] (w) at (130:.7) {};
    \node (vi) at (130:1.4){};
    \node (vj) at (50:.7){};
    \draw (v) edge[->-] (w) edge (vj)
    (w) edge (vi) edge +(-110:.5) edge +(-150:.5)
    (v) edge +(-30:.5) edge +(-70:.5)   ;
  \end{tikzpicture}
  +
  \sum\,
  \begin{tikzpicture}[scale=0.7]
    \node[ext, accepting] (v) at (0,0){};
    \node[ext] (w) at (130:.7) {};
    \node (vi) at (130:1.4){};
    \node (vj) at (50:.7){};
    \draw (v) edge (w) edge[->-] (vj)
    (w) edge (vi) edge +(-110:.5) edge +(-150:.5)
    (v) edge +(-30:.5) edge +(-70:.5)   ;
  \end{tikzpicture}
  -
  \sum\, 
  \begin{tikzpicture}[scale=0.7]
    \node[ext] (v) at (0,0){};
    \node[ext] (w) at (0.7,0){};
    \node[ext] (ww) at (130:0.7){};
    \draw (v) edge[crossed] (w) edge (ww)
    (ww) edge +(130:.7)  
    (w) edge +(50:.7) 
    (ww) edge +(-110:.5) edge +(-150:.5) 
    (w) edge +(-30:.5) 
    (v) edge +(-70:.5) ;
  \end{tikzpicture}
  +
  \sum\,
  \begin{tikzpicture}[scale=0.7]
    \node[ext, accepting] (v) at (0,0){};
    \node[ext] (w) at (50:.7) {};
    \node (vi) at (130:.7){};
    \node (vj) at (50:1.4){};
    \draw (v) edge (w) edge[->-] (vi)
    (w) edge (vj) edge +(-30:.5) edge +(-70:.5)
    (v)  edge +(-110:.5) edge +(-150:.5)  ;
  \end{tikzpicture}
  +
  \sum\,
  \begin{tikzpicture}[scale=0.7]
    \node[ext, accepting] (v) at (0,0){};
    \node[ext] (w) at (50:.7) {};
    \node (vi) at (130:.7){};
    \node (vj) at (50:1.4){};
    \draw (v) edge[->-] (w) edge (vi)
    (w) edge (vj) edge +(-30:.5) edge +(-70:.5)
    (v)  edge +(-110:.5) edge +(-150:.5)  ;
  \end{tikzpicture}
  \\
  -
  \sum\, 
  \begin{tikzpicture}[scale=0.7]
    \node[ext] (v) at (-.7,0){};
    \node[ext] (ww) at (50:.7){};
    \node[ext] (w) at (0,0){};
    \draw (v) edge[crossed] (w) edge +(130:.7)  
    (ww) edge +(50:.7)
    (w)  edge (ww)
    (ww)   edge +(-30:.5) edge +(-70:.5)
    (w) edge +(-110:.5)
    (v) edge +(-150:.5) ;
  \end{tikzpicture}
  +\sum\,
  \begin{tikzpicture}[scale=0.7]
  \begin{scope}[shift={(0,.5)}]
    \node[ext, accepting] (v) at (0,0){};
    \node[ext] (w) at (0,-.7) {};
    \node (vi) at (130:.7){};
    \node (vj) at (50:.7){};
    \draw (v) edge (w) edge[->-] (vi) edge (vj)
    (w) edge +(-110:.5) edge +(-70:.5)
    (v) edge +(-150:.5) edge +(-30:.5) ;
    \end{scope}
  \end{tikzpicture}
  +
  \sum\,
  \begin{tikzpicture}[scale=0.7]
    \begin{scope}[shift={(0,.5)}]
    \node[ext, accepting] (v) at (0,0){};
    \node[ext] (w) at (0,-.7) {};
    \node (vi) at (130:.7){};
    \node (vj) at (50:.7){};
    \draw (v) edge (w) edge (vi) edge[->-] (vj)
    (w) edge +(-110:.5) edge +(-70:.5)
    (v) edge +(-150:.5) edge +(-30:.5) ;
        \end{scope}
  \end{tikzpicture}
  -
  \sum\, 
  \begin{tikzpicture}[scale=0.7]
    \begin{scope}[shift={(0,.5)}]
    \node[ext] (v) at (0,0){};
    \node[ext] (w) at (0.7,0){};
    \node[ext] (ww) at (0,-0.7){};
    \draw (v) edge[crossed] (w) edge (ww) edge +(130:.7)  
    (w) edge +(50:.7) 
    (ww)    edge +(-70:.5) edge +(-110:.5)
    (w) edge +(-30:.5) 
    (v) edge +(-150:.5) ;
        \end{scope}
  \end{tikzpicture}
  -
  \sum\, 
  \begin{tikzpicture}  [scale=0.7]
  \begin{scope}[shift={(0,.5)}]
    \node[ext] (v) at (0,0){};
    \node[ext] (w) at (0.7,0){};
    \node[ext] (ww) at (0.7,-0.7){};
    \draw (v) edge[crossed] (w)  edge +(130:.7)  
    (w) edge +(50:.7) edge (ww)
    (ww)    edge +(-70:.5) edge +(-110:.5)
    (w) edge +(-30:.5) 
    (v) edge +(-150:.5) ;
          \end{scope}
  \end{tikzpicture}
  \quad .
\end{gathered}
\end{equation}
\vspace{0 pt}

\noindent We need to compare this to $\delta_{split}$ applied to the right-hand side of \eqref{equ:Eij delta int}. The fourth and seventh terms together match $\delta_{split}$ applied to the second term of \eqref{equ:Eij delta int}, with opposite sign. Likewise, the second and the eighth term of \eqref{equ:d square many terms} match $\delta_{split}$ applied to the first term of \eqref{equ:Eij delta int}. The cancellation of the remaining terms is similar.
\end{proof}

\begin{prop} \label{prop:tGKqiso}
  There is a quasi-isomorphism
  \[
      \tGK_{g,n}^2 \to \bGK_{g,n}^2
  \]
  that sets $E_{ij}$ to zero and imposes the relation $\psi_i+\psi_j - \displaystyle \sum_{i\in S, j\in S^c} \delta_{0,S}$.
  \end{prop}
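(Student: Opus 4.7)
The plan is to first check that the prescribed map $f\colon \tGK_{g,n}^2\to \bGK_{g,n}^2$ is well-defined and a chain map, and then to prove it is a quasi-isomorphism via a spectral sequence comparison analogous to the proof of Theorem~\ref{thm:main quotient}.

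Well-definedness amounts to showing that the prescription---kill $E_{ij}$ and impose the relation $\psi_i+\psi_j-\sum_{i\in A,\,j\in A^c}\delta_{0,A}=0$---is consistent with the loop relations \eqref{equ:loop relations}. This is a direct case check: each such relation either lives entirely among $E_{ij}$-generators (which map to zero), or reduces to a combination of the $\bGK$ loop relation $\delta_{0,A}=0$ for $\{t,t'\}\subset A$ and the resolution relation. The chain map property is then immediate: $\delta_{split}$ is intertwined tautologically, while on any $E_{ij}$-decorated generator $\delta_{res}$ produces precisely the resolution relation, which vanishes in $\bGK$.

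For the quasi-isomorphism, I filter both $\tGK_{g,n}^2$ and $\bGK_{g,n}^2$ by the number of vertices, letting $\mF^p$ be the subcomplex of graphs with at least $p$ vertices, and compare the induced spectral sequences via \cite[Theorem 5.2.12]{Weibel}. Because $\delta_{split}$ strictly increases the vertex count while $\delta_{res}$ preserves it, the $E^0$-differential is $\delta_{res}$ on $\tGK_{g,n}^2$ and is zero on $\bGK_{g,n}^2$. The induced map on $E^1$ is
\[
H(\tGK_{g,n}^2,\delta_{res}) \longrightarrow \bGK_{g,n}^2,
\]
and showing this is an isomorphism is sufficient, as the higher pages then agree tautologically. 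Since $\delta_{res}$ only touches the decoration at the special vertex, this computation decomposes, after taking $\Aut(\Gamma)$-coinvariants (which commutes with cohomology), into a direct sum indexed by underlying stable graphs $\Gamma$ with genus-$0$ vertices, of the cohomology of a two-term decoration complex $R_\Gamma$ attached to the special vertex.

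When the special vertex of $\Gamma$ carries no loop, $R_\Gamma$ is precisely the resolution of $H^2(\bMM_{0,k})$ from Proposition~\ref{prop:H2 resolution}, so its only nonzero cohomology equals the $\bGK$-decoration space at that vertex. The hard case, and the main obstacle, is the loop case: one must show that the loop relations \eqref{equ:loop relations} cut $R_\Gamma$ down to a complex whose only nonzero cohomology equals $H^2(\bMM_{0,k+2})/\langle \delta_{0,A}:\{t,t'\}\subset A\rangle$, the $\bGK$-decoration space in the loop case. This reduces to verifying, in the loop-ideal quotient of $H^2(\bMM_{0,k+2})$ and modulo the $\bbS_2$-symmetry on the loop half-edges $\{t,t'\}$, that the relations from Lemma~\ref{lem:H2 presentation} degenerate exactly into the four relations of \eqref{equ:loop relations}: for instance, applying $\psi_i=\sum_{i\in A,\,x,y\notin A}\delta_{0,A}$ with $x=t$ and $y=t'$ shows every non-loop $\psi_i$ lies in the loop ideal, giving $\psi_i=0$, while the $\bbS_2$-identification $\psi_t=\psi_{t'}$ combined with $\psi_t+\psi_{t'}=\sum_{t\in A,\,t'\notin A}\delta_{0,A}$ produces the coefficient $\tfrac{1}{2}$; injectivity of $\delta_{res}$ on the surviving $E_{ij}$'s follows using Lemma~\ref{lem:S2independent}. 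Once this decoration-level bookkeeping is in place, the spectral sequence comparison theorem concludes the proof.
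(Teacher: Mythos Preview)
Your approach is essentially the paper's: filter by vertex count, reduce to the two-term decoration complex at the special vertex, handle the no-loop case via Proposition~\ref{prop:H2 resolution}, and in the loop case invoke Lemma~\ref{lem:S2independent} for injectivity. The paper organizes the loop case slightly differently---it takes $\bbS_2$-coinvariants of the resolution in Proposition~\ref{prop:H2 resolution} and writes out an explicit three-term sequence whose exactness is checked by eliminating bases---whereas you phrase it as the relations of Lemma~\ref{lem:H2 presentation} ``degenerating'' to the loop relations \eqref{equ:loop relations}; but the underlying verification is the same.
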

\begin{proof}
  We first verify that this projection is a well-defined map of complexes. It clearly respects the differential on all generators where the special vertex is decorated by $\psi$ or $\delta_{0,A}$.    Consider a generator $\Gamma\in \tGK_{g,n}^2$ whose special vertex is decorated by $E_{ij}$. Then $\delta_{split}\Gamma$ also has an $E_{ij}$-decorated special vertex and is hence sent to zero. 
  Furthermore $\delta_{res}\Gamma$ is sent to zero since its special vertex is just decorated by the relation between $\psi$- and $\delta$-classes.

  To see that this projection is a quasi-isomorphism, we filter both sides by the number of vertices and consider the associated spectral sequences.
  We claim that the map $\tGK^2_{g,n} \to \bGK_{g,n}^2$ induces a quasi-isomorphism already on the associated graded complex. 
  Proceeding as in Section~\ref{sec:thm quotient proof actual} one can see it therefore suffices to consider the complex associated to the special vertex. 
  The differential on the associated graded of $\bGK^2_{g,n}$ is zero, and the differential on the associated graded of $\tGK^2_{g,n}$ does not add any edges. We can therefore consider separate cases according to whether or not there is a loop at the special vertex.
  
  If there is no loop, then the loop relations listed above do not come into play, and the statement boils down to Proposition~\ref{prop:H2 resolution}, which says that 
\[%\label{equ:H2 exact seq 2}
\bigoplus_{1\leq i<j\leq k} \Q E_{ij}
  \to 
  \Big( \bigoplus_{1\leq i\leq k} \Q\psi_i \Big) \oplus  \bigoplus \Q \delta_{0,S}
\]
%$
%\langle E_{ij} \rangle_{1\leq i< j\leq k} \to \langle \psi_i \rangle_{1\leq i\leq k}
%\oplus \langle \delta_{0,S}\rangle      
%$
is a resolution of $H^2(\bMM_{0,k})$.

It remains to consider the case where there is a loop at the special vertex. Let $t, t'$ denote the last two marked points on $\bMM_{0,k+2}$. We must show that the complex
\begin{equation} \label{eq:loopres}
\bigoplus_{1\leq i<j\leq k} \Q E_{ij} \to \bigoplus_{A \subset \{1, \ldots, k\}} \Q \delta_{0,A \cup \{t\}},  \to H^2(\bMM_{0,k+2})_{\bbS_2} / \sim
\end{equation}
is exact, where $\sim$ is the quotient by the loop relations $\delta_{0, A} = 0$ for $A \subset \{1, \ldots, k \}$.  To see this, we begin by taking the $\bbS_2$-coinvariants of the resolution of $H^2(\bMM_{0,k+2})$ from Proposition~\ref{prop:H2 resolution}.  Then the $\bbS_2$-coinvariant space of the first term $\bigoplus \Q E_{ij}$  has a basis consisting of 
\[
E_{ij}, \ 1 \leq i < j \leq k; \quad \quad \quad E_{it}, \ 1 \leq i \leq k; \quad \quad \quad E_{tt'}. 
\]
Similarly, the $\bbS_2$-coinvariant space of the second term $(\bigoplus \Q \psi_i) \oplus (\bigoplus \Q \delta_{0,A})$ has a basis consisting of
\[
\resizebox{0.95\hsize}{!}{$
\psi_i, \ 1 \leq i \leq k; \quad \quad \psi_t; \quad \quad \delta_{0,A}, \ A \subset \{1, \ldots, k\}, \ |S| \geq 2; \quad \quad \delta_{0, A \cup \{t\}}, \ A \subset \{1, \ldots, k\}, \ |A| \geq 1, \  |A^c| \geq 1.
$}
\]
By Lemma~\ref{lem:S2independent}, the subspace spanned by the basis elements $\delta_{0,A}$, for  $\ A \subset \{1, \ldots, k\}$, $|A| \geq 2$ maps isomorphically onto its image in $H^2(\bMM_{0,k+2})_{\bbS_2}$, which is precisely the span of the loop relations.  We can then use $E_{it}$ to eliminate $\psi_i$ and $E_{tt'}$ to eliminate $\psi_t$ and see that \eqref{eq:loopres} is exact, as required.
\end{proof}

\subsection{Completing the zig-zag}

We now complete the zig-zag from $\bGK_{g,n}^2$ to $X_{g,n}$ by producing a quasi-isomorphism
\[
\Phi : \tGK_{g,n} \xrightarrow{\sim} X_{g,n}. 
\]
Let $\Gamma$ be a graph in $\tGK_{g,n}$ with $e$ structural edges. Say that the half-edges incident to external vertices are \emph{legs}. 
\begin{itemize}
\item If the special vertex of $\Gamma$ is decorated by $\delta_{0,A}$ we define $\Phi(\Gamma)=0$.
\item If the special vertex of $\Gamma$ is decorated by $E_{ij}$, then we let $\Phi(\Gamma)$ be the marked graph obtained by deleting the special vertex, making the half-edges incident to the special vertex into legs, and decorating the external vertices on $i,j$ by $\omega$ and the other newly created external vertices by $\epsilon$.
\begin{equation}\label{eq:PhiEij}
\begin{tikzpicture}
  \node[ext, accepting] (v) at (0,0){};
  \node (vi) at (130:.7){$\scriptstyle i$};
  \node (vj) at (50:.7){$\scriptstyle j$};
  \draw (v) edge[->-] (vi) edge[->-] (vj)
  (v) edge +(-30:.5) edge +(-90:.5) edge +(-150:.5)
   ;
\end{tikzpicture}
\quad \xrightarrow{\Phi}
\quad 
  \begin{tikzpicture}
    \node (vi) at (130:1.1){$\scriptstyle i$};
    \node (vj) at (50:1.1){$\scriptstyle j$};
    \node (vi0) at (130:.3) {$\scriptstyle \omega$};
    \node (vj0) at (50:.3) {$\scriptstyle \omega$};
    \node (w0) at (-150:.3) {$\scriptstyle \epsilon$};
    \node (w1) at (-30:.3) {$\scriptstyle \epsilon$};
    \node (w2) at (-90:.3) {$\scriptstyle \epsilon$};
    \draw (vi0) edge (vi) (vj0) edge (vj) 
    (w0)  edge +(-150:.5) 
    (w1) edge +(-30:.5) 
    (w2) edge +(-90:.5) 
     ;
  \end{tikzpicture}
\end{equation}
Note that the structural edges in $\Gamma$ and in $\Phi(\Gamma)$ are in 1-1-correspondence; we retain their ordering.
\item If the special vertex of $\Gamma$ is decorated by $\psi_i$ then we define $\Phi(\Gamma)$ by cutting the edge and pairing each of its half edges with an $\omega$-decorated leg, with an overall sign of $(-1)^e$.
\begin{equation}\label{equ:Phi pic1}
\begin{tikzpicture}[scale=1]
  \node[ext] (v) at (0,0){};
  \node[ext, accepting] (w) at (-.7,0){};
  \draw (v) edge +(0:.5) edge +(60:.5) edge +(-60:.5) 
  (w) edge[->-] (v) edge +(120:.5) edge +(180:.5) edge +(-120:.5);
\end{tikzpicture}
\quad \xrightarrow{\Phi}
\quad 
(-1)^e
\,
\begin{tikzpicture}[scale=1]
  \node[int] (v) at (0,0){};
  \node[int] (w) at (-1.2,0){};
  \node (v1) at (-.5,.3) {$\scriptstyle \omega$};
  \node (w1) at (-.7,-.3) {$\scriptstyle \omega$};
  \draw (v) edge +(0:.5) edge +(60:.5) edge +(-60:.5) edge (v1)
  (w) edge (w1) edge +(120:.5) edge +(180:.5) edge +(-120:.5);
\end{tikzpicture}
\end{equation}
Note in particular that if the half-edge opposite $i$ is a leg, say incident to an external vertex with some marking $j$, then $\Phi(\Gamma)$ will have a connected component that is an $(\omega, j)$ edge: 
\begin{equation}\label{equ:Phi pic1 leg}
\begin{tikzpicture}[scale=1]
  \node (v) at (0,0){$\scriptstyle j$};
  \node[ext, accepting] (w) at (-.7,0){};
  \draw 
  (w) edge[->-] (v) edge +(120:.5) edge +(180:.5) edge +(-120:.5);
\end{tikzpicture}
\quad \xrightarrow{\Phi}
\quad 
(-1)^e
\,
\begin{tikzpicture}[scale=1]
  \node (v) at (.5,0){$\scriptstyle j$};
  \node[int] (w) at (-1,0){};
  \node (v1) at (0,.3) {$\scriptstyle \omega$};
  \node (w1) at (-.5,-.3) {$\scriptstyle \omega$};
  \draw (v) edge (v1)
  (w) edge (w1)  edge +(120:.5) edge +(180:.5) edge +(-120:.5);
\end{tikzpicture}
\end{equation}
In both cases \eqref{equ:Phi pic1} and \eqref{equ:Phi pic1 leg}, $\Phi(\Gamma)$ has one more edge than $\Gamma$. We may assume, without loss of generality,  that the decorated edge (of which $i$ is one half-edge) is the first in the ordering of edges. Then we order the structural edges in $\Phi(\Gamma)$ such that the edge containing $i$ is first, the edge containing the opposite half-edge of $i$ is second, and the relative order of the remaining edges is unchanged.
\end{itemize}

\begin{lemma}
The map $\Phi$ is a map of complexes.
\end{lemma}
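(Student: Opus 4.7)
The plan is to verify $\Phi \circ \delta_{\tGK} = \delta_X \circ \Phi$ on generators by case analysis on the decoration of the special vertex. Since $\delta_{split}$ acting on a non-special vertex of $\Gamma$ corresponds, under $\Phi$, to $\delta_{split}^X$ acting on that same vertex (now an ordinary internal vertex) of $\Phi(\Gamma)$, and the edge-ordering conventions are matched by construction, those contributions cancel trivially. The verification therefore reduces to analyzing the interaction of the differential with the special vertex, together with the contributions of $\delta_{res}$ and $\delta_{join}$.

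If the special vertex carries $\delta_{0,A}$, then $\Phi(\Gamma)=0$ and $\delta_{res}\Gamma=0$; by \eqref{equ:delta split delta}, the first two summands of $\delta_{split}\Gamma$ remain $\delta_{0,A'}$-decorated and hence vanish under $\Phi$, while the last two summands carry a $\psi$-decoration on the newly created structural edge whose $\Phi$-image contains a connected component consisting of a single $(\omega,\omega)$-edge, declared zero in $X_{g,n}$. If the special vertex carries $\psi_i$, then $\Phi(\Gamma)$ has exactly two new $\omega$-legs and no $\epsilon$-legs, so every admissible join set $S$ (with $|S|\geq 2$ and at most one $\omega$) would need an $\epsilon$-leg, forcing $\delta_{join}\Phi(\Gamma)=0$. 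It then suffices to verify $\Phi(\delta_{split}\Gamma)=\delta_{split}\Phi(\Gamma)$, which follows directly from \eqref{eq:splitpsi}: the split preserves the $\psi_i$-decoration on one of the two resulting vertices, and applying $\Phi$ cuts the $i$-edge on that side in a way that matches $\delta_{split}$ on the side of $\Phi(\Gamma)$ containing the corresponding $\omega$-leg. Signs are reconciled via the $(-1)^e$ factor from \eqref{equ:Phi pic1}.

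The $E_{ij}$-case is the most intricate and involves all four pieces simultaneously. Let $k$ be the valence of the special vertex; then $\Phi(\Gamma)$ has two $\omega$-legs (on the $i,j$-half-edges) and $k-2$ $\epsilon$-legs. The plan is to match $\delta_{join}\Phi(\Gamma)$ with contributions from both $\delta_{split}$ and $\delta_{res}$ on the $\tGK$-side, indexed by the cardinality of the join set $S$: for $2\leq |S|\leq k-2$ the terms should correspond bijectively with $\Phi(\delta_{split}\Gamma)$ via \eqref{equ:Eij split}, each $S$ in $X_{g,n}$ arising as the image of a subset of half-edges split off from the special vertex in $\tGK$; whereas the two extremal terms with $|S|=k-1$ (one $\omega$-leg together with all $\epsilon$'s) should match exactly with the two surviving $\psi$-terms of $\Phi(\delta_{res}\Gamma)$ from \eqref{equ:Eij delta int}, the $\delta_{0,S}$-summands being killed by $\Phi$. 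The main obstacle is the sign bookkeeping here: one must simultaneously reconcile the $(-1)^e$ sign of $\delta_{res}$, the $(-1)^e$ sign of $\Phi$ on $\psi$-decorated graphs, and the edge-ordering conventions for newly created structural edges under $\delta_{split}$ and $\delta_{join}$. I expect signs to line up once verified in a minimal canonical configuration, with the general case following by a standard parity argument from commutative graph complex calculations.
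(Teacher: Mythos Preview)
Your overall strategy matches the paper's: decompose the differential into the part acting on non-special vertices (which commutes with $\Phi$ trivially) and the part at the special vertex, then argue case by case on the decoration. The $\psi_i$ case and the $E_{ij}$ case are handled essentially as in the paper, though your $E_{ij}$ discussion defers the sign verification rather than carrying it out.

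However, your treatment of the $\delta_{0,A}$ case contains a genuine error. You claim that the two $\psi$-decorated terms in the last line of \eqref{equ:delta split delta} are sent by $\Phi$ to graphs containing an isolated $(\omega,\omega)$-edge, hence zero in $X_{g,n}$. This is not what happens. In those terms the $\psi$-arrow sits on the edge joining the two vertices produced by the split; both of these vertices are stable, hence of valence $\geq 3$. Applying $\Phi$ via \eqref{equ:Phi pic1} cuts that edge and attaches an $\omega$-leg to each of the two \emph{internal} vertices. No $(\omega,\omega)$-edge is created: you simply obtain two $\omega$-legs hanging off internal vertices. The two terms do vanish, but for a different reason: they map to the \emph{same} graph in $X_{g,n}$, and the edge-ordering convention of $\Phi$ (the $\omega$-edge containing the $\psi$-half comes first) gives them opposite orderings of the two new $\omega$-edges, hence opposite signs. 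They cancel in pairs rather than individually. This is the argument you need to supply, and it is exactly what the paper does.
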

\begin{proof}
It is clear that the map $\Phi$ respects the degrees and genera and is hence a well-defined map of graded vector spaces. We need to verify that it commutes with the differentials,
\[
\delta\Phi(\Gamma) = \Phi(\delta \Gamma).  
\] 
Recall that the differential on $\tGK_{g,n}$ is $\delta=\delta_{split}+\delta_{res}$, while the differential on $X_{g,n}$ is $\delta=\delta_{split}+\delta_{join}$.
We furthermore decompose the operation $\delta_{split}$ on $\tGK_{g,n}$ into two terms, 
\[
\delta_{split} = \delta_{split}^{s}+ \delta_{split}^{o},  
\]
with $\delta_{split}^{s}$ splitting the special vertex and $\delta_{split}^{o}$ the other vertices.
Similarly, we write $\delta_{split}\Phi(\Gamma)=(\delta_{split}^{s}+ \delta_{split}^{o})\Phi(\Gamma)$ in $X_{g,n}$ with $\delta_{split}^{s}$ splitting the vertex that is the image of the special vertex, when the decoration on $\Gamma$ is $\psi_i$, and $\delta_{split}^{o}$ splitting the other vertices.  (When the decoration on $\Gamma$ is $E_{ij}$, we set $\delta_{split}^s \Phi(\Gamma) = 0$.) 
Since away from the special vertex the graph is not altered by $\Phi$ we have 
\[
  \Phi(\delta_{split}^{o}\Gamma)=\delta_{split}^o\Phi(\Gamma).
\]
Keep in mind that if $\Gamma$ has a $\psi_j$-decoration then $\Phi(\Gamma)$ has one more structural edge than $\Gamma$, producing an additional $-$ sign upon applying $\delta_{split}^o$. However, due to the sign $(-1)^e$ in the definition of $\Phi$ the signs on both sides of the above equation still agree.
Next, we then need to check that 
\[
(\delta_{split}^s+\delta_{join}) \Phi(\Gamma) = \Phi((\delta_{split}^s+\delta_{res}) \Gamma).  
\] 
We consider cases according to the decoration at the special vertex of $\Gamma$.

First, suppose that the special vertex of $\Gamma$ is decorated by $\delta_{0,A}$. Then we have $\Phi(\Gamma)=0$ and $\delta_{res}\Gamma=0$, so we need to check that 
\[
  \Phi(\delta_{split}^s \Gamma)=0.
\]
The only terms of $\delta_{split}^s \Gamma$ that do not themselves carry a $\delta$-decoration (and are hence sent to zero by $\Phi$) are those appearing in the last line of \eqref{equ:delta split delta} on p.~\pageref{equ:delta split delta}. Both of these terms are mapped to the same graph via $\Phi$ as depicted in \eqref{equ:Phi pic1}, with opposite orderings of the two $\omega$-edges, and hence the matching terms cancel.

Next, suppose that the special vertex of $\Gamma$ is decorated by $\psi_i$. 
We have $\delta_{res}\Gamma=0$, and, since $\Phi(\Gamma)$ has no $\epsilon$-legs, $\delta_{join}\Phi(\Gamma)=0$. We need to check that
\[
  \delta_{split}^s\Phi(\Gamma)= \Phi(\delta_{split}^s \Gamma).
\]
Applying the definitions shows that both sides have the same form 
\[
  \sum 
  \begin{tikzpicture}[scale=1]
    \node (v) at (.5,0){};
    \node[int] (w) at (-1,0){};
    \node[int] (wi) at (-1.7,0){};
    \node (v1) at (0,0) {$\scriptstyle \omega$};
    \node (w1) at (-.5,0) {$\scriptstyle \omega$};
    \draw (v) edge (v1)
    (w) edge (w1)  edge (wi) edge +(120:.5) edge +(180:.5) edge +(-120:.5)
    (wi) edge +(-.5,.5) edge +(-.5,0) edge +(-.5,-.5);
  \end{tikzpicture}\, .
\]

Finally, suppose the special vertex of $\Gamma$ is decorated by $E_{ij}$.  The differential $\delta_{res}$ sends $\Gamma$ to $(-1)^e$-times a graph whose special vertex is decorated by $\psi_i+\psi_j -\sum_{i\in S, j\in S^c}\delta_{0,S}$. The $\delta_{0,S}$-terms can be dropped upon applying $\Phi$. The two $\psi$-terms produce graphs 
\begin{equation}\label{equ:Phi delta int}
  \begin{tikzpicture}
    \node[int] (v) at (0,0){};
    \node (vi) at (130:.5){$\scriptstyle \omega$};
    \node (vii) at (130:.9){$\scriptstyle \omega$};
    \node (viii) at (130:1.3){$\scriptstyle i$};
    \node (vj) at (50:.7){$\scriptstyle j$};
    \draw (v) edge (vi) edge (vj)
    (vii) edge (viii) 
    (v) edge +(-30:.5) edge +(-90:.5) edge +(-150:.5)
     ;
  \end{tikzpicture}
  +
  \begin{tikzpicture}
    \node[int] (v) at (0,0){};
    \node (vi) at (50:.5){$\scriptstyle \omega$};
    \node (vii) at (50:.9){$\scriptstyle \omega$};
    \node (viii) at (50:1.3){$\scriptstyle j$};
    \node (vj) at (130:.7){$\scriptstyle i$};
    \draw (v) edge (vi) edge (vj)
    (vii) edge (viii) 
    (v) edge +(-30:.5) edge +(-90:.5) edge +(-150:.5)
     ;
  \end{tikzpicture},
\end{equation}
where one has to remember that in each case the $\omega$-edge adjacent to the depicted vertex is the first in the ordering.
Furthermore, note that the two factors $(-1)^e$ from the definitions of $\Phi$ and $\delta_{res}$ cancel.
Next consider the terms $\delta_{split}^s\Gamma$, which are given by replacing the special vertex by two vertices in one of three ways as follows:
\[
  \sum 
  \begin{tikzpicture}
    \node[ext, accepting] (v) at (0,0){};
    \node[ext] (w) at (-30:.5) {};
    \node (vi) at (130:.7){$\scriptstyle i$};
    \node (vj) at (50:.7){$\scriptstyle j$};
    \draw (v) edge[->-] (vi) edge[->-] (vj) edge (w)
    (v)  edge +(-150:.5) edge (w)
     (w) edge +(-30:.5) edge +(-90:.5)
     ;
  \end{tikzpicture}
  + 
  \sum 
  \begin{tikzpicture}
    \node[ext, accepting] (v) at (0,0){};
    \node[ext] (w) at (130:.6) {};
    \node (vi) at (130:1.2){$\scriptstyle i$};
    \node (vj) at (50:.7){$\scriptstyle j$};
    \draw (v)  edge[->-] (vj) edge (w)
    (v)  edge +(-30:.5) edge[->-] (w)
     (w) edge +(-150:.5) edge +(-90:.5) edge (vi)
     ;
  \end{tikzpicture}
  +
  \sum 
  \begin{tikzpicture}
    \node[ext, accepting] (v) at (0,0){};
    \node[ext] (w) at (50:.6) {};
    \node (vi) at (130:.7){$\scriptstyle i$};
    \node (vj) at (50:1.2){$\scriptstyle j$};
    \draw (v)  edge[->-] (vi) edge (w)
    (v)  edge +(-150:.5) edge[->-] (w)
     (w) edge +(-30:.5) edge +(-90:.5) edge (vj)
     ;
  \end{tikzpicture}
\]
Applying $\Phi$ we obtain terms 
\begin{equation}\label{equ:Phi delta split}
  \sum 
  \begin{tikzpicture}
    \node[int] (w) at (-30:.7) {};
    \node (vi) at (130:1.1){$\scriptstyle i$};
    \node (vj) at (50:1.1){$\scriptstyle j$};
    \node (vi0) at (130:.3) {$\scriptstyle \omega$};
    \node (vj0) at (50:.3) {$\scriptstyle \omega$};
    \node (w0) at (-150:.3) {$\scriptstyle \epsilon$};
    \node (w1) at (-30:.3) {$\scriptstyle \epsilon$};
    \draw (vi0) edge (vi) (vj0) edge (vj) 
    (w0)  edge +(-150:.5) 
    (w1) edge (w)
     (w) edge +(-30:.5) edge +(-90:.5)
     ;
  \end{tikzpicture}
  + 
  \sum 
  \begin{tikzpicture}
    \node[int] (w) at (130:.8) {};
    \node (vi) at (130:1.4){$\scriptstyle i$};
    \node (vj) at (50:1.0){$\scriptstyle j$};
    \node (vi0) at (130:.3) {$\scriptstyle \omega$};
    \node (vj0) at (50:.3) {$\scriptstyle \omega$};
    \node (w0) at (-30:.3) {$\scriptstyle \epsilon$};
    \draw (vj0) edge  (vj) (vi0) edge (w)
    (w0)  edge +(-30:.7) 
     (w) edge +(-150:.5) edge +(-90:.5) edge (vi)
     ;
  \end{tikzpicture}
  +
  \sum 
  \begin{tikzpicture}
    \node[int] (w) at (50:.8) {};
    \node (vi) at (50:1.4){$\scriptstyle j$};
    \node (vj) at (130:1.0){$\scriptstyle i$};
    \node (vi0) at (50:.3) {$\scriptstyle \omega$};
    \node (vj0) at (130:.3) {$\scriptstyle \omega$};
    \node (w0) at (-150:.3) {$\scriptstyle \epsilon$};
    \draw (vj0) edge  (vj) (vi0) edge (w)
    (w0)  edge +(-150:.7) 
     (w) edge +(-30:.5) edge +(-90:.5) edge (vi)
     ;
  \end{tikzpicture}\, .
\end{equation}
Note that in all terms there is at least one $\epsilon$-leg.
Next we look at $\Phi(\Gamma)$. In this case $\delta_{split}^s\Phi(\Gamma)=0$, since $\Phi$ removes the special vertex. Applying $\delta_{join}$ to the right-hand side of \eqref{eq:PhiEij} produces several graphs, by fusing a subset of $\epsilon$-legs together to produce a new $\epsilon$-leg, or fusing a subset of the $\epsilon$-legs to one $\omega$-leg.
The two cases in which all the $\epsilon$-hairs are fused to one $\omega$-leg precisely contribute \eqref{equ:Phi delta int}. The cases where one or more $\epsilon$-legs remain contribute the terms \eqref{equ:Phi delta split}.
To confirm the signs, note that in all cases considered the newly added edge is the first in the ordering. 

Hence we conclude that 
$
\delta_{join} \Phi(\Gamma) = \Phi(\delta_{split}^s\Gamma+\delta_{res}\Gamma),
$
as required.
\end{proof}

\begin{prop}\label{prop:Phiqiso}
The map $\Phi$ is a surjective quasi-isomorphism with kernel generated by graphs in which the special vertex is decorated by some $\delta_{0,S}$, and by symmetric combinations of graphs with $\psi$-decorations on either half of some edge:
\[
  \begin{tikzpicture}
    \node[draw, ellipse] (v) at (0,.5) {$\cdots \cdots$};
    \node[ext,accepting] (w1) at (-.5,-.5) {};
    \node[ext] (w2) at (.5,-.5) {};
    \draw (w1) edge (v.north) 
    (w1) edge (v.west)  edge[->-] (w2)
    (w2) edge (v.north) 
    (w2) edge (v.east) ;
    \node[draw, ellipse, fill, fill=white] (vd) at (0,.5) {$\cdots \cdots$};
  \end{tikzpicture}
  +
  \begin{tikzpicture}
    \node[draw, ellipse, fill, fill=white] (v) at (0,.5) {$\cdots \cdots$};
    \node[ext] (w1) at (-.5,-.5) {};
    \node[ext,accepting] (w2) at (.5,-.5) {};
    \draw (w1) edge (v.north) 
    (w1) edge (v.west)  
    (w2) edge[->-] (w1)
    (w2) edge (v.north) 
    (w2) edge (v.east) ;
    \node[draw, ellipse, fill, fill=white] (vd) at (0,.5) {$\cdots \cdots$};
  \end{tikzpicture}
  \ \ .
\]
\end{prop}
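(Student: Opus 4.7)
The plan is to reduce the statement to showing that $K := \ker \Phi$ is acyclic, and then apply Lemma \ref{lem:acyclicity}. First I will verify surjectivity of $\Phi$ and the stated description of $K$ by a direct case analysis on the three decoration types ($E_{ij}$, $\psi_i$, $\delta_{0,S}$) for generators of $\tGK^2_{g,n}$. A graph in $X_{g,n}$ whose two $\omega$-legs lie on a common internal vertex lifts to an $E_{ij}$-decorated graph (merging the $\omega$-legs back into a special vertex); one with two $\omega$-legs on distinct vertices lifts to a $\psi$-decorated graph (fusing the two $\omega$-edges into a single edge carrying a $\psi$-arrow on one half-edge). The $\delta_{0,S}$-decorated graphs lie in $K$ by definition, and two $\psi$-decorations on opposite halves of the same edge yield $\Phi$-images differing only by the ordering of the two new $\omega$-edges, so their symmetric sums lie in $K$. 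No further cancellations occur because the $\Phi$-images of $E$- and $\psi$-decorated graphs are distinguished in $X_{g,n}$ by whether the two $\omega$-legs share a vertex.

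Next I will write $K = K_A \oplus K_B$ as a direct sum of graded vector spaces, where $K_A$ is spanned by $\delta_{0,S}$-decorated graphs and $K_B$ by symmetric $\psi$-combinations on edges joining two internal vertices. The plan is to verify that $K_B$ is a dg subspace, so that Lemma \ref{lem:acyclicity} applies with $V_1 = K_B$ and $V_0 = K_A$. The projection $D := \pi_{K_B} \circ \delta : K_A \to K_B$ is then the map read off from the last two terms of \eqref{equ:delta split delta}: since $\delta_{res}$ vanishes on $\delta_{0,S}$-decorations, only $\delta_{split}$ applied to the special vertex contributes to the $K_B$ component, and this contribution is precisely $-1$ times the symmetric $\psi$-pair on the edge that used to be the marked $\delta$-edge. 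This gives a bijection on the generating data (a $K_A$-generator is a graph equipped with a distinguished marked $\delta$-edge, and a $K_B$-generator is a graph equipped with a distinguished unmarked edge carrying a symmetric $\psi$-pair), so $D$ is an isomorphism. By Lemma \ref{lem:acyclicity}, $K$ is acyclic, and hence $\Phi$ is a quasi-isomorphism.

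The hard part will be verifying that $K_B$ is closed under $\delta = \delta_{split}$. Applying $\delta_{split}$ to a symmetric pair $\Gamma + \Gamma'$ produces terms from splitting each vertex, and these must be shown to reorganize into new symmetric pairs. The delicate case is when one splits an endpoint of the distinguished edge: a non-special split of a vertex $w$ in $\Gamma$ (where the $\psi$-decoration sits on the other endpoint $v$) must match, term by term, with the splitting of $w$ as the special vertex in $\Gamma'$ via \eqref{eq:splitpsi}, so that the new edge and the distribution of remaining half-edges coincide in the two sides of the pair. Secondary attention is needed to reconcile the bijection underlying $D$ with the loop relations \eqref{equ:loop relations} when the distinguished edge is incident to a loop at one endpoint, but the loop relations kill precisely those $\delta_{0,S}$-generators for which the corresponding symmetric $\psi$-pair is also zero, so the bijection descends to the quotient without alteration of the main argument.
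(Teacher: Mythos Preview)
Your acyclicity argument for $K=\ker\Phi$ is correct and is exactly the paper's approach: the paper writes $K=J_\delta\oplus J_\psi$ (your $K_A\oplus K_B$), reads off the map $u\colon J_\delta\to J_\psi$ from the last two terms of \eqref{equ:delta split delta}, observes it is a bijection on generators, and invokes Lemma~\ref{lem:acyclicity}. Your explicit remark that one must check $K_B$ is closed under $\delta_{split}$ fills in a step the paper leaves implicit.

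However, your surjectivity argument uses the wrong dichotomy. For an $E_{ij}$-decorated graph, $\Phi$ deletes the special vertex and turns \emph{all} of its incident half-edges into legs; since the special vertex is at least trivalent, the image always carries at least one $\epsilon$-leg in addition to the two $\omega$-legs. Conversely, $\Phi$ on a $\psi_i$-decorated graph produces no $\epsilon$-legs at all. So the images are distinguished by the presence or absence of $\epsilon$-legs, not by whether the two $\omega$-legs are adjacent to a common internal vertex. Your proposed lifts then fail: a graph in $X_{g,n}$ with $\epsilon$-legs whose two $\omega$-legs sit on distinct internal vertices cannot be lifted to a $\psi$-decorated graph (such images have no $\epsilon$-legs), and ``merging the $\omega$-legs back into a special vertex'' ignores the $\epsilon$-legs and would create a bivalent special vertex. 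The correct construction is: if $\Gamma\in X_{g,n}$ has at least one $\epsilon$-leg, attach a new special vertex to \emph{all} $\epsilon$- and $\omega$-decorated external vertices and decorate it by $E_{ij}$ with $i,j$ the two $\omega$-half-edges; if $\Gamma$ has no $\epsilon$-legs, fuse the two $\omega$-edges into a single edge carrying a $\psi$-decoration on one half. The same dichotomy also supplies the clean injectivity statement that should replace your ``no further cancellations'' claim. With this fix, the remainder of your argument stands.
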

\begin{proof}
We begin by showing that $\Phi$ is surjective. Let $\Gamma$ be a graph in $X_{g,n}$. We will construct a graph $\widetilde \Gamma\in \tGK_{g,n}$ such that $\Phi(\widetilde \Gamma)=\Gamma$. 
First, consider the case where $\Gamma$ has at least one $\epsilon$-decorated external vertex. Then we build $\widetilde \Gamma$ by joining the $\epsilon$- and $\omega$-decorated external vertices to a new internal vertex, decorated by $E_{ij}$, with $i,j$ corresponding to the legs at the two $\omega$ decorations:
   \begin{align*}
    \begin{tikzpicture}
      \node[draw, ellipse] (v) at (0,0) {$\cdots \cdots$};
      \node (vo1) at (0,-0.7) {$\omega$};
      \node (vo2) at (0.5,-0.7) {$\omega$};
      \node (ve1) at (-.5,-0.7) {$\epsilon$};
      \node (ve2) at (-.8,-0.7) {$\epsilon$};
      \draw (v.south east) edge (vo2) 
      (v.south) edge (vo1) 
      (v.south west) edge (ve1) 
      (v.west) edge (ve2);
    \end{tikzpicture}
&\mapsto 
\begin{tikzpicture}
  \node[draw, ellipse] (v) at (0,0) {$\cdots \cdots$};
  \node[ext,accepting] (w) at (0,-1) {};
  \draw (w) edge[->-, bend right] (v.south east) 
  (w) edge[->-] (v.south) 
  (w) edge (v.south west) 
  (w) edge[bend left] (v.west) ;
\end{tikzpicture} \ .
  \end{align*}
 This cannot produce a ``forbidden graph" with decoration $E_{it}$ or $E_{tt'}$, where $t$ and $t'$ are the half-edges of a loop, because, in defining $X_{g,n}$, we excluded graphs with $(\epsilon, \omega)$- and $(\omega,\omega)$-edges. By construction, $\Phi(\widetilde \Gamma)=\Gamma$. 
  
  It remains to consider the case where $\Gamma$ has no $\epsilon$-decorated external vertices.
  Then we build $\widetilde \Gamma$ by deleting the two $\omega$-decorated external vertices and their incident legs, joining the two unpaired half-edges into a new edge, and adding a $\psi$-decoration on one of the two, as shown.
  \begin{align*}
  \begin{tikzpicture}
    \node[int] (v) at (0,0) {};
    \node (vo) at (0,-0.5) {$\omega$};
    \node[draw, ellipse] (vd) at (0,.5) {$\cdots$};
    \draw (v) edge (vo) edge (vd.south) edge (vd.south west) edge (vd.south east);
    \node[int] (w) at (1,0) {};
    \node (wo) at (1,-0.5) {$\omega$};
    \node[draw, ellipse] (wd) at (1,.5) {$\cdots$};
    \draw (w) edge (wo) edge (wd.south) edge (wd.south west) edge (wd.south east);
  \end{tikzpicture}
  &\mapsto 
  \begin{tikzpicture}
    \node[ext,accepting] (v) at (0,0) {};
    \node[draw, ellipse] (vd) at (0,.5) {$\cdots$};
    \node[ext] (w) at (1,0) {};
    \node[draw, ellipse] (wd) at (1,.5) {$\cdots$};
    \draw (v)  edge (vd.south) edge (vd.south west) edge (vd.south east) edge[->-,bend right] (w);
    \draw (w) edge (wd.south) edge (wd.south west) edge (wd.south east);
  \end{tikzpicture}
  & &
  \text{or}
  &
  \begin{tikzpicture}
    \node[int] (v) at (0,0) {};
    \node (vo) at (0,-0.5) {$\omega$};
    \node[draw, ellipse] (vd) at (0,.5) {$\cdots$};
    \draw (v) edge (vo) edge (vd.south) edge (vd.south west) edge (vd.south east);
    \node (w) at (1.7,0) {$j$};
    \node (wo) at (1,0) {$\omega$};
    \draw (w) edge (wo) ;
  \end{tikzpicture}
  &\mapsto 
  \begin{tikzpicture}
    \node[ext, accepting] (v) at (0,0) {};
    \node[draw, ellipse] (vd) at (0,.5) {$\cdots$};
    \node (w) at (1.7,0) {$j$};
    \draw (v) edge[->-,bend right] (w) edge (vd.south) edge (vd.south west) edge (vd.south east);
  \end{tikzpicture}
  \end{align*}
In either case, $\Phi(\widetilde \Gamma)=\Gamma$.
  Here we also use that at least one of the $\omega$-legs is adjacent to an internal vertex. Indeed, if this were not the case, then the graph would be either a single $(\omega,\omega)$-edge, or the union of an $(\omega,1)$-edge and an $(\omega,2)$-edge. In either case, we would have $2g + n < 3$, which we have excluded from consideration.
  
The map takes graphs with decoration $\delta_{0,S}$ to zero, and the orientation data ensures that symmetric combinations of $\psi$-decorations on paired half-edges also map to zero.  Otherwise, distinct generators for $\tGK_{g,n}$ map to distinct generators of $X_{g,n}$, so nothing else is in the kernel.
  
It remains to check that $J:=\ker \Phi$ is acyclic. Decompose $J=J_\delta\oplus J_\psi$, where $J_\delta$ and $J_\psi$ are linear combinations of graphs with $\delta$- or $\psi$-decorations, respectively.
  The differential $\delta_{split}$ on $J$ then splits into the following pieces 
  \[
  \begin{tikzcd}
    J_\delta \ar{r}{u} \ar[loop above]{} & J_\psi \ar[loop above]{}
  \end{tikzcd}\, .
  \]
Note that $u$ appears in the last terms in \eqref{equ:delta split delta} on p.~\pageref{equ:delta split delta}, and takes the form 
  \[
  u : 
  \begin{tikzpicture}
    \node[draw, ellipse] (v) at (0,.5) {$\cdots \cdots$};
    \node[ext] (w1) at (-.5,-.5) {};
    \node[ext] (w2) at (.5,-.5) {};
    \draw (w1) edge (v.north) 
    (w1) edge (v.west)  
    edge[crossed] (w2)
    (w2) edge (v.north) 
    (w2) edge (v.east) ;
    \node[draw, ellipse, fill, fill=white] (vd) at (0,.5) {$\cdots \cdots$};
  \end{tikzpicture}
  \mapsto 
  -
  \begin{tikzpicture}
    \node[draw, ellipse] (v) at (0,.5) {$\cdots \cdots$};
    \node[ext,accepting] (w1) at (-.5,-.5) {};
    \node[ext] (w2) at (.5,-.5) {};
    \draw (w1) edge (v.north) 
    (w1) edge (v.west)  edge[->-] (w2)
    (w2) edge (v.north) 
    (w2) edge (v.east) ;
    \node[draw, ellipse, fill, fill=white] (vd) at (0,.5) {$\cdots \cdots$};
  \end{tikzpicture}
  -
  \begin{tikzpicture}
    \node[draw, ellipse, fill, fill=white] (v) at (0,.5) {$\cdots \cdots$};
    \node[ext] (w1) at (-.5,-.5) {};
    \node[ext,accepting] (w2) at (.5,-.5) {};
    \draw (w1) edge (v.north) 
    (w1) edge (v.west)  
    (w2) edge[->-] (w1)
    (w2) edge (v.north) 
    (w2) edge (v.east) ;
    \node[draw, ellipse, fill, fill=white] (vd) at (0,.5) {$\cdots \cdots$};
  \end{tikzpicture}
  \]
In these graphical depictions, $u$ replaces a marked edge with a symmetric combination of $\psi$-decorations on its paired half-edges. This map gives a bijection between bases for $J_\delta$ and for $J_\psi$, and is hence an isomorphism.
It then follows by Lemma \ref{lem:acyclicity} that $J$ is acyclic, as required. 
  \end{proof}
  
  \noindent Combining Propositions \ref{prop:tGKqiso} and \ref{prop:Phiqiso} gives the desired zig-zag of quasi-isomorphisms
      $
          \bGK_{g,n} \leftarrow \tGK_{g,n} \xrightarrow{\Phi} X_{g,n}.
      $

  \section{A quasi-isomorphic subcomplex}\label{sec:classes}
  Although we do not fully understand the cohomology of $X_{g,n}$, we can describe many nontrivial classes.  
We do so by identifying a quasi-isomorphic subcomplex $\Xast_{g,n}$ and constructing an involution on $\Xast_{g,n}$ that simplifies the differential. In this simplified subcomplex, we find direct summands whose cohomology we can either compute or bound from below.

\subsection{A quasi-isomorphic subcomplex}
Recall that every connected component of a generator $\Gamma$ for $X_{g,n}$ contains an external vertex labeled by $\epsilon$ or $\omega$. 
\begin{defi}
A connected component that contains an $\omega$-decoration is an \emph{$\omega$-component}.  All other connected components are \emph{$\epsilon$-components}.
\end{defi}

\begin{defi} \label{def:Xast}
Let $\Xast_{g,n} \subset X_{g,n}$ be the subspace spanned by generators in which the union of all $\epsilon$-components contains no internal vertices and at most one decoration from $\{1,\dots,n\}$.
\end{defi}

\noindent In other words, if $\Gamma$ is a generator for $\Xast_{g,n}$, then the union of its $\epsilon$-components is either empty or consists of an $(\epsilon, \epsilon)$-edge, an $(\epsilon, j)$-edge, or one of each. Here are two examples:
\begin{align*}
  &
  \scalebox{.9}{\begin{tikzpicture}
    \node[int] (v1) at (0:.5) {};
    \node[int] (v2) at (120:.5) {};
    \node[int] (v3) at (-120:.5) {};
    \node[int] (v0) at (0,0) {};
    \node (w1) at (0:1) {$1$};
    \node (w2) at (120:1) {$\epsilon$};
    \node (w3) at (-120:1) {$\omega$};
    \node (u1) at (-2.2,0) {$\omega$};
    \node (u2) at (-1.2,0) {$2$};
    \draw (v0) edge (v1) edge (v2) edge (v3)
        (v1) edge (v2) edge (v3) edge (w1)
         (v2) edge (v3) edge (w2) (v3) edge (w3);
    \draw (u1) edge (u2);
    \end{tikzpicture}}\, ,
    &
    &\scalebox{.9}{\begin{tikzpicture}
      \node[int] (v1) at (0:.5) {};
      \node[int] (v2) at (120:.5) {};
      \node[int] (v3) at (-120:.5) {};
      \node[int] (v0) at (0,0) {};
      \node (w1) at (0:1) {$1$};
      \node (w2) at (120:1) {$\omega$};
      \node (w3) at (-120:1) {$\omega$};
      \node (u1) at (-2,-.35) {$\epsilon$};
      \node (u3) at (-2,.35) {$\epsilon$};
      \node (u4) at (-1,.35) {$\epsilon$};
      \node (u2) at (-1,-.35) {$2$};
      \draw (v0) edge (v1) edge (v2) edge (v3)
          (v1) edge (v2) edge (v3) edge (w1)
           (v2) edge (v3) edge (w2) (v3) edge (w3);
      \draw (u1) edge (u2) (u3) edge (u4);
      \end{tikzpicture}} \ .
\end{align*}

\begin{lemma} \label{lem:subcomplex}
  The graded subspace $\Xast_{g,n}\subset X_{g,n}$ is a subcomplex.
\end{lemma}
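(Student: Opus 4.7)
Since $\delta = \delta_{split} + \delta_{join}$, the plan is to verify closure of $\Xast_{g,n}$ under each piece separately. The part $\delta_{split}$ will be straightforward, while $\delta_{join}$ is the delicate step.

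For $\delta_{split}$: the operation only splits internal vertices, and in any generator $\Gamma \in \Xast_{g,n}$ every internal vertex lies in an $\omega$-component by hypothesis (since the $\epsilon$-components have no internal vertices). Splitting such a vertex produces two internal vertices joined by a new edge within the same $\omega$-component, so the $\epsilon$-components of the output coincide with those of $\Gamma$. Hence $\delta_{split} \Gamma \in \Xast_{g,n}$ with no further analysis.

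For $\delta_{join}$: a summand indexed by a subset $S$ leaves $\Xast_{g,n}$ exactly when the new internal vertex $i$ ends up in an $\epsilon$-component of the output. The first step of the plan is to rule out most subsets. Whenever $S$ contains an $\omega$-decorated vertex, the new external vertex $w$ is labelled $\omega$, so $i$'s component is automatically an $\omega$-component. Whenever $S$ contains an $\epsilon$-vertex that belongs to an $\omega$-component of $\Gamma$, the rerouted edge drags that entire $\omega$-component into $i$'s component, again supplying an $\omega$-decoration. The only potentially problematic $S$ therefore consist purely of $\epsilon$-vertices drawn from the $\epsilon$-components of $\Gamma$. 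Since $\Xast_{g,n}$ permits at most one $(\epsilon,\epsilon)$-edge and one $(\epsilon,j)$-edge among the $\epsilon$-components, there are at most three such vertices, and any $S$ containing both endpoints of the $(\epsilon,\epsilon)$-edge creates a loop at $i$ and thus vanishes.

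The main obstacle is to dispense with the remaining cases, $S = \{v_1, v_\epsilon\}$ and $S = \{v_2, v_\epsilon\}$, where $v_1, v_2$ are the two $\epsilon$-endpoints of the $(\epsilon,\epsilon)$-edge and $v_\epsilon$ is the $\epsilon$-endpoint of the $(\epsilon,j)$-edge (its other endpoint $w_j$ carrying label $j \in \{1,\dots,n\}$). Each such summand does give a valid generator of $X_{g,n}$ outside $\Xast_{g,n}$, namely one whose new component has $i$ attached to the surviving $\epsilon$-endpoint $v$, to $w_j$, and to the new external vertex $w$. The key observation will be that $v$ and $w$ are both $\epsilon$-labelled external leaves at $i$, so the transposition $v \leftrightarrow w$ is a decorated-graph automorphism that exchanges precisely the two structural edges $i$–$v$ and $i$–$w$, while fixing the non-structural edge $i$–$w_j$. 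By the sign rule on the ordering of structural edges, this automorphism acts as $-1$, forcing the graph to be zero in $X_{g,n}$. Once this vanishing is established, every problematic summand drops out, giving $\delta_{join}\Gamma \in \Xast_{g,n}$ and completing the proof.
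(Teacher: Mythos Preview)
Your proof is correct and follows essentially the same approach as the paper's. The paper's argument is more compressed: for $\delta_{join}$, it simply observes that when all vertices of $S$ come from $\epsilon$-components the resulting graph has either a loop or two $\epsilon$-labelled external vertices adjacent to the same internal vertex, and is therefore zero in $X_{g,n}$; your version unpacks this into the explicit case analysis on the possible $S$ and spells out the odd-automorphism argument that makes the latter graphs vanish.
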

\begin{proof}
Let $\Gamma$ be a generator for $\Xast_{g,n}$.  The differential is a sum of two parts $\delta_{split} + \delta_{join}$. The part $\delta_{split}$ acts separately on the internal vertices of each connected component. It cannot create internal vertices in a component that does not already have any, and it does change the decorations. So $\delta_{split} \Gamma$ is a linear combination of generators for $\Xast_{g,n}$.

Now consider the terms that appear in $\delta_{join} \Gamma$, which are obtained by joining external vertices of one or several components.  If at least one of the joined components is an $\omega$-component, then the resulting graph is a generator for $\Xast_{g,n}$. However, if all components that are joined are $\epsilon$-components, then the resulting graph has either a loop or two $\epsilon$-labeled external vertices adjacent to the same internal vertex, and hence is zero in $X_{g,n}$.
\end{proof}

\begin{prop}\label{prop:WHGC prime WHGC inclusion qiso}
  The inclusion $\Xast_{g,n}\subset X_{g,n}$ is a quasi-isomorphism.
\end{prop}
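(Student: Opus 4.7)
The plan is to deduce the proposition by showing that the quotient complex
\[
Y_{g,n} := X_{g,n}/\Xast_{g,n}
\]
is acyclic, which by the long exact sequence associated to $0 \to \Xast_{g,n} \to X_{g,n} \to Y_{g,n} \to 0$ is equivalent to the inclusion being a quasi-isomorphism. The generators of $Y_{g,n}$ are precisely the graphs in $X_{g,n}$ whose union of $\epsilon$-components either contains at least one internal vertex or carries at least two distinct numerical labels from $\{1,\ldots,n\}$. The induced differential $\bar\delta$ is $\delta_{split}+\delta_{join}$ with any terms landing back in $\Xast_{g,n}$ discarded.

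To analyze $Y_{g,n}$, I would introduce a filtration by the total number of internal vertices plus structural edges contained in the $\epsilon$-components. An inspection of the differential shows that $\delta_{split}$ on an $\epsilon$-internal vertex and the all-$\epsilon$ part of $\delta_{join}$ strictly increase this complexity, while the pieces of $\delta_{join}$ involving the $\omega$-leg strictly decrease it; only $\delta_{split}$ acting on the $\omega$-component preserves it. On the associated graded, the complex therefore splits (via a K\"unneth-style decomposition) as a direct sum, indexed by the shape of the $\omega$-component, of tensor products with an ``$\epsilon$-only'' complex generated by admissible configurations of $\epsilon$-components and equipped only with the complexity-preserving part of $\bar\delta$.

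The heart of the argument is then to establish acyclicity of this $\epsilon$-only quotient, for which I would invoke Lemma~\ref{lem:acyclicity}. Since the $\epsilon$ label morally plays the role of a unit (as in the embedding calculus interpretation noted in the paper), a natural decomposition $V = V_0 \oplus V_1$ pairs graphs containing a distinguished $(\epsilon,\epsilon)$-edge component (in $V_1$) with graphs where this edge has been absorbed into an adjacent $\epsilon$-component through a piece of $\delta_{join}$ (in $V_0$). The map $D$ given by this adjunction should be an isomorphism, yielding acyclicity.

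The main obstacle is the combinatorial bookkeeping: one must verify that $D$ is a genuine bijection on the chosen subspace, with orientation signs, edge orderings, and Koszul conventions tracked carefully; and one must handle the asymmetric boundary case of an $\epsilon$-component containing a lone numerical label $j$, where the naive $(\epsilon,\epsilon)$-based pairing breaks down because such a leg cannot be duplicated. I expect this case to require an auxiliary pairing based on $\delta_{split}$ at a trivalent $\epsilon$-internal vertex with two $\epsilon$-legs, and checking that the two pairings assemble into a single well-defined isomorphism $D$ is the technical core of the argument.
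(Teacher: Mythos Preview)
Your strategy is the right one in spirit—filter so as to isolate the $\epsilon$-components, then kill the complement of $\Xast_{g,n}$ via Lemma~\ref{lem:acyclicity} using a piece of $\delta_{join}$. But the specific filtration you propose does not work: the quantity ``internal vertices plus structural edges in the $\epsilon$-components'' is not monotone under the differential. As you yourself note, $\delta_{split}^\epsilon$ and the all-$\epsilon$ part of $\delta_{join}$ strictly \emph{increase} it, while the mixed pieces of $\delta_{join}$ that absorb an $\epsilon$-component into an $\omega$-component strictly \emph{decrease} it. Since the differential moves both up and down, neither ``$\geq p$'' nor ``$\leq p$'' defines a subcomplex, and there is no associated spectral sequence. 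This also explains the internal tension in your outline: on your associated graded the only surviving piece is $\delta_{split}$ on the $\omega$-part, so the $\epsilon$-only factor carries the \emph{zero} differential—yet you then propose to prove it acyclic using a $\delta_{join}$-type map $D$ that has already been filtered away.

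The fix is to filter instead by the number of internal vertices in the $\omega$-components, as the paper does. Every piece of $\delta$ weakly increases this number, so the filtration is genuine, and the pieces that preserve it are exactly $\delta_{split}^\epsilon+\delta_{join}^{\epsilon,\epsilon}$ acting on the $\epsilon$-components alone (the $\omega$-part is frozen). On the associated graded one then has $X_{g,n}\cong \Xast_{g,n}\oplus W$ with the differential on $\Xast_{g,n}$ zero, and after splitting off the $(\epsilon,\epsilon)$-edge one applies Lemma~\ref{lem:acyclicity} with $D\colon W'_{\geq 2}\to W'_1$ the map that joins \emph{all} $\epsilon$-legs of $\epsilon$-components into a single new $\epsilon$-leg. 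This $D$ is a bijection on the nose: the unique $\epsilon$-leg in a graph of $W'_1$ is necessarily adjacent to an internal vertex (an $(\epsilon,j)$-edge alone would lie in $\Xast_{g,n}$, and $(\epsilon,\omega)$-edges are forbidden), so removing that vertex and re-sprouting $\epsilon$-legs inverts $D$. In particular the awkward boundary case you anticipate, with a lone numerical leg on an $\epsilon$-component, does not arise—such configurations already belong to $\Xast_{g,n}$ and never enter $W$.
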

\begin{proof}
Filter the complexes $\Xast_{g,n}$ and $X_{g,n}$ by the number of internal vertices in $\omega$-components. 
We claim that the inclusion induces a quasi-isomorphism between the associated graded complexes with respect to this filtration.

The associated graded complexes have differential $\delta_{join}^{\epsilon,\epsilon}+\delta_{split}^\epsilon$, with $\delta_{join}^{\epsilon,\epsilon}$ the part of $\delta_{join}^\epsilon$ (as defined in the proof of Lemma~\ref{lem:subcomplex}) that joins $\epsilon$-legs of $\epsilon$-components only and $\delta_{split}^\epsilon$ the part of $\delta_{split}$ that splits vertices in $\epsilon$-components only. Note that the differential on $\Xast_{g,n}$ is 0.
Furthermore, we have the direct sum decomposition 
\[
  (X_{g,n},\delta_{join}^{\epsilon,\epsilon}+\delta_{split}^\epsilon)
  =
  \Xast_{g,n}  \oplus (W, \delta_{join}^{\epsilon,\epsilon}+\delta_{split}^\epsilon),
\]
with $W \subset X_{g,n}$ spanned by graphs that are not generators for $\Xast_{g,n}$. 
Generators of $W$ are graphs in which the $\epsilon$-components contain an internal vertex  or more than one leg labelled $1,\dots,n$.
We further decompose 
\[
W= W' \oplus W'',  
\]
where $W'$ is spanned by graphs that do not contain an $(\epsilon, \epsilon)$-edge, and $W''$ is spanned by those that do.
Note that $W''$ is isomorphic (up to degree shift) to the $W'$ summand that arises in $X_{g-1,n}$, so it suffices to show that $W'$ is acyclic. 
To this end consider the decomposition of graded vector spaces 
\[
  W'=  
  \begin{tikzcd}[column sep=.1cm]
  W_1'  \ar[loop above]{} & 
  \oplus & 
  W_{\geq 2}' \ar[bend right]{ll}[above]{\delta_{join}^{\epsilon,\epsilon}}  \ar[loop above]{}
  \end{tikzcd},
\]
with $W_1'$ (resp. $W_{\geq 2}'$) spanned by graphs that have 1 (resp. $\geq 2$) $\epsilon$-decorations in $\epsilon$-components. 
Using again Lemma \ref{lem:acyclicity} it suffices to check that the map (part of the differential $\delta_{join}^{\epsilon,\epsilon}$)
\[
  W_{\geq 2}' \to  W_1'
\]
is an isomorphism. Combinatorially, this map joins all $\epsilon$-decorated external vertices in $\epsilon$-components, attaching a new internal vertex together with an $\epsilon$-leg:
\begin{align*}
\scalebox{.85}{
\begin{tikzpicture}
\node[draw, ellipse, minimum width=1cm, minimum height=.5cm] (v) at (0,0) {$\cdots$};
\node (u1) at (-.5,1) {$1$};
\node (u2) at (0,1) {$\cdots$};
\node (u3) at (.5,1) {};
\node (x1) at (-.5,-1) {$\epsilon$};
\node (x2) at (0,-1) {$\cdots$};
\node (x3) at (.5,-1) {$\epsilon$};
\draw 
(v.north east) edge (u3) (v.north west) edge (u1)
(v.south east) edge (x3) (v.south west) edge (x1);
\begin{scope}[xshift=1.6cm]
\node[draw, ellipse, minimum width=1cm, minimum height=.5cm] (v) at (0,0) {$\cdots$};
\node (w1) at (1.5,.3) {$\omega$};
\node (w2) at (1.5,-.3) {$\omega$};
\node (u1) at (-.5,1) {};
\node (u2) at (0,1) {$\cdots$};
\node (u3) at (.5,1) {$n$};
\node (x1) at (-.5,-1) {$\epsilon$};
\node (x2) at (0,-1) {$\cdots$};
\node (x3) at (.5,-1) {$\epsilon$};
\draw (v) edge (w1) (v) edge (w2) 
(v.north east) edge (u3) (v.north west) edge (u1)
(v.south east) edge (x3) (v.south west) edge (x1);
\end{scope}
\end{tikzpicture}}
\quad
&\mapsto
\quad
\scalebox{.85}{
 \begin{tikzpicture}
  \node[draw, ellipse, minimum width=1cm, minimum height=.5cm] (v) at (0,0) {$\cdots$};
  \node (u1) at (-.5,1) {$1$};
  \node (u2) at (0,1) {$\cdots$};
  \node (u3) at (.5,1) {};
  \node (x1) at (0,-1.7) {$\epsilon$};
  \node[int] (ww) at (0,-1) {};
  \draw  
  (v.north east) edge (u3) (v.north west) edge (u1)
  (v.south east) edge (ww) (v.south west) edge (ww)
  (ww) edge (x1);
  \begin{scope}[xshift=1.6cm]
    \node[draw, ellipse, minimum width=1cm, minimum height=.5cm] (v) at (0,0) {$\cdots$};
    \node (w1) at (1.5,.3) {$\omega$};
    \node (w2) at (1.5,-.3) {$\omega$};
    \node (u1) at (-.5,1) {};
    \node (u2) at (0,1) {$\cdots$};
    \node (u3) at (.5,1) {$n$};
    \node (x1) at (-.5,-1) {$\epsilon$};
    \node (x2) at (0,-1) {$\cdots$};
    \node (x3) at (.5,-1) {$\epsilon$};
    \draw (v) edge (w1) (v) edge (w2) 
    (v.north east) edge (u3) (v.north west) edge (u1)
    (v.south east) edge (x3) (v.south west) edge (x1);
    \end{scope}
   \end{tikzpicture}}\, .
\end{align*}
This is map is injective; it has a one-sided inverse obtained by removing both new vertices and the edge between them, and adding an external vertex labeled $\epsilon$ to each of the dangling edges.
This map is also surjective, since the unique $\epsilon$-decoration in an $\epsilon$-component of a graph in $W_1'$ must be adjacent to an internal vertex.
(If it was adjacent to an $j$-decorated leg the graph would not be in $W$, and $(\epsilon,\omega)$-edges are forbidden in $X_{g,n}$.)
\end{proof}

We note that in the above proof the presence of the part of the differential $\delta_{join}^\omega$ played no role. The same proof also shows the following auxiliary result, which we will use in the proof of Proposition~\ref{prop:Hg inclusion qiso}. 

\begin{lemma}\label{lem:X Xprime delta eps}
The inclusion $(\Xast_{g},\delta_{join}^\epsilon+ \delta_{split}) \to (X_{g},\delta_{join}^\epsilon+ \delta_{split})$ is a quasi-isomorphism of dg vector spaces.
\end{lemma}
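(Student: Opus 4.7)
The plan is to repeat the proof of Proposition~\ref{prop:WHGC prime WHGC inclusion qiso} essentially verbatim, since as the authors remark, $\delta_{join}^\omega$ plays no role in that argument. Concretely, I would filter both $\Xast_g$ and $X_g$ by the number of internal vertices lying in $\omega$-components; this is a decreasing filtration compatible with the inclusion, and the spectral sequence comparison theorem reduces the problem to showing that the induced map on associated graded complexes is a quasi-isomorphism.

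On the associated graded, only the parts of the differential that preserve the count of internal vertices in $\omega$-components survive, namely $\delta_{split}^\epsilon$ (splitting vertices lying in $\epsilon$-components) and $\delta_{join}^{\epsilon,\epsilon}$ (joining $\epsilon$-decorated external vertices of $\epsilon$-components only into a new $\epsilon$-component). Every piece of $\delta_{join}^\omega$ either raises the filtration degree, because it merges an $\omega$-component with added internal vertex, or is simply absent from our differential to begin with, so its removal does not affect the associated graded at all. In particular, the associated graded differential on $\Xast_g$ is zero, as in the proof of Lemma~\ref{lem:subcomplex}.

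It therefore suffices to show the complementary graded subspace $W \subset X_g$ spanned by graphs not in $\Xast_g$ is acyclic with respect to $\delta_{join}^{\epsilon,\epsilon}+\delta_{split}^\epsilon$. Decomposing $W = W' \oplus W''$ according to whether or not a graph contains an $(\epsilon,\epsilon)$-edge, one sees $W''$ is isomorphic up to degree shift to the analogous $W'$ in $X_{g-1}$, so the problem reduces by induction on $g$ to the acyclicity of $W'$. Splitting $W' = W'_1 \oplus W'_{\geq 2}$ by the number of $\epsilon$-decorations in $\epsilon$-components, the map $W'_{\geq 2} \to W'_1$ that joins all such $\epsilon$-decorations into a single new internal vertex with an $\epsilon$-leg is an isomorphism (with explicit inverse detaching that vertex), and acyclicity follows from Lemma~\ref{lem:acyclicity}. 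No step of this argument invokes $\delta_{join}^\omega$, so the conclusion of Proposition~\ref{prop:WHGC prime WHGC inclusion qiso} carries over to the present setting. The only subtlety, which should be dispatched quickly, is checking that the candidate isomorphism $W'_{\geq 2}\to W'_1$ still lands in the correct space when $\delta_{join}^\omega$ is suppressed; but this map only ever joins $\epsilon$-decorations of $\epsilon$-components, so it is entirely a piece of $\delta_{join}^\epsilon$ and is unaffected.
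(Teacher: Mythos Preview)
Your proposal is correct and follows exactly the approach the paper intends: the paper's ``proof'' of this lemma is simply the remark that the argument for Proposition~\ref{prop:WHGC prime WHGC inclusion qiso} goes through unchanged because $\delta_{join}^\omega$ never entered it, and you have spelled out precisely why. The filtration by internal vertices in $\omega$-components, the resulting associated graded differential $\delta_{join}^{\epsilon,\epsilon}+\delta_{split}^\epsilon$, the decomposition $W=W'\oplus W''$, and the isomorphism $W'_{\geq 2}\to W'_1$ via Lemma~\ref{lem:acyclicity} are all exactly as in the paper.
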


\subsection{A simplifying involution}
We now construct an involution of $\Xast_{g,n}$ that simplifies the differential.  This involution is given by reattaching subsets of the $\epsilon$-decorated external vertices in all possible ways and motivates the introduction of the subcomplex $\Xast_{g,n}$; see Remark~\ref{rem:involutionwd}. %; it does not extend to $X_{g,n}$.

Let $\Gamma$ be a generator for $\Xast_{g,n}$. We write $R_S(\Gamma)$ for the sum of all graphs obtained by reattaching a subset $S$ of the $\epsilon$-decorated external vertices of $\Gamma$ to internal vertices, in all possible ways without forming loop edges: 
\[
\begin{tikzpicture}[baseline=-.65ex,
decoration={brace,mirror,amplitude=3}]
\node[draw, circle] (v) at (0,.3) {$\Gamma$};
\draw (v) edge +(-.5,-.6) edge +(-.25,-.6)  edge +(0,-.6) edge +(.25,-.6) edge +(.5,-.6);  
\draw [decorate] (-.5,-.5) -- (-.25,-.5) 
node [pos=0.5,anchor=north,yshift=-0.1cm] {$S$}; 
\end{tikzpicture}
\quad
\mapsto
\quad
R_S(\Gamma)=
\sum
\begin{tikzpicture}[baseline=-.65ex]
\node[draw, circle] (v) at (0,.3) {$\Gamma$};
\node (xx) at (0,0.9) {};
\path[overlay] (v) edge[out=-135,in=135, loop] coordinate[midway](X) (v) edge[out=-115,in=115, loop] (v) ;
\draw (v) edge +(0,-.6) edge +(.25,-.6) edge +(.5,-.6);
\path (X);  
\end{tikzpicture}\, .
\]
The structural edges of each graph in $R_S(\Gamma)$ are in bijection with those of $\Gamma$, and we keep the given ordering.

\begin{rem} \label{rem:involutionwd}
The restriction on the union of all $\epsilon$-components of graphs in $\Xast_{g,n}$ (Definition~\ref{def:Xast}) guarantees that the reattachment operation does not produce connected components without any $\epsilon$- or $\omega$-decoration. For this reason, $R_S$ is well-defined on $\Xast_{g,n}$. It is not well-defined on $X_{g,n}$.
\end{rem}

We now consider the graded endomorphism $\XPhi$ of $\Xast_{g,n}$, given by 
\[
  \XPhi(\Gamma) = (-1)^{\# \epsilon} \sum_S R_S(\Gamma),
\]
where the sum runs over all subsets of the set of $\epsilon$-decorations, and $\#\epsilon$ is the number of $\epsilon$-decorations in $\Gamma$.

\begin{lemma}
The map $\XPhi$ is an involution, i.e., it is invertible and $\XPhi^{-1} = \XPhi$.
\end{lemma}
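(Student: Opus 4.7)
The plan is to compute $\XPhi^2(\Gamma)$ directly and show that it equals $\Gamma$, using a combinatorial factorization of $\sum_S R_S$. Since reattachment preserves the number of $\epsilon$-decorations, the two sign factors cancel, and one must show
\[
\sum_{S, S' \subseteq E} R_{S'} \circ R_S(\Gamma) = \Gamma,
\]
where $E$ denotes the set of $\epsilon$-decorations of $\Gamma$.

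First, I would factorize $\sum_S R_S$ as a composition of single-vertex operators, one for each $\epsilon$-decoration. Away from $\epsilon$-components, reattachments at distinct $\epsilon$-vertices are independent, so $\sum_S R_S$ decomposes as a product $\prod_e \rho_e$ of local operators $\rho_e$ acting on the attachment data of the single vertex $e$. The identity $\XPhi^2 = \id$ then reduces to a local involution identity for each $\rho_e$: summing over the intermediate positions visited by $e$ during the two reattachments, one gets cancellations that leave only the trivial contribution. The overall sign $(-1)^{\#\epsilon}$ is needed precisely to combine the local sign identities correctly across the $\epsilon$-decorations.

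The hard part will be handling the $\epsilon$-components, namely the $(\epsilon,\epsilon)$- and $(\epsilon,j)$-edges, whose two endpoints cannot be reattached independently of each other. Fortunately, Definition~\ref{def:Xast} restricts any generator of $\Xast_{g,n}$ to contain at most one $(\epsilon,\epsilon)$-edge and at most one $(\epsilon,j)$-edge, so the non-factorizing cases are confined to a small, predictable set of configurations. I would treat each $\epsilon$-component as a single combined "slot" with its own local operator and verify the resulting local involution identity by a direct case check. The condition that reattachment avoids loop edges is what makes these local identities consistent, in particular by forbidding the would-be problematic configurations when both endpoints of an $(\epsilon,\epsilon)$-edge are sent to the same internal vertex.

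Finally, since $R_S$ leaves the structural edges of $\Gamma$ and their total ordering unchanged, no Koszul signs intervene when iterating $\XPhi$, so no residual sign appears from the edge orderings. Putting these pieces together yields $\XPhi^2 = \id$, so $\XPhi$ is invertible with $\XPhi^{-1} = \XPhi$, as required.
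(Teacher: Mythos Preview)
Your proposal has a genuine error at the very first step. You claim that ``reattachment preserves the number of $\epsilon$-decorations,'' but this is false: applying $R_S$ removes the external vertices in $S$ entirely, redirecting each of their incident edges to an internal vertex. After $R_S$, only the $\epsilon$-decorations in $E\setminus S$ survive. Consequently, when you apply $\XPhi$ a second time, the inner sign is $(-1)^{\#\epsilon - |S|}$, not $(-1)^{\#\epsilon}$, and the combined sign is $(-1)^{|S|}$, not $+1$. Moreover, the inner sum must range over subsets $T\subseteq E\setminus S$, not over all of $E$; the expression $R_{S'}\circ R_S$ with $S'\cap S\neq\emptyset$ is not even defined, since the vertices in $S$ no longer exist.

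This sign $(-1)^{|S|}$ is exactly what makes the argument work, and it is much simpler than your proposed factorization. The paper's proof is a two-line inclusion--exclusion: any graph appearing in $\XPhi^2(\Gamma)$ is obtained by reattaching some subset $U\subseteq E$ of size $k$ to fixed internal positions, and each partition $U=S\sqcup T$ with $S$ done first and $T$ second produces the same graph with sign $(-1)^{|S|}$; summing over the $2^k$ choices of $S\subseteq U$ gives $\sum_{S\subseteq U}(-1)^{|S|}=0$ unless $k=0$. No separate treatment of $(\epsilon,\epsilon)$- or $(\epsilon,j)$-edges is needed, and the local-operator decomposition you outline is unnecessary. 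Your proposed identity $\sum_{S,S'\subseteq E}R_{S'}R_S(\Gamma)=\Gamma$, once made well-defined by restricting $S'\subseteq E\setminus S$, is simply false without the sign; for a single $\epsilon$-leg it would read $\id + 2r = \id$.
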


\begin{proof}
Any graph appearing in $\XPhi \circ \XPhi \, (\Gamma)$ is obtained by reattaching some subset $S$ of the $\epsilon$-decorated legs to internal vertices. Each such graph appears $2^{|S|}$ different ways, with signs that cancel unless $S = \emptyset$.
\end{proof}

For $\Gamma\in \Xast_{g,n}$ a graph, $S$ a non-empty subset of its $\epsilon$-decorated external vertices and $j=1,\dots,n$, let $R_S^j (\Gamma)$
be the graph obtained by reattaching the external vertices in $S$ to the midpoint of the edge that contains the $j$ decoration.
\[
  R_S^j : 
\begin{tikzpicture}[baseline=-.65ex,decoration={brace,mirror,amplitude=3}]
  \node[draw, circle] (v) at (0,.3) {$\Gamma$};
  \node (vj) at (0,-.6) {$j$};
\draw (v) edge +(-.5,-.6) edge +(-.25,-.6)  edge (vj) edge +(.25,-.6) edge +(.5,-.6);  
\draw [decorate] (-.5,-.5) -- (-.25,-.5) 
node [pos=0.5,anchor=north,yshift=-0.1cm] {$S$}; 
\end{tikzpicture}
\quad
\mapsto
\quad
\begin{tikzpicture}[baseline=-.65ex]
  \node[draw, circle] (v) at (0,.3) {$\Gamma$};
  \node[int] (w) at (0,-.3) {};
  \node (vj) at (0,-1) {$j$};
\draw (v) edge[out=-135, in=160] (w)  (v) edge[bend right] (w) edge (w) edge +(.25,-.6) edge +(.5,-.6)
(w) edge (vj) ;  
\end{tikzpicture}
\, .
\]
This operation creates precisely one new structural edge, which we take to be first in the ordering, preserving the relative ordering of the remaining edges.
Note that the operation also makes sense if applied not to a numbered leg, but an $\omega$-labeled leg, and we shall denote the sum of such operations applied to the two $\omega$-legs by $R_S^\omega$.

We also define 
\[
  \tilde \delta_{join}^j(\Gamma) 
  =
  \sum_{|S|\geq 1} |\,S| \cdot  R_S^j(\Gamma),
\]
where the sum runs over all non-empty subsets of the set of $\epsilon$-legs of $\Gamma$.
We then have the following result:

\begin{prop}\label{thm:WHGC simpl 2}
Let
\[
\tilde \delta := \delta_{split} + \delta_{join}^\epsilon - \sum_{j=1}^n \tilde \delta_{join}^j
\]
Then $\tilde \delta = \XPhi \circ \delta \circ \XPhi$.
In particular, $\XPhi$ defines an isomorphism of dg vector spaces 
\begin{equation*}%\label{equ:WHGC simpl 2}
  \XPhi \colon (\Xast_{g,n}, \tilde \delta) \to (\Xast_{g,n}, \delta).
\end{equation*}
\end{prop}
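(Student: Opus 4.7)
The plan is to verify the identity $\tilde\delta = \XPhi\circ\delta\circ\XPhi$ by direct calculation on a generator $\Gamma\in \Xast_{g,n}$, exploiting the inclusion--exclusion structure of $\XPhi = (-1)^{\#\epsilon}\sum_S R_S$. The key combinatorial fact underlying both $\XPhi^2 = \mathrm{Id}$ and the argument below is the elementary identity $\sum_{S\subseteq U}(-1)^{|S|} = \delta_{U,\emptyset}$, which forces most configurations to cancel. I would decompose $\delta = \delta_{split} + \delta_{join}^\epsilon + \delta_{join}^\omega$, where $\delta_{join}^\omega$ consists of the terms of $\delta_{join}$ that involve an $\omega$-leg (and thus produce a new $\omega$-leg). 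Since $\XPhi$ acts only by reattaching $\epsilon$-legs and does not touch $\omega$-legs, I would separately prove the three identities $\XPhi\delta_{split}\XPhi = \delta_{split}$, $\XPhi\delta_{join}^\epsilon\XPhi = \delta_{join}^\epsilon$, and $\XPhi\delta_{join}^\omega\XPhi = -\sum_{j=1}^n \tilde\delta_{join}^j$, which together give the result.

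\textbf{The $\epsilon$-preserving pieces.} For the first two identities the strategy is essentially the same. When $\delta_{split}$ (respectively $\delta_{join}^\epsilon$) is applied to $R_S(\Gamma)$, the resulting sum of graphs splits into ``bare'' contributions $R_S(\delta_{split}\Gamma)$ (respectively $R_S(\delta_{join}^\epsilon\Gamma)$), plus reindexing contributions where a newly created half-edge or newly created $\epsilon$-leg becomes the object that gets reattached. Applying the outer $\XPhi$ and summing over $S$, the binomial identity above forces every configuration with a nonempty reattached set to cancel, leaving only $\delta_{split}\Gamma$ and $\delta_{join}^\epsilon\Gamma$ respectively. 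The matching of the external signs $(-1)^{\#\epsilon}$ before and after $\delta$ is immediate: $\delta_{split}$ preserves $\#\epsilon$, while $\delta_{join}^\epsilon$ changes $\#\epsilon$ by a fixed parity consistent across all of its contributions.

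\textbf{The $\omega$-piece and the main obstacle.} The identity $\XPhi\delta_{join}^\omega\XPhi = -\sum_j \tilde\delta_{join}^j$ carries the real content of the proposition. Applying $\delta_{join}^\omega$ to $R_S(\Gamma)$ joins the $\omega$-leg with a subset of the remaining $\epsilon$-legs at a fresh internal vertex carrying a new $\omega$-leg; because this destroys $\epsilon$-legs in a way that is not equivalent to reattachment, the inclusion--exclusion in the outer $\XPhi$ only partially collapses. Analyzing the non-cancelling residue carefully, I expect these surviving terms to correspond precisely to inserting a new internal vertex into the interior of a $j$-labeled edge and reattaching a subset $S$ of the $\epsilon$-legs there, with the coefficient $|S|$ in $\tilde\delta_{join}^j$ arising from the number of ways to achieve such an insertion in the combinatorial sum. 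The appearance of $j$-labeled legs (rather than $\omega$-legs or $(\epsilon,\epsilon)$-edges) reflects the defining restriction of $\Xast_{g,n}$ that $\epsilon$-components have no internal vertices and at most one $j$-leg, which is precisely what breaks the symmetry in the desired direction.

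\textbf{Main obstacle.} The principal difficulty lies in this last step: parametrizing all contributions to $\XPhi\delta_{join}^\omega\XPhi(\Gamma)$, isolating the non-cancelling ones via inclusion--exclusion, and verifying that they assemble into $-\tilde\delta_{join}^j$ with the correct sign conventions (in particular the rule that the newly created edge comes first in the ordering) and the correct combinatorial multiplicity $|S|$. A secondary, smaller, difficulty is tracking the signs $(-1)^{\#\epsilon}$ attached to $\XPhi$ at both ends so as to pin down the overall $-$ sign in front of $\sum_j \tilde\delta_{join}^j$.
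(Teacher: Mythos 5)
Your overall strategy (conjugate the pieces of the differential by $\XPhi$ and compare) is the right one, but the three intermediate identities you propose to prove are false individually, and this is a genuine gap rather than a technicality. Conjugation by $\XPhi$ does \emph{not} fix $\delta_{split}$: when the inner $\XPhi$ reattaches a set $T$ of $\epsilon$-legs to a vertex $v$ and $\delta_{split}$ then splits $v$ so that one of the two new vertices is supported mainly by reattached legs (e.g.\ it carries only reattached legs together with the half-edge pointing to a $j$-labelled or $\omega$-labelled external vertex), the would-be cancelling partner in the inclusion--exclusion would have a vertex of valence $<3$, or would require splitting an external vertex, and hence does not exist in the complex. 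These terms survive, and one finds (as in the paper, following \cite[Lemma 3.6]{TWspherical})
\[
\XPhi\,\delta_{split}\,\XPhi \;=\; \delta_{split} \;+\; \sum_{|S|\ge 2}|S|\,R_S^\epsilon \;+\; \sum_{|S|\ge 2}(|S|-1)\,R'_S \;+\; \sum_{|S|\ge 1}\ \sum_{j\in\{1,\dots,n,\omega\}} R_S^j .
\]
In particular all of $\delta_{join}^\omega$ (the $R_S^\omega$ terms) and all of the $j$-edge insertions you expect to extract from the $\omega$-piece actually arise from conjugating $\delta_{split}$. Similarly $\XPhi\,\delta_{join}^\epsilon\,\XPhi=\sum_{|S|\ge2}(1-|S|)(R_S^\epsilon+R'_S)\neq\delta_{join}^\epsilon$. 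Your third identity also fails outright: every term of $\XPhi\,\delta_{join}^\omega\,\XPhi(\Gamma)$ contains a new vertex carrying a freshly created $\omega$-leg, whereas $\tilde\delta_{join}^j$ never alters the neighbourhoods of the $\omega$-legs. Concretely, if $\Gamma$ has a single $\epsilon$-leg $e$, then $\XPhi\,\delta_{join}^\omega\,\XPhi(\Gamma)=-R^\omega_{\{e\}}(\Gamma)$ (insertions into the two $\omega$-edges), which is not $-\sum_j R^j_{\{e\}}(\Gamma)$ (insertions into the $j$-edges); and already in this example $\XPhi\,\delta_{split}\,\XPhi(\Gamma)-\delta_{split}(\Gamma)=\sum_{j\in\{1,\dots,n,\omega\}}R^j_{\{e\}}(\Gamma)\neq 0$. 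Note also that the separation you invoke cannot be justified by any $\epsilon$-counting grading, since $\XPhi$ itself changes the number of $\epsilon$-legs, and $\delta_{join}^\epsilon$ changes it by $1-|S|$, which is not of fixed parity.

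What is true, and what the paper proves, is that the \emph{sum} of the conjugates works out: the $|S|\,R_S^\epsilon$ and $(|S|-1)R'_S$ terms from $\delta_{split}$ cancel against the $(1-|S|)(R_S^\epsilon+R'_S)$ terms from $\delta_{join}^\epsilon$, leaving exactly $\delta_{join}^\epsilon$; the $R_S^\omega$ terms assemble into $\delta_{join}^\omega$; and the leftover $\sum_j\sum_{|S|\ge1}R_S^j$ is removed by the conjugate of $\sum_j\tilde\delta_{join}^j$, using $\XPhi\circ\big(\sum_{|S|\ge1}R_S^j\big)\circ\XPhi=\sum_{|S|\ge1}|S|\,R_S^j=\tilde\delta_{join}^j$. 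Your inclusion--exclusion heuristic is the right engine for these computations, but the proof must be organized around this cross-cancellation between the pieces (with the edge-ordering signs doing real work, e.g.\ insertions into internal edges cancel in pairs coming from the two endpoints, which is why only edges with an external endpoint survive), rather than around the three separate identities you state.
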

\begin{proof}
  The proof is similar to that of \cite[Lemma 3.6]{TWspherical}; the argument there goes through essentially unchanged, even though the complexes $\Xast_{g,n}$ we consider here are different. We give only a condensed sketch of the proof. 
  
  We introduce the following notation.   For $S$ a subset of the $\epsilon$-legs in some graph $\Gamma\in \Xast_{g,n}$ we denote by $R_S^\epsilon(\Gamma)$ the graph obtained by reattaching the legs in $S$ to a new $\epsilon$-decorated vertex.
  \[
  R_S^\epsilon \colon  
\begin{tikzpicture}[baseline=-.65ex,decoration={brace,mirror,amplitude=3}]
  \node[draw, circle] (v) at (0,.3) {$\Gamma$};
\draw (v) edge +(-.5,-.6) edge +(-.25,-.6)  edge +(0,-.6) edge +(.25,-.6) edge +(.5,-.6);  
\draw [decorate] (-.5,-.5) -- (0,-.5) 
node [pos=0.5,anchor=north,yshift=-0.1cm] {$S$}; 
\end{tikzpicture}
\quad
\mapsto
\quad
\begin{tikzpicture}[baseline=-.65ex]
  \node[draw, circle] (v) at (0,.3) {$\Gamma$};
  \node[int] (w) at (0,-.3) {};
  \node (vj) at (0,-1) {$\epsilon$};
\draw (v) edge[out=-135, in=160] (w)  (v) edge[bend right] (w) edge (w) edge +(.25,-.6) edge +(.5,-.6)
(w) edge (vj) ;  
\end{tikzpicture}
\, .
\]
Furthermore, we denote by $R'_S(\Gamma)$ the sum of graphs obtained from $R_S^\epsilon(\Gamma)$ by attaching the newly formed $\epsilon$-leg to an internal vertex of $\Gamma$. 
\[
  R'_S \colon  
\begin{tikzpicture}[baseline=-.65ex,decoration={brace,mirror,amplitude=3}]
  \node[draw, circle] (v) at (0,.3) {$\Gamma$};
\draw (v) edge +(-.5,-.6) edge +(-.25,-.6)  edge +(0,-.6) edge +(.25,-.6) edge +(.5,-.6);  
\draw [decorate] (-.5,-.5) -- (0,-.5) 
node [pos=0.5,anchor=north,yshift=-0.1cm] {$S$}; 
\end{tikzpicture}
\quad
\mapsto
\quad
\sum\,
\begin{tikzpicture}[baseline=-.65ex]
  \node[draw, circle] (v) at (0,.3) {$\Gamma$};
  \node[int] (w) at (0,-.3) {};
\draw (v) edge[out=-135, in=160] (w)  ;
\draw (v) edge[bend right] (w) edge (w) edge +(.25,-.6) edge +(.5,-.6) ;
\draw[overlay] (w) edge[out=-120, in=120, looseness=4] coordinate[midway] (X) coordinate[near end] (XX) coordinate[near start] (XXX) (v); 
\path (X) (XX) (XXX); 
\end{tikzpicture}
\, .
\]
Using the fact that $\XPhi^{-1} = \XPhi$, one computes that 
\begin{equation*}%\label{equ:WHGC simpl 1}
(\XPhi \circ \delta_{split}\circ\XPhi )(\Gamma)
=
\delta_{split} (\Gamma)
+ \sum_{|S|\geq 2}  |S| \cdot R_S^\epsilon (\Gamma) 
+ \sum_{|S|\geq 2}  (|S|-1) \cdot R'_S (\Gamma)
+ \sum_{|S|\geq 1} \sum_{j\in \{1,\dots,n,\omega \} } R_S^j (\Gamma),
\end{equation*}
Here $S$ runs over subsets of the set of $\epsilon$-legs.
Similarly, one may compute
\begin{equation*}%\label{equ:WHGC simpl 2}
(\XPhi \circ \delta_{join}^\epsilon\circ\XPhi )(\Gamma)
=
 \sum_{ 
 |S|\geq 2}  (1-|S|) \cdot R_S^\epsilon (\Gamma) 
+ \sum_{ 
|S|\geq 2}  (1-|S|) \cdot R'_S (\Gamma).
\end{equation*}
Together we find that 
\begin{align*}
  (\XPhi \circ (\delta_{split}+ \delta_{join}^\epsilon)\circ\XPhi )
  (\Gamma)
   &=
   (\delta_{split}
   +
   \delta_{join}^\epsilon
   +
   \delta_{join}^\omega) \Gamma
    + 
    \sum_{ 
    |S|\geq 1} \sum_{j\in \{1,\dots,n\} } R_S^j (\Gamma)
    \\&=
    \delta (\Gamma) + \sum_{ 
    |S|\geq 1} \sum_{j\in \{1,\dots,n\} } R_S^j (\Gamma).
\end{align*}
Then one may finally compute that 
\begin{equation*}%\label{equ:WHGC simpl 3}
 \Big( \XPhi\circ \sum_{|S| \geq 1} R_S^j \circ \XPhi \Big) \, (\Gamma)
 =
\sum_{ 
|S|\geq 1} |S| \cdot R_S^j (\Gamma)
=
\tilde \delta_{join}^j (\Gamma).
\end{equation*}
Putting the above computations together we find that 
\begin{align*}
\XPhi\circ \tilde \delta \circ \XPhi 
&=
\XPhi\circ \Big( \delta_{split}+\delta_{join}^\epsilon-\sum_j \tilde \delta_{join}^j \Big)\circ \XPhi 
=
\delta + \sum_j \delta_{join}^j -
\sum_j\delta_{join}^j = \delta,
\end{align*}
which proves the proposition.
\end{proof}

 \section{The weight 2 compactly supported cohomology of $\MM_g$}%\label{sec:classes n0}
 We now focus on the special case where $n=0$. Using the quasi-isomorphic subcomplex $\Xast_g \subset X_g$, with its simplified differential $\tilde \delta$, we shall see that the cohomology of $X_{g}$ can be fully expressed through the cohomology of the ordinary commutative graph complexes $G^{(g',n')}$ of Section \ref{sec:svert weight0}, with $n'=1, 2$ and $g'=g,g-1,g-2$. 
 
When $n = 0$, the differential $\tilde \delta$ in Proposition~\ref{thm:WHGC simpl 2} further simplifies to 
$\delta_{split}+ \delta_{join}^\epsilon$.
We define the subcomplex 
\begin{equation} \label{equ:Hg inclusion}
  H_{g} \subset (\Xast_{g},\delta_{split}+ \delta_{join}^\epsilon)
\end{equation}
spanned by graphs with either no $\epsilon$-decorations, or exactly two on an $(\epsilon,\epsilon)$-edge. Note that $\delta_{join}^\epsilon$ vanishes on $H_g$.

We also introduce the space 
\[
G^{(\bullet,n)} = \bigoplus_{g} G^{(g,n)},  
\]
that inherits an additional grading by the genus.
We also define the exterior product 
\[
\mathbb{W}_\bullet = \bigwedge^2 G^{(\bullet,1)}
\]
This comes with a grading inherited from the genus grading; let $\mathbb{W}_g$ be the homogeneous part of genus $g$.

\begin{lemma}\label{lem:Hg isomorphism}
The subspace $H_{g}$ 
is a subcomplex with respect to $\tilde \delta = \delta_{split}+ \delta_{join}^\epsilon$, and $(H_g, \tilde \delta)$ is isomorphic to
\[
\mathbb{W}_{g}[-3]
  \oplus 
  G^{(g-1,2)}_{as}[-3]
  \oplus
  \mathbb{W}_{g-1}[-4]
  \oplus 
  G^{(g-2,2)}_{as}[-4].
\]
Here the subscript $(-)_{as}$ refers to taking the antisymmetric part under the $\bbS_2$-action permuting the labels $1,2$.
\end{lemma}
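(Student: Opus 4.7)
The plan is to verify three things in sequence: (a) $H_g$ is closed under $\tilde\delta$ and $\tilde\delta|_{H_g}=\delta_{split}$; (b) the four claimed summands give a graded vector-space decomposition of $H_g$, with the correct degree shifts; and (c) under this decomposition $\delta_{split}$ matches the standard split differentials on commutative graph complexes, with signs.

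First I would check that $\delta_{join}^\epsilon$ is identically zero on $H_g$. On generators with no $\epsilon$-decoration there is nothing to join. On generators with a single $(\epsilon,\epsilon)$-edge component, the only admissible $S$ is the pair of $\epsilon$-vertices of that edge (together with at most one $\omega$), and joining these forces the two half-edges of the $(\epsilon,\epsilon)$-edge to meet at a new internal vertex, producing a loop edge, which is zero in $X_g$. Since $\delta_{split}$ does not touch external vertices and preserves the set of $(\epsilon,\epsilon)$-edge components, $H_g$ is a subcomplex and $\tilde\delta|_{H_g}=\delta_{split}$.

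Next I would set up the decomposition by deleting the $\omega$-external vertices (and the $(\epsilon,\epsilon)$-edge component, if present) and turning the freed $\omega$ half-edges into marked legs labeled $1,2$. Every generator of $H_g$ falls into exactly one of four classes according to whether an $(\epsilon,\epsilon)$-edge is present and whether the resulting graph $\Gamma'$ is connected. Using the standard formula $h^1(\Gamma')=|E(\Gamma')|-|V(\Gamma')|+c$ one checks that in the connected case $\Gamma'\in G^{(g-1,2)}$ (resp.\ $G^{(g-2,2)}$ in the Type 2 case), while in the disconnected case $\Gamma'=\Gamma_1\sqcup\Gamma_2$ with $\Gamma_i\in G^{(g_i,1)}$ and total genus $g$ (resp.\ $g-1$). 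The map to the wedge square $\bigwedge^2 G^{(\bullet,1)}$ is $\Gamma\mapsto \Gamma_1\wedge\Gamma_2$; the antisymmetry is forced because swapping the two identically labeled $\omega$-vertices transposes the two $\omega$-edges in the edge ordering, contributing a sign $-1$. The same mechanism explains why the connected case lands in $G^{(g-1,2)}_{as}$ rather than the full $G^{(g-1,2)}$: the swap of the two $\omega$-edges and the swap of the two leg-labels differ by one transposition of structural edges, hence by $-1$. Counting structural edges gives degree $e+3$ (Type 1) or $e+4$ (Type 2) in $X_g$ against degree $e$ in the target, yielding exactly the shifts $[-3]$ and $[-4]$.

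Finally I would compare differentials. Choose the edge-ordering convention on generators of $H_g$ that places the edges of $\Gamma_1$ first, then those of $\Gamma_2$, then the two $\omega$-edges, and then the $(\epsilon,\epsilon)$-edge if present. Since $\delta_{split}$ acts at an internal vertex of a single component, and inserts the new edge at the beginning of the ordering, the four classes in the decomposition are individually preserved. On the connected summands the bijection carries the $X_g$-split of an internal vertex to the standard split in $G^{(g-1,2)}$ (or $G^{(g-2,2)}$), with identical sign. On the disconnected summands, splitting in $\Gamma_1$ matches the first term of the Leibniz rule $d(\Gamma_1\wedge\Gamma_2)=d\Gamma_1\wedge\Gamma_2+(-1)^{|\Gamma_1|}\Gamma_1\wedge d\Gamma_2$, and splitting in $\Gamma_2$ produces a new edge which, in our convention, must be commuted past the $|\Gamma_1|$ edges of $\Gamma_1$, generating precisely the Koszul sign $(-1)^{|\Gamma_1|}$. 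The main obstacle is this sign bookkeeping, specifically verifying that the interplay between the $\omega$-swap, the leg-swap antisymmetrization, and the placement of the new edge in the global structural-edge ordering is consistent across all four summands; once the conventions are fixed the matching is essentially mechanical.
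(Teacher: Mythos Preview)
Your proposal is correct and follows essentially the same route as the paper: identify the four combinatorial types (with/without an $(\epsilon,\epsilon)$-edge; one or two $\omega$-components), observe that $\delta_{join}^\epsilon$ vanishes on $H_g$, and match each type with the corresponding commutative graph complex. The paper's proof is terse and omits the genus counts, degree-shift verification, antisymmetry check, and Koszul sign comparison that you spell out; one small imprecision in your write-up is the parenthetical ``(together with at most one $\omega$)'' --- since $\tilde\delta$ involves only $\delta_{join}^\epsilon$, the subset $S$ contains no $\omega$-vertex by definition, so only the case $S=\{\text{both }\epsilon\text{-vertices}\}$ needs to be ruled out (and your loop argument does this).
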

\begin{proof}
  Graphs in $H_g$ have exactly two $\omega$-decorated legs, and every connected component has an $\omega$- or $\epsilon$-decorated leg.  There are four possibilities, with and without an $(\epsilon,\epsilon)$-edge, and with 1 or 2 $\omega$-components:
    \[
      \resizebox{0.91\hsize}{!}{$
      \begin{tikzpicture}
        \node[draw, circle] (v1) at (-.5,.5) {$\Gamma_1$};
        \node[draw, circle] (v2) at (.5,.5) {$\Gamma_2$};
        \node (o1) at (-.5,-.5) {$\omega$};
        \node (o2) at (.5,-.5) {$\omega$};
        \draw (v1) edge (o1) (v2) edge (o2);
      \end{tikzpicture}
      \quad , \quad \quad \quad 
\begin{tikzpicture}
  \node[draw, circle] (v) at (0,.5) {$\Gamma$};
  \node (o1) at (-.5,-.5) {$\omega$};
  \node (o2) at (.5,-.5) {$\omega$};
  \draw (v) edge (o1) edge (o2);
\end{tikzpicture}
      \quad , \quad \quad \quad
      \begin{tikzpicture}
        \node[draw, circle] (v1) at (-.5,.5) {$\Gamma_1$};
        \node[draw, circle] (v2) at (.5,.5) {$\Gamma_2$};
        \node (o1) at (-.5,-.5) {$\omega$};
        \node (o2) at (.5,-.5) {$\omega$};
        \node (e1) at (1.2,0) {$\epsilon$};
        \node (e2) at (1.9,0) {$\epsilon$};
        \draw (e1) edge (e2);
        \draw (v1) edge (o1) (v2) edge (o2);
      \end{tikzpicture}
         , \quad  \quad \mbox{ or } \quad \quad 
    \begin{tikzpicture}
      \node[draw, circle] (v) at (0,.5) {$\Gamma$};
      \node (o1) at (-.5,-.5) {$\omega$};
      \node (o2) at (.5,-.5) {$\omega$};
      \node (e1) at (1,0) {$\epsilon$};
      \node (e2) at (1.7,0) {$\epsilon$};
      \draw (v) edge (o1) edge (o2);
      \draw (e1) edge (e2);
    \end{tikzpicture} \ .
    $}
    \]
The graded vector space $H_g$ decomposes into a direct sum of 4 subcomplexes accordingly; these subcomplexes are identified with  $\mathbb{W}_{g}[-3]$, $G^{(g-1,2)}_{as}[-3]$, $\mathbb{W}_{g-1}[-4]$, and $G^{(g-2,2)}_{as}[-4]$, respectively, since $\delta_{join}^\epsilon$ vanishes on $H_g$.
\end{proof}

 \begin{prop}\label{prop:Hg inclusion qiso}
 The inclusion \eqref{equ:Hg inclusion} is a quasi-isomorphism.
 \end{prop}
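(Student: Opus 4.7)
The plan is a three-step reduction followed by an acyclicity argument for a complementary subcomplex.

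\textbf{Step 1 (Reduction to $m=0$).} First I would reduce to graphs without $(\epsilon,\epsilon)$-edge components. Any generator of $\Xast_g$ with two or more $(\epsilon,\epsilon)$-edge components admits an odd automorphism, namely the transposition of two such identical components, which is a transposition on the structural edge ordering, and hence vanishes. Thus $\Xast_g = \Xast_g^{(0)} \oplus \Xast_g^{(1)}$, indexed by the number $m\in\{0,1\}$ of $(\epsilon,\epsilon)$-edges. Both $\delta_{split}$ and $\delta_{join}^\epsilon$ preserve $m$: the former trivially, and the latter because any join involving an $\epsilon$-vertex of an $(\epsilon,\epsilon)$-edge produces either a loop or a new internal vertex adjacent to two external $\epsilon$-legs, each vanishing by antisymmetry. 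Since the extra $(\epsilon,\epsilon)$-edge in $\Xast_g^{(1)}$ is inert under the differential, $(\Xast_g^{(1)},\tilde\delta)$ is isomorphic to $(\Xast_g^{(0)},\tilde\delta)$ up to a shift by one in degree, and the same for the subspace $H_g$. It thus suffices to prove that $A_{0,0} := H_g \cap \Xast_g^{(0)}$ includes as a quasi-isomorphism into $\Xast_g^{(0)}$.

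\textbf{Step 2 (Direct sum splitting).} Next I would observe that $\Xast_g^{(0)} = A_{0,0} \oplus K$ as a direct sum of dg subspaces, where $K$ consists of graphs with at least one $\epsilon$-leg attached to an internal vertex of an $\omega$-component. Indeed, $\delta_{split}$ preserves the $\epsilon$-leg count $k$, while every nonzero output of $\delta_{join}^\epsilon$ contains a newly created $\epsilon$-leg, hence has $k\geq 1$. So both $A_{0,0}$ (where $k=0$) and $K$ (where $k\geq 1$) are closed under $\tilde\delta = \delta_{split}+\delta_{join}^\epsilon$. The proposition then reduces to the statement that $(K,\tilde\delta)$ is acyclic.

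\textbf{Step 3 (Acyclicity of $K$).} To prove $K$ acyclic, I plan to construct a chain homotopy $h\colon K \to K$ of degree $-1$ satisfying $[\tilde\delta,h] = N$, where $N$ is the degree-zero operator multiplying each graph by its $\epsilon$-leg count $k$. Since $N$ is invertible on $K$ (where $k\geq 1$), such an $h$ yields acyclicity directly: every $\tilde\delta$-cocycle $\Gamma \in K$ satisfies $\Gamma = k^{-1}\tilde\delta(h\Gamma)$. The natural candidate for $h$ is a signed sum over $\epsilon$-legs $v$ of an ``un-fusion'' operation at the internal vertex $u$ carrying $v$: remove $u$ together with $v$ and attach a new $\epsilon$-leg at each remaining neighbor of $u$, set to zero when this produces an invalid configuration (for instance, a neighbor that is external or already carries an $\epsilon$-leg). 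Equivalently, one may filter $K$ by $k$ and analyze the resulting spectral sequence, whose $E_0$-page is $\bigoplus_{k\geq 1}(\Xast_g^{(0,k)},\delta_{split})$; the construction of $h$ amounts to exhibiting the collapse.

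\textbf{Main obstacle.} The main technical obstacle is the verification of the identity $[\tilde\delta, h] = N$ with the correct sign conventions. Composing $h$ with the two parts of $\tilde\delta$ produces many terms from splittings of internal vertices and fusions of $\epsilon$-legs, and one must check that all off-diagonal contributions cancel while the diagonal contributions combine to contribute exactly $k\cdot \Gamma$. Particular care is required in the degenerate cases where the un-fusion operation vanishes, which must be balanced by terms arising from $\delta_{split}$ acting near the relevant vertex; this combinatorial verification is expected to proceed in the same style as the proof of Proposition~\ref{thm:WHGC simpl 2} establishing the simplifying involution $\XPhi$.
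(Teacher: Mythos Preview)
Your Steps~1 and~2 are correct and match the paper's reduction. Two small remarks: the extra $(\epsilon,\epsilon)$-edge also shifts the closed-up genus, so $\Xast_g^{(1)}\cong\Xast_{g-1}^{(0)}[-1]$ rather than $\Xast_g^{(0)}[-1]$; and the paper first enlarges from $\Xast_g$ to $X_g$ via Lemma~\ref{lem:X Xprime delta eps} before splitting off the $(\epsilon,\epsilon)$-edge and the complement $U$ of $H_g'$, but this is a cosmetic difference. The splitting into a piece with no $\epsilon$-legs and a complementary subcomplex, and the reduction to acyclicity of the latter, is exactly what the paper does.

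Step~3 has a genuine gap: the proposed homotopy does not satisfy $[\tilde\delta,h]=N$. Take $\Gamma\in K$ with a single $\epsilon$-leg attached to a trivalent internal vertex $u$ whose other neighbours are an external $\omega$-vertex and an internal vertex $w$, with the remainder of the graph (carrying the second $\omega$) attached at $w$ and built from trivalent vertices so that $\Gamma$ has no odd automorphism and $\delta_{split}\Gamma=0$. Then $h\Gamma=0$ because one neighbour of $u$ is external; $\delta_{join}^\epsilon\Gamma=0$ because there is only one $\epsilon$-leg; and $\delta_{split}\Gamma=0$ by construction. Hence $[\tilde\delta,h]\Gamma=0\neq\Gamma=N\Gamma$. (Concretely, in genus~$4$ one may take $w$ adjacent to $a,b$ and $a,b,c$ forming a triangle with $c$ carrying the second $\omega$.) The same example shows that the spectral sequence filtered by $k$ does not degenerate at $E_1$: this $\Gamma$ is a nonzero $\delta_{split}$-cocycle in $K_1$ sitting in the minimal degree for its genus.

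The paper's argument for acyclicity of the complementary piece is different and uses in an essential way that there are exactly two $\omega$-legs. One applies Lemma~\ref{lem:acyclicity} three times. First, the part $D$ of $\delta_{join}^\epsilon$ that fuses \emph{all} $\epsilon$-legs into a single one is injective, so one passes to $\coker D$, spanned by graphs whose unique $\epsilon$-leg sits at a vertex $v$ also carrying an $\omega$-leg. Second, splitting $\coker D$ by the valence of $v$, the part of $\delta_{split}$ taking $v$ to valence~$3$ is again injective, and its cokernel is spanned by graphs where the unique internal neighbour $w$ of $v$ carries the \emph{second} $\omega$-leg. Third, splitting by the valence of $w$, the analogous map is now a bijection, precisely because both $\omega$'s have been accounted for, so the last cokernel vanishes. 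The obstruction your homotopy hits---an $\epsilon$-leg pinned at a trivalent vertex adjacent to an $\omega$---is thus not dissolved by a label-blind un-fusion; one must instead walk along the graph toward the second $\omega$, and it is the finiteness of the $\omega$-supply that terminates the iteration.
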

 \begin{proof}
 First note that one has a commutative diagram of morphisms of dg vector spaces
 \[
   \begin{tikzcd}
    (H_g,\delta_{split}) \ar[hookrightarrow]{r}{\text{\eqref{equ:Hg inclusion}}}
    \ar[hookrightarrow]{dr}
    & 
    (\Xast_{g}, \delta_{split}+\delta_{join}^\epsilon)
    \ar[hookrightarrow]{d}{\sim}
    \\
    & (X_{g}, \delta_{split}+\delta_{join}^\epsilon)
   \end{tikzcd}.
 \]
The vertical map is a quasi-isomorphism by Lemma~\ref{lem:X Xprime delta eps}.
It hence suffices to show that the diagonal inclusion
\[
  (H_g,\delta_{split})\to (X_{g}, \delta_{split}+\delta_{join}^\epsilon)
\]
is a quasi-isomorphism. 
Each complex splits, as in the proof of Proposition~\ref{prop:WHGC prime WHGC inclusion qiso}, 
into one piece generated by graphs that do not have any $(\epsilon,\epsilon)$-edge and a complementary piece spanned by those that do. So, we write $H'_g$ and $X'_g$ for the subcomplexes of $H_g$ and $X_g$, respectively, spanned by graphs that do not have any $(\epsilon,\epsilon)$-edge.  Similarly, we write $H''_g$ and $X''_g$ for the subcomplexes spanned by graphs with an $(\epsilon,\epsilon)$-edge.  We then have 
\begin{align*}
  H_g &\cong H_g'
  \oplus H_g''
  &
  X_g &\cong X_g' 
  \oplus X_g''.
\end{align*}
Note that $H_g'' \cong   H_{g-1}'[-1]$ and  $X_g'' \cong X_{g-1}'[-1]$, so 
it suffices to show that $H_g' \subset X'_g$ is a quasi-isomorphism.

We furthermore have a direct sum decomposition of complexes 
\[
  (X_g',  \delta_{split}+\delta_{join}^\epsilon)
  \cong 
  (H_g',\delta_{split})
  \oplus 
  (U, \delta_{split}+\delta_{join}^\epsilon),
\]
where $U\subset X_g'$ is the subcomplex spanned by graphs with at least one $\epsilon$-leg. 
It remains to show that $U$ is acyclic. 

We proceed as in Proposition \ref{prop:WHGC prime WHGC inclusion qiso} and decompose 
 \[
  U
  =  
 \begin{tikzcd}[column sep=.1cm]
 U_1 \ar[loop above]{} & 
 \oplus & 
 U_{\geq 2} \ar[loop above]{} \ar[bend right]{ll}[above]{D}
 \end{tikzcd},
 \]
 with the graded subspace $U_1\subset U$ (resp. $U_{\geq 2}\subset U$) being spanned by graphs with exactly $1$ (respectively $\geq 2$) $\epsilon$-legs.
The drawn arrows indicate various parts of the differential.
In particular, the part 
\[
  D : U_{\geq 2} \to U_1
\]
arising from $\delta_{join}^\epsilon$ joins all $\epsilon$-legs into one.

\begin{align*}
  D: 
  \begin{tikzpicture}
  \node[draw, ellipse, minimum width=1cm, minimum height=.5cm] (v) at (0,0) {$\cdots$};
  \node (w1) at (-1.5,0) {$\omega$};
  \node (w2) at (1.5,0) {$\omega$};
  \node (x1) at (-.5,-1) {$\epsilon$};
  \node (x2) at (0,-1) {$\cdots$};
  \node (x3) at (.5,-1) {$\epsilon$};
  \draw (v.west) edge (w1) (v.east) edge (w2) 
  (v.south east) edge (x3) (v.south west) edge (x1);
  \end{tikzpicture}
  \quad
  &\mapsto
  \quad
   \begin{tikzpicture}
    \node[draw, ellipse, minimum width=1cm, minimum height=.5cm] (v) at (0,0) {$\cdots$};
    \node (w1) at (-1.5,0) {$\omega$};
    \node (w2) at (1.5,0) {$\omega$};
    \node (x1) at (0,-1.7) {$\epsilon$};
    \node[int] (ww) at (0,-1) {};
    \draw (v.west) edge (w1) (v.east) edge (w2) 
    (v.south east) edge (ww) (v.south west) edge (ww)
    (ww) edge (x1);
   \end{tikzpicture}
    \, .
\end{align*}

This part is injective, since the operation can be undone by removing the two newly added vertices, and adding back the $\epsilon$-decorations on legs.
By Lemma \ref{lem:acyclicity} we hence conclude that the projection 
\[
U \to \coker D  
\]
is a quasi-isomorphism.
The cokernel of $D$ is spanned by graphs in $U_1$ such that the unique $\epsilon$-leg is connected to a vertex $v$ that is connected to an $\omega$-leg. (Recall that we forbade $(\epsilon,\omega)$-edges in the definition of $X_{g,n}$.) 
\[
\begin{tikzpicture}
  \node[draw, ellipse, minimum width=1cm, minimum height=.5cm] (v) at (0,0) {$\cdots$};
  \node (w1) at (-1.5,-1) {$\omega$};
  \node (w2) at (1.5,0) {$\omega$};
  \node (x1) at (0,-1.7) {$\epsilon$};
  \node[int,label=0:{$v$}] (ww) at (0,-1) {};
  \draw (ww) edge (w1) (v.east) edge (w2) 
  (v.south east) edge (ww) (v.south west) edge (ww)
  (ww) edge (x1);
\end{tikzpicture}
\]
However, we can continue in the same fashion and filter the cokernel of $D$ as 
\[
\coker (D) = 
\begin{tikzcd}[column sep=.1cm]
 U_1' \ar[loop above]{}& 
 \oplus & 
 U_{2}' \ar[bend right]{ll}[above]{D'}
 \ar[loop above]{}
 \end{tikzcd},
\]
with $U_1'$ (resp. $U_{2}'$) being spanned by graphs for which the vertex $v$ has valence $3$ (resp. $\geq 4$).
The part of the differential 
\[
  D': U_{2}' \to  U_1'  
\]
is injective. Hence, applying Lemma \ref{lem:acyclicity} again, we find that the projection 
\[
  \coker (D) \to \coker (D')
\]
is a quasi-isomorphism.
The cokernel of $D'$ is spanned by graphs in $U_1'$ such that the unique vertex $w$ neighboring $v$ has an $\omega$-hair attached.
\[
\begin{tikzpicture}
  \node[draw, ellipse, minimum width=1cm, minimum height=.5cm] (v) at (0,0) {$\cdots$};
  \node (w1) at (-2,-1) {$\omega$};
  \node (w2) at (1.5,-1) {$\omega$};
  \node (x1) at (-1,-1.7) {$\epsilon$};
  \node[int,label=90:{$v$}] (ww) at (-1,-1) {};
  \node[int,label=-90:{$w$}] (www) at (0,-1) {};
  \draw (ww) edge (w1) (www) edge (w2) 
  (v.south east) edge (www) (v.south west) edge (www)
  (ww) edge (x1)
  (www) edge (ww);
\end{tikzpicture}
\]
Repeating the argument once more we split 
\[
\coker (D') = 
\begin{tikzcd}[column sep=.1cm]
 U_1'' & 
 \oplus & 
 U_{2}'' \ar[bend right]{ll}[above]{D''}
 \end{tikzcd},
\]
with $U_1''$ (resp. $U_{2}''$) being spanned by graphs for which the vertex $w$ has valency $3$ (resp. $\geq 4$).
Now the piece of the differential 
\[
  D'': U_{2}'' \to  U_1''  
\]
is a bijection, since there are only two $\omega$-legs.
Hence by Lemma \ref{lem:acyclicity} we have that $\coker (D')$ is acyclic, and hence so are $\coker (D)$ and $U$. This proves the proposition.
\end{proof}

\begin{proof}[Proof of Theorem \ref{cor:n0}]
  By Propositions \ref{prop:WHGC prime WHGC inclusion qiso}, \ref{thm:WHGC simpl 2}, and \ref{prop:Hg inclusion qiso}, the composition 
  \[
    H_g
    \hookrightarrow 
    (\Xast_{g},\delta_{split}+\delta_{join}^\epsilon)
    \xrightarrow{\Phi}
    (\Xast_{g},\delta_{split}+\delta_{join})
    \hookrightarrow 
    X_{g}
  \]
  is a quasi-isomorphism. The cohomology of the right-hand side is $\gr^2 H_c^\bullet(\MM_{g})$. The cohomology of $H_g$ is expressed via Lemma \ref{lem:Hg isomorphism} through the cohomology of the complexes $G^{(g',n')}$ that compute $\gr^0 H_c^\bullet(\MM_{g',n'})$ by Theorem~\ref{thm:weight0}, and Theorem \ref{cor:n0} follows.
\end{proof}
 
\section{Direct summands with marked points} \label{sec:classes n1}
Let $\Gamma$ be a generator for $\Xast_{g,n}$, and let us now assume $n \geq 1$. Recall that each such generator has some number of $\epsilon$-decorations and precisely two $\omega$-decorations. The remaining external vertices are decorated by a bijection to $\{1, \ldots, n\}$.  Each component has at least one decoration from $\{\epsilon, \omega\}$.  

\begin{defi}
A connected component of $\Gamma$ is \emph{isolated} if it contains no decorations from $\{1, \ldots, n, \epsilon\}$.
\end{defi}

\noindent In other words, a component is isolated if all of its external vertices are decorated with $\omega$.  Components that do have a decoration from $\{1, \ldots, n, \epsilon\}$ are \emph{non-isolated}.  We decompose $\Xast_{g,n}$ according to the number of $\omega$ decorations that are contained non-isolated components.

\begin{defi}
Let $H_{g,n}$, $J_{g,n}$, and $K_{g,n}$ be the graded subspaces of $\Xast_{g,n}$ spanned by graphs with $0$, $1$, or $2$ of their $\omega$-decorations contained in non-isolated components, respectively.
\end{defi}  
\noindent No term of the differential $\tilde \delta$ can make an isolated connected component into a non-isolated one or vice versa, so 
we have a direct sum decomposition of dg vector spaces 
\begin{equation}\label{eq:Xast direct sum}
  (\Xast_{g,n},\tilde \delta)
  = 
  H_{g,n}\oplus J_{g,n} \oplus K_{g,n}.
\end{equation}

\noindent We will study the cohomology of the summands separately.  Unfortunately, we have nothing to say about $H(K_{g,n})$.  

\subsection{The cohomology of $H_{g,n}$}

The summand $H_{g,n}$ contributes to the cohomology of $\Xast_{g,n}$ only when $n = 1$, and that contribution is well-understood.
 
\begin{lemma}\label{lem:Hgn}
We have $H_{g,1} \cong H_g$ and $H_{g,n} = 0$ for $n \geq 2$. 
\end{lemma}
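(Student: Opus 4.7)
The plan is to split the proof into two cases: the vanishing assertion for $n\geq 2$ and the isomorphism for $n=1$. For $n\geq 2$ the defining conditions of $\Xast_{g,n}$ (Definition~\ref{def:Xast}) and of $H_{g,n}$ are incompatible. Indeed, any generator of $H_{g,n}$ has both $\omega$-decorations in isolated components, which by definition contain no decorations from $\{1,\dots,n,\epsilon\}$; hence all $n$ markings $1,\dots,n$ must lie in $\epsilon$-components. But Definition~\ref{def:Xast} permits at most one such decoration in the union of all $\epsilon$-components, so $H_{g,n}$ has no nonzero generators when $n\geq 2$.

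For $n=1$, the same argument forces the unique marking $1$ to lie in an $\epsilon$-component. Since in $\Xast_{g,1}$ every $\epsilon$-component has no internal vertex and each external vertex is $1$-valent, each $\epsilon$-component is a single edge; the marking $1$ must therefore sit on an $(\epsilon,1)$-edge. By the observation following Definition~\ref{def:Xast}, the only further $\epsilon$-component that may be present is an $(\epsilon,\epsilon)$-edge. Thus the generators of $H_{g,1}$ are exactly the generators of $H_g$ with an $(\epsilon,1)$-edge adjoined, and since that $(\epsilon,1)$-edge is non-structural, removing it defines a degree-preserving bijection $\phi\colon H_{g,1}\to H_g$ of underlying graded vector spaces.

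To upgrade $\phi$ to an isomorphism of complexes I will show that $\tilde\delta$ of Proposition~\ref{thm:WHGC simpl 2} reduces to $\delta_{split}$ on both $H_{g,1}$ and $H_g$. Since $\delta_{split}$ never touches the internal-vertex-free $(\epsilon,1)$-edge, $\phi$ then commutes with the differentials automatically. On $H_g$ the part $\delta_{join}^\epsilon$ vanishes because joining the two $\epsilon$'s of an $(\epsilon,\epsilon)$-edge would create a loop, which is forbidden in $X_{g,n}$. The main technical obstacle, which I expect to be the only nontrivial verification, is to establish the analogous vanishing of $\delta_{join}^\epsilon$ and $\tilde\delta_{join}^1$ on $H_{g,1}$: one enumerates each admissible subset $S$ of the at most three $\epsilon$-external vertices and checks that every nontrivial (re)attachment produces either a loop or two $\epsilon$-labeled external vertices adjacent to a common internal vertex, the latter being zero by the signed ordering of structural edges. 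With $\tilde\delta=\delta_{split}$ on both sides, $\phi$ is a chain isomorphism, yielding $H_{g,1}\cong H_g$.
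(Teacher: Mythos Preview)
Your proof is correct and follows the same approach as the paper: both arguments identify the generators of $H_{g,1}$ as those of $H_g$ with an $(\epsilon,1)$-edge adjoined, and observe that for $n\geq 2$ no generator of $\Xast_{g,n}$ can have all its numbered markings in $\epsilon$-components. Your additional case-by-case verification that $\delta_{join}^\epsilon$ and $\tilde\delta_{join}^1$ vanish on $H_{g,1}$ (via loops or odd automorphisms swapping two $\epsilon$-legs at a common internal vertex) is more explicit than the paper, which simply asserts that ``the lemma follows'' once the generators are matched; this extra care is warranted but does not constitute a different method.
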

\begin{proof}
By the definition of $\Xast_{g,n}$, the non-isolated components that do not contain $\omega$-decorations consist of either an $(\epsilon, \epsilon)$-edge, an $(\epsilon,j)$-edge, or one of each. If $n \geq 2$, then none of the generators for $\Xast_{g,n}$ are in $H_{g,n}$.  Moreover, the generators for $H_{g,1}$ are precisely the graphs of the form 
\[
  \begin{tikzpicture}
    \node[circle, draw, minimum width=5mm] (vv) at (-1,0) {$\Gamma$};
      \node(v) at (0,0) {$\epsilon$};
      \node(w) at (.7,0) {$1$};
      \draw (v) edge (w);
  \end{tikzpicture},
\]
where $\Gamma$ is a (possibly disconnected) generator for $H_g$, and the lemma follows.

\end{proof}

\noindent We remark that the identification $H_{g,1} \cong H_g$ of the lemma is induced by the pullback under the forgetful map $\bMM_{g,1}\to \bMM_g$, see Appendix \ref{sec:pullback map} below.

\noindent Combining Lemmas \ref{lem:Hg isomorphism} and \ref{lem:Hgn}, we can express the cohomology of $H_{g,n}$ through the graph cohomology $H(G^{(g',n')})$, or equivalently through $W_0H_c(\MM_{g',n'})$.

\subsection{The graphs in $J_{g,n}$}

Each generator for $J_{g,n}$ has exactly one isolated component, and can be drawn as: 
\[
  \begin{tikzpicture}
    \node[draw, ellipse, minimum width=1cm, minimum height=.5cm] (v) at (0,0) {$\cdots$};
    \node (w1) at (-1.5,0) {$\omega$};
    \node (u1) at (-.5,1) {$1$};
    \node (u2) at (0,1) {$\cdots$};
    \node (u3) at (.5,1) {$n$};
    \node (x1) at (-.5,-1) {$\epsilon$};
    \node (x2) at (0,-1) {$\cdots$};
    \node (x3) at (.5,-1) {$\epsilon$};
    \draw (v.west) edge (w1) 
    (v.north east) edge (u3) (v.north west) edge (u1)
    (v.south east) edge (x3) (v.south west) edge (x1);
  \end{tikzpicture}
\quad \quad 
  \begin{tikzpicture}
    \node[draw, ellipse, minimum width=1cm, minimum height=.5cm] (v) at (0,0) {$\cdots$};
    \node (w1) at (0,-1) {$\omega$};
    \draw (v) edge (w1);
  \end{tikzpicture} \ .
\]
The differential acts by $\delta_{split}$ on the isolated component, so $J_{g,n}$ decomposes as a direct sum of subcomplexes determined by the genus of the isolated component.  The subcomplex where the non-isolated part has genus $h$ can be written as a tensor product $\Jast_{h,n} \otimes G^{(h',1)}[-1]$, where $h + h' = g$ and $\Jast_{h,n}$ is a graph complex perfectly analogous to $\Xast_{h,n}$, except that each generator has exactly one $\omega$ decoration instead of two, and this $\omega$-decoration must be part of a non-isolated component.\footnote{The latter condition is automatically satisfied if $n\geq 2$.}

We then have a decomposition
\begin{equation}\label{equ:Jgn decomp}
  J_{g,n}
  =
  \bigoplus_{h+h'=g}
  \Jast_{h,n} \otimes G^{(h',1)}[-1].
\end{equation}
The degree shift on the factor $G^{(h',1)}$ is due to the additional structural edge that contains the $\omega$ decoration.
The degree of a generator for $\Jast_{h,n}$ is the number of structural edges plus one, just as for $X_{g',n}$. The cohomology of $\Jast_{h,n}$ is difficult to evaluate when $h$ is large; we leave its study for $h\geq 2$ to future work. Theorems~\ref{cor:Psi injection} and \ref{cor:n2} use only the cohomology of $\Jast_{0,n}$ and $\Jast_{1,n}$, which we now describe.

\subsection{The cohomology of $\Jast_{0,n}$} We now explain how the cohomology of $\Jast_{0,n}$ is closely related to $W_0H^\bullet_c(\MM_{0,n})$ and $W_0H^\bullet_c(\MM_{0,n+1})$. Let $V_1 \subset J^*_{0,n}$ be the subcomplex generated by graphs with no $\epsilon$ decorations. Each generator for $V_1$ is a rooted tree with $n$ leaves labeled $\{1, \ldots, n\}$ and a root labeled $\omega$.  Let $V_0$ be the complementary graded subspace generated by graphs with an $\epsilon$-decoration. Since the genus is zero, there are no $(\epsilon,\epsilon)$-edges, and no connected component can have both an $\epsilon$-decoration and also an $\omega$-decoration.  So each generator for $V_0$ has two connected components, an $(\epsilon, j)$-edge and an $\omega$-rooted tree with $(n-1)$ leaves labeled $\{1, \ldots, \hat \jmath, \ldots n\}$. As an immediate consequence, we see that  
\begin{equation*}
\Jast_{0,0}=0 \quad \quad \mbox{ and }  \quad \quad \Jast_{0,1} \cong \Q[ -1], \quad  \mbox {spanned by the graph} \quad   \begin{tikzpicture}
    \node (v) at (0,0) {$1$};
    \node (w) at (0.7,0) {$\omega$};
    \draw (v) edge (w);
  \end{tikzpicture} .
\end{equation*}

We now compute the cohomology of $\Jast_{0,n}$ for $n \geq 2$.

\begin{prop} \label{lem:J0np}
For $n\geq 2$, $H(\Jast_{0,n}, \delta)$ has dimension $(n-2)!$ and is concentrated in degree $n-1$.
More precisely, there is an isomorphism of $\bbS_n$-modules 
\[
H^{n-1}(\Jast_{0,n}, \delta) 
\cong H_c^{n-3}(\MM_{0,n})
\cong \Lie((n))\otimes \sgn_n
\]
with the $n$-ary part of the cyclic Lie operad $\Lie((n))$.
\end{prop}

\begin{proof}
The differential on $\Jast_{0,n}$ splits into pieces as 
  \[
  \begin{tikzcd}
    V_0 \ar{r}{\delta_{join}} \ar[loop above]{} & V_1 \ar[loop above]{}
  \end{tikzcd}\, ,
  \]
where the internal arrows on $V_0$ and $V_1$ are $\delta_{split}$.  Note that $V_1 \cong G^{(0,n+1)}[-2]$, and $(V_0,\delta_{split}) \cong \bigoplus_{j=1}^n G^{(0,n)}[-2]$. Therefore $H(V_1) \cong W_0H_c(\MM_{0,n+1})[-2]$ has dimension $(n-1)!$, supported in degree $n$, and $H(V_0,\delta_{split})$ has dimension $n(n-2)!$, supported in degree $n-1$. By Lemma~\ref{lem:acyclicity}\eqref{it:surj}, the first statement of Proposition \ref{lem:J0np} follows if we can show that $$\delta_{join} \colon H(V_0, \delta_{split}) \to H(V_1)$$ is surjective. For $1 \leq j \leq n$, let $D_j \subset \MM_{0,n+1}$ be the locally closed divisor parametrizing 1-nodal curves where $j$ and $\omega$ collide.  Then $D_j \cong \MM_{0,n}$, the union $\MM_{0,n+1} \sqcup \Big( \bigsqcup_j D_j \Big)$ is open in $\bMM_{0,n+1}$, and $\bigsqcup_j D_j$ is closed in this union.  Then $\delta_{join}$ is naturally identified with the excision coboundary map 
\[
\delta \colon \bigoplus_j W_0H^\bullet_c(D_j) \to W_0H^{\bullet + 1}_c(\MM_{0,n+1}).
\]
We must show that $\delta$ is surjective. We will give a short algebraic proof of this surjectivity.

To this end, it will be convenient to work with the $\bbS_n$-modules
\[
  \tJast_{0,n} := \Jast_{0,n}\otimes \sgn_n.
\]
Tensoring with $\sgn_n$ can be nicely incorporated in the sign conventions on graph complexes; we take generators for $\tJast_{0,n}$ to be graphs with a total ordering of all edges (not just the structural edges), and impose the relation that permuting the edges induces multiplication by the sign of the permutation. Decompose $\tJast_{0,n} \cong \tilde V_0 \oplus \tilde V_1$, as above.

Then we can identify the cohomology groups with subspaces of free Lie algebras 
\begin{align*}%\label{equ:two summands J0}
H(\tilde V_0, \tilde \delta_{split})
&= \bigoplus_{j=1}^n \Lie(x_1,\dots,\hat x_j,\dots,x_n)[-n]
&
H(\tilde V_1) 
&= \Lie(x_1,\dots,x_n)[-n-1] 
\end{align*}
with $\Lie(x_1,\dots,x_n)$ the part of the free Lie algebra with generators $x_1,\dots,x_n$ for which each generator appears exactly once.  Here the identification between (trivalent) trees and Lie words is such that the root of our tree is taken to be $\omega$, every vertex is replaced by one Lie bracket, and the $j$-labeled leaf is replaced by $x_j$, e.g. 
\[
\begin{tikzpicture}
  \node (w) at (0,1) {$\omega$};
  \coordinate (v1) at (0,.3);
  \coordinate (v2) at (0.5,-.2) {};
  \node (e1) at (-.7,-.4) {$2$};
  \node (e2) at (0,-.9) {$1$};
  \node (e3) at (.9,-.9) {$3$};
  \draw (v1) edge (w) edge (v2) edge (e1)
    (v2) edge (e2) edge (e3);
\end{tikzpicture}
\, 
\to 
\,
[x_2,[x_1,x_3]].
\]
The differential  is then given by 
\begin{equation}\label{equ:deltajoin F expl}
 (F_1,\dots,F_n) \to \sum_{j=1}^n [x_j,F_j],
\end{equation}
which is surjective, as required.

For the second statement of Proposition \ref{lem:J0np} we take for granted the well-known fact that $H_c^{n-3}(\MM_{0,n})\cong \Lie((n))\otimes \sgn_n$.
It then remains to check that the kernel of \eqref{equ:deltajoin F expl} can be identified with $\Lie((n))$ as an $\bbS_n$-module.
We may do this by providng an injective map 
\[
\iota :  \Lie((n))
\to 
\bigoplus_{j=1}^n \Lie(x_1,\dots,\hat x_j,\dots,x_n)
\]
whose image is in the kernel of \eqref{equ:deltajoin F expl}.
To construct $\iota$, we use the following notation 
for elements of $\Lie((n))$.
We have that $\Lie((n))\cong \Lie(x_1,\dots,x_{n-1})$
and we write 
\[
 f(x_1,\dots,x_n) := \langle f_n(x_1,\dots,x_{n-1}),x_n\rangle \in \Lie((n))
\]
for the element corresponding to a Lie expression $f_n(x_1,\dots,x_{n-1})\in \Lie(x_1,\dots,x_{n-1})$.
The $\bbS_n$-action on $\Lie((n))$ is defined by permuting variables, considering $\langle-,-\rangle$ formally as an invariant inner product.
In particular, for each $j$ we have Lie expressions 
$f_j(x_1,\dots,\hat x_j ,\dots, x_{n})\in \Lie(x_1,\dots,\hat x_j,\dots,x_{n})$ such that 
\[
f(x_1,\dots,x_n) = \langle f_j(x_1,\dots,\hat x_j, \dots,x_{n}),x_j\rangle.
\]
We then define the map $\iota$ such that 
\[
\iota(f) := (f_1,f_2,\dots,f_n)\in 
\bigoplus_{j=1}^n \Lie(x_1,\dots,\hat x_j,\dots,x_n).  
\]
It is clear that $\iota$ is an injection since $f$ may be recovered from any of the $f_j$.

We are hence left with checking that the composition of $\iota$ and \eqref{equ:deltajoin F expl} is zero.
To this end, note that in $\Lie((n+1))$ we have the equality
\begin{align*}
\langle [x_j, f_j(x_1,\dots,\hat x_j,\dots,x_n)], x_{n+1} \rangle 
&=-
\langle  f_j(x_1,\dots,\hat x_j,\dots,x_n), [x_j,x_{n+1}] \rangle 
\\&=f(x_1,\dots, [x_{n+1},x_j], \dots, x_n)
\\&=
\langle f_n(x_1,\dots,[x_{n+1},x_j],\dots,x_{n-1}), x_{n} \rangle,   
\end{align*}
for $j=1,\dots,n-1$, using invariance of our formal inner product $\langle-,-\rangle$ and the definition of the $f_j$.
Thus in $\Lie((n+1))$ we have that
\begin{align*}
    &\langle \sum_{j=1}^n [x_j, f_j(x_1,\dots,\hat x_j,\dots,x_n)], x_{n+1} \rangle 
    \\&=
    \langle 
    -[x_{n+1}, f_n(x_1,\dots,x_{n-1})]
    +
    \sum_{j=1}^{n-1}f_n(x_1,\dots,[x_{n+1},x_j],\dots,x_{n-1}) 
    ,x_n \rangle 
    =0
\end{align*}
by the Jacobi identity.
Hence $\sum_{j=1}^n [x_j, f_j(x_1,\dots,\hat x_j,\dots,x_n)]=0$ as desired and the second statement of Proposition \ref{lem:J0np} follows.

\end{proof}

The second statement of Proposition \ref{lem:J0np} was kindly pointed out to us by the anonymous referee, whom we thank for her or his contribution.

\begin{ex}\label{ex:J02}
  In particular, Proposition \ref{lem:J0np} states that $H(\Jast_{0,2})$ is $1$-dimensional, concentrated in degree $1$.
  Tracing the proof, one can see that a representative is given by the cocycle
  \[ 
  \begin{tikzpicture}
    \node (a1) at (-1, .4) {$1$};
    \node (b1) at (-2.2, .4) {$\omega$};
    \node (a2) at (-1, -.4) {$2$};
    \node (b2) at (-2.2, -.4) {$\epsilon$};
    \draw (a1) edge (b1) (a2) edge (b2);
  \end{tikzpicture} 
  \, - \, 
  \begin{tikzpicture}
    \node (a1) at (-1, .4) {$2$};
    \node (b1) at (-2.2, .4) {$\omega$};
    \node (a2) at (-1, -.4) {$1$};
    \node (b2) at (-2.2, -.4) {$\epsilon$};
    \draw (a1) edge (b1) (a2) edge (b2);
  \end{tikzpicture}\, .
  \]
\end{ex}

\subsection{The cohomology of $\Jast_{1,n}$} \label{sec:J1n}

Each generator for $\Jast_{1,n}$ has one of six combinatorial types: with or without an $(\epsilon,j)$-edge, and with 0, 1, or 2 other $\epsilon$-decorations.  The following diagram depicts one generator of each type (for varying values of $n$). We denote the corresponding graded subspaces of $\Jast_{1,n}$ by $V_{i,k}$, as shown. The notation is chosen so that a generator for $V_{i,k}$ has $1-k$ edges of type $(\epsilon,j)$ and $2-i$ $\epsilon$-decorations on other componentss.
\[ 
  \resizebox{0.91\hsize}{!}{$
\arraycolsep=25 pt 
\begin{array}{ccc}
  \begin{tikzpicture}
    \node[int] (v1) at (0,.8) {};
    \node[] (w1) at (0,1.5) {$\omega$};
    \node[int] (v2) at (-.5,.1) {};
    \node[] (w2) at (-1,-.4) {$1$};
    \node[] (e) at (0,-.4) {$3$};
    \node[] (w3) at (.5,.1) {$2$};
    \draw (v1) edge (v2) edge (w3) edge (w1)
    (v2) edge (e) edge (w2);
    \node (e) at (1.5,.5) {$\epsilon$};
    \node (w4) at (2.2,.5) {$\epsilon$};
    \draw (e) edge (w4);
    \node[] at (0,-1) {$V_{0,1}$};
  \end{tikzpicture}
  &
  \begin{tikzpicture}
    \node[int] (v1) at (0,.8) {};
    \node[] (w1) at (0,1.5) {$\omega$};
    \node[int] (v2) at (-.5,.1) {};
    \node[] (w2) at (-1,-.4) {$1$};
    \node[] (e) at (0,-.4) {$\epsilon$};
    \node[] (w3) at (.5,.1) {$2$};
    \draw (v1) edge (v2) edge (w3) edge (w1)
    (v2) edge (e) edge (w2);
        \node[] at (0,-1) {$V_{1,1}$};
  \end{tikzpicture}
&
\raisebox{15 pt}{
  \begin{tikzpicture}
    \node[int] (v1) at (0:.5) {};
    \node[] (w1) at (0:1.2) {$\omega$};
    \node[int] (v2) at (120:.5) {};
    \node[] (w2) at (120:1.2) {$1$};
    \node[int] (v3) at (-120:.5) {};
    \node[] (w3) at (-120:1.2) {$2$};
    \draw (v1) edge (v2) edge (v3) edge (w1)
    (v2) edge (v3) edge (w2)
    (v3) edge (w3);
        \node[] at (0,-1.5) {$V_{2,1}$};
  \end{tikzpicture} }
\\ \\
  \begin{tikzpicture}
    \node[int] (v1) at (0,.8) {};
    \node[] (w1) at (0,1.5) {$\omega$};
    \node[int] (v2) at (-.5,.1) {};
    \node[] (w2) at (-1,-.4) {$1$};
    \node[] (e) at (0,-.4) {$3$};
    \node[] (w3) at (.5,.1) {$2$};
    \draw (v1) edge (v2) edge (w3) edge (w1)
    (v2) edge (e) edge (w2);
    \node (e) at (1.5,1) {$\epsilon$};
    \node (w4) at (2.2,1) {$4$};
    \node (ee) at (1.5,0) {$\epsilon$};
    \node (ww4) at (2.2,0) {$\epsilon$};
    \draw (e) edge (w4) (ee) edge (ww4);
        \node[] at (0,-1) {$V_{0,0}$};
  \end{tikzpicture}
  &
  \begin{tikzpicture}
    \node[int] (v1) at (0,.8) {};
    \node[] (w1) at (0,1.5) {$\omega$};
    \node[int] (v2) at (-.5,.1) {};
    \node[] (w2) at (-1,-.4) {$1$};
    \node[] (e) at (0,-.4) {$\epsilon$};
    \node[] (w3) at (.5,.1) {$2$};
    \draw (v1) edge (v2) edge (w3) edge (w1)
    (v2) edge (e) edge (w2);
    \node (e) at (1.5,.5) {$\epsilon$};
    \node (w4) at (2.2,.5) {$3$};
    \draw (e) edge (w4);
        \node[] at (.8,-1) {$V_{1,0}$};
  \end{tikzpicture}
  &
  \raisebox{15 pt}{
  \begin{tikzpicture}
    \node[int] (v1) at (0:.5) {};
    \node[] (w1) at (0:1.2) {$\omega$};
    \node[int] (v2) at (120:.5) {};
    \node[] (w2) at (120:1.2) {$1$};
    \node[int] (v3) at (-120:.5) {};
    \node[] (w3) at (-120:1.2) {$2$};
    \node (e) at (1.8,0) {$\epsilon$};
    \node (w4) at (2.5,0) {$3$};
    \draw (v1) edge (v2) edge (v3) edge (w1)
    (v2) edge (v3) edge (w2)
    (v3) edge (w3)
    (e) edge (w4);
        \node[] at (.5,-1.5) {$V_{2,0}$};
  \end{tikzpicture}}
\end{array}
$}
\]
Let us filter $J^\ast_{1,n}$ by the number of $\epsilon$-decorations and consider the associated spectral sequence.  On the $E_0$-page (the associated graded), the differential is $\delta_{split}$, which preserves each of the six combinatorial types. So $E_1$ is the direct sum of the homologies of the associated graph complexes, with respect to $\delta_{split}$.  Each term then has an evident interpretation in terms of weight zero cohomology of moduli spaces; for instance
\[
H(V_{1,0}) \cong \bigoplus_{j=1}^n W_0H_c(\MM_{0,n+1})[-3] \quad \quad \mbox{ and } \quad \quad H(V_{2,1}) \cong W_0H_c(\MM_{1,n+1})[-2].
\]
Note that the cohomology of $V_{i,k}$ is supported in degree $n+ i + k$.

The differential on $E_1$ is the part of $\delta_{join}$ that reduces the number of $\epsilon$-decorations by exactly 1. This further decomposes as two parts, one that eliminates an $(\epsilon,j)$-edge, and one that decreases the number of other $\epsilon$-decorations by $1$.  Therefore, the $E_1$-page is the total complex of a diagram of the following form:
\begin{equation}\label{equ:E1 diag moduli}
\adjustbox{scale=.8}{
  \begin{tikzcd}
    W_0H_c(\MM_{0,n+1})[-3] \ar{r} 
    & W_0 H_c \MM_{0,n+2}[-3] \ar{r}
    &  W_0 H_c \MM_{1,n+1}[-2] \\
    \bigoplus_{j=1}^n
    W_0 H_c (\MM_{0,n})[-3]
    \ar{r} \ar{u}
    &
    \ar{r} \ar{u}
 \bigoplus_{j=1}^n
    W_0 H_c (\MM_{0,n+1})[-3]
    &
    \ar{u}
    \bigoplus_{j=1}^n
    W_0 H_c \MM_{1,n}[-2].
  \end{tikzcd}
  }
\end{equation} 
The arrows are once again coboundary maps in excision sequences arising from stratifications of moduli spaces.

As in our computation of $H(\Jast_{0,n})$, we find it helpful to tensor once again with $\sgn_n$ and then give an algebraic interpretation for the resulting diagram.  Let $\tJast_{1,n} := \Jast_{1,n} \otimes \sgn_n$, with differential $\tilde \delta = \tilde \delta_{split} + \tilde \delta_{join}$. Similarly, let $\tilde V_{i,k} := V_{i,k} \otimes \sgn_n$.  Recall that tensoring with $\sgn_n$ amounts to equipping each generator with a total ordering of all of its edges, not just the structural edges, and imposing the usual relation that reordering the edges is multiplication by the sign of the induced permutation.  We then have 
\begin{align} 
  H(\tilde V_{0,1},\tilde \delta_{split})&\cong \Lie(x_1,\dots,x_n)[-n-1]\nonumber
  \\ 
   H(\tilde V_{1,1}, \tilde \delta_{split}) &\cong \Ass(x_1,\dots,x_n)[-n-2]   \label{equ:J assoc id}
   \\ 
     H(\tilde V_{2,1},\tilde \delta_{split}) &\cong \Ass(x_1,\dots,x_n)_{\bbS_2}[-n-3].
     \label{equ:J assoc id 2}
    %   \nonumber
\end{align}

Here, $\Ass(x_1, \ldots, x_n)$ is the subspace of the free associative algebra spanned by words in which each variable appears exactly once, and $\bbS_2$ acts by reversing each word (corresponding to the symmetry reversing the orientation of a based loop).
Under the identification \eqref{equ:J assoc id}, the associative word $x_1\cdots x_n$ corresponds to the graph 
\[
\begin{tikzpicture}
    \node (w) at (0,0) {$\omega$};
    \node[int] (v1) at (0.7,0) {};
    \node[int] (v2) at (1.4,0) {};
    \node[] (vd) at (2.1,0) {$\cdots$};
    \node[int] (vn) at (2.8,0) {};
    \node[] (e) at (3.5,0) {$\epsilon$};
    \node (w1) at (0.7,-.7) {$1$};
    \node (w2) at (1.4,-.7) {$2$};
    \node (wn) at (2.8,-.7) {$n$};
    \draw (v1) edge (w) edge (v2) edge (w1)
    (vd) edge (vn) edge (v2)
    (v2) edge (w2)
    (vn) edge (e) edge (wn);
\end{tikzpicture}\, .
\]
Similarly, under the identification \eqref{equ:J assoc id 2} the associative word $x_1\cdots x_n$ corresponds to the graph
\[
\begin{tikzpicture}
    \node (w) at (1.75,1.7) {$\omega$};
    \node[int] (wi) at (1.75,1) {};
    \node[int] (v1) at (0.7,0) {};
    \node[int] (v2) at (1.4,0) {};
    \node[] (vd) at (2.1,0) {$\cdots$};
    \node[int] (vn) at (2.8,0) {};
    \node (w1) at (0.7,-.7) {$1$};
    \node (w2) at (1.4,-.7) {$2$};
    \node (wn) at (2.8,-.7) {$n$};
    \draw (v1) edge (wi) edge (v2) edge (w1)
    (vd) edge (vn) edge (v2)
    (v2) edge (w2)
    (vn) edge (wi) edge (wn)
    (wi) edge (w);
\end{tikzpicture}\, .
\]
The cohomology groups in the second row of \eqref{equ:E1 diag moduli} are similar, but one must take a direct sum over the decorations $j$ that appears in an $(\epsilon,j)$-edge, and omit the variable $x_j$,  i.e., $H(\tilde V_{0,0},\delta_{split}) \cong \bigoplus_{j=1}^n \Lie(x_1,\dots, \hat x_j,\dots, x_n)[-n]$, and so on.

With these identifications, the $E_1$-page of our spectral sequence (see diagram \eqref{equ:E1 diag moduli}) hence becomes isomorphic to the total complex of the following diagram, from which we omit degree shifts for notational brevity:\footnote{The bottom left space is concentrated in degree $n$, and each arrow increases the degree by one.}
\begin{equation}\label{equ:E1 diag pre}
\adjustbox{scale=.65}{
  \begin{tikzcd}
    \Lie(x_1,\dots,x_n) \ar{r} 
    & \Ass(x_1,\dots,x_n) \ar{r}
    &  \Ass(x_1,\dots,x_n)_{\bbS_2} \\
    \bigoplus_{j=1}^n
    \Lie(x_1,\dots, \hat x_j,\dots, x_n)
    \ar{r} \ar{u}
    &
    \ar{r} \ar{u}
    \bigoplus_{j=1}^n
    \Ass(x_1,\dots, \hat x_j,\dots, x_n)
    &
    \ar{u}
    \bigoplus_{j=1}^n
    \Ass(x_1,\dots, \hat x_j,\dots, x_n)_{\bbS_2}.
  \end{tikzcd}
  }
\end{equation} 

Each vertical arrow in \eqref{equ:E1 diag pre} is defined by the formula 
\begin{equation}\label{equ:FFFdiff}
  (F_1,\dots,F_n) \mapsto \sum_{j=1}^n [x_j,F_j].
\end{equation}
The horizontal arrows are the canonical inclusions and projections, so that the cohomology of each row of \eqref{equ:E1 diag pre} is concentrated in the middle term. It is hence easy to check (for example, using Lemma \ref{lem:acyclicity} again) that the total complex of \eqref{equ:E1 diag pre} is quasi-isomorphic to the two-term complex built from its row-wise cohomology.
It hence remains to study these middle cohomology groups of each row, and the induced map between them.

To this end let $\Pois(x_1,\dots,x_n)$ be the part of the free Poisson algebra in $x_1,\dots,x_n$ in which each $x_j$ appears exactly once.
Its elements are linear combinations of Poisson expressions of the form 
\[
F(x_1,\dots,x_n)= f_1(x_1,\dots,x_n)\wedge \cdots \wedge f_k(x_1,\dots,x_n), 
\]  
where the $f_j$ are Lie words, such that each variable $x_j$ appears exactly once in $F$. Let $\Pois^k(x_1,\dots,x_n)$ be the subspace spanned by expressions that are products of exactly $k$ Lie words. There is a natural map 
\begin{equation}\label{equ:Dk def}
D_n^k: \bigoplus_{j=1}^n \Pois^k(x_1,\dots,\hat x_j,\dots,x_n)  \to \Pois^k(x_1,\dots,x_n),
\end{equation}
given again by the expression \eqref{equ:FFFdiff}, i.e., 
$(F_1,\dots,F_n)
\mapsto 
\sum_{j=1}^n [x_j, F_j].$.  Let us define 
\begin{align*} 
  \mathbb A_n^k &:= \ker D_n^k & \mathbb B_n^k := \coker D_n^k,
\end{align*}
and 
\begin{align}\label{equ:An Bn def} 
  \mathbb A_n &:= \bigoplus_{j\geq 1} \mathbb A_n^{2j+1} & \mathbb B_n := \bigoplus_{j\geq 1} \mathbb B_n^{2j+1}.
\end{align}
All objects here are naturally $\bbS_n$-representations, by changing the variable indices and order of summands on the left-hand side of \eqref{equ:Dk def}. For example, we have $\mathbb A_1=\mathbb B_1=\mathbb A_2=\mathbb B_2=\mathbb A_3=0$,  $\mathbb B_3$ and $\mathbb A_4$ are one-dimensional trivial representations, and $\mathbb B_4$ is a three-dimensional irreducible representation of $\bbS_4$.

\begin{prop}\label{lem:J1np}
  We have $\Jast_{1,0}=0$. For $n\geq 1$ we have an isomorphism of graded $\bbS_n$-modules 
  \[
  H(\Jast_{1,n}, \delta)
    \cong
    (\mathbb A_n[-n-1] \oplus \mathbb B_n[-n-2]) \otimes \sgn_n.
  \]
\end{prop}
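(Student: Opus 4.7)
The plan is to complete the spectral sequence analysis that the paper has already set up for $\tJast_{1,n} = \Jast_{1,n} \otimes \sgn_n$. Since the total complex of the diagram \eqref{equ:E1 diag pre} is quasi-isomorphic to the two-term complex built from its row-wise cohomologies, and each row's cohomology is concentrated in the middle term, the task reduces to identifying this middle cohomology and the induced vertical map. The case $\Jast_{1,0}=0$ is immediate, since a single-component graph of genus $1$ with no legs from $\{1,\dots,n\}$ and exactly one $\omega$-decoration would have $2g+n=2<3$ and is disallowed.

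First, I would use the Poincar\'e--Birkhoff--Witt isomorphism to write $\Ass(x_1,\dots,x_n)$, in multilinear degree $n$, as $\bigoplus_{k \geq 1} \Pois^k(x_1,\dots,x_n)$ of $\bbS_n$-modules, with $\Lie = \Pois^1$. The word-reversal action of $\bbS_2$ on $T^n V$ differs from the antipode of $U(\Lie(V))$ by $(-1)^n$, and since the antipode acts by $(-1)^k$ on $S^k(\Lie)$, word reversal acts on $\Pois^k$ as $(-1)^{n+k}$. Thus the kernel of the projection $\Ass \to \Ass_{\bbS_2}$ is $\bigoplus_{k:\, n+k\text{ odd}} \Pois^k$. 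After tracking the signs carried by the canonical horizontal maps of \eqref{equ:E1 diag pre}, one finds that the middle cohomology of the top row is $\bigoplus_{j \geq 1} \Pois^{2j+1}(x_1,\dots,x_n)$, and that the middle cohomology of the bottom row is $\bigoplus_{j\geq 1} \bigoplus_{\ell=1}^n \Pois^{2j+1}(x_1,\dots,\hat x_\ell,\dots,x_n)$.

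Next, the vertical differential between the middle cohomologies is induced by \eqref{equ:FFFdiff}, which preserves the $\Pois^k$-grading and restricts on each summand to the map $D_n^k$ of \eqref{equ:Dk def}. Hence the $E_2$-page decomposes as a direct sum, over odd $k \geq 3$, of the two-term complexes defining $\mathbb A_n^k$ and $\mathbb B_n^k$. Since the $E_2$-page lies in just two adjacent columns, no further nontrivial differentials are possible, the spectral sequence collapses, and tracking the degree shifts on the two rows yields $H(\tJast_{1,n},\tilde\delta) \cong \mathbb A_n[-n-1]\oplus \mathbb B_n[-n-2]$. Untwisting by $\sgn_n$ gives the stated isomorphism.

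The main obstacle will be the careful sign bookkeeping: reconciling word-reversal parities with the signs carried by the horizontal arrows of \eqref{equ:E1 diag pre} so that $\Pois^1=\Lie$ does land in the kernel of the projection for both parities of $n$. Naively this holds only when $n$ is even; for $n$ odd one must verify that the horizontal maps of \eqref{equ:E1 diag pre} carry an additional $(-1)^n$ sign produced by edge-reordering in $\tJast_{1,n}$. Once this compatibility is nailed down, the identifications with $\Pois^k$, the factorization of the vertical differential as $\bigoplus_k D_n^k$, and the collapse of the spectral sequence are all routine.
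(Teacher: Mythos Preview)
Your proposal is correct and follows essentially the same route as the paper: apply PBW to rewrite \eqref{equ:E1 diag pre} in terms of $\Pois^k$, identify the row-wise middle cohomology as $\bigoplus_{j\geq 1}\Pois^{2j+1}$, observe that the induced vertical map is $\bigoplus_k D_n^k$, and read off $\mathbb A_n\oplus\mathbb B_n$ with the degree shifts. The paper simply asserts $\Pois(x_1,\dots,x_n)_{\bbS_2}\cong\bigoplus_{j>0}\Pois^{2j}$ without discussing the parity of $n$, whereas you correctly flag the sign issue and its resolution via the edge-reordering sign in $\tJast_{1,n}$; this is the one place where you are being more careful than the paper, not less.
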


\begin{proof}
Equivalently, we must show that $H(\tJast_{1,n}, \tilde \delta) \cong \mathbb A_n[-n-1] \oplus \mathbb B_n[-n-2]$.  Poincar\'e-Birkhoff-Witt gives an $\bbS_n$-equivariant isomorphism $\Pois(x_1,\dots,x_n)\cong \Ass(x_1,\dots,x_n)$. Applying this to the center and righthand columns of \eqref{equ:E1 diag pre}, 
we get the diagram 
\begin{equation}\label{equ:E1 diag}
\adjustbox{scale=.65}{
  \begin{tikzcd}
    \Lie(x_1,\dots,x_n) \ar{r} 
    & \Pois(x_1,\dots,x_n) \ar{r} 
    &  \Pois(x_1,\dots,x_n)_{\bbS_2} \\
    \bigoplus_{j=1}^n
    \Lie(x_1,\dots, \hat x_j,\dots, x_n) \ar{r}\ar{u}
    &
    \bigoplus_{j=1}^n
    \Pois(x_1,\dots, \hat x_j,\dots, x_n) \ar{r} \ar{u}
    & \ar{u}
    \bigoplus_{j=1}^n
    \Pois(x_1,\dots, \hat x_j,\dots, x_n)_{\bbS_2}.
  \end{tikzcd}
  }
\end{equation}
The vertical arrows in \eqref{equ:E1 diag} are still given by \eqref{equ:FFFdiff}.
Furthermore, note that taking $\bbS_2$-coinvariants in the right-hand column is the same is reducing to Poisson words containing evenly many Lie words, i.e.,
\[
  \Pois(x_1,\dots,x_n)_{\bbS_2}
  \cong \bigoplus_{j> 0} \Pois^{2j}(x_1,\dots,x_n).
\]

Then the total complex of \eqref{equ:E1 diag} is quasi-isomorphic to the complex of its row-wise cohomology groups 
\begin{equation}\label{equ:E1 diag post} 
  \begin{tikzcd}
    \bigoplus_{k\geq 1} \Pois^{2k+1}(x_1,\dots,x_n) 
    \\
    \bigoplus_{j=1}^n
    \bigoplus_{k\geq 1}
    \Pois^{2k+1}(x_1,\dots, \hat x_j,\dots, x_n) \ar{u}{\sum_k D_n^k}.
  \end{tikzcd}\, .
\end{equation}
The cohomology of \eqref{equ:E1 diag post} is the kernel plus the cokernel of the differential $\sum_k D_n^k$, i.e., the direct sum of $\mathbb A_n$ and $\mathbb B_n$, as defined above. The proposition follows, after suitably accounting for the required degree shifts. 
\end{proof}

The above description of the spaces $\mathbb A_n$ and $\mathbb B_n$ is not very explicit.
However, we can at least provide the following lower bound on the dimensions.
\begin{lemma}\label{lem:Bn dim bound}
For $n\geq 3$ we have  
$$\dim \mathbb B_n\geq (n-2)!.$$
\end{lemma}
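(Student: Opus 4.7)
My plan is to prove the stronger equality $\dim \mathbb{B}_n - \dim \mathbb{A}_n = (n-2)!$ via an Euler-characteristic computation with unsigned Stirling numbers of the first kind.

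First I would verify that $\dim \Pois^k(x_1,\dots,x_n) = |s(n,k)|$. A basis for $\Pois^k(x_1,\dots,x_n)$ is indexed by unordered $k$-block set partitions of $\{1,\dots,n\}$ together with a choice of multilinear Lie word on each block; since an $a$-letter multilinear Lie word space has dimension $(a-1)!$, the total count is
\[
\sum_{\pi = \{B_1,\dots,B_k\}} \prod_{i=1}^k (|B_i|-1)! \;=\; |s(n,k)|,
\]
i.e.\ the number of permutations of $\{1,\dots,n\}$ with exactly $k$ cycles. This uses the commutative wedge convention on $\Pois$, consistent with the PBW identification $\Pois\cong\Ass$ invoked in Proposition~\ref{lem:J1np}.

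From the defining two-term complex
\[
0\to\mathbb{A}_n^k\to\bigoplus_{j=1}^n \Pois^k(x_1,\dots,\hat x_j,\dots,x_n) \xrightarrow{D_n^k} \Pois^k(x_1,\dots,x_n)\to \mathbb{B}_n^k\to 0,
\]
I read off $\dim \mathbb{B}_n^k - \dim \mathbb{A}_n^k = |s(n,k)| - n\,|s(n-1,k)|$ and sum over odd $k\geq 3$. Using the classical generating function $\sum_k |s(n,k)|\,x^k = x(x+1)\cdots(x+n-1)=:f_n(x)$, which vanishes at $x=-1$ for $n\geq 2$, I get $\sum_{k\text{ odd}}|s(n,k)| = \tfrac{1}{2}(f_n(1)-f_n(-1)) = n!/2$. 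Subtracting the $k=1$ contribution and doing the analogous step for $n-1$, the algebra telescopes to
\[
\sum_{k\text{ odd},\,k\geq 3}\big(|s(n,k)|-n\,|s(n-1,k)|\big) = \Big(\tfrac{n!}{2}-(n-1)!\Big) - n\Big(\tfrac{(n-1)!}{2}-(n-2)!\Big) = (n-2)!.
\]
Hence $\dim\mathbb{B}_n = \dim\mathbb{A}_n + (n-2)! \geq (n-2)!$, as claimed.

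The one non-routine point is the dimension formula $\dim\Pois^k(x_1,\dots,x_n)=|s(n,k)|$, but this is a direct consequence of the commutative wedge structure on the free Poisson algebra and the standard count of multilinear Lie words; nothing else in the argument goes beyond elementary generating-function manipulation, so no real obstacle is anticipated.
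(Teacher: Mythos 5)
Your proof is correct, and its skeleton coincides with the paper's: both arguments reduce the bound to the Euler-characteristic identity
$\dim \mathbb B_n - \dim \mathbb A_n = \dim\bigl(\bigoplus_{k \text{ odd},\, k\geq 3}\Pois^k(x_1,\dots,x_n)\bigr) - n\,\dim\bigl(\bigoplus_{k \text{ odd},\, k\geq 3}\Pois^k(x_1,\dots,x_{n-1})\bigr)$
for the two-term complex defining $\mathbb A_n$ and $\mathbb B_n$, and then verify that the right-hand side equals $(n-2)!$. Where you diverge is in how the dimension of the odd part of $\Pois$ is computed. The paper reads it off from the top row of \eqref{equ:E1 diag pre}: using the PBW identification $\Pois\cong\Ass$, the equality $\Pois^1=\Lie$, and the freeness of word reversal on multilinear associative words of length at least two, it obtains $m!-m!/2-(m-1)!$ in $m$ variables. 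You instead compute $\dim\Pois^k(x_1,\dots,x_m)=|s(m,k)|$ directly (partition into Lie-word blocks, with $(a-1)!$ multilinear Lie words on an $a$-element block; the disjointness of the blocks makes the wedge convention harmless for the count) and isolate the odd $k$ via the generating function $x(x+1)\cdots(x+m-1)$ evaluated at $x=\pm1$. This yields the same value $m!/2-(m-1)!$, and the remaining arithmetic telescopes identically to $(n-2)!$. Your route is somewhat more self-contained, as it does not invoke the exactness of the rows of \eqref{equ:E1 diag pre}, whereas the paper economizes by reusing facts already established for Proposition~\ref{lem:J1np}. Your observation that the bound is an equality up to the correction $\dim\mathbb A_n$ is also implicit in the paper's argument, whose first displayed inequality is precisely the Euler-characteristic identity with $\dim\mathbb A_n\geq 0$ discarded.
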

\begin{proof}
From the definition of $\mathbb B_n$, we have
\[
\resizebox{0.95\hsize}{!}{$
  \dim \mathbb B_n \geq \dim \left(\bigoplus_{k\geq 1} \Pois^{2k+1}(x_1,\dots,x_n) \right)
  -
  \dim \left(\bigoplus_{j=1}^n
  \bigoplus_{k\geq 1}
  \Pois^{2k+1}(x_1,\dots, \hat x_j,\dots, x_n) \right). 
  $}
\]
From the top row of \eqref{equ:E1 diag pre}, we have
\begin{center}
\scalebox{.75}{\parbox{\linewidth}{% 
\begin{align*}
  \dim \left(\bigoplus_{k\geq 1} \Pois^{2k+1}(x_1,\dots,x_n) \right)
  &= 
  \dim\left( \Ass(x_1,\dots,x_n) \right)
  - \dim\left( \Ass(x_1,\dots,x_n)_{\bbS_2} \right)
  - \dim \left( \Lie(x_1,\dots,x_n) \right)
  \\&=
  n! - \frac{n!}2 - (n-1)!, 
\end{align*}
}}
\end{center}
which simplifies to $\frac12 (n-2)(n-1)!$ Here, we have used that the action of $\bbS_2$ on associative words of length $\geq 2$ is faithful in computing $\dim\left( \Ass(x_1,\dots,x_n)_{\bbS_2} \right)=\frac {n!}2$.
By the same computation (valid for $n\geq 3$) we also see that 
\begin{align*}
  \dim \left(\bigoplus_{j=1}^n
  \bigoplus_{k\geq 1}
  \Pois^{2k+1}(x_1,\dots, \hat x_j,\dots, x_n) \right)
  =
  \frac 12 n (n-3)(n-2)!
\end{align*}
Hence 
\[
  \dim \mathbb B_n \geq 
  \frac12 \left( (n-2)(n-1) - n(n-3)  \right)(n-2)!,
\]
which is $(n-2)!$, as required.
\end{proof}

\begin{proof}[Proof of Theorems \ref{cor:Psi injection} and \ref{cor:n2}]
By \eqref{eq:Xast direct sum}, we know that $H(H_{g,n})\oplus H(J_{g,n})$ injects into the cohomology of $\Xast_{g,n}$. The latter is identified with the cohomology of $X_{g,n}$, and hence with $\gr_2H_c(\MM_{g,n})$.  The summand $H(H_{g,n})$ is evaluated by Lemma \ref{lem:Hgn}. It contributes only for $n=0,1$. For $n=1$ it produces an injection of the form \eqref{equ:gr2 injection n1} on p.~\pageref{equ:gr2 injection n1}. This injection agrees with the pullback map $\pi^*$, as shown in Appendix \ref{sec:pullback map}.

For the summand $H(J_{g,n})$ we can use that by the decomposition \eqref{equ:Jgn decomp} we have an injection of 
\begin{equation}\label{equ:J otimes G}
H(\Jast_{0,n}) \otimes H(G^{(g,1)})[-1]
\oplus
H(\Jast_{1,n}) \otimes H(G^{(g-1,1)})[-1]
\end{equation}
into its cohomology.
The second factors in the tensor products compute the weight zero cohomology of $\MM_{g,1}$ and $\MM_{g-1,1}$ respectively.
The first factors $H(\Jast_{0,n})$ and $H(\Jast_{1,n})$ are evaluated by Proposition~\ref{lem:J0np} and \ref{lem:J1np} respectively. 

For $n=1$ one only has a contribution from $H(\Jast_{0,1})\cong \Q[-1]$, while $H(\Jast_{1,1})=0$.
This produces the injection \eqref{equ:Psi injection} on p.~\pageref{equ:Psi injection}.
To see that this injection indeed corresponds to multiplication with the $\psi$-class at the marking as is claimed in Theorem \ref{cor:Psi injection} we have to trace the representatives.
Say $\Gamma\in G^{(g,1)}$ is a cocycle representing a cohomology class in $W_0H_c^k(\MM_{g,1})$.
Then the corresponding class in $H(\Xast_{g,1},\tilde \delta)$ is represented by 
\[
  \Gamma'=
  \begin{tikzpicture}
    \node[circle, draw, minimum width=5mm] (vv) at (-1,.5) {$\Gamma$};
    \node (ww) at (-1,-.5) {$\omega$};
      \node(v) at (0,0) {$\omega$};
      \node(w) at (.7,0) {$1$};
      \draw (v) edge (w) (vv) edge (ww);
  \end{tikzpicture},
\]
The action of the morphism $\XPhi$ on this cocycle is trivial since there are no $\epsilon$-legs. Hence $\Gamma'$ is also a cocycle in $(X_{g,1},\tilde \delta)$ and in $X_{g,1}$.
From \eqref{equ:Phi pic1 leg} on p.~\pageref{equ:Phi pic1 leg}, we see that the corresponding class in $\tGK_{g,1}^2$ is obtained, up to a sign, by decorating the unique leg in $\Gamma$ with a $\psi$-class.  We claim that this is the graphical representation of multiplying with a $\psi$-class at the marking in $H_c^{\bullet}(\MM_{g,1})$.  To see this, note that $H^\bullet$ acts on $H^\bullet_c$ via cup product, and recall that the weight spectral sequence abutting to $\gr H^\bullet(\MM_{g,1})$ is induced by a natural filtration on the sheaf of smooth differential forms with logarithmic poles along the boundary of $\bMM_{g,1}$ \cite[Chapter 4, Section 3]{KulikovKurchanov}.  It is related via Poincar\'e duality to the spectral sequence that we study here, abutting to $\gr H^\bullet_c(\MM_{g,1})$, which is likewise induced by a natural filtration on currents.  The $\psi$-class is of pure weight 2 in $H^2(\bMM_{g,1})$ and is represented by a smooth and closed differential form $\eta$ without logarithmic poles.  The cup product $\psi \cup \colon \gr_\bullet H_c^{\bullet}(\MM_{g,1}) \to \gr_{\bullet + 2} H_c^{\bullet + 2}(\MM_{g,1})$ is therefore induced by the action of $\eta$ on currents, which also induces a compatible action on every term on every page in our weight spectral sequence.  The $\xi_\Gamma$-pullback of $\psi$ is the corresponding $\psi$-class on $\bMM_\Gamma$, and it follows that $\psi \cup$ is induced by multiplication by $\psi$ on every term of $\GK_{g,1}$.  The claim follows by descending from $\GK^2_{g,1}$ to $\bGK^2_{g,1}$ and then lifting to $\tGK^2_{g,1}$.  This finishes the proof of Theorem \ref{cor:Psi injection}.

To show Theorem \ref{cor:n2} we consider again the injection from \eqref{equ:J otimes G} to $\gr_2H_c^\bullet(\MM_{g,n})$, for $n\geq 2$.
Then Proposition~\ref{lem:J0np} directly produces the first summand in the formula of Theorem \ref{cor:n2}, and Lemma \ref{lem:J1np} produces the second summand.
The assertion on the dimension of $\mathbb B_n$ is shown in Lemma \ref{lem:Bn dim bound}. 
\end{proof}

\begin{rem} \label{rem:An}
Proposition~\ref{lem:J1np} yields an additional summand to $\gr_2 H^\bullet_c(\MM_{g,n})$ corresponding to the subspace $$(\mathbb A_n[-n-1] \otimes \sgn_n) \subset H(\Jast_{1,n}).$$
Lacking lower bounds on the dimension of $\mathbb A_n$, we omitted this summand from the statement of  Theorem \ref{cor:n2}.
\end{rem}

\subsubsection{Examples} %\label{sec:examples}
Let us illustrate Theorem \ref{cor:n2} with pictures of the simplest non-trivial cohomology classes that  it provides.  First, consider the first summand in Theorem \ref{cor:n2} for $(g,n) = (3,2)$. 
This summand has dimension 1, concentrated in degree 8. A generating cocycle in $(\Xast_{3,2}, \tilde \delta)$ is given by the linear combination 
\[
  \begin{tikzpicture}
    \node[int] (v1) at (0:.5) {};
    \node[int] (v2) at (90:.5) {};
    \node[int] (v3) at (180:.5) {};
    \node[int] (v4) at (-90:.5) {};
    \node (w2) at (-90:1.2) {$\omega$};
    \draw (v2) edge (v1) edge (v3)
      (v4) edge (v1) edge (v3) edge (v2)
      (v4) edge (w2) (v1) edge (v3);
    \node (a1) at (-1, .5) {$1$};
    \node (b1) at (-1.7, .5) {$\omega$};
    \node (a2) at (-1, -.5) {$2$};
    \node (b2) at (-1.7, -.5) {$\epsilon$};
    \draw (a1) edge (b1) (a2) edge (b2);
  \end{tikzpicture} 
  \quad   - \quad 
  \begin{tikzpicture}
  \node[int] (v1) at (0:.5) {};
  \node[int] (v2) at (90:.5) {};
  \node[int] (v3) at (180:.5) {};
  \node[int] (v4) at (-90:.5) {};
  \node (w2) at (-90:1.2) {$\omega$};
  \draw (v2) edge (v1) edge (v3)
    (v4) edge (v1) edge (v3) edge (v2)
    (v4) edge (w2) (v1) edge (v3);
  \node (a1) at (-1, .5) {$2$};
  \node (b1) at (-1.7, .5) {$\omega$};
  \node (a2) at (-1, -.5) {$1$};
  \node (b2) at (-1.7, -.5) {$\epsilon$};
  \draw (a1) edge (b1) (a2) edge (b2);
\end{tikzpicture} 
\]
Here the component with internal vertices is the generator of $H^6(G^{(3,1)})$, which is isomorphic to $sW_0H_c^6(\MM_{3,1})$. The disjoint union of two isolated edges corresponds to the generator of $H(\Jast_{0,2})$ of Example \ref{ex:J02}. (Note that $H(\Jast_{0,2})$ has dimension $1$, by Proposition~\ref{lem:J0np}.) 
The corresponding cocycle in $(\Xast_{3,2},\delta)$ is obtained by applying the involution $\XPhi$. Up to a conventional overall sign that we omit, it is:
\[
  \begin{tikzpicture}
    \node[int] (v1) at (0:.5) {};
    \node[int] (v2) at (90:.5) {};
    \node[int] (v3) at (180:.5) {};
    \node[int] (v4) at (-90:.5) {};
    \node (w2) at (-90:1.2) {$\omega$};
    \draw (v2) edge (v1) edge (v3)
      (v4) edge (v1) edge (v3) edge (v2)
      (v4) edge (w2) (v1) edge (v3);
    \node (a1) at (-1, .5) {$1$};
    \node (b1) at (-1.7, .5) {$\omega$};
    \node (a2) at (-1, -.5) {$2$};
    \node (b2) at (-1.7, -.5) {$\epsilon$};
    \draw (a1) edge (b1) (a2) edge (b2);
  \end{tikzpicture} 
  \quad + \quad 3 \cdot 
    \begin{tikzpicture}
      \node[int] (v1) at (0:.5) {};
      \node[int] (v2) at (90:.5) {};
      \node[int] (v3) at (180:.5) {};
      \node[int] (v4) at (-90:.5) {};
      \node (w2) at (-90:1.2) {$\omega$};
      \draw (v2) edge (v1) edge (v3)
        (v4) edge (v1) edge (v3) edge (v2)
        (v4) edge (w2) (v1) edge (v3);
      \node (a1) at (-1, .5) {$1$};
      \node (b1) at (-1.7, .5) {$\omega$};
      \node (a2) at (-1, -.5) {$2$};
      \draw (a1) edge (b1) (a2) edge (v3);
    \end{tikzpicture} 
    \quad + \quad 
    \begin{tikzpicture}
      \node[int] (v1) at (0:.5) {};
      \node[int] (v2) at (90:.5) {};
      \node[int] (v3) at (180:.5) {};
      \node[int] (v4) at (-90:.5) {};
      \node (w2) at (-90:1.2) {$\omega$};
      \draw (v2) edge (v1) edge (v3)
        (v4) edge (v1) edge (v3) edge (v2)
        (v4) edge (w2) (v1) edge (v3);
      \node (a1) at (-1, .5) {$1$};
      \node (b1) at (-1.7, .5) {$\omega$};
      \node (a2) at (-1, -.5) {$2$};
      \draw (a1) edge (b1) (a2) edge (v4);
    \end{tikzpicture} 
    \quad - \quad (1\leftrightarrow 2) \, .
  \]
  Here ``$(1\leftrightarrow 2)$" stands for the same terms, with the labels 1 and 2 interchanged. 
  The corresponding cocycle in $\GK_{3,2}^2$ is then 
  \[
    3\cdot 
    \begin{tikzpicture}
      \node[int] (v1) at (0:.5) {};
      \node[int] (v2) at (90:.5) {};
      \node[int] (v3) at (180:.5) {};
      \node[ext, accepting] (v4) at (-90:.5) {};
      \node (w2) at (-90:1.2) {$1$};
      \draw (v2) edge (v1) edge (v3)
        (v4) edge (v1) edge (v3) edge (v2)
        (v4) edge[->-] (w2) (v1) edge (v3);
      \node (a2) at (-1, -.5) {$2$};
      \draw (a2) edge (v3);
    \end{tikzpicture}
    \quad + \quad 
    \begin{tikzpicture}
      \node[int] (v1) at (0:.5) {};
      \node[int] (v2) at (90:.5) {};
      \node[int] (v3) at (180:.5) {};
      \node[ext, accepting] (v4) at (-90:.5) {};
      \node (w2) at (-90:1.2) {$1$};
      \draw (v2) edge (v1) edge (v3)
        (v4) edge (v1) edge (v3) edge (v2)
        (v4) edge[->-] (w2) (v1) edge (v3);
      \node (a2) at (-1, -.5) {$2$};
      \draw (a2) edge (v4);
    \end{tikzpicture}
    \quad - \quad (1\leftrightarrow 2)\,.
  \]
By \cite{Tommasi07}, we know that $\gr_2 H_c^\bullet(\MM_{3,2}) \cong \Q \oplus \sgn_2$, supported in degree $8$.  Our computation shows that the summand $\sgn_2$ is naturally identified with  $H(\Jast_{0,2}) \otimes H(G^{(3,1)})[-1]$. 

Next, we turn to the second summand in Theorem \ref{cor:n2}.
The first nontrivial class is found in $\gr_2H_c^{12}(\MM_{4,3})$. It corresponds to the generator of $\mathbb B_3\otimes H^6(G^{(3,n)})[-1]$, and is represented by the following degree 12 cocycle in $(\Xast_{4,3},\tilde \delta)$.
\[
  \sum_{\sigma\in \bbS_3}\sgn(\sigma)
  \begin{tikzpicture}
\begin{scope}[xshift=-2.5cm,yshift=-.6cm]
    \node[int] (v1) at (0,.8) {};
    \node[] (w1) at (0,1.5) {$\omega$};
    \node[int] (v2) at (.4,.4) {};
    \node[int] (v3) at (.8,0) {};
    \node[] (w2) at (0,-.3) {$\sigma(2)$};
    \node[] (e) at (1.2,-.7) {$\epsilon$};
    \node[] (w3) at (-.5,.1) {$\sigma(1)$};
    \node[] (w4) at (.4,-.7) {$\sigma(3)$};
    \draw (v1) edge (v2) edge (w3) edge (w1)
    (v2) edge (w2) edge (v3)
    (v3) edge (e) edge (w4);
\end{scope}
    \node[int] (iv1) at (0:.5) {};
    \node[int] (iv2) at (90:.5) {};
    \node[int] (iv3) at (180:.5) {};
    \node[int] (iv4) at (-90:.5) {};
    \node (iw2) at (-90:1.2) {$\omega$};
    \draw (iv2) edge (iv1) edge (iv3)
      (iv4) edge (iv1) edge (iv3) edge (iv2)
      (iv4) edge (iw2) (iv1) edge (iv3);
  \end{tikzpicture} \ .
\]

Finally, as explained in Remark~\ref{rem:An}, there is one more summand of $\gr_2 H^\bullet_c(\MM_{g,n})$ coming from $\mathbb{A}_n$ which is not mentioned in Theorem~\ref{cor:n2}.  The first case where this summand is nontrivial arises when $(g,n) = (4,4)$. We then have $\mathbb A_4 \cong \Q$, and the graph cocycle in $(\Xast_{4,4},\tilde \delta)$ corresponding to $\mathbb A_4 \otimes H^6(G^{(3,1)})[-1]$ has the form 
\[
  \sum_{\sigma\in \bbS_4}\sgn(\sigma)
  \begin{tikzpicture}
\begin{scope}[xshift=-3.2cm,yshift=-.6cm]
    \node[int] (v1) at (0,.8) {};
    \node[] (w1) at (0,1.5) {$\omega$};
    \node[int] (v2) at (.4,.4) {};
    \node[int] (v3) at (.8,0) {};
    \node[] (w2) at (0,-.3) {$\sigma(2)$};
    \node[] (e) at (1.2,-.7) {$\epsilon$};
    \node[] (w3) at (-.5,.1) {$\sigma(1)$};
    \node[] (w4) at (.4,-.7) {$\sigma(3)$};
    \draw (v1) edge (v2) edge (w3) edge (w1)
    (v2) edge (w2) edge (v3)
    (v3) edge (e) edge (w4);
    \node (u1) at (1,1) {$\sigma(4)$};
    \node (u2) at (2,1) {$\epsilon$};
    \draw (u1) edge (u2);
\end{scope}
    \node[int] (iv1) at (0:.5) {};
    \node[int] (iv2) at (90:.5) {};
    \node[int] (iv3) at (180:.5) {};
    \node[int] (iv4) at (-90:.5) {};
    \node (iw2) at (-90:1.2) {$\omega$};
    \draw (iv2) edge (iv1) edge (iv3)
      (iv4) edge (iv1) edge (iv3) edge (iv2)
      (iv4) edge (iw2) (iv1) edge (iv3);
  \end{tikzpicture} \ .
\]

\appendix 
\section{A direct map $X_{g,n}\to \bGK_{g,n}^2$}\label{sec:direct map def}
We have connected the complexes $X_{g,n}$ and $\bGK_{g,n}^2$ through a zig-zag of quasi-isomorphisms. Here, we construct a direct map  
$
F \colon X_{g,n} \to \bGK_{g,n}^2
$
that fits into a homotopy commutative triangle with the maps in the zig-zag.

The map $F$ is useful for giving geometrically meaningful representatives in $\bGK_{g,n}^2$ of the cohomology classes we have constructed in $X_{g,n}$. By Theorem~\ref{thm:main quotient}, for $(g,n) \neq (1,1)$, the projection $\GK_{g,n}^2 \to \bGK_{g,n}^2$ is a quasi-isomorphism. Hence each cocycle in $\bGK_{g,n}^2$ lifts to a cocycle on $\GK_{g,n}^2$, uniquely up to exact terms. We now turn to the construction of the map $F$, which we define initially as a map of graded vector spaces.

Let $\Gamma$ be a generator for $X_{g,n}$ with $e$ structural edges.
  First, suppose $\Gamma$ has no $\epsilon$-decorations. At least one of the $\omega$-decorations must be adjacent to an internal vertex. If both $\omega$-decorations are adjacent to internal vertices, then  $F(\gamma)$ is obtained by joining the $\omega$-legs to form a new edge, and decorating either half-edge by $\psi_i$, in the antisymmetric linear combination:
  \begin{equation}\label{equ:F case 1}
  \begin{tikzpicture}
    \node[int] (v) at (-.4, .3) {};
    \node[int] (w) at (.4, .3) {};
    \node[] (vo) at (-.4, -.3) {$\scriptstyle \omega$};
    \node[] (wo) at (.4, -.3) {$\scriptstyle \omega$};
    \draw 
    (v) edge (vo) edge +(40:.5) edge +(90:.5) edge +(130:.5)
    (w) edge (wo) edge +(40:.5) edge +(90:.5) edge +(130:.5)
    ;
  \end{tikzpicture}
  \mapsto
  (-1)^e \frac 1 2 \left(
  \begin{tikzpicture}
    \node[ext,accepting] (v) at (-.4, -.1) {};
    \node[ext] (w) at (.4, -.1) {};
    \draw 
    (v) edge[bend right, ->-] (w) edge +(40:.5) edge +(90:.5) edge +(130:.5)
    (w) edge +(40:.5) edge +(90:.5) edge +(130:.5)
    ;
  \end{tikzpicture}
  -
  \begin{tikzpicture}
    \node[ext] (v) at (-.4, -.1) {};
    \node[ext, accepting] (w) at (.4, -.1) {};
    \draw 
    (v) edge +(40:.5) edge +(90:.5) edge +(130:.5)
    (w) edge[bend left, ->-] (v) edge +(40:.5) edge +(90:.5) edge +(130:.5)
    ;
  \end{tikzpicture}
  \right)
  \end{equation}
  To fix the sign, in this case and in all following cases, draw $\Gamma$ so that the left-hand $\omega$-edge precedes the right-hand $\omega$-edge in the ordering. Then the new edge comes first, the relative order of the remaining edges is preserved, and the sign is as indicated. Otherwise, if there are no $\epsilon$-decorations and only one $\omega$-decoration is adjacent to an internal vertex, then $F$ is given simply by 
    \[
  \begin{tikzpicture}
    \node[int] (v) at (-.4, .3) {};
    \node[] (w) at (.4, .3) {$\scriptstyle j$};
    \node[] (vo) at (-.4, -.3) {$\scriptstyle \omega$};
    \node[] (wo) at (.4, -.3) {$\scriptstyle \omega$};
    \draw (w) edge (wo)
    (v) edge (vo) edge +(40:.5) edge +(90:.5) edge +(130:.5)
    ;
  \end{tikzpicture}
  \mapsto 
-(-1)^e
  \begin{tikzpicture}
    \node[ext, accepting] (v) at (-.4, .3) {};
    \node[] (w) at (-.4, -.3) {$\scriptstyle j$};
    \draw 
    (v) edge[->-] (w) edge +(40:.5) edge +(90:.5) edge +(130:.5)
    ;
  \end{tikzpicture}
  \]

It remains to consider the cases where $\Gamma$ has a positive number of $\epsilon$-decorations. First, suppose both of the $\omega$-decorations are adjacent to internal vertices. By the orientation relations, we may assume these internal vertices are distinct. Call them $v$ and $v'$. Then let $\Gamma_v$ be the graph formed by making $v$ the special vertex, attaching all $\epsilon$- and $\omega$-decorated legs to that vertex and decorating it by $\delta_{0,S}$ with $S$ the union of the half-edges adjacent to the $\epsilon$-decorations and the half-edge towards $v'$.
Then $F(\Gamma)=\frac 1 2 (\Gamma_v-\Gamma_{v'})$. Pictorially:
\begin{equation}\label{equ:F case 3}
  \begin{tikzpicture}
    \node[int] (v) at (-.4, .3) {};
    \node[int] (w) at (.4, .3) {};
    \node[] (vo) at (-.4, -.3) {$\scriptstyle \omega$};
    \node[] (wo) at (.4, -.3) {$\scriptstyle \omega$};
    \node (e1) at (-.3, -.6) {$\scriptstyle \epsilon$};
    \node (e2) at (0, -.6) {$\scriptstyle \epsilon$};
    \node (e3) at (.3, -.6) {$\scriptstyle \epsilon$};
    \draw 
    (v) edge (vo) edge +(40:.5) edge +(90:.5) edge +(130:.5)
    (w) edge (wo) edge +(40:.5) edge +(90:.5) edge +(130:.5)
    (e1) edge +(0,-.4) (e2) edge +(0,-.4) (e3) edge +(0,-.4) 
    ;
  \end{tikzpicture}
  \mapsto
  (-1)^e\frac 1 2 \left(
  \begin{tikzpicture}
    \node[ext] (v) at (-.4, .3) {};
    \node[ext] (w) at (.4, .3) {};
    \node[ext] (u) at (0, -.3) {};
    % \node[] (vo) at (-.4, -.3) {$\scriptstyle \omega$};
    % \node[] (wo) at (.4, -.3) {$\scriptstyle \omega$};
    \draw (u) edge[crossed] (v) edge (w) edge +(-40:.4) edge +(-90:.4) edge +(-130:.4) 
    (v)  edge +(40:.5) edge +(90:.5) edge +(130:.5)
    (w) edge +(40:.5) edge +(90:.5) edge +(130:.5)
    ;
  \end{tikzpicture}
  -
  \begin{tikzpicture}
    \node[ext] (v) at (-.4, .3) {};
    \node[ext] (w) at (.4, .3) {};
    \node[ext] (u) at (0, -.3) {};
    \draw (u) edge (v) edge[crossed] (w) edge +(-40:.4) edge +(-90:.4) edge +(-130:.4) 
    (v)  edge +(40:.5) edge +(90:.5) edge +(130:.5)
    (w) edge +(40:.5) edge +(90:.5) edge +(130:.5)
    ;
  \end{tikzpicture} 
  \right) \ .
\end{equation}

Next, if exactly one $\omega$-decoration is adjacent to an internal vertex $v$, % and the other to a hair.
we define $\Gamma_v$ as above and set $F(\gamma)=\frac 12 \Gamma_v$: 
\[
  \begin{tikzpicture}
    \node[int] (v) at (-.4, .3) {};
    \node[] (w) at (.4, .3) {$\scriptstyle j$};
    \node[] (vo) at (-.4, -.3) {$\scriptstyle \omega$};
    \node[] (wo) at (.4, -.3) {$\scriptstyle \omega$};
    \node (e1) at (-.3, -.6) {$\scriptstyle \epsilon$};
    \node (e2) at (0, -.6) {$\scriptstyle \epsilon$};
    \node (e3) at (.3, -.6) {$\scriptstyle \epsilon$};
    \draw 
    (v) edge (vo) edge +(40:.5) edge +(90:.5) edge +(130:.5)
    (w) edge (wo) 
    (e1) edge +(0,-.4) (e2) edge +(0,-.4) (e3) edge +(0,-.4) 
    ;
  \end{tikzpicture}
  \mapsto (-1)^e\frac 12 \,
  \begin{tikzpicture}
    \node[ext] (v) at (-.4, .3) {};
    \node[] (w) at (.4, .3) {$\scriptstyle j$};
    \node[ext] (u) at (0, -.3) {};
    \draw (u) edge[crossed] (v) edge (w) edge +(-40:.4) edge +(-90:.4) edge +(-130:.4) 
    (v)  edge +(40:.5) edge +(90:.5) edge +(130:.5)
    ;
  \end{tikzpicture} \ .
  \]

Finally, if neither $\omega$-decoration is adjacent to an internal vertex then we set $F(\gamma)=0$. 

\begin{prop}%\label{lem:F map}
The map $F \colon X_{g,n} \to \bGK_{g,n}^2$ is a map of complexes and fits into a homotopy commutative triangle 
\begin{equation*}%\label{equ:F Phi triangle}
\begin{tikzcd}
  \tGK_{g,n} \ar{r}{\pi} \ar{d}[swap]{\Phi} & \bGK_{g,n}^2 \\
  X_{g,n}\ar{ur}[swap]{F}
\end{tikzcd}
\end{equation*}
\end{prop}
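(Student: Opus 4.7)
\medskip

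My plan is to prove the proposition in two stages: first verify that $F$ is a chain map by case analysis, then exhibit an explicit chain homotopy witnessing $F\circ \Phi \simeq \pi$.

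\textbf{Step 1: $F$ is a chain map.} I would verify $F\circ \delta_X = \delta_{\bGK}\circ F$ by checking on generators, organized by the four cases in the definition of $F$ (no $\epsilon$ with both/one $\omega$ adjacent to internal vertices, with $\epsilon$ and both/one $\omega$ adjacent to internal vertices). For each, the differential $\delta_X = \delta_{split}+\delta_{join}$ needs to match $\delta_{\bGK}=\delta_{split}+\delta_{loop}$ after applying $F$. The $\delta_{split}$ at vertices far from the $\omega$-legs matches tautologically since $F$ acts locally near the $\omega$-decorations. The delicate matchings are: (i) splitting of a vertex adjacent to an $\omega$-leg in $X$ matches the corresponding split in $\bGK$ after decorating with $\psi_i$ or $\delta_{0,A}$, using the pullback formulas for $\theta^*\psi$ and $\theta^*\delta_{a,A}$ recalled in Section~\ref{sec:explicit wt 2}; (ii) a $\delta_{join}$ that joins only $\epsilon$-legs produces an $\epsilon$-decorated leg at a new internal vertex, which after applying $F$ produces exactly one term of the $\theta^*\delta_{0,A}$ expansion needed on the $\bGK$ side; and (iii) a $\delta_{join}$ involving one $\omega$-leg (producing a new $\omega$-decorated leg) matches $\delta_{loop}$ at the special vertex, where $\xi^*\delta_{irr}$ contributes the crucial $-\psi_t-\psi_{t'}+\sum \delta_{a,A}$ terms. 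The signs work out by the convention placing the newly created edge first in the ordering.

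\textbf{Step 2: Constructing the homotopy.} Let $H = F\circ \Phi - \pi \colon \tGK_{g,n}\to \bGK_{g,n}^2$. On a generator $\Gamma$ with special vertex decoration $\delta_{0,A}$ one has $\Phi(\Gamma)=0$ while $\pi(\Gamma)=\Gamma$, so $H(\Gamma)=-\Gamma$. On a generator with decoration $E_{ij}$ one has $\pi(\Gamma)=0$, while $F(\Phi(\Gamma))$ produces (by case \eqref{equ:F case 3}) a signed combination of graphs with $\delta_{0,A}$ decorations reattached at the endpoints of the severed $E_{ij}$. On a generator with $\psi_i$ decoration, $\pi(\Gamma)$ has a $\psi_i$ decoration on the original edge while $F(\Phi(\Gamma))$ produces an antisymmetric combination of $\psi$-decorations at the two halves of the same edge (when both endpoints are internal), with appropriate correction terms otherwise. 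The natural homotopy $h$ is constructed so that on the $E_{ij}$-decorated generators, $h$ produces graphs with a single $\psi$-decoration on one of the two relevant half-edges (and the symmetrized combination together with the $\delta_{res}$-image accounts for $F\Phi$); on the $\delta_{0,A}$-decorated generators, $h$ is defined using an analogous ``splitting'' move. Concretely, I would define $h$ by specifying it as a signed sum over ways of inserting one new edge that converts an $E$- or $\delta$-type decoration into the intermediate forms that $F\circ\Phi$ visits, then verify the identity $\delta h + h\delta = F\Phi - \pi$ via the same case analysis used in Step~1.

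\textbf{Main obstacle.} The verification of $F$ as a chain map and the explicit construction of $h$ are both combinatorially lengthy but essentially routine; the real difficulty lies in the bookkeeping of signs and of the three distinct decoration types ($\psi$, $\delta$, $E$) on the $\tGK$-side against the single ``cut-and-reattach'' language on the $X$-side. Getting the signs in \eqref{equ:F case 1} and \eqref{equ:F case 3} to match under the antisymmetrization provided by the orientation of structural edges is where I would focus the most care; modulo that, the homotopy identity reduces to the same pullback formulas $\xi^*$ and $\theta^*$ already invoked in Step~1. As a sanity check, one may verify that $H$ induces the zero map on cohomology independently by comparing $F$ with the zig-zag at the level of $E_1$-terms in the spectral sequence induced by the filtration by number of vertices (as in Section~\ref{sec:thm quotient proof actual}), where the identification becomes transparent because the single-vertex cohomology is governed by Propositions~\ref{prop:H2 resolution} and \ref{prop:onevertmain}. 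This sanity check also provides a fallback argument that $F$ is a quasi-isomorphism even in the absence of an explicit chain-level homotopy, since then $F$ and $\pi\circ\Phi^{-1}$ would agree on cohomology and both be quasi-isomorphisms.
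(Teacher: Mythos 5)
There is a genuine gap in your Step 2: the chain homotopy you sketch has the wrong support and cannot satisfy $\delta h + h\delta = F\Phi - \pi$. Test it on a generator $\Gamma\in\tGK_{g,n}$ whose special vertex is decorated by $\psi_i$ with the half-edge $i$ pointing to an internal vertex. Here $\pi(\Gamma)=\Gamma$, while $F(\Phi(\Gamma))$ is the antisymmetrization $\tfrac12(\Gamma_{\psi_i}-\Gamma_{\psi_{i'}})$ over the two half-edges of that edge (cf.\ \eqref{equ:F case 1}), so $F\Phi(\Gamma)-\pi(\Gamma)=-\tfrac12(\Gamma_{\psi_i}+\Gamma_{\psi_{i'}})\neq 0$ in $\bGK^2_{g,n}$ in general. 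But $\delta\Gamma$ consists only of $\psi$-decorated generators (the splitting \eqref{eq:splitpsi}; $\delta_{res}=0$ on $\psi$-decorations), so if $h$ vanishes on $\psi$-decorated generators — as it does in your proposal, which places $h$ only on $E_{ij}$- and $\delta_{0,A}$-decorated graphs — then $\delta h(\Gamma)=h(\delta\Gamma)=0$ and the identity fails. Any valid homotopy must therefore be nonzero on $\psi$-decorated generators; the one used in the paper contracts the $\psi$-marked edge into the adjacent internal vertex and decorates the merged vertex by $\tfrac12\delta_{0,S}$, and is zero in all other cases. The $\delta_{0,A}$- and $E_{ij}$-cases are then handled through $h(\delta\Gamma)$ (via the last two terms of \eqref{equ:delta split delta}, resp.\ via $\delta_{res}$), not through a nonzero value of $h$ on those generators. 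Your proposed values are also degree-incompatible: putting a $\psi$-decoration on a half-edge of an $E_{ij}$-decorated graph raises the degree by one, whereas a homotopy must lower it by one.

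Two further remarks. First, the paper avoids your Step 1 entirely: it proves the homotopy identity \eqref{equ:lem F Phi triangle} first and then deduces that $F$ is a chain map from the surjectivity of $\Phi$ (Proposition~\ref{prop:Phiqiso}), since $(\delta F - F\delta)\circ\Phi=0$ forces $\delta F=F\delta$; a direct verification as in your Step 1 is possible but duplicates work, and your matching ``(iii)'' invokes $\delta_{loop}$ and $\xi^*\delta_{irr}$, which do not enter the differential of $\bGK^2_{g,n}$ (all vertices there have genus $0$, so the induced differential is $\delta_{split}$; the $\xi^*$-formulas only enter through the loop relations defining the quotient). Second, your fallback argument does not prove the stated proposition: comparing $E_1$-pages could at best show that $F$ and the zig-zag agree on cohomology, which is weaker than producing the chain-level homotopy asserted in the statement, and it presupposes that $F$ is a chain map, which is exactly what the homotopy identity is used to establish.
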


\begin{proof}
We begin by constructing a degree $-1$ map of graded vector spaces
\[
h \colon \tGK_{g,n}^2 \to \bGK_{g,n}^2,
\]
and show that it satisfies the required equation \eqref{equ:lem F Phi triangle} to give a chain homotopy from $F \circ \Phi$ to $\pi$. We will then use $h$ to show that $F$ is a map of complexes, and thereby prove the proposition.

Let $\Gamma$ be a generator for $\tGK_{g,n}^2$. We define $h(\Gamma)$ by a local replacement at the special vertex as follows.
Suppose the special vertex of $\Gamma$ is decorated by $\psi_i$, and $i$ is not joined to a leg, so the half-edge is joined to a half-edge at an internal vertex $v$. Then we let $h(\Gamma)$ be the graph obtained by merging the special vertex with $v$, and decorating the new special vertex by $\frac 1 2 \delta_{0,S}$, where $S$ is the set of half-edges incident to $v$. Pictorially:
  \[
  h \colon 
  \begin{tikzpicture}
    \node[ext] (v) at (0,0) {};
    \node[ext] (w) at (.7,0) {};
    \draw (v) edge[->-] (w) edge +(180:.5) edge +(150:.5) edge +(210:.5) 
    (w) edge +(0:.5) edge +(30:.5) edge +(-30:.5);
  \end{tikzpicture}
  \mapsto\frac12\,
  \begin{tikzpicture}
    \node[ext] (v) at (0,0) {};
    \node[ext] (w) at (.7,0) {};
    \draw (v) edge[crossed] (w) edge +(180:.5) edge +(150:.5) edge +(210:.5) 
    (w) edge +(0:.5) edge +(30:.5) edge +(-30:.5);
  \end{tikzpicture}
  \]
 To fix the sign, we assume here that the $\psi$-decorated edge is first in the ordering. 
 In all other cases we set $h(\Gamma)=0$.

We claim that $h$ satisfies the required equation to give a chain homotopy from $F \circ \Phi$ to $\pi$, i.e.:
\begin{equation}\label{equ:lem F Phi triangle}
  F(\Phi(\Gamma)) - \pi(\Gamma) = 
\delta h(\Gamma) + h(\delta(\Gamma)).   
\end{equation}
We prove this case-by-case, according to the decoration on the special vertex of $\Gamma$.

First, suppose the decoration on the special vertex of the graph $\Gamma\in \tGK_{g,n}$ is $\delta_{0,S}$. Then $\Phi(\Gamma)=0$ and $h(\Gamma)=0$.
Furthermore, $\pi(\Gamma)$ is the class in $\bGK^2_{g,n}$ represented by the decorated graph $\Gamma$. Note that $\delta \Gamma$ consists of some terms with $\delta_{0,T}$-decorations, plus terms with $\psi$-decoration, and only the latter contribute upon applying $h$. Furthermore, they contribute exactly $\Gamma$. Pictorially:
\[
h\left(\delta \begin{tikzpicture}
  \node[ext] (v) at (0,0) {};
  \node[ext] (w) at (.7,0) {};
  \draw (v) edge[crossed] (w) edge +(180:.5) edge +(150:.5) edge +(210:.5) 
  (w) edge +(0:.5) edge +(30:.5) edge +(-30:.5);
\end{tikzpicture}\right) = 
h\left(-\begin{tikzpicture}
  \node[ext] (v) at (0,0) {};
  \node[ext] (w) at (.7,0) {};
  \draw (v) edge[->-] (w) edge +(180:.5) edge +(150:.5) edge +(210:.5) 
  (w) edge +(0:.5) edge +(30:.5) edge +(-30:.5);
\end{tikzpicture}
-
\begin{tikzpicture}
  \node[ext] (v) at (0,0) {};
  \node[ext] (w) at (.7,0) {};
  \draw (v)  edge +(180:.5) edge +(150:.5) edge +(210:.5) 
  (w) edge +(0:.5) edge +(30:.5) edge +(-30:.5)
  (w) edge[->-] (v);
\end{tikzpicture} +(\cdots)
\right)
=
-2 \frac12 \begin{tikzpicture}
  \node[ext] (v) at (0,0) {};
  \node[ext] (w) at (.7,0) {};
  \draw (v) edge[crossed] (w) edge +(180:.5) edge +(150:.5) edge +(210:.5) 
  (w) edge +(0:.5) edge +(30:.5) edge +(-30:.5);
\end{tikzpicture},
\]
with $(\cdots)$ representing terms killed by $h(-)$.

Next, suppose that the special vertex of $\Gamma$ is decorated by $\psi_{i}$, with $i$ a half-edge incident to the special vertex. Then, $\pi(\Gamma)$ is once again the class in $\bGK_{g,n}^2$ represented by the decorated graph $\Gamma$.  If the half-edge $i$ connects to a leg, then $F(\Phi(\Gamma))=\Gamma$ and $h(\Gamma)=h(\delta\Gamma)=0$, so \eqref{equ:lem F Phi triangle} holds.

If the half-edge $i$ points towards an internal vertex, then $F(\Phi(\Gamma))$ is obtained by anti-symmetrizing over the two $\psi$-decorations one can put on the $i$-edge, 
\[
  F(\Phi(\Gamma)) = \frac 1 2 
  \left(
    \begin{tikzpicture}
      \node[ext] (v) at (0,0) {};
      \node[ext] (w) at (.7,0) {};
      \draw (v) edge[->-] (w) edge +(180:.5) edge +(150:.5) edge +(210:.5) 
      (w) edge +(0:.5) edge +(30:.5) edge +(-30:.5);
    \end{tikzpicture}
    -
    \begin{tikzpicture}
      \node[ext] (v) at (0,0) {};
      \node[ext] (w) at (.7,0) {};
      \draw (v)  edge +(180:.5) edge +(150:.5) edge +(210:.5) 
      (w) edge +(0:.5) edge +(30:.5) edge +(-30:.5)
      (w) edge[->-] (v);
    \end{tikzpicture}
  \right).
\]
In this case, we have 
\[
  \delta h(\Gamma) + h(\delta\Gamma) 
  = -\frac 1 2 
  \left(
    \begin{tikzpicture}
      \node[ext] (v) at (0,0) {};
      \node[ext] (w) at (.7,0) {};
      \draw (v) edge[->-] (w) edge +(180:.5) edge +(150:.5) edge +(210:.5) 
      (w) edge +(0:.5) edge +(30:.5) edge +(-30:.5);
    \end{tikzpicture}
    +
    \begin{tikzpicture}
      \node[ext] (v) at (0,0) {};
      \node[ext] (w) at (.7,0) {};
      \draw (v)  edge +(180:.5) edge +(150:.5) edge +(210:.5) 
      (w) edge +(0:.5) edge +(30:.5) edge +(-30:.5)
      (w) edge[->-] (v);
    \end{tikzpicture}
  \right),
\]
so \eqref{equ:lem F Phi triangle} holds.

Finally, suppose the special vertex of $\Gamma$ is decorated by $E_{ij}$, with $i,j$ half-edges incident to the special vertex. Then $\pi(\Gamma)=0$ and $h(\Gamma) = 0$. We must show that $F(\Phi(\Gamma)) = h(\delta\Gamma)$, and we consider cases according to whether $i$ and $j$ are joined to legs. If both are joined to legs, then $F(\Phi(\Gamma))=h(\delta\Gamma)=0$.  If only $j$ is joined to a leg, then one computes, with $e$ the number of structural edges of $\Gamma$,
\[
F(\Phi(\Gamma))
=
(-1)^e
\frac12 
\begin{tikzpicture}
  \node[ext] (v) at (-.4, .3) {};
  \node[] (w) at (.4, .3) {$\scriptstyle j$};
  \node[ext] (u) at (0, -.3) {};
  \draw (u) edge[crossed] (v) edge (w) edge +(-40:.4) edge +(-90:.4) edge +(-130:.4) 
  (v)  edge +(40:.5) edge +(90:.5) edge +(130:.5)
  ;
\end{tikzpicture} ,
\quad \quad \mbox{ and } \quad \quad 
    h(\delta\Gamma)
    =
    h(\delta_{res}\Gamma)
    =
    (-1)^e
    \frac12 
    \begin{tikzpicture}
      \node[ext] (v) at (-.4, .3) {};
      \node[] (w) at (.4, .3) {$\scriptstyle j$};
      \node[ext] (u) at (0, -.3) {};
      \draw (u) edge[crossed] (v) edge (w) edge +(-40:.4) edge +(-90:.4) edge +(-130:.4) 
      (v)  edge +(40:.5) edge +(90:.5) edge +(130:.5)
      ;
    \end{tikzpicture},
\]
as required.  If neither $i$ nor $j$ connects to a leg, then they connect to distinct internal vertices and one computes 
\[
  F(\Phi(\Gamma))
  =
  (-1)^e \frac 1 2 \left(
    \begin{tikzpicture}
      \node[ext] (v) at (-.4, .3) {};
      \node[ext] (w) at (.4, .3) {};
      \node[ext] (u) at (0, -.3) {};
      \draw (u) edge[crossed] (v) edge (w) edge +(-40:.4) edge +(-90:.4) edge +(-130:.4) 
      (v)  edge +(40:.5) edge +(90:.5) edge +(130:.5)
      (w) edge +(40:.5) edge +(90:.5) edge +(130:.5)
      ;
    \end{tikzpicture}
    -
    \begin{tikzpicture}
      \node[ext] (v) at (-.4, .3) {};
      \node[ext] (w) at (.4, .3) {};
      \node[ext] (u) at (0, -.3) {};
      \draw (u) edge (v) edge[crossed] (w) edge +(-40:.4) edge +(-90:.4) edge +(-130:.4) 
      (v)  edge +(40:.5) edge +(90:.5) edge +(130:.5)
      (w) edge +(40:.5) edge +(90:.5) edge +(130:.5)
      ;
    \end{tikzpicture}
    \right)
    =
    h(\delta\Gamma).
\]
This completes the proof of \eqref{equ:lem F Phi triangle}.

Having proved \eqref{equ:lem F Phi triangle}, we have that for all $\Gamma\in \tGK_{g,n}$
\[
\delta  F(\Phi(\Gamma))
-
F(\delta \Phi(\Gamma))
=
0.
\]
By Proposition~\ref{prop:Phiqiso}, the map $\Phi$ is surjective, and hence $\delta \circ F = F \circ \delta$. We conclude that $F$ is a map of chain complexes, and $h$ is chain homotopy from $F \circ \Phi$ to $\pi$, as required.
\end{proof}

\subsection{The pullback map}\label{sec:pullback map}
Recall that the projection $\pi \colon \MM_{g,1} \to \MM_g$ is proper and extends to a morphism on the stable curves compactifications, also denoted $\pi \colon \bMM_{g,1} \to \bMM_g$.  Recall also that the boundary divisors $\partial \MM_{g,1} := \bMM_{g,1} \smallsetminus \MM_{g,1}$ and $\partial \MM_g := \bMM_g \smallsetminus \MM_g$ have normal crossings.  Applying the ``fundamental simplicial constructions" as in \cite[\S\textbf{II}.I]{ElZein83} gives an induced morphism $\pi^*$ from the simplicial resolution of the constant sheaf $\Q$ on $\partial \MM_g$ to that of the constant sheaf $\Q$ on $\partial \MM_{g,1}$, induced by the normal crossings structure.  After realizing the compactly supported cohomology of $\MM_{g,1}$ and of $\MM_g$ as the reduced cohomology of the mapping cones for the inclusions $\partial \MM_{g,1} \to \bMM_{g,1}$ and $\partial \MM_g \to \bMM_g$, respectively, we get an induced pullback morphism between the weight spectral sequences that abuts to $\pi^* \colon \gr H_c (\MM_g) \to \gr H_c (\MM_{g,1})$.

We consider the graphical interpretation of the the induced map on the $E_1$-page of the weight spectral sequence, i.e., the induced map between the Getzler-Kapranov graph complexes.  We claim that 
$
  \pi^* \colon \GK_{g,0} \to \GK_{g,1}
$
is given by the formula 
\begin{equation} \label{eq:pv}
\pi^*\Gamma = \sum_{v} p_v(\Gamma),  
\end{equation}
where the sum is over all vertices of the graph $\Gamma\in\GK_{g,0}$, and $p_v(\Gamma)$ is obtained from $\Gamma$ by attaching a leg labeled by $1$ to vertex $v$. If the old decoration of $v$ is $\alpha\in H^{i}(\bMM_{h,k})$, then the new decoration on the vertex is given by $\pi^* \alpha \in H^{i}(\bMM_{h,k+1})$, with $\pi\colon \bMM_{h,k+1}\to \bMM_{h,k}$ forgetting the new marking.  To see this, note that if $\Gamma$ has $j$ edges then the underlying graphs of $p_v(\Gamma)$, obtained by attaching a leg labeled $1$ to a vertex $v$ of $\Gamma$, correspond precisely to the strata of codimension $j$ that map onto the codimension $j$ stratum $\xi_\Gamma(\MM_\Gamma) \subset \bMM_{\Gamma}$, and the operation on decorations that we have described corresponds, via the K\"unneth decomposition, to the induced pullback on the cohomology of strata in the simplicial resolution.

The same formula \eqref{eq:pv} also defines a pullback operation 
\[
  \pi^* \colon \bGK_{g,0} \to \bGK_{g,1},
\]
and the projection map $\GK_{g,n}\to \bGK_{g,n}$ intertwines the two operations $\pi^*$. 
We furthermore define another operation 
\[
\varpi^* \colon (H_g,\tilde \delta) \to (\Xast_{g,1},\tilde \delta)
\]
by taking minus the disjoint union with an $(\epsilon,1)$-edge,
\[
  \varpi^* \colon
  \begin{tikzpicture}
    \node[draw, circle, minimum width=5mm] (v) at (0,0) {$\Gamma$};
  \end{tikzpicture}
  \mapsto 
  -
  \,
  \begin{tikzpicture}
    \node[draw, circle, minimum width=5mm] (v) at (0,0) {$\Gamma$};
    \node (v1) at (.7,0) {$\epsilon$};
    \node (v2) at (1.5,0) {$1$};
    \draw (v1) edge (v2);
  \end{tikzpicture}\, .
\]
\begin{lemma}
The following diagram commutes 
\begin{equation}\label{equ:pi diag}
\begin{tikzcd}
  (\Xast_{g,1},\tilde \delta) \ar{r}{\XPhi}
  &
  (\Xast_{g,1},\delta)
  \ar[hookrightarrow]{r}
  &
  X_{g,1}
  \ar{r}{F}
  &
  \bGK_{g,1}
  \\
    (H_g,\tilde \delta) \ar{r}{\XPhi} \ar{u}{\varpi^*} 
  &
  (\Xast_{g},\delta)
  \ar[hookrightarrow]{r}
  &
  X_{g}
  \ar{r}{F}
  &
  \bGK_{g,}
  \ar{u}{\pi^*}
\end{tikzcd}\, .
\end{equation}
\end{lemma}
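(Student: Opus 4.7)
The plan is to verify commutativity by tracing an arbitrary generator $\Gamma \in H_g$ through both paths of the diagram and matching the resulting linear combinations in $\bGK^2_{g,1}$. I would treat the four types of generators of $H_g$ from Lemma~\ref{lem:Hg isomorphism} separately --- distinguished by whether $\Gamma$ has one or two $\omega$-components and whether it carries an auxiliary $(\epsilon,\epsilon)$-edge --- since each requires slightly different bookkeeping but admits a uniform strategy.

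First I would unwind the upper path $\pi^* \circ F \circ \XPhi$. The composition $F \circ \XPhi$ sends $\Gamma$ to a graph in $\bGK^2_g$ whose special vertex carries a decoration of the form $\psi$ (when $\Gamma$ has no $\epsilon$-legs) or $\delta_{0,S}$ (when an $(\epsilon,\epsilon)$-edge is present), according to the formulas \eqref{equ:F case 1}--\eqref{equ:F case 3}. The pullback $\pi^*$ is then the sum $\sum_w p_w$ over vertices $w$, as in \eqref{eq:pv}, where $p_w$ adjoins a leg labeled $1$ at $w$ and pulls back the decoration at $w$. At non-special vertices this pullback is trivial; at the special vertex one uses the Arbarello--Cornalba formulas $\pi^*\psi_i = \psi_i - \delta_{0,\{i,1\}}$ and $\pi^*\delta_{0,A} = \delta_{0,A} + \delta_{0,A\cup\{1\}}$ from \cite[Lemma~3.1]{ArbarelloCornalba98}.

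Next I would unwind the lower path $F \circ \XPhi \circ \varpi^*$. The map $\varpi^*$ adjoins an $(\epsilon,1)$-edge with a minus sign, producing a graph in $X^*_{g,1}$ with one extra $\epsilon$-external vertex. Applying $\XPhi$ expands this into a signed sum over subsets of all $\epsilon$-external vertices, reattached to internal vertices in all allowed ways. These summands separate naturally into those where the new $(\epsilon,1)$-edge survives as an $\epsilon$-component and those where the newly added $\epsilon$ (together with its attached $1$-leg) is reattached into an internal vertex of $\Gamma$. Applying $F$ then converts each case into a graph in $\bGK^2_{g,1}$.

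The central combinatorial matching is the following: reattaching the new $\epsilon$-external vertex to an internal vertex $w$ of $\Gamma$ and then applying $F$ reproduces exactly $p_w \circ F \circ \XPhi$ applied to $\Gamma$, because in both constructions the $1$-leg ends up attached at $w$ with the surrounding graph unchanged. The summands of the lower path in which the new $\epsilon$ stays paired with $1$ must then reproduce the correction terms $-\delta_{0,\{i,1\}}$ or $+\delta_{0,A\cup\{1\}}$ coming from the Arbarello--Cornalba pullback at the special vertex; this works because $F$ applied to a graph with an adjoined $(\epsilon,1)$-edge produces precisely a boundary-class decoration at the special vertex via the formula \eqref{equ:F case 3}. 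The main obstacle will be sign bookkeeping: the signs $(-1)^{\#\epsilon}$ in $\XPhi$, $(-1)^e$ in $F$, the antisymmetrization in $F$ when joining two $\omega$-legs, the minus sign in $\varpi^*$, and the signs in the Arbarello--Cornalba formulas must all conspire. Fixing the convention that the newly adjoined $(\epsilon,1)$-edge comes last in the edge ordering should reduce this to a finite and mechanical verification in each of the four cases.
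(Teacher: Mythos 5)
Your proposal is correct and follows essentially the same route as the paper's own proof sketch: trace a generator of $H_g$ along both paths, match the terms where the adjoined $\epsilon$-vertex is reattached (producing the $1$-leg at each vertex) against $\sum_w p_w$, and match the term where the $(\epsilon,1)$-edge survives against the $\delta_{0,S}$-correction in the Arbarello--Cornalba pullback of $\psi$ via \eqref{equ:F case 3}. The only cosmetic difference is that the paper organizes the case analysis by presence or absence of an $(\epsilon,\epsilon)$-edge rather than by all four generator types, and your worry about ordering the new $(\epsilon,1)$-edge is moot since it is non-structural.
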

\begin{proof}[Proof sketch]
A graph $\Gamma\in H_g$ contains either no $\epsilon$-leg, or exactly two, on an $(\epsilon,\epsilon)$-edge.
We consider both cases in turn.
Suppose first that $\Gamma\in H_g$ contains no $\epsilon$-leg.
Then its image under the first two horizontal maps in \eqref{equ:pi diag} is just the same unaltered graph $\Gamma$.
Applying $F$ produces the linear combination \eqref{equ:F case 1}. Applying $\pi^*$ afterwards produces a linear combination of graphs of the form
\begin{equation}\label{equ:pi first case}
  -(-1)^e \frac 1 2
  \left( 
    \sum 
    \begin{tikzpicture}
      \node[ext,accepting] (v) at (-.4, .3) {};
      \node[ext] (w) at (.4, .3) {};
      \node (v1) at (.4, -1) {$1$};
      \draw 
      (v) edge[bend right, ->-] (w) edge +(40:.5) edge +(90:.5) edge +(130:.5)
      (w) edge +(40:.5) edge +(90:.5) edge +(130:.5)
      (w) edge[dashed] (v1)
      ;
    \end{tikzpicture}
    -
    \sum
    \begin{tikzpicture}
      \node[ext] (v) at (-.4, .3) {};
      \node[ext, accepting] (w) at (.4, .3) {};
      \node (v1) at (.4, -1) {$1$};
      \draw 
      (v) edge +(40:.5) edge +(90:.5) edge +(130:.5)
      (w) edge[bend left, ->-] (v) edge +(40:.5) edge +(90:.5) edge +(130:.5)
      (w) edge[dashed] (v1)
      ;
    \end{tikzpicture}
    -
    \begin{tikzpicture}
      \node[ext] (v) at (-.4, .3) {};
      \node[ext] (w) at (.4, .3) {};
      \node[ext] (u) at (0, -.3) {};
      \node (v1) at (0,-1) {$1$};
      \draw (u) edge[crossed] (v) edge (w) 
      edge (v1)
      (v)  edge +(40:.5) edge +(90:.5) edge +(130:.5)
      (w) edge +(40:.5) edge +(90:.5) edge +(130:.5)
      ;
    \end{tikzpicture}
    +
    \begin{tikzpicture}
      \node[ext] (v) at (-.4, .3) {};
      \node[ext] (w) at (.4, .3) {};
      \node[ext] (u) at (0, -.3) {};
      \node (v1) at (0,-1) {$1$};
      \draw (u) edge (v) edge[crossed] (w) 
      edge (v1)
      (v)  edge +(40:.5) edge +(90:.5) edge +(130:.5)
      (w) edge +(40:.5) edge +(90:.5) edge +(130:.5)
      ;
    \end{tikzpicture}
  \right)
\end{equation}
In the first two sums one sums over all ways of attaching the $1$-labeled leg to a vertex, including the special vertex.
Here we are using \cite[Proposition 3.1 (ii)]{ArbarelloCornalba98} to compute the pullback of the $\psi$-class, and the additional $\delta_{0,S}$-terms there produce the last two summands.
To compare, let us now follow our graph $\Gamma$ along the lower rim of \eqref{equ:pi diag}.
The vertical arrow $\varpi^*$ adds one $(\epsilon,1)$-edge to $\Gamma$. Call the resulting graph $-\Gamma'\in \Xast_{g,1}$.
Applying $\Xi$ then produces $\Gamma'$ plus the sum of all graphs obtained by attaching a $1$-labeled leg to $\Gamma$.
Applying $F$ produces from this latter sum the two sums in \eqref{equ:pi first case}.
Applying $F$ to $\Gamma'$ produces the last two summands in \eqref{equ:pi first case} via \eqref{equ:F case 3}.
This shows commutativity of \eqref{equ:pi diag} for $\Gamma\in H_g$ without $(\epsilon,\epsilon)$-edge.
The proof in the case where $\Gamma\in H_g$ has an $(\epsilon,\epsilon)$-edge is essentially similar, although the map $\XPhi$ produces more terms in presence of an $(\epsilon,\epsilon)$-edge. 
\end{proof}

\section{Numerical results}

We record the cohomology groups of the graph complexes $G^{(g,n)}$, $X_{g,n}$, and $\Jast_{g,n}$, with the characters of their $\bbS_n$-representations for small $g$ and $n$, obtained by calculations in Sage. For example, the entry $t^{11}s_{2,1}+ t^{12}s_{1,1,1}$ in Figure~\ref{Fig:Ggn} for $(g,n) = (4,3)$ indicates that   $W_0H^{11}_c(\MM_{4,3})$ is the $2$-dimensional irreducible representation of $\bbS_3$ with character $s_{2,1}$, $W_0H^{12}_c(\MM_{4,3})$ is the sign representation, and $W_0H^\bullet_c(\MM_{4,3})$ vanishes in all other degrees.  The full cohomology of $\MM_{g,n}$ is well-understood for $g = 0$, since $\MM_{0,n}$ is a hyperplane arrangement complement, and also for $g = 1$ \cite{Getzler, Gorinov}.  For $g = 2$, we understand that Tommasi has computed the cohomology of $\MM_{2,n}$, for $n \leq 5$ \cite{Tommasi}, cf. \cite{BCGY} for weight 0 and $n \leq 4$. The additional computations in Figure~\ref{Fig:Ggn} were previously known as $\Q$-vector spaces \cite[Appendix~A]{CGP2}.  We include the details here to facilitate comparison with the computations in Figures~\ref{Fig:Xgn} and \ref{Fig:Jgn}.  To the best of our understanding, the computations in Figure~\ref{Fig:Xgn} are new for $g \geq 3$, as are all of the computations in Figure~\ref{Fig:Jgn}.

\vskip -5 pt
\begin{figure}[h]
\[
\scalebox{.8}{
\begin{tabular}{|g|M|M|M|M|M|M|M|}
    \hline
    \rowcolor{Gray}
    g,n & 0& 1 & 2 & 3 & 4 & 5 & 6 \\
    \hline
    0 & - & - & - & $s_3$ & $t^1 s_{2,2}$ & $t^2s_{3,1,1}$ & $t^3(s_{3,3} + s_{4,1,1}+s_{2,2,1,1})$\\ \hline
    1 & - & 0 & 0 & $t^3 s_{1,1,1}$ & $t^4s_{3,1}$ & $\scriptstyle{t^5(s_5+s_{3,2}+s_{2,2,1}+s_{1,1,1,1,1})}$& \\ \hline
    2 & 0 & 0 & $t^5 s_2$ & 0 & $t^6 s_4+t^7s_{2,1,1}$ &  & \\ \hline
    3 & $t^6$ & $t^6s_1 $ & 0 & $t^9 s_3$ & $t^9s_{2,1,1}+t^{10}s_{2,2}$&  & \\ \hline
    4 & 0 & 0 & 0 & $t^{11}s_{2,1}+ t^{12}s_{1,1,1}$ &  & & \\\hline
    5 & $t^{10}$ & $t^{10} s_1$ & $t^{14}s_2$ & $t^{13}s_3+t^{14}s_{2,1}$ &  & & \\\hline
    6 & $t^{15}$ & $t^{15}s_1$ &  &  & & & \\ 
    \hline
\end{tabular}
}
\]
%\vskip-10 pt
\caption{\small{The $\bbS_n$-equivariant Poincar\'e polynomials of $H^\bullet(G^{(g,n)}) \cong W_0H^\bullet_c(\MM_{g,n})$ for small $g$ and $n$.}} \label{Fig:Ggn}
\end{figure}
%\vskip -15 pt
\begin{figure}[h]
\[
\scalebox{.7}{
\begin{tabular}{|g|M|M|M|M|M|M|M|}
    \hline
    \rowcolor{Gray}
    g,n & 0& 1 & 2 & 3 & 4 & 5 & 6\\
    \hline
    0 & - & - & - & 0 & $t^2 s_4$ & $t^3 s_{3,2}$ & $t^4(s_{3,2,1}+s_{4,1,1})$\\ \hline
    1 & - & 0 & 0 & 0 & $t^5 s_{2,1,1}$ & $t^6(s_{3,1,1}+s_{3,2}+s_{4,1})$&  \\ \hline
    2 & 0 & 0 & 0 & $ t^7(s_3+s_{2,1})$ & $t^8s_4$ &  & \\ \hline
    3 & 0 & $t^8 s_1$ & $t^8(s_2 + s_{1,1})$ &  $2t^9 s_3$ &  & & \\ \hline
    4 & 0 & 0 & 0 & $2t^{12}s_{1,1,1}$ &  & & \\ \hline
    5 & 0 & $t^{12} s_1$ & $t^{12}(s_2+s_{1,1})+2t^{14}s_2$ &  & & & \\\hline
    6 & 0 & $2t^{15}s_1+t^{17}s_1$ &  &  & & & \\
    \hline
\end{tabular}
}
\]
%\vskip-10 pt
\caption{\small{The $\bbS_n$-equivariant Poincar\'e polynomials of $H^\bullet(X_{g,n}) \cong \gr_2 H^\bullet_c(\MM_{g,n})$ for small $g$ and $n$.}} \label{Fig:Xgn}
\end{figure}
%\vskip-15 pt

\begin{figure}[h]
\[
\scalebox{.7}{
\begin{tabular}{|g|M|M|M|M|M|M|M|}
    \hline
    \rowcolor{Gray}
    g,n & 1 & 2 & 3 & 4 & 5 & 6\\
    \hline
    0 & $t s_1$ & $t s_{1,1}$ & $t^2 s_3$ & $t^3s_{2,2}$ & $t^4s_{3,1,1}$ & $\scriptstyle{t^5 (s_{4,1,1} + s_{3,3}+s_{2,2,1,1})}$ \\ \hline
    1 & 0 & 0 & $t^5 s_{1,1,1}$& $\scriptstyle{t^5s_{1,1,1,1} +t^6s_{3,1}}$ & $\!\!\scriptstyle{t^6s_{3,1,1}+  t^7(s_5+s_{3,2}+s_{2,2,1}+}$ $\scriptstyle{s_{1,1,1,1})}$ & \\ \hline
    2 & 0 & $t^7 s_2$ & $t^7 s_{2,1}$ & $\scriptstyle{t^8(2 s_4+s_{3,1})+t^9s_{2,1,1}}$ &  & \\ \hline
    3 & $t^8 s_1$ & $t^8 s_{1,1}$ & $t^9 s_3 + t^{11} s_3$&  & & \\ \hline
    4 & 0 & 0 & $\scriptstyle{t^{12}s_{1,1,1}+t^{13}(s_3+s_{2,1})+}$ $\scriptstyle{t^{14}s_{1,1,1}}$&  & & \\ \hline
    5 & $ t^{12} s_1$ & $\scriptstyle{t^{12}s_{1,1}+t^{14}(s_2+s_{1,1})+t^{16}s_2}$&  & & & \\
    \hline
\end{tabular}
}
\]
%\vskip-10 pt
\caption{\small{The $\bbS_n$-equivariant Poincar\'e polynomials of $H^\bullet(\Jast_{g,n})$ for small $g$ and $n$.}} \label{Fig:Jgn}
\end{figure}

\end{document}